\newcommand{\e}{\ensuremath{\varepsilon}}
\newcommand{\dt}{\ensuremath{\textup{d}}}
\newcommand{\dist}{\operatorname{d}}
\newcommand{\diam}{\operatorname{diam}}
\newcommand{\dL}{\operatorname{d}_L}
\newcommand{\dLk}{\operatorname{d}_{L+r}}
\newcommand{\dtL}{\operatorname{\tilde{d}}}
\newcommand{\sh}{\operatorname{Sh}}
\newcommand{\czero}{\operatorname{\bf{A1}}}
\newcommand{\cone}{\operatorname{\bf{A0}}}
\newcommand{\ctwo}{\operatorname{\bf{C1}}}
\newcommand{\good}{\operatorname{Good}_L}
\newcommand{\onebad}{\operatorname{OneBad}_L}
\newcommand{\onebadi}{\operatorname{OneBad}_L^{(i)}}
\newcommand{\onebadj}{\operatorname{OneBad}_L^{(j)}}
\newcommand{\manybad}{\operatorname{ManyBad}_L}
\newcommand{\bbad}{\operatorname{BdBad}_{L,r}}
\newcommand{\badP}{\ensuremath{\mathcal{B}}}
\newcommand{\badPo}{\ensuremath{\mathcal{B}^\star}}
\newtheorem{theorem}{Theorem}[section]
\newtheorem{lemma}{Lemma}[section]
\newtheorem{proposition}{Proposition}[section]
\newtheorem{corollary}{Corollary}[section]
\newenvironment{prooof}{\noindent{\bf Proof:}}{%
  \hspace*{\fill}$\Box$\par\vskip2ex}
\newenvironment{prooof2}{\noindent{\bf Proof }}{%
  \hspace*{\fill}$\Box$\par\vskip2ex}
\newcommand{\pibm}{\ensuremath{\pi^{\textup{BM}}}}
\newcommand{\phbm}{\ensuremath{\hat{\pi}^{\textup{BM}}}}
\newcommand{\phibm}{\ensuremath{\phi}^{\textup{BM}}}
\newcommand{\ph}{\ensuremath{\hat{\pi}}}
\newcommand{\pho}{\ensuremath{\breve{\pi}}}
\newcommand{\pt}{\ensuremath{\tilde{\pi}}}
\newcommand{\Ph}{\ensuremath{\hat{\Pi}}}
\newcommand{\Pho}{\ensuremath{\breve{\Pi}}}
\newcommand{\Pt}{\ensuremath{\tilde{\Pi}}}
\newcommand{\Ptg}{\ensuremath{\tilde{\Pi}^g}}
\newcommand{\Phg}{\ensuremath{\hat{\Pi}^g}}
\newcommand{\Phgo}{\ensuremath{\breve{\Pi}^g}}
\newcommand{\Pg}{\ensuremath{\Pi^g}}
\newcommand{\gh}{\ensuremath{\hat{g}}}
\newcommand{\gho}{\ensuremath{\breve{g}}}
\newcommand{\gt}{\ensuremath{\tilde{g}}}
\newcommand{\Gh}{\ensuremath{\hat{G}}}
\newcommand{\Gho}{\ensuremath{\breve{G}}}
\newcommand{\Ghg}{\ensuremath{\hat{G}^g}}
\newcommand{\Ghgo}{\ensuremath{\breve{G}^g}}
\newcommand{\Gt}{\ensuremath{\tilde{G}}}
\newcommand{\Gtg}{\ensuremath{\tilde{G}^g}}
\newcommand{\gbm}{\ensuremath{g^{\textup{BM}}}}
\newcommand{\Prw}{\operatorname{P}}
\newcommand{\Pbm}{\operatorname{P}^{\textup{BM}}}
\newcommand{\Erw}{\operatorname{E}}
\newcommand{\pP}{\ensuremath{\mathbb{P}}}
\newcommand{\pQ}{\ensuremath{\mathbb{Q}}}
\newcommand{\pE}{\ensuremath{\mathbb{E}}}
\newcommand{\pF}{\ensuremath{\mathcal{F}}}
\newcommand{\pG}{\ensuremath{\mathcal{G}}}
\newcommand{\bj}{\ensuremath{{\bf j}}}
\newcommand{\ctime}{\operatorname{\bf{C2}}}
\newcommand{\goods}{\operatorname{Good}_L^{\textup{\tiny sp}}}
\newcommand{\bads}{\operatorname{Bad}_L^{\textup{\tiny sp}}}
\newcommand{\goodt}{\operatorname{Good}_L^{\textup{\tiny tm}}}
\newcommand{\onebadt}{\operatorname{OneBad}_L^{\textup{\tiny tm}}}
\newcommand{\manybadt}{\operatorname{ManyBad}_L^{\textup{\tiny tm}}}
\newcommand{\nottoobadt}{\operatorname{NotTooBad}_L^{\textup{\tiny tm}}}
\newcommand{\badPs}{\ensuremath{\mathcal{B}}_L^{\textup{\tiny sp}}}
\newcommand{\badPt}{\ensuremath{\mathcal{B}}_L^{\textup{\tiny tm}}}
\theoremstyle{definition}
\newtheorem{remark}{Remark}[section]
\renewcommand{\theenumi}{{\rm (\roman{enumi})}}
\begin{document}

\addtocontents{toc}{\protect\thispagestyle{empty}}
\pagestyle{empty}
\thispagestyle{empty}

\newpage
 \pagestyle{fancy} 
 \renewcommand{\headrulewidth}{0mm}
  \chead[\leftmark]{\rightmark}      
  \rhead[]{\thepage}
  \lhead[\thepage]{}
  \cfoot{} 
 \pagenumbering{arabic}

  \title{Long-time Behavior of Random Walks in Random Environment}
\author{Erich Baur\footnote{Universit\"at Z\"urich. Email:
    erich.baur@math.uzh.ch.}}

\date{}
\maketitle 
\thispagestyle{empty}
\begin{abstract}
We study behavior in space and time of random walks in an i.i.d. random
environment on $\mathbb{Z}^d$, $d\geq 3$. It is assumed that the measure
governing the environment is isotropic and concentrated on environments
that are small perturbations of the fixed environment corresponding to
simple random walk. We develop a revised and extended version of the paper 
of Bolthausen and Zeitouni (2007) on exit laws from large balls, which, as
we hope, is easier to follow. Further, we study mean sojourn times in balls.
  
This work is part of the author's PhD thesis under the supervision of Erwin Bolthausen. A
generalization of the results on exit measures to certain anisotropic
random walks in random environment is available
at~\href{http://arxiv.org/abs/1309.3169}{\tt arXiv:1309.3169 [math.PR]}.
\\\\
 {\bf Subject classifications:} 60K37; 82C41.\\
 {\bf Key words:} random walk, random environment, exit
  measure, pertubative regime, non-ballistic behavior, isotropic.
\end{abstract}
\tableofcontents 

   \section{Introduction}
\label{intro}
\subsection{The  model}
\subsubsection{General description}
Consider the integer lattice $\mathbb{Z}^d$ with unit vectors $e_i$, whose
$i$th component equals $1$. We let $\mathcal{P}$ be the set of probability
distributions on $\{\pm e_i : i = 1,\ldots,d\}$. Given a probability
measure $\mu$ on $\mathcal{P}$, we equip
$\Omega=\mathcal{P}^{\mathbb{Z}^d}$ with its natural product $\sigma$-field
$\pF$ and the product measure $\pP_\mu = \mu^{\otimes\mathbb{Z}^d}$. Each
element $\omega\in\Omega$ yields transition probabilities of a nearest
neighbor Markov chain on $\mathbb{Z}^d$, the {\it random walk in random
  environment} (RWRE for short), via
\begin{equation*}
p_\omega(x,x+e) = \omega_x(e),\quad e\in\{\pm e_i : i = 1,\ldots,d\}.
\end{equation*}
We write $\Prw_{x,\omega}$ for the ``quenched'' law of the canonical Markov
chain $(X_n)_{n\geq 0}$ with these transition probabilities, starting at
$x\in\mathbb{Z}^d$.  The probability measure
\begin{equation*}
P = \int_\Omega \Prw_{0,\omega}\pP(\dt\omega)
\end{equation*}
is commonly referred to as averaged or ``annealed'' law of the RWRE started
at the origin. 
\subsubsection{Additional requirements}
We study asymptotic properties of the RWRE in dimension $d\geq 3$ when
the underlying environments are small perturbations of the fixed
environment $\omega_x(\pm e_i) =1/(2d)$ corresponding to simple or standard random walk.
In order to fix a perturbative regime, we introduce the following condition.
\begin{itemize}
\item Let $0<\varepsilon<1/(2d)$. We say that $\cone(\e)$ holds if $\mu(\mathcal{P}_\e) = 1$,
where 
\begin{equation*}
\mathcal{P}_\e = \left\{q\in\mathcal{P} : \left|q(\pm e_i) -
  1/(2d)\right| \leq \e\mbox{ for all }i=1,\ldots,d\right\}. 
\end{equation*}
\end{itemize}
The perturbative behavior concerns the  behavior of the RWRE
when $\cone(\e)$ holds for small $\e$. However, even for arbitrarily small
$\e$, such walks can behave very differently compared to simple random
walk. This motivates a further ``centering''
restriction on $\mu$.
\begin{itemize}
\item We say that $\czero$ holds if 
  $\mu$ is invariant under all orthogonal transformations fixing the
  lattice $\mathbb{Z}^d$, i.e. if $O:\mathbb{R}^d\rightarrow\mathbb{R}^d$ is any orthogonal matrix
  that maps $\mathbb{Z}^d$ onto itself, then the laws of
  $(\omega_0(Oe))_{|e|=1}$ and $(\omega_0(e))_{|e|=1}$ coincide.
\end{itemize}
If $\czero$ holds, $\pP_\mu$ is called {\it isotropic}. 
\subsection{Informal description of the results}
In the following, we write $\pP$ instead of $\pP_\mu$ and denote by $\pE$
the corresponding expectation.
\subsubsection{Exit laws from balls}
In the main part of this work, we investigate the RWRE exit distribution
from the ball $V_L=\{x\in\mathbb{Z}^d : |x| \leq L\}$ when the radius $L$
is large. Assuming $\czero$  and $\cone(\e)$ for small $\e$, we show that the exit law of the walk,
started from a point $x$ with 
$|x|\leq L/5$, is close to that of simple random walk. 
More precisely, using the multiscale analysis introduced in 
Bolthausen and Zeitouni~\cite{BZ}, we prove that
if the radius $L$ tends to infinity, then
\begin{enumerate}
\item The difference of the two exit laws measured in total variation stays
  small as $L$ increases (but does not tend to zero, due to boundary
  effects) (Theorem~\ref{def:main-theorem} (i)).
\item The distance between the two exit laws converges to zero if they are
  convolved with an additional smoothing kernel on a scale increasing
  arbitrarily slowly with $L$ (Theorem~\ref{def:main-theorem} (ii)).
\item The RWRE exit measure of boundary portions of size $\geq (L/(\log
  L)^{15})^{d-1}$ can be bounded from above by that of simple random
  walk. Evaluated on segments of size $\geq (L/(\log L)^{6})^{d-1}$, the
  two measures agree up to a multiplicative error, which tends to one as
  $L$ increases (Theorem~\ref{local-thm-exitmeas}).
\end{enumerate}
The first two parts already appeared in~\cite{BZ}, which serves as the basis
for our work. However, for reasons explained below, it was of great interest to find a somewhat different approach.

\subsubsection{Mean sojourn times}
The results on exit laws can be used to prove transience of the RWRE
(Corollary~\ref{def:main-transience}), and they provide an invariance
principle up to time transformation. Getting complete control over time is
a major open problem, and in that direction, we look in Section~\ref{times}
at mean holding or sojourn times in balls. Our basic insight is that
exceptionally small or large times can only be produced by spatially
atypical regions. Consequently, the philosophy behind our approach is to
derive statements on sojourn times from estimates on exit laws. However,
our results on exit distributions seem not quite sufficient to handle the
presence of strong traps, i.e. regions where the RWRE cannot escape for a
long time with high probability. We therefore make an additional assumption
which guarantees that the mass of environments producing very large times
is sufficiently small. Let $\tau_L = \inf\{n\geq 0 : X_n\notin V_L\}$ be
the first exit time of the RWRE from the ball $V_L$, and denote by
$\Erw_{0,\omega}$ the expectation with respect to $\Prw_{0,\omega}$.
\begin{itemize}
\item We say that {\bf A2} holds if for large $L$,
\begin{equation*}
  \pP\left(\Erw_{0,\omega}\left[\tau_L\right]>(\log L)^4L^2\right)\leq
  L^{-8d}. 
\end{equation*}
\end{itemize}
Assuming this additional condition, we prove 
\begin{enumerate}
\setcounter{enumi}{3}
\item For almost all
environments, the normalized quenched mean time
$\Erw_{0,\omega}[\tau_L]/L^2$ is finally contained in a small interval
around one, where the size of the interval converges to zero if the
disorder $\varepsilon$ tends to zero (Proposition~\ref{def:times-main-prop}
and Theorem~\ref{def:times-thm2}).  
\end{enumerate}
We believe that {\bf A2} follows from $\cone(\e)$ and $\czero$, even with a
faster decay of the probability. It remains an open (and possibly
challenging) problem to prove this. An example where {\bf A2} trivially
holds true is given in Remark~\ref{remark-balanced}.

\subsection{Discussion of this work}
\label{intro-purpose}
The part on exit measures should be seen as a corrected and extended
version of Bolthausen and Zeitouni~\cite{BZ}. Most of the ideas can already
be found there, and also our proofs sometimes follow
those of~\cite{BZ}. However, our focus lies more on Green's function
estimates on ``goodified'' environments, which are developed in
Section~\ref{super}. Partly based on (unpublished) notes of Bolthausen,
this section is entirely new, and the results obtained make the proofs of
the main statements more transparent. The core
statement is Lemma~\ref{def:superlemma}, which gives a bound on the (coarse
grained) RWRE Green's function, for a large class of environments. As such
estimates were only partially present in~\cite{BZ}, the authors had to
repeatedly consider higher order expansions in terms of Green's functions
coming from simple random walk, which led to serious problems, for example
in Sections 4.3 and 4.4 in~\cite{BZ}. 

The reason for developing a new approach was twofold: On the
one hand, it seemed difficult to fix these problems 
ad hoc. On the other hand, we aimed at establishing a solid basis for
future work on this topic, in particular in the direction of a central
limit theorem. Further new points of this work can be summarized as follows.
\begin{itemize}
\item We give either new proofs of the statements in~\cite{BZ} or we revise
  the old ones. For example, the proofs leading to the main results on the
  exit measures in Sections~\ref{smv-exits} and~\ref{nonsmv-exits} are
  based on our new techniques. These include the bounds on Green's
  functions, the use of parametrized coarse graining schemes
  and the concept of goodified environments, which goes back to~\cite{BZ}
  and is further elaborated here.
\item The appendix is completely rewritten. In this part, the main
  corrections concern the proof of the key Lemma~\ref{def:hittingprob}
  (Lemma 3.4 in~\cite{BZ}), where different case had to be
  considered. Also, we provide a lower bound on exit probabilities
  (Lemma~\ref{def:hittingprob} (iii)), which was already implicitly used
  in~\cite{BZ}, but not proved.
\item We obtain local estimates for the exit
  measures (Theorem~\ref{local-thm-exitmeas}). The global estimates in 
  total variation distance are extended to starting points $|x|\leq L/5$.
\item The results on the exit distributions are used to control the
  mean sojourn time of the RWRE in balls, under an
  extra assumption on $\pP$.
\end{itemize}
To improve readability, we overview the main steps of this work in Section~\ref{readingguide}.

\subsection{A brief history}
The literature on random walks in random environment is vast,
and we do by no means intend to give a full overview here. Instead, we 
point at some cornerstones and focus on results which are relevant for our
particular model. For a more detailed survey, the reader is invited to
consult the lecture notes of Sznitman~\cite{SZ-LN},~\cite{SZ-LN2} and
Zeitouni~\cite{ZT-StF},~\cite{ZT-TRev}, and also the overview article of
Bogachev~\cite{BG}.

Recall the general model defined at the very beginning under ``General
description''. We additionally assume that the environment is {\it
  uniformly elliptic}, i.e. there exists $\kappa>0$ such that
$\pP$-almost surely, $\omega_x(e)\geq \kappa$ for all $x,e\in\mathbb{Z}^d$,
$|e| = 1$. Note that in the perturbative regime, this is automatically
true.

The natural condition of uniform ellipticity can sometimes be relaxed to
mere ellipticity $\omega_x(e)>0$ for $x,e\in\mathbb{Z}^d$, $|e| = 1$. Also,
it often suffices to require $\pP$ to be
stationary and ergodic instead of being ``i.i.d.''. 

\subsubsection{Dimension $d=1$}
Early interest in models of RWRE can be traced back to the 60's in the
context of biochemistry, where they were used as a toy model for DNA
replication, cf. Chernov~\cite{CHE} and
Temkin~\cite{TEM}. Solomon~\cite{SOL} started a rigorous mathematical
analysis in dimension $d=1$. He proved that if
\begin{equation*}
\pE\left[\log\rho\right]\neq
0,\quad\mbox{where }\rho=\omega_0(-1)/\omega_0(1),
\end{equation*}
then the RWRE is $P$-almost surely transient, whereas in the case
$\pE\left[\log\rho\right]=0$, the walk is $P$-a.s. 
recurrent. Further, he obtained almost sure existence of the limit speed
$v=\lim_{n\rightarrow\infty} X_n/n$,
\begin{equation*}
v = \left\{\begin{array}{l@{\quad }l}
\frac{1-\pE[\rho]}{1+\pE[\rho]} & \mbox{if\ }\pE[\rho]<1\\
\frac{1-\pE[\rho^{-1}]}{1+\pE[\rho^{-1}]}
& \mbox{if\ }\pE[\rho^{-1}]<1\\
0&\mbox{otherwise}
\end{array}\right..
\end{equation*}
His results already reveal some surprising features of the model. For
example, it can happen that $v=0$, but nonetheless the RWRE is transient
(note that this is impossible for a Markov chain with stationary
increments, according to Kesten~\cite{KES2}).  Also, if
$\overline{v}=\pE[\omega_0(1)-\omega_0(-1)]$ denotes the mean local drift,
it is possible that $|v|< |\overline{v}|$. Such slowdown effects, caused by
traps reflecting impurities in the medium, come again to light in limit
theorems for the RWRE under both the quenched and the annealed
measure. In~\cite{KKS}, Kesten, Kozlov and Spitzer proved that in the
transient case under the annealed law, both diffusive and sub-diffusive
behavior can occur, depending on a critical exponent connected to
hitting times. However, the strongest form of sub-diffusivity appears in the
recurrent case with non-degenerate site distribution $\mu$, for which
Sinai~\cite{SIN} proved that after $n$ steps, the RWRE is typically at
distance of order only $(\log n)^2$ away from the starting point. His
analysis shows that the walk spends most of the time at the bottom of
certain valleys. The limit law of $X_n/(\log n)^2$ is given by the
distribution of a functional of Brownian motion, cf. Kesten~\cite{KES} and
Golosov~\cite{GOL}. Let us finally mention that slowdown phenomena also
show up when studying probabilities of atypical events like large
deviations, see e.g.~\cite{GRHO},~\cite{DPZ},~\cite{GAZEIT}.
\subsubsection{Dimensions $d\geq 2$}
While the one-dimensional picture is quite complete, many
questions remain open in higher dimensions, including a classification into
recurrent/transient behavior, existence of a limit speed and invariance
principles. The main difficulties come from the non-Markovian character 
under the annealed measure and the fact that the RWRE is irreversible under the
quenched measure as soon as $d\geq 2$. 

Let us illustrate one prominent open problem, the directional zero-one law.
For an element $l$ from the unit sphere $\mathbb{S}^{d-1}$, denote the
event that the RWRE is transient in direction $l$ by
\begin{equation*}
A_l = \left\{\lim_{n\rightarrow\infty}X_n\cdot l=\infty\right\}.
\end{equation*}
Kalikow proved in~\cite{KAL} that $P(A_l\cup A_{-l}) \in\{0,1\}$. Is it
also true that $P(A_l) \in \{0,1\}$ ? The answer is affirmative in
dimension $d=1,2$ (\cite{SOL} for $d=1$, Merkl and Zerner~\cite{MERZER} for
$d=2$), but unknown for higher dimensions. It is known that a limit speed
$v\in\mathbb{R}^d$ (possibly zero) exists if $P(A_l)\in\{0,1\}$ for every
$l\in\mathbb{S}^{d-1}$, cf. Sznitman and Zerner~\cite{SZZER}.

Much progress has been made in characterizing models which exhibit
ballistic behavior, that is when the limit velocity $v$ is an almost
sure constant vector different from zero.
Here Sznitman's conditions $(T_\gamma)$,
$\gamma\in(0,1]$,  give a criterion for ballisticity and lead to an
invariance principle under the annealed measure $P$,
see Sznitman~\cite{SZ1},~\cite{SZ2} and also his lecture notes~\cite{SZ-LN2}.
When $d\geq 4$ and the disorder is small, a quenched invariance principle
has been shown by Bolthausen and Sznitman~\cite{BOLTSZ}.  A stronger
ballisticity condition was given earlier by Kalikow~\cite{KAL}. However, as
examples in Sznitman~\cite{SZ4} demonstrate, Kalikow's condition does not completely
describe ballistic behavior in dimensions $d\geq 3$. A handy and complete
characterization of ballisticity has still to be found. For recent
developments, see the work of Berger~\cite{BER} and
Berger, Drewitz, Ram\'irez~\cite{BERDRERAM}. In~\cite{BERDRERAM}, it is
conjectured that in dimensions $d\geq 2$, a RWRE which is transient in all
directions $l$ out of an open subset $U\subset\mathbb{S}^{d-1}$ is
ballistic (for an i.i.d uniformly elliptic environment).

Turning to ballistic behavior in the perturbative
regime, Sznitman shows in~\cite{SZ4} that for $0<\eta<5/2$ in dimension $d=3$
or for $0<\eta<3$ in dimensions $d\geq 4$, there exists $\e_0 = \e_0(d,\eta)$
such that if $\cone(\e)$ is fulfilled for some $\e\leq \e_0$ and the mean
local drift under the static measure satisfies
\begin{equation*}
\pE[d(0,\omega)\cdot e_1] > \left\{\begin{array}{l@{\quad }l} 
\e^{5/2-\eta} &\mbox{if } d=3\\
\e^{3-\eta} &\mbox{if }d\geq 4
\end{array},\right.\quad\mbox{where } d(0,\omega) = \sum_{|e|=1}e\omega_0(e),
\end{equation*}
then the RWRE is ballistic in direction $e_1$, i.e. $v\cdot e_1\neq
0$. Moreover, a functional limit theorem holds under $P$. In~\cite{BSZ}, Bolthausen,
Sznitman and Zeitouni consider RWRE in
dimensions $d\geq 6$ where the projection onto at least five components
behaves as simple random walk. Among other things, examples are constructed
under $\cone(\e)$ for which $\pE[d(0,\omega)] \neq 0$, but $v= 0$ $(d\geq
7)$, and a quenched invariance principle is proved when $d\geq 15$. On the
other hand, it can happen that $\pE[d(0,\omega)] = 0$ but $v\neq 0$. As
a further remarkable result of~\cite{BSZ}, it can even happen that $0\neq
v=-c\pE[d(0,\omega)]$ for some $c>0$, which exemplifies that
the environment acts on the path of the walk in a highly nontrivial
way. Large deviations of $X_n/n$ are studied in Varadhan~\cite{VAR}.

Concerning non-ballistic behavior, much is known for the class of {\it balanced} RWRE
when $\pP(\omega_0(e_i) = \omega_0(-e_i))=1$ for all $i=1,\ldots,d$. One
first notices that the walk is a martingale, which readily leads to limit
speed zero. Employing the method of environment viewed from the particle,
Lawler proves in~\cite{LAWbalanced} that for $\pP$-almost all $\omega$,
$X_{\lfloor n\cdot\rfloor}/\sqrt{n}$ converges in
$\Prw_{0,\omega}$-distribution to a non-degenerate Brownian motion with
diagonal covariance matrix. Moreover, the RWRE is recurrent in dimension
$d=2$ and transient when $d\geq 3$, see~\cite{ZT-StF}. Recently, within the
i.i.d. setting, diffusive behavior has been shown in the mere elliptic case
by Guo and Zeitouni~\cite{GUOZT}
and in the non-elliptic case by Berger and Deuschel~\cite{BERDEU}.

Our study of random walks in random environment in the perturbative regime
under the isotropy condition $\czero$ aims at a quenched central limit
theorem, showing that in dimensions $d\geq 3$, the RWRE is asymptotically
Gaussian, on $\pP$-almost all environments $\omega$. Such an invariance
principle has already been shown by Bricmont and Kupiainen~\cite{BK}, who
introduced condition $\czero$. However, it is of interest to find a
self-contained new proof. A continuous counterpart of
this model, isotropic diffusions in a random environment which are small
perturbations of Brownian motion, has been investigated by Sznitman and
Zeitouni in~\cite{SZZ}. They prove transience and a full quenched
invariance principle in dimensions $d\geq 3$.

\subsection{Open problems for our model and ongoing work}
As we already pointed out above, with respect to a central limit theorem
one still needs to find ways to handle large times, which are in a certain
sense excluded by Assumption {\bf A2}. In this direction, a more complete
picture of exit laws could prove helpful, including sharper estimates for
the appearance of balls with an atypical exit measure. A further task is to
combine space and time estimates in the right way.

In the direction of a fully perturbative theory it would be desirable to
replace the isometry condition $\czero$ by the requirement that $\mu$ is
just invariant under reflections mapping a unit vector to its inverse. Then
the RWRE exit law from a ball should be close to that of some
$d$-dimensional symmetric random walk. The relaxed condition on $\mu$
would, for example, include the class of walks that are balanced in one
coordinate direction, where time can be controlled much easier. This is
work in progress. 

Another open problem is the case of small isotropic perturbations in
dimension $d=2$.  One expects diffusive behavior as in dimensions $d\geq
3$, but there is no rigorous result yet. In principle, one might try to
follow a similar multiscale approach for the exit measures as it is
presented below. But the same perturbation argument shows that unlike
dimensions $d\geq 3$, the disorder does not contract in leading
order. Therefore, one has to look closer at higher order terms. While for
$d\geq 3$, the nonlinear terms in the perturbation expansion for the
Green's function can be estimated in a somewhat crude way once the right
scales are found, it seems that in dimension $d=2$, at least terms up to
order three have to be carefully taken into account.  \newpage

   \section{Basic notation and main results}
\label{prel}
\subsection{Basic notation}
Our purpose here is to cover the most relevant notation which will be used
throughout this text. Further notation will be introduced later on when needed.
\subsubsection{Sets and distances}
We have $\mathbb{N} = \{0,1,2,3,\ldots\}$ and $\mathbb{R}_+ =
\{x\in\mathbb{R} : x \geq 0\}$. For a set $A$, its complement is denoted by
$A^c$. If $A\subset \mathbb{R}^d$ is measurable and non-discrete, we write
$|A|$ for its $d$-dimensional Lebesgue measure. Sometimes, $|A|$ denotes
the surface measure instead, but this will be clear from the context. If
$A\subset \mathbb{Z}^d$, then $|A|$ denotes its cardinality.

For $x\in\mathbb{R}^d$, $|x|$ is the Euclidean norm. If $A, B \subset
\mathbb{R}^d$, we set $\dist(A,B) = \inf\{|x-y| : x\in A,\; y\in B\}$ and
$\diam(A) = \sup\{|x-y| : x,y \in A\}$. Given $L > 0$, let
$V_L=\{x\in\mathbb{Z}^d : |x| \leq L\}$, and for $x\in\mathbb{Z}^d$,
$V_L(x) = V_L + x$. For Euclidean balls in $\mathbb{R}^d$ we write
$C_L=\{x\in \mathbb{R}^d : |x| < L\}$ and for $x\in\mathbb{R}^d$,
$C_L(x) = x + C_L$.

If $V\subset \mathbb{Z}^d$, then $\partial V =\{x\in V^c\cap\mathbb{Z}^d
: \dist(\{x\},V) = 1\}$ is the outer boundary, while in the case of a
non-discrete set $V\subset\mathbb{R}^d$, $\partial V$ stands for the usual
topological boundary of $V$ and $\overline{V}$ for its closure. For
$x\in\overline{C}_L$, we set $\dL(x) = L-|x|$. Finally, for $0\leq
a<b\leq L$, the ``shell'' is defined by
\begin{equation*}
\sh_L(a,b)=\{x\in V_L : a\leq \dL(x) < b\},\ \ \sh_L(b) = \sh_L(0,b).
\end{equation*}
\subsubsection{Functions}
If $a,b$ are two real numbers, we set $a\wedge b = \min\{a,b\}$, $a\vee
b=\max\{a,b\}$. The largest integer not greater than $a$ is denoted by
$\lfloor a\rfloor$. As usual, set $1/0 =\infty$. For us, $\log$ is the
logarithm to the base $e$. For $x,z\in\mathbb{R}^d$, the Delta function
$\delta_{x}(z)$ is defined to be equals one for $z=x$ and zero
otherwise. If $V\subset \mathbb{Z}^d$ is a set, then $\delta_V$ is the
probability distribution on the subsets of $\mathbb{Z}^d$ satisfying
$\delta_V(V') = 1$ if $V'=V$ and zero otherwise.

Given two functions $F,G :
\mathbb{Z}^d\times\mathbb{Z}^d\rightarrow\mathbb{R}$, we write $FG$ for
the (matrix) product $FG(x,y) = \sum_{u\in\mathbb{Z}^d}F(x,u)G(u,y)$,
provided the right hand side is absolutely summable. $F^k$ is the $k$th
power defined in this way, and $F^0(x,y) = \delta_x(y)$. $F$ can also
operate on functions $f:\mathbb{Z}^d\rightarrow\mathbb{R}$ from the
left via $Ff(x) = \sum_{y\in\mathbb{Z}^d}F(x,y)f(y)$. 

We use the symbol $1_W$ for the indicator
function of the set $W$. By an abuse of notation, $1_W$ will also denote the kernel
$(x,y)\mapsto 1_W(x)\delta_x(y)$.  If $f:\mathbb{Z}^d\rightarrow
\mathbb{R}$, $||f||_1 = \sum_{x\in\mathbb{Z}^d}|f(x)| \in [0,\infty]$ is
its $L^1$-norm. When $\nu : \mathbb{Z}^d\rightarrow \mathbb{R}$ is a
(signed) measure, $||\nu||_1$ is its total variation norm.

Let $U\subset\mathbb{R}^d$ be a bounded open set, and let
$k\in\mathbb{N}$. For a $k$-times continuously differentiable function
$f:U\rightarrow\mathbb{R}$, that is $f\in C^k(U)$, we define
for $i=0,1,\ldots,k$,
\begin{equation*}
  {\left|\left|D^if\right|\right|}_{U} = \sup_{|\beta| =
    i}\sup_{U}\left|\frac{\partial^{i}}{\partial x_1^{\beta_1}\cdots\partial x_d^{\beta_d}}f\right|,
\end{equation*}
where the first supremum is over all multi-indices $\beta =
(\beta_1,\ldots,\beta_d)$, $\beta_j\in\mathbb{N}$, with
$|\beta|=\sum_{j=1}^d\beta_j$. 
Let $L>0$. Putting $U_L =\left\{x\in\mathbb{R}^d : L/2< |x| <
  2L\right\}$, we define
\begin{equation*}
  \mathcal{M}_L = \left\{\psi: U_L\rightarrow(L/10,5L),\ \psi\in
    C^4(U_L), {\left|\left|D^i\psi\right|\right|}_{U_L}\leq
    10\;\mbox{ for } i=1,\ldots,4\right\}. 
\end{equation*}
We will mostly
interpret functions $\psi\in\mathcal{M}_L$ as maps from
$U_L\cap\mathbb{Z}^d \subset\mathbb{R}^d$.
A typical function we have in mind is the constant function $\psi \equiv L$.

\subsubsection{Transition probabilities and exit distributions}
Given (not necessarily nearest neighbor) transition
probabilities $p = {(p(x,y))}_{x,y\in\mathbb{Z}^d}$, we write
$\Prw_{x,p}$ for the law of the canonical Markov chain ${(X_n)}_{n\geq 0}$
on $({(\mathbb{Z}^d)}^{\mathbb{N}},\pG)$, $\pG$
the $\sigma$-algebra generated by cylinder functions, with transition
probabilities $p$ and starting point $X_0=x$\, $\Prw_{x,p}$ -a.s. The
expectation with respect to $\Prw_{x,p}$ is denoted by $\Erw_{x,p}$.  We
will often consider the simple random walk kernel $p^{\mbox{\scriptsize RW}}(x,x\pm
e_i) = 1/(2d)$.

If $V\subset \mathbb{Z}^d$, we denote by $\tau_V = \inf\{n\geq 0 :
X_n\notin V\}$ the first exit time from $V$, with $\inf \emptyset =
\infty$, whereas $T_V = \tau_{V^c}$ is the first hitting time of
$V$. Given $x,z\in\mathbb{Z}^d$ and $p, V$ as above, we define
\begin{equation*}
\mbox{ex}_{V}(x,z;\,p) = \Prw_{x,p}\left(\tau_V=z\right).
\end{equation*}
Notice that for $x\in V^c$, $\mbox{ex}_{V}(x,z;p) = \delta_{x}(z)$.
For simple random walk, we write 
\begin{equation*}
\pi_V(x,z) = \mbox{ex}_{V}\left(x,z;p^{\mbox{\scriptsize RW}}\right).
\end{equation*}
Given $\omega\in\Omega$, we set
\begin{equation*}
\Pi_V(x,z) = \mbox{ex}_{V}(x,z;p_{\omega}).
\end{equation*}
Here, $\Pi_V$ should be understood as a {\it random} exit distribution, but
we suppress $\omega$ in the notation. 

\subsubsection{Coarse grained random walks}
In order to transfer information about both exit measures and sojourn times
from one scale to the next, we work with coarse graining schemes.

Fix once for all a probability density $\varphi\in
C^\infty(\mathbb{R}_+,\mathbb{R}_+)$ with compact support in $(1,2)$. Given
a nonempty subset $W\subset\mathbb{Z}^d$, $x\in W$ and $m_x
> 0$, the image measure of the rescaled density $(1/m_x)\varphi(t/m_x)\dt
t$ under the mapping $t\mapsto V_t(x)\cap W$ defines a probability distribution
on (finite) sets containing $x$.
If $\psi = (m_x)_{x\in W}$ is a field of positive numbers, we obtain in
this way a collection of probability distributions indexed by $x\in W$, a
{\it coarse graining scheme} on $W$. 

Now if $p=(p(x,y))_{x\in W,y\in\mathbb{Z}^d}$ is a collection of transition
probabilities on $W$, we define the coarse grained transitions belonging
to $(\psi,\,p)$ by
\begin{equation}
\label{eq:prel-cgpsi}
  p_{\psi}^{\mbox{\scriptsize CG}}(x,\cdot)  =
  \frac{1}{m_x}\int_{\mathbb{R}_+}
  \varphi\left(\frac{t}{m_x}\right)\mbox{ex}_{V_t(x)\cap W}(x,\cdot;p)\dt
  t,\quad x\in W.
\end{equation}
If $p =p^{\mbox{\scriptsize RW}}$, we write $\ph_{\psi}$ instead of
$p_{\psi}^{\mbox{\scriptsize CG}}$. Note that for every choice of $W$ and
$\psi$, $\ph_{\psi}$ defines a probability kernel. 

For the motion in the ball $V_L$, we use a particular field $\psi$,
which we describe in Section~\ref{smoothbad-cgs}. There, we will also
introduce a coarse grained RWRE transition kernel.

\subsubsection{Further notation and abbreviations}
For simplicity, we set $\Prw_x=\Prw_{x,p^{\mbox{\tiny RW}}}$,
$\Erw_x=\Erw_{x,p^{\mbox{\tiny RW}}}$. Given transition probabilities
$p_\omega$ coming from an environment $\omega$, we use the notation
$\Prw_{x,\omega}$, $\Erw_{x,\omega}$.
In order to avoid double indices, we usually write $\pi_L$ instead of
$\pi_V$, $\Pi_L$ for $\Pi_V$ and $\tau_L$ for $\tau_{V}$ if $V=V_L$ is the
ball of radius $L$ around zero.

Many of our quantities will be indexed by both $L$ and $r$, where $r$ is an
additional parameter. While we always keep the indices in the statements,
we normally drop both of them in the proofs. 
We will often use the abbreviations $\dist(y,B)$ for $\dist(\{y\},B)$, $T_x$ for $T_{\{x\}}$ and
$\pP\left(A;\, B\right)$ for $\pP\left(A\cap B\right)$.

\subsubsection{Some words about constants, $O$-notation and large $L$ behavior}
All our constants are positive. They only depend on the dimension $d\geq 3$
unless stated otherwise. In particular, they do {\it not} depend on $L$, on
$\omega$ or on any point $x\in\mathbb{Z}^d$, and they are also independent
of the parameter $r$ which will be introduced in
Section~\ref{smoothbad}. 

We use $C$ and $c$ for generic positive constants
whose values can change in different expressions, even in the same line. If
we use other constants like $K, C_1, c_1$, their values
are fixed throughout the proofs. Lower-case constants usually indicate
small (positive) values.

Given two functions $f,g$ defined on some subset of $\mathbb{R}$, we write
$f(t) = O\left(g(t)\right)$ if there exists a positive $C > 0$ and a real
number $t_0$ such that $|f(t)| \leq C |g(t)|\mbox{ for }t\geq t_0.$

If a statement holds for ``$L$ large (enough)'', this means that there exists
$L_0>0$ depending only on the dimension
such that the statement is true for all $L\geq L_0$. This applies
analogously to the expressions ``$\delta$
(or $\e$) small (enough)''.

The reader should always keep in mind that we are interested in asymptotics
when $L\rightarrow\infty$ and $\e$ is a (arbitrarily) small positive
constant. Even though some of our statements are valid only for large $L$
and $\e$ sufficiently small, we do not mention this every time.

\subsection{Main results on exit laws}
\label{mainresults-exitmeasures}
We still need some notation. For $x\in\mathbb{Z}^d$, $t>0$ and
$\psi:\partial V_t(x)\rightarrow (0,\infty)$ define
\begin{equation*}
  D_{t,\psi}(x) =  \left|\left|\left(\Pi_{V_t(x)}-\pi_{V_t(x)}\right)\ph_{\psi}(x,\cdot)\right|\right|_1 ,
\end{equation*}
\begin{equation*}
  D_{t}(x) = \left|\left|\left(\Pi_{V_t(x)}-\pi_{V_t(x)}\right)(x,\cdot)\right|\right|_1.
\end{equation*}
If $\psi \equiv m$ is constant, we write $D_{t,m}$ instead of
$D_{t,\psi}$. We usually drop $x$ from the notation if $x=0$. Further, let
\begin{equation*}
D_{t,\psi}^{\ast} =\sup_{x\in V_{t/5}}\left|\left|\left(\Pi_{V_t}-\pi_{V_t}\right)\ph_{\psi}(x,\cdot)\right|\right|_1,
\end{equation*} 
\begin{equation*}
 D_{t}^\ast = \sup_{x\in V_{t/5}}\left|\left|\left(\Pi_{V_t}-\pi_{V_t}\right)(x,\cdot)\right|\right|_1.
\end{equation*}
With $\delta > 0$, we set for $i=1,2,3$ 
\begin{equation*}
b_i(L,\psi,\delta)= \pP\left(\left\{(\log L)^{-9+9(i-1)/4} < D_{L,\psi}^\ast \leq
  (\log L)^{-9 +9i/4}\right\}\cap\left\{D_L^\ast\leq \delta\right\}\right),
\end{equation*}
and
\begin{equation*}
b_4(L,\psi,\delta)= \pP\left(\left\{D_{L,\psi}^\ast > (\log
    L)^{-3+3/4}\right\}\cup\left\{D_L^\ast> \delta\right\}\right).
\end{equation*}
The following technical condition will play a key role.

Let $\delta > 0$ and $L_1\geq 3$. We say that $\ctwo(\delta,L_1)$
holds if for all $3\leq L\leq L_1$, all $\psi\in\mathcal{M}_L$, $i=1,2,3,4$,
\begin{equation*}
b_i(L,\psi,\delta)\leq\frac{1}{4}\exp\left(-\left((3+i)/4\right)(\log
  L)^2\right).
\end{equation*}
Notice that if $\ctwo(\delta,L_1)$ is satisfied, then for any $3\leq L\leq L_1$ and any
$\psi\in\mathcal{M}_L$,
\begin{equation*}
\pP\left(\left\{D_{L,\psi}^\ast > (\log L)^{-9}\right\}\cup\left\{D_L^\ast >
\delta\right\}\right) \leq \exp\left(-(\log L)^2\right).
\end{equation*}
We can now formulate our results. Proposition~\ref{def:main-prop},
Theorem~\ref{def:main-theorem} and Corollary~\ref{def:main-transience} are
in a similar form already present in~\cite{BZ}. See also our discussion in the
introduction.

The main technical statement is
\begin{proposition}
  \label{def:main-prop}
  Assume $\czero$. There exists $\delta_0 > 0$ such that for any $\delta\in
  (0,\delta_0]$, there exists $\e_0 = \e_0(\delta) > 0$ with the
  following property: If $\e \leq \e_0$ and $\cone(\e)$ holds, then
  \begin{enumerate}
  \item There exists $L_0 = L_0(\delta)$ such that for $L_1\geq L_0$,
    \begin{equation*}
      \ctwo\left(\delta,L_1\right) \Rightarrow \ctwo\left(\delta,L_1(\log L_1)^2\right).  
    \end{equation*}
  \item There exist sequences $l_n$, $m_n\rightarrow \infty$ such that if
    $L_1\geq l_n$ and $L_1\leq L \leq L_1(\log L_1)^2$, $m\geq m_n$,
    \begin{equation*}
      \ctwo(\delta,L_1) \Rightarrow \left(\pP\left(D_{L,m}^\ast >
        1/n\right) \leq \exp\left(-(\log L)^2\right)\right).
    \end{equation*}
  \end{enumerate}
\end{proposition}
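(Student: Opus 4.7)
The plan is a renormalization induction on scales. Fix $L \in (L_1, L_1(\log L_1)^2]$ and $\psi \in \mathcal{M}_L$; the goal is to verify
\begin{equation*}
b_i(L,\psi,\delta)\leq \tfrac{1}{4}\exp\bigl(-((3+i)/4)(\log L)^2\bigr) \qquad (i=1,\ldots,4).
\end{equation*}
Pick an intermediate scale $\ell$ with $\ell \leq L_1$ and $L/\ell \leq (\log L)^2$, so that the hypothesis $\ctwo(\delta,L_1)$ applies at scale $\ell$. Introduce the coarse graining of the walk on $V_L$ at scale $\ell$ (cf.\ Section~\ref{smoothbad-cgs}), producing a coarse-grained RWRE kernel $\Pt$ and its SRW analogue $\ph$ on $V_L$ together with their Green's functions.

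Call a coarse-grained site $x \in V_L$ \emph{bad} if the local smoothed exit-law difference at scale $\ell$ exceeds $(\log\ell)^{-9}$; by $\ctwo(\delta,L_1)$ applied at scale $\ell$, each site is bad with probability at most $\exp(-(\log\ell)^2)$. Build a \emph{goodified} kernel $\Pho$ by setting $\Pho(x,\cdot)=\ph(x,\cdot)$ on bad sites and $\Pho=\Pt$ elsewhere, and invoke Lemma~\ref{def:superlemma} to bound its Green's function $\Gho$ polynomially in $L$. A perturbation expansion for the iterated exit from $V_L$ then expresses $(\Pi_L-\pi_L)\ph_\psi$ as $\Gho$ applied to the local differences $(\Pt-\ph)\ph_\psi$ together with an extra term supported on bad sites. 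The good contribution is a deterministic sum of small local differences, stratified into the intervals that define $b_1,b_2,b_3$, while the bad contribution is handled by Chebyshev on a suitable moment, exploiting the per-site exponential bound and the i.i.d.\ structure of the environment.

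Summing these stratified contributions produces the four inequalities for $b_i(L,\psi,\delta)$; the exponents $(3+i)/4$ carry over from $\ctwo(\delta,L_1)$ at scale $\ell$, and the loss $(\log L)^2-(\log\ell)^2$ is absorbed by the prefactor $\tfrac{1}{4}$ together with the independence gain for bad sites. Part (ii) follows from (i) by a mollification argument: with $m_n$ growing slowly enough, the smoothing at scale $m$ removes the fine-scale oscillations of $\Pi_L-\pi_L$ that keep $D^\ast_L$ bounded away from zero, bringing $D^\ast_{L,m}$ below $1/n$ off the exceptional set already controlled by (i). The hardest step is controlling the good and bad contributions simultaneously and \emph{uniformly} over all $\psi\in\mathcal{M}_L$: this is precisely where the $C^4$-regularity built into $\mathcal{M}_L$ and the polynomial Green's-function bound of Lemma~\ref{def:superlemma} for the goodified kernel are decisive, as crude higher-order Green's-function expansions would no longer suffice.
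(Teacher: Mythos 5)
Your skeleton (multiscale induction, goodification, perturbation expansion, Green's function bounds from Lemma~\ref{def:superlemma}) matches the paper's strategy, but the proposal misses the steps that actually make the induction close, so there is a genuine gap. First, the ``good contribution'' is neither deterministic nor automatically small: on a good environment the crude bound is $|\Gh\Delta\phi|\leq \Gh(x,V_L)\sup_y|\Delta\phi(y,z)|\leq C(\log L)^6\cdot(\log L)^{-12}L^{-d}=(\log L)^{-6}L^{-d}$, which falls short of the required $(\log L)^{-(9+1/5)}L^{-d}$ by a power of $\log L$, so the bound $(\log L)^{-9}$ cannot be reproduced at the next scale this way. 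The paper's Lemma~\ref{def:lemma-goodpart} overcomes this with two ingredients your proposal never invokes: the isotropy hypothesis $\czero$, used through Proposition~\ref{def:est-isometry} to show that the \emph{expectation} of the bulk contribution nearly vanishes (gradient term killed by reflection symmetry, Hessian term reduced to the Laplacian of a harmonic function), and a concentration step (decomposition into widely separated blocks $W_{\bj}$, independence, Hoeffding) to control the fluctuations. Since $\czero$ appears nowhere in your argument, the estimate you need simply is not reachable by the deterministic stratification you describe. Relatedly, the exponents $(3+i)/4$ do not ``carry over'' automatically: they are propagated through the four-level badness hierarchy of Section~\ref{s1}, where Lemma~\ref{def:lemma-badpart} shows the level of badness drops by one per scale and $\pP(\onebadj)$ for $j>i$ is paid for by the stronger exponent $(3+j)/4$ from the hypothesis at the lower scale; your single good/bad dichotomy with ``Chebyshev on a suitable moment'' does not reproduce this bookkeeping.

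Second, condition $\ctwo$ also contains the non-smoothed event $\{D_L^\ast>\delta\}$ (it enters $b_4$), and no amount of smoothed estimation controls it. The paper needs a separate argument (Lemma~\ref{def:nonsmv-lemma1}) with a coarse graining of \emph{constant} boundary parameter $r$, the sparse-bad-boundary-layer estimate of Lemma~\ref{def:smoothbad-lemmabbad}, capacity bounds from Corollary~\ref{def:super-cor}, and—crucially—the choice of $\e_0=\e_0(\delta)$ to make the single-step difference in the innermost layer $\Lambda_0$ small; this is exactly why the proposition's $\e_0$ depends on $\delta$, a dependence your proof never produces. Finally, part (ii) is not a mollification corollary of part (i): convolving with a kernel of constant radius $m$ does not by itself push a total-variation distance of order $\delta$ below $1/n$; one must show the residual difference lives in the boundary layer and is killed by the last smoothing step, which is the content of the separate Lemma~\ref{def:nonsmv-lemma2} (with $l_n$, $m_n$ chosen from the invariance principle and the $\log l/l$ difference estimate). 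As written, your argument would not yield either the propagation of $b_4$ or part (ii).
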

As a direct consequence, we get
\begin{theorem}[$d\geq 3$]
Assume $\czero$. 
\label{def:main-theorem}
    \begin{enumerate}
    \item There exists $\delta_0>0$ such that for any $\delta\in
      (0,\delta_0]$, there exists $\e_0 = \e_0(\delta) > 0$ with the
      following property: If $\e\leq \e_0$ and $\cone(\e)$ holds, then for
      all $L\geq 1$,
      \begin{equation*}
        \pP\left(D_L^\ast>
          \delta\right) \leq \exp\left(-(\log L)^2\right).
      \end{equation*}
    \item There exists $\e_0 > 0$ such that if $\cone(\e)$ is
      satisfied for some $\e\leq \e_0$, then for any $\eta > 0$, we can find
      $L_\eta$ and a smoothing radius $m_\eta$ such that
      for $m\geq m_\eta$, $L\geq L_\eta$,
      \begin{equation*}
        \pP\left(D_{L,m}^\ast
          > \eta\right) \leq \exp\left(-(\log L)^2\right).
      \end{equation*}
    \end{enumerate}
\end{theorem}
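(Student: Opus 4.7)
The plan is to deduce the theorem from Proposition~\ref{def:main-prop} by bootstrapping $\ctwo(\delta,L)$ along a slowly growing sequence of scales. Summing the four defining inequalities of $\ctwo(\delta,L_1)$ already yields, for every $3\leq L\leq L_1$ and every $\psi\in\mathcal{M}_L$, the estimate
\begin{equation*}
\pP\!\left(\{D_{L,\psi}^\ast>(\log L)^{-9}\}\cup\{D_L^\ast>\delta\}\right)\leq \exp(-(\log L)^2),
\end{equation*}
so proving part~(i) reduces to showing that $\ctwo(\delta,L)$ holds for all $L$ large enough, and part~(ii) will follow by feeding the same $\ctwo(\delta,L_1)$ into Proposition~\ref{def:main-prop}(ii).

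\textbf{Base case.} Fix $\delta\in(0,\delta_0]$ and let $L_0=L_0(\delta)$ be as in Proposition~\ref{def:main-prop}(i). For $3\leq L\leq L_0$ every ball appearing in the definition of $D_L^\ast$, or of $\ph_\psi$ for $\psi\in\mathcal{M}_L$, has radius at most $5L_0$, a constant depending only on $\delta$ and the dimension. Under $\cone(\e)$ the kernel $p_\omega$ differs from $p^{\mbox{\scriptsize RW}}$ in every entry by at most $\e$ and is uniformly elliptic, so a standard one-step telescoping comparison of $\Pi_{V_t(x)}$ and $\pi_{V_t(x)}$ on balls of radius $\leq 5L_0$ yields a deterministic bound
\begin{equation*}
\sup_{3\leq L\leq L_0}\,\sup_{\psi\in\mathcal{M}_L}\bigl(D_L^\ast\vee D_{L,\psi}^\ast\bigr)\leq C(L_0)\,\e
\end{equation*}
valid for every $\omega$. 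Choosing $\e_0=\e_0(\delta)$ so small that $C(L_0)\e_0\leq\min\bigl(\delta,(\log L_0)^{-9}\bigr)$ makes all events defining $b_i(L,\psi,\delta)$ empty for $3\leq L\leq L_0$, so $\ctwo(\delta,L_0)$ holds.

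\textbf{Iteration and both conclusions.} By repeatedly applying Proposition~\ref{def:main-prop}(i), $\ctwo(\delta,L_k)$ holds along the sequence $L_{k+1}=L_k(\log L_k)^2$, which tends to infinity. For any $L\geq 3$ choose the smallest $k$ with $L_k\geq L$; then $\ctwo(\delta,L_k)$ delivers the bound in part~(i) at $L$, while for $L\in\{1,2\}$ the bound is trivial since $\exp(-(\log L)^2)\geq 1$. For part~(ii), given $\eta>0$ pick $n$ with $1/n\leq\eta$, set $m_\eta=m_n$ and $L_\eta=\max(l_n,L_0)$ with $l_n,m_n$ from Proposition~\ref{def:main-prop}(ii); for $L\geq L_\eta$, pick $k$ with $L_k\leq L\leq L_k(\log L_k)^2$ (so $L_k\geq l_n$ by the choice of $L_\eta$), and apply Proposition~\ref{def:main-prop}(ii) with $L_1=L_k$ and any $m\geq m_\eta$.

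\textbf{Main obstacle.} The one step that does not reduce to quoting Proposition~\ref{def:main-prop} is the base-case perturbation bound. The subtlety is that $D_L^\ast$ and $D_{L,\psi}^\ast$ are suprema over starting points in $V_{L/5}$ and, for the latter, involve an outer smoothing against $\ph_\psi$ on a scale comparable to $L$ with $\psi$ only $C^4$-controlled; the estimate must therefore be uniform in $x$, in $\psi\in\mathcal{M}_L$, and in $\omega$. For $L\leq L_0$ of constant order this is a routine consequence of uniform ellipticity, but it is precisely this step that forces $\e_0$ to depend on $\delta$ through $L_0$; the multiscale proposition then propagates initial smallness through all scales with no further dependence on $\e$.
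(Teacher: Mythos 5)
Your proposal is correct and takes essentially the paper's own route: the paper treats the theorem as a direct consequence of Proposition~\ref{def:main-prop} together with the observation (made in the remark after the theorem) that for any $\delta$ and any initial scale one can force $\cone(\e)\Rightarrow\ctwo(\delta,L_1)$ by taking $\e$ small — which is exactly your deterministic perturbation bound on bounded scales — after which iterating Proposition~\ref{def:main-prop}(i) and summing the $b_i$ inequalities gives part (i), and Proposition~\ref{def:main-prop}(ii) gives part (ii). Two small patches are needed in your write-up: for $L=2$ the bound is not trivial since $\exp(-(\log 2)^2)<1$, but your base-case deterministic estimate (probability zero for $\e\leq\e_0(\delta)$) covers it; and in part (ii) the parenthetical claim $L_k\geq l_n$ need not hold when $L$ lies just above $l_n$ — the clean fix is to apply Proposition~\ref{def:main-prop}(ii) with $L_1=L$ itself, which is legitimate because $\ctwo(\delta,L')$ holds for every $L'\geq 3$ (the condition is monotone in its second argument and you have it along a sequence tending to infinity), or alternatively to enlarge $L_\eta$ to $l_n(\log l_n)^2$.
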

\begin{remark}
  (i) In particular, part (i) of Proposition~\ref{def:main-prop} tells us
  that if $\delta \leq \delta_0$, then $\ctwo(\delta,L)$ holds for all
  large $L$, provided $\cone(\e)$ is fulfilled for $\e$ small enough,
  depending only on $\delta$ (and the dimension). This follows immediately from the fact that
  given any $\delta > 0$ and any $L_1\geq 3$, we can always find
  $\e > 0$ such that $\cone(\e)$ implies $\ctwo(\delta,L_1)$.\\
  (ii) As an easy consequence of part (ii) of the theorem, if
  one increases the smoothing scale with $L$, i.e. if $m = m_L \uparrow
  \infty$ (arbitrary slowly) as $L\rightarrow \infty$, then
\begin{equation*}
D_{L,m_L}^\ast\rightarrow 0\quad\pP\mbox{-almost surely}.
\end{equation*}
(iii) One could define the smoothing kernel $\ph_{\psi}$
differently. However, our particular form is useful for the induction
procedure.
\end{remark}
Our methods allow us to compare the exit measures in a more local way. For
positive $t$ and $z\in\partial V_L$, let $W_t(z)=V_t(z)\cap \partial V_L$. Then
$W_t(z)$ contains on the order of $t^{d-1}$ points. The center 
$z\in\partial V_L$ will play no particular role, so we drop it from the notation.
If we choose our parameters according to
Theorem~\ref{def:main-theorem} (i), we have good control over
$\Pi_L(x,W_t)$ in terms of $\pi_L(x,W_t)$, provided $x$ has a distance of
order $L$ from the boundary and $t$ is sufficiently large. For the
statement of the following theorem, we pick $\delta\in (0,\delta_0]$ and
$L_0(\delta)$ according to Proposition~\ref{def:main-prop}, and choose the
perturbation $\e\leq \e_0$ small enough such that $\cone(\e)$ implies
$\ctwo(\delta,L_0)$ (and then $\ctwo(\delta,L)$ for all $L\geq L_0$,
according to the proposition).

\begin{theorem}
\label{local-thm-exitmeas}
Assume $\czero$. In the setting just described, if $\cone(\e)$ is
fulfilled, then for $L\geq L_0$, there exists an event $A_L\in\pF$ with
$\pP(A^c_L) \leq \exp(-(1/2)(\log L)^2)$ such that on $A_L$,
the following holds true. If $0<\eta <1$ and $x\in V_{\eta L}$, then
\begin{enumerate}
\item For $t\geq L/(\log L)^{15}$ and every set $W_t$ as above, there
  exists $C=C(\eta)$ with
\begin{equation*}
\Pi_L(x,W_t) \leq C\pi_L(x,W_t).
\end{equation*}
\item For $t\geq L/(\log L)^{6}$,
\begin{equation*}
  \Pi_L(x,W_t) = \pi_L(x,W_t)\left(1+O\left((\log L)^{-5/2}\right)\right).
\end{equation*}
Here, the constant in the $O$-notation depends only on $d$ and $\eta$.
\end{enumerate}
\end{theorem}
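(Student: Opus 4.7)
My plan is to combine the smoothed total-variation bound $D_{L,\psi}^\ast\leq(\log L)^{-9}$ supplied by $\ctwo(\delta,L)$, the unsmoothed bound $D_L^\ast\leq\delta$, and the hitting-probability estimates of Lemma~\ref{def:hittingprob}. Because Proposition~\ref{def:main-prop} propagates $\ctwo$ across scales, I would take $A_L$ to be the intersection of the events $\{D_{L_k,\psi}^\ast\leq(\log L_k)^{-9}\}\cap\{D_{L_k}^\ast\leq\delta\}$ over a finite geometric sequence of scales $L_0=L$, $L_k\asymp L(\log L)^{-k}$, down to $L_K\asymp L/(\log L)^{15}$, and over a dense family of $\psi\in\mathcal{M}_{L_k}$. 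A union bound over the $O(\log L)$ scales and a straightforward computation of $(\log L-K\log\log L)^2$ yield $\pP(A_L^c)\leq\exp(-(1/2)(\log L)^2)$.

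For part~(i) I would first dispose of $t\asymp L$ directly from $D_L^\ast\leq\delta$ and then proceed by a mesoscopic stopping decomposition. Setting $L'=L-s$ with $s\asymp t$, write
\begin{equation*}
\Pi_L(x,W_t)=\sum_{y\in\partial V_{L'}}\Pi_{L'}(x,y)\,\Pi_L(y,W_t),
\end{equation*}
and similarly for $\pi_L$. For $y$ at mesoscopic distance from $W_t$ the inner probability $\Pi_L(y,W_t)$ is comparable to $\pi_L(y,W_t)$ uniformly on $A_L$ via Lemma~\ref{def:hittingprob}, while tails with $y$ far from $W_t$ contribute only $O((t/L)^{d-1})$. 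The outer mass placed by $\Pi_{L'}$ on the $y$-values near $W_t$ is compared to that of $\pi_{L'}$ through the smoothed bound at scale $L'$: testing the signed measure $(\Pi_{L'}-\pi_{L'})\ph_\psi$ against a smoothed indicator of the relevant cap on $\partial V_{L'}$ gives the comparison up to the additive error $(\log L)^{-9}$, which is then absorbed by iterating through the scales $L_k$ along the multi-scale control built into $A_L$. The net effect is $\Pi_L(x,W_t)\leq C(\eta)\pi_L(x,W_t)$.

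For part~(ii) the same decomposition is used with $t\geq L/(\log L)^6$ and the finer choice $s\asymp L/(\log L)^{17/2}$, so that $s/t\asymp(\log L)^{-5/2}$. With this scale match, both the inner Harnack-type comparison coming from Lemma~\ref{def:hittingprob} and the outer smoothed-TV bound at scale $L'$ are pushed to a relative error of size $(\log L)^{-5/2}$, and $\pi_L(x,W_t)\gtrsim(\log L)^{-6(d-1)}$ is large enough that the additive $(\log L)^{-9}$ contribution is of strictly lower order; together these give the multiplicative estimate $1+O((\log L)^{-5/2})$.

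The main obstacle is the absorption of the additive smoothed-TV error: because $\ph_\psi$ smooths on the macroscopic scale $L$, the bound $\|(\Pi_L-\pi_L)\ph_\psi(x,\cdot)\|_1\leq(\log L)^{-9}$ is genuinely insensitive to caps of linear size below $L$ and, by itself, only compares $\Pi_L(x,W_t)$ to $\pi_L(x,W_t)$ up to a large polylogarithmic factor once $(t/L)^{d-1}\ll(\log L)^{-9}$. The multi-scale bootstrap through the $L_k$, coupled with the hitting-probability estimates, is what converts the macroscopic smoothed comparison into a local one; matching the errors of the inner Harnack step and the outer smoothed-TV step at the mesoscale $s$, and controlling the tail contributions from $y\in\partial V_{L'}$ far from $W_t$, is the delicate technical point.
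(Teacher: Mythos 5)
There is a genuine gap, and it sits exactly where you flag "the delicate technical point": your mechanism for converting macroscopic information into the local comparison does not close. First, your mesoscopic decomposition over $\partial V_{L'}$ requires the inner comparison $\Pi_L(y,W_t)\asymp \pi_L(y,W_t)$ uniformly for $y$ at distance $\asymp t$ from the cap, and you invoke Lemma~\ref{def:hittingprob} for this; but that lemma (like Lemma~\ref{def:lemmalawler}) concerns simple random walk only. For the RWRE this inner comparison is essentially the statement being proved, and it cannot be obtained by recursing through your scales $L_k$: those are balls centered at the origin, while the inner problem is an exit problem from the \emph{same} large ball $V_L$ started near its boundary, so neither your event $A_L$ nor the propagation of $\ctwo$ via Proposition~\ref{def:main-prop} applies to it. Second, even granting the outer step, testing $(\Pi_{L'}-\pi_{L'})\ph_\psi$ against a smoothed cap indicator yields an \emph{additive} error of order $(\log L)^{-9}$, whereas the target quantity $\pi_L(x,W_t)\asymp (t/L)^{d-1}$ can be as small as $(\log L)^{-15(d-1)}$; your proposed absorption "by iterating through the scales $L_k$" is not an argument, because no scale in your hierarchy produces events localized at the boundary cap, and the error does not contract under the decomposition.

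For comparison, the paper's proof takes $A_L=\good$ (no bad points for the coarse graining with $r=r_L$, hence control of exit laws from balls centered at \emph{every} point of $V_L$, refined down to radius $\asymp r_L$ near $\partial V_L$), and then part (i) is almost immediate: on $\good$ one has $\Pi_L(x,\cdot)=\Ghg_{L,r_L}(x,\cdot)$ on $\partial V_L$ and $\Ghg_{L,r_L}\preceq C\Gamma_{L,r_L}$ by Lemma~\ref{def:superlemma}, giving $\Pi_L(x,W_t)\leq C(t/L)^{d-1}$, which is compared with the SRW lower bound of Lemma~\ref{def:lemmalawler}~(i). This is why the threshold $t\geq L/(\log L)^{15}$ matches the boundary refinement $r_L$. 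For part (ii) the paper does use a smoothing scale $l=L/(\log L)^{17/2}$ (so your heuristic $s/t\asymp(\log L)^{-5/2}$ is the right scale count), but the comparison is a sandwich $\pi(x,W_{t-6l})-(\pi-\Pi)\ph_\psi(x,U_l(W_{t-6l}))\leq\Pi(x,W_t)\leq\pi(x,W_{t+6l})+(\Pi-\pi)\ph_\psi(x,U_l(W_t))$, followed by the perturbation expansion $(\pi-\Pi)\ph_\psi=\Ghg\Delta\pi\ph_\psi$ estimated against $\Gamma$ (bulk part via the argument of Proposition~\ref{def:super-keyest}, boundary part via Lemma~\ref{def:hittingprob}, Lemma~\ref{def:hittingprob-technical} and the kernel difference estimates of Lemma~\ref{def:app-kernelest}), all of which produce errors \emph{proportional to} $\pi(x,W_t)$ rather than additive ones. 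If you want to salvage your outline, you would need (a) an event $A_L$ built from local goodness at all points of $V_L$ with boundary refinement, and (b) a substitute for the Green's function bound $\Ghg\preceq C\Gamma$ that turns local goodness into a bound on the RWRE mass reaching the cap; without these, both the inner comparison and the error absorption remain unproved.
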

 From Proposition~\ref{def:main-prop}, we also obtain transience of the
 RWRE.
 \begin{corollary}[Transience]
   \label{def:main-transience}
   Assume $\czero$. There exist $\e_0$, $\rho> 0$ such that if $\cone(\e)$ is satisfied for
   some $\e\leq \e_0$, then for $\pP$-almost all $\omega\in\Omega$, there
   exists $m_0=m_0(\omega)\in\mathbb{N}$ with the following property: For integers $m\geq
   m_0$ and $k\geq 1$, 
\begin{equation}
\label{eq:main-transience-eq}
\sup_{x : |x| \geq
  \rho^{m+k}}\Prw_{x,\omega}\left(T_{V_{\rho^m}}<\infty\right)\leq \left(2/3\right)^k.
\end{equation}
In particular, the RWRE $(X_n)_{n\geq 0}$ is transient.
 \end{corollary}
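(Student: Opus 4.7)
The argument iterates Theorem~\ref{def:main-theorem}(i) along a geometric sequence of scales and feeds the resulting exit-measure control into a telescoping hitting-probability bound. First, one fixes $\rho$ large enough (depending only on $d\geq 3$) that the classical estimate $\Prw_y(T_{V_r}<\infty)\leq C_d(r/|y|)^{d-2}$ for simple random walk in $d\geq 3$ produces, at each one-scale transition, a baseline hitting probability strictly below $1/3$, and in addition $\rho\geq 5$ so that points at scale $\rho^n$ lie inside $V_{\rho^{n+1}/5}$. Once $\rho$ is chosen, one selects $\delta$ and then $\e_0$ via Theorem~\ref{def:main-theorem}(i) small enough that the perturbative correction encountered below is also at most $1/3$. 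For any $\omega$ satisfying $\cone(\e)$ with $\e\leq\e_0$, the summability $\sum_n \exp(-n^2(\log\rho)^2)<\infty$ and Borel--Cantelli then yield $\pP$-a.s.\ an integer $m_0(\omega)$ with $D_{\rho^n}^\ast(\omega)\leq\delta$ for every $n\geq m_0$; equivalently, for every such $n$ and every $y\in V_{\rho^n/5}$, $\Pi_{V_{\rho^n}}(y,\cdot)$ lies within total variation $\delta$ of $\pi_{V_{\rho^n}}(y,\cdot)$.

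Next, a telescope over scales reduces~\eqref{eq:main-transience-eq} to a single-scale estimate. Because the walk is nearest-neighbor, reaching $V_{\rho^m}$ from any $|x|\geq\rho^{m+k}$ forces the trajectory through each intermediate shell, and at the first hitting time $T_{V_{\rho^{m+j}}}$ the walk lies in $\{|z|\in(\rho^{m+j}-1,\rho^{m+j}]\}$. Set
\begin{equation*}
\beta_k(\omega)=\sup\bigl\{\Prw_{y,\omega}(T_{V_{\rho^m}}<\infty):|y|\in(\rho^{m+k}-1,\rho^{m+k}]\bigr\},
\end{equation*}
\begin{equation*}
\alpha_k(\omega)=\sup\bigl\{\Prw_{y,\omega}(T_{V_{\rho^{m+k-1}}}<\infty):|y|\in(\rho^{m+k}-1,\rho^{m+k}]\bigr\}.
\end{equation*}
The strong Markov property at $T_{V_{\rho^{m+k-1}}}$ gives $\beta_k\leq\alpha_k\beta_{k-1}$ and hence $\beta_k\leq\prod_{j=1}^k\alpha_j$. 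Since any $x$ with $|x|\geq\rho^{m+k}$ must enter $V_{\rho^{m+k}}$ before reaching $V_{\rho^m}$, the left-hand side of~\eqref{eq:main-transience-eq} is bounded by $\beta_k$, so it suffices to prove $\alpha_j\leq 2/3$ uniformly in $j\geq 1$ for $m\geq m_0$.

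To control a single $\alpha_j$, I examine the exit from the ambient ball $V_{\rho^{m+j+1}}$. Writing $h(z)=\Prw_{z,\omega}(T_{V_{\rho^{m+j-1}}}<\infty)$ and splitting by whether $T_{V_{\rho^{m+j-1}}}$ occurs before or after $\tau_{V_{\rho^{m+j+1}}}$ gives
\begin{equation*}
h(y)\leq\Prw_{y,\omega}\bigl(T_{V_{\rho^{m+j-1}}}<\tau_{V_{\rho^{m+j+1}}}\bigr)+\int h(z)\,\Pi_{V_{\rho^{m+j+1}}}(y,dz).
\end{equation*}
Since $|y|\approx\rho^{m+j}\leq\rho^{m+j+1}/5$, the exit-measure estimate replaces $\Pi$ by $\pi$ at cost $\delta$. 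The inner hitting piece $\Prw_{y,\omega}(T_{V_{\rho^{m+j-1}}}<\tau_{V_{\rho^{m+j+1}}})$ is then compared to its SRW analog by optional stopping applied to the SRW harmonic function $u(z)=\Prw_z(T_{V_{\rho^{m+j-1}}}<\tau_{V_{\rho^{m+j+1}}})$, whose classical potential-theoretic value $u(y)\leq C_d\rho^{-(d-2)}$ furnishes the baseline, while the SRW average of $h$ on $\partial V_{\rho^{m+j+1}}$ is controlled by one further application of the same decomposition.

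The main obstacle is absorbing the perturbation error $\Erw_{y,\omega}\bigl[\sum_{n<T\wedge\tau}(p_\omega-p^{\mbox{\tiny RW}})u(X_n)\bigr]$ produced by the optional-stopping step. A pointwise bound using only $|\omega_z(e)-1/(2d)|\leq\e$ is much too crude: it overshoots by a factor $\rho^{m+j}$, because the expected visit count is of order $\rho^{2(m+j)}$ while $|\nabla u|$ is only of order $\rho^{-(d-2)-(m+j)}$. The required gain comes from the isotropy assumption $\czero$ combined with the coarse-grained Green's-function estimates on goodified environments developed in Section~\ref{super}, in particular Lemma~\ref{def:superlemma}, whose role is precisely to furnish the cancellations reducing this error to $O(\delta)$. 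With this input in hand, one obtains $\alpha_j\leq C_d\rho^{-(d-2)}+O(\delta)\leq 2/3$ uniformly in $j\geq 1$, which is~\eqref{eq:main-transience-eq}. Transience of $(X_n)_{n\geq 0}$ is then immediate: by uniform ellipticity the walk a.s.\ reaches $\{|X_n|\geq\rho^{m+k}\}$ for every $k$, and combining this with~\eqref{eq:main-transience-eq} and a Borel--Cantelli argument over successive such excursions forces every ball $V_{\rho^m}$ to be visited only finitely often, so $|X_n|\to\infty$ a.s.
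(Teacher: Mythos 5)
Your reduction $\beta_k\leq\prod_{j\leq k}\alpha_j$ via the strong Markov property is sound, but the heart of the corollary --- the uniform single-scale bound $\alpha_j\leq 2/3$ --- is not actually proved, for two reasons. First, the decomposition of $h(y)$ does not close: the term $\int h(z)\,\Pi_{V_{\rho^{m+j+1}}}(y,dz)$ involves $h$ on $\partial V_{\rho^{m+j+1}}$, i.e.\ the probability of hitting $V_{\rho^{m+j-1}}$ from two scales out, and ``one further application of the same decomposition'' only reproduces quantities of the form $\alpha_{j+1}\alpha_j$; you end up needing $\alpha_{j+1}$ bounded away from $1$ uniformly, which is the same unproved statement one scale up, so the regress never terminates. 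The paper's device for precisely this point is the gambler's-ruin computation at the end of its proof: a one-scale annulus-crossing estimate, $\Prw_{x,\omega}(\tau_{V_{\rho^{n+1}}}<T_{V_{\rho^{n-1}}})\geq 3/5$ uniformly on the shell for all $n\geq m_0$, is converted into the $(2/3)^k$ bound through the difference inequality $h_M(k)\leq \frac{2}{5}h_M(k-1)+\frac{3}{5}h_M(k+1)$, which automatically accounts for arbitrarily many back-and-forth crossings between scales. Second, and more seriously, your bound on the quenched crossing probability $\Prw_{y,\omega}(T_{V_{\rho^{m+j-1}}}<\tau_{V_{\rho^{m+j+1}}})$ is only asserted: you concede that the optional-stopping error $\Erw_{y,\omega}[\sum_{n}(p_\omega-p^{\mbox{\scriptsize RW}})u(X_n)]$ is far too large pointwise and then claim that $\czero$ together with Lemma~\ref{def:superlemma} ``furnishes the cancellations''. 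Those tools do not deliver this. Lemma~\ref{def:superlemma} is an \emph{upper bound} on goodified coarse-grained Green's functions and contains no cancellation; invoking it pathwise requires knowing the environment has no (or controlled) bad points in the relevant annuli at the relevant coarse-graining radii, which your Borel--Cantelli step (only the non-smoothed $D^\ast$ for balls centered at the origin) does not provide; and the isotropy cancellation in the paper (Proposition~\ref{def:est-isometry}) acts under the annealed expectation $\pE$ inside the proof of the smoothed estimates, not pathwise for a fixed $\omega$. In effect the entire difficulty of the corollary has been compressed into that one sentence.

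For comparison, the paper never attempts a quenched harmonic-function comparison. It coarse-grains the walk with smoothing kernels of constant radius $\alpha\rho^n$, uses Proposition~\ref{def:main-prop}\,(i) to control the \emph{smoothed} differences $D_{t,\psi}(x)$ simultaneously for all centers $|x|\leq\rho^{n^{3/2}}$ (the Borel--Cantelli event $A_n$ must include many centers, not just the origin, because the coarse-grained chain moves), compares $M$-step transition kernels of the coarse-grained RWRE and SRW, and extracts the annulus-crossing bound from the local CLT for the coarse-grained simple random walk (Proposition~\ref{def:super-localclt}); the gambler's-ruin step then yields \eqref{eq:main-transience-eq}. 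If you want to salvage your outline, the single-scale crossing estimate should be obtained by this kind of coarse-grained iteration of the already-proven exit-law estimates rather than by optional stopping against an SRW-harmonic function; your final deduction of transience from \eqref{eq:main-transience-eq} is fine.
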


\subsection{Main results on mean sojourn times}
\label{mainresults-meantime}
For the times, we propagate a condition similar to  
$\ctwo(\delta,L)$. In this regard, we first introduce a monotone increasing
function which will limit the normalized
mean sojourn time in the ball. Let $0<\eta<1$, and define $f_\eta:
\mathbb{R}_+\rightarrow \mathbb{R}_+$ by setting
\begin{equation*}
  f_\eta(L)= \frac{\eta}{3}\sum_{k=1}^{\lceil \log L\rceil}k^{-3/2}.
\end{equation*}
Note that
$\eta/3\leq f_\eta(L)< \eta$ and therefore $\lim_{\eta\downarrow 0}\lim_{L\rightarrow\infty}f_\eta(L) = 0$.

Recall that $\Erw_0$ is the expectation with respect to
simple random walk starting at the origin. We say that $\ctime(\eta,L_1)$ holds, if for all $3\leq L \leq L_1$,
\begin{equation*}
\pP\left(\Erw_{0,\omega}\left[\tau_L\right] \notin \left[1-f_\eta(L),\,
      1+f_\eta(L)\right]\cdot\Erw_0\left[\tau_L\right]\right) \leq
L^{-6d}.
\end{equation*}
Our main technical result is
\begin{proposition}
\label{def:times-main-prop}
  Assume $\czero$ and {\bf A2}, and let $0<\eta<1$. There exists
  $\e_0=\e_0(\eta)> 0$ with the following
  property: If $\e \leq \e_0$ and $\cone(\e)$ holds, then
\begin{enumerate}
\item There exists $L_0=L_0(\eta)>0$ such that for $L_1\geq L_0$,
\begin{equation*}
\ctime(\eta,L_1)\Rightarrow \ctime(\eta,L_1(\log L_1)^2).
\end{equation*}
\item
\begin{equation*}
\lim_{L\rightarrow\infty}L^d\,\pP\left(\sup_{x: |x| \leq L^{3}}\sup_{y\in
      V_L(x)}\Erw_{y,\omega}\left[\tau_{V_L(x)}\right] \notin
  [1-\eta,1+\eta]\cdot L^2\right) = 0.
\end{equation*}
\end{enumerate}
\end{proposition}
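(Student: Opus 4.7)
Both parts rest on a coarse graining argument analogous to the one used for exit laws, combined with the induction hypothesis $\ctime(\eta,L_1)$, the exit law control from Theorem~\ref{def:main-theorem}, and Assumption~{\bf A2} to handle atypically large mean exit times. The guiding principle, as stated in the introduction, is that exceptional sojourn times arise from spatially atypical regions; these are controlled by exit measure estimates together with {\bf A2}.

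\textbf{Part (i): inductive step.} Fix $L$ with $L_1\leq L\leq L_1(\log L_1)^2$ and choose a coarse graining scale $\ell$ of order $L/(\log L)^2$, so that $\ell\leq L_1$. Using a suitable field $\psi\in\mathcal{M}_L$, construct a coarse grained RWRE on $V_L$; let $Y_k$ be its position after $k$ CG steps, $N$ the number of CG steps before exit, and $\Delta T_k$ the number of underlying RWRE steps taken in the $k$th CG step. By the strong Markov property,
\begin{equation*}
  \Erw_{0,\omega}[\tau_L] = \sum_{k\geq 0}\Erw_{0,\omega}\bigl[\mathbf{1}_{\{k<N\}}\,\Erw_{Y_k,\omega}[\tau_{V_\ell(Y_k)\cap V_L}]\bigr].
\end{equation*}
For $Y_k$ at distance at least $\ell$ from $\partial V_L$, the induction hypothesis $\ctime(\eta,\ell)$, applied after translation by $Y_k$ (using that $\pP$ is i.i.d.), gives $\Erw_{Y_k,\omega}[\tau_{V_\ell(Y_k)}]=(1+O(f_\eta(\ell)))\Erw_0[\tau_\ell]$ outside a set of probability $\ell^{-6d}$; CG steps close to $\partial V_L$ are treated with truncated balls and contribute only a small fraction of the total time. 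Summing over $k$ and invoking Theorem~\ref{def:main-theorem} to compare $\Erw_{0,\omega}[N]$ with its simple random walk analog yields
\begin{equation*}
  \Erw_{0,\omega}[\tau_L] = (1+O(f_\eta(\ell))+O((\log L)^{-c}))\,\Erw_0[\tau_L],
\end{equation*}
with $c$ chosen large enough that the new error is absorbed by the increment $f_\eta(L)-f_\eta(\ell)\geq \eta/(3\lceil\log L\rceil^{3/2})$. Exceptional environments are handled in two tiers: ``moderate'' deviations of $\Erw_{Y_k,\omega}[\tau_{V_\ell(Y_k)}]$ at any visited $Y_k$ contribute at most $(L/\ell)^d\cdot\ell^{-6d}\ll L^{-6d}$ after a union bound, while environments producing exceptionally large local exit times are ruled out by {\bf A2}, whose $L^{-8d}$ decay survives the same union bound comfortably.

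\textbf{Part (ii): uniform control.} Iterating part (i) from a base scale $L_0$ at which $\ctime(\eta,L_0)$ can be arranged by choosing $\e$ small enough yields $\ctime(\eta,L)$ for all $L\geq L_0$. For each fixed $x$, shift invariance of $\pP$ gives
\begin{equation*}
  \pP\bigl(\Erw_{x,\omega}[\tau_{V_L(x)}]\notin[1-\eta,1+\eta]L^2\bigr)\leq L^{-6d}
\end{equation*}
(using $\Erw_0[\tau_L]=(1+o(1))L^2$ for simple random walk). A union bound over the $O(L^{3d})$ values of $x$ with $|x|\leq L^3$ gives $O(L^{-3d})$, and $L^d$ times this still tends to zero. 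The supremum over $y\in V_L(x)$ is handled by combining the discrete maximum principle (for simple random walk, $\max_y \Erw_y[\tau_{V_L(x)}]=\Erw_x[\tau_{V_L(x)}]$) with the local exit measure comparison of Theorem~\ref{local-thm-exitmeas}, which lets one compare $\Erw_{y,\omega}[\tau_{V_L(x)}]$ to the simple random walk value $\Erw_y[\tau_{V_L(x)}]$ uniformly in $y$, at an additional probability cost still absorbed by $L^{-6d}$.

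\textbf{Main obstacle.} The delicate point is the error accounting: the per-induction-step budget $f_\eta(L)-f_\eta(\ell)$ is only of order $\log\log L/(\log L)^{3/2}$, so every contribution --- from the coarse graining error in the exit laws, from ordinarily deviant sub-environments along the path, and from boundary CG steps --- must be strictly smaller than this. The bookkeeping forces the $L^{-6d}$ probability in $\ctime$ and the $L^{-8d}$ tail in {\bf A2} to be used in tandem: without {\bf A2} one cannot exclude isolated strong traps whose mean exit time vastly exceeds $L^2$, and without the $L^{-6d}$ factor the inductive union bounds would not close. Orchestrating these bounds so that they are compatible with the exit-law inputs from Theorem~\ref{def:main-theorem} (and with the particular choice of coarse graining scale $\ell$) is the core technical challenge.
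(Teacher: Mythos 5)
Your part (i) has the right architecture — the coarse-grained decomposition of $\Erw_{0,\omega}[\tau_L]$ into local mean times, induction via $\ctime$, Assumption {\bf A2} to exclude strong traps, and union bounds trading $\ell^{-6d}$ against $L^{-6d}$ — and this is indeed the skeleton of the paper's proof (Lemma~\ref{times-keylemma} gives exactly your representation, in the form $\Erw_{x,\omega}[\tau_L]=\Gh_{L,r}\Lambda_L(x)$). But the step you dispose of in one sentence, ``invoking Theorem~\ref{def:main-theorem} to compare $\Erw_{0,\omega}[N]$ with its simple random walk analog,'' is precisely where the proof lives, and as stated it does not work. Matching the expected number of CG steps (or the exit law from $V_L$) does not match the time, because the local mean times $\Lambda_L(y)$ are position dependent: they vary by order $Ls_L$ across a single CG step and collapse near $\partial V_L$, so one must compare the full occupation measures $\Gh(0,\cdot)$ and $\gh(0,\cdot)$ weighted by $\Lambda_L$. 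A term-by-term comparison using the unsmoothed $\delta$-closeness of one-step kernels gives an error of order $\delta\,(\log L)^{6}\sup_y\Lambda_L(y)$, which dwarfs the available budget $f_\eta(L)-f_\eta\bigl((\log L)^{-3}L\bigr)\asymp(\log\log L)(\log L)^{-3/2}$. The paper closes this by expanding $\Gh\Lambda_L=\gh\Lambda_L+\Gh\Delta\gh\Lambda_L$ and splitting $\Delta\gh=\Delta\ph\gh+\Delta$: the first piece is beaten by combining the smoothed estimate $\|\Delta\ph(v,\cdot)\|_1\le(\log L)^{-9}$ at space-good points with the $\ell^1$-difference estimate for the coarse-grained Green's function (Lemma~\ref{def:super-greendifference}) and the a priori bound $\Lambda_L\le(\log L)^{-2}L^2$ supplied by {\bf A2} (Lemma~\ref{def:times-lemmanottoobadt}); the second piece requires the spatial smoothness of $\Lambda_L$ itself (Lemma~\ref{def:times-auxillemma1}), which is what forces the two-level ``space-good/time-good'' notion controlling the scale below — your single-level induction hypothesis applied along the CG path does not yield it. Without these ingredients (Lemmata~\ref{def:times-auxillemma1}--\ref{def:times-mainlemma}) the bookkeeping you yourself identify as the main obstacle does not close.

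In part (ii) there is a concrete error: Theorem~\ref{local-thm-exitmeas} compares exit \emph{laws}, which carry no information whatsoever about sojourn \emph{times} (this is exactly why {\bf A2} is needed in the first place), so it cannot be used to transfer the simple random walk maximum principle to $\Erw_{y,\omega}[\tau_{V_L(x)}]$; nor does the quenched walk obey such a maximum principle. The correct route — and what the paper's short argument rests on — is that the deterministic estimate on the good event is uniform in the starting point: Lemma~\ref{def:times-auxillemma2} is stated as a supremum over $x\in V_L$, and the $\gh\Lambda_L$ analysis of Lemma~\ref{def:times-mainlemma} runs from any starting point, so on $\goods\cap\onebadt\cap\nottoobadt$ one gets $\sup_{y\in V_L(x)}\Erw_{y,\omega}[\tau_{V_L(x)}]\in[1-\eta,1+\eta]\cdot L^2$ directly; a union bound over the $O(L^{3d})$ centers, each failing with probability at most $L^{-6d}$, then gives the $L^{-2d}$ decay that beats the factor $L^d$. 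Your lower-bound treatment (evaluating at $y=x$ and translating) is fine, but the handling of the supremum over $y$ must be replaced as above.
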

By Borel-Cantelli and the Markov property, we immediately have
\begin{corollary}[Quenched moments]
 \label{def:times-cormoments}
 In the framework of Proposition~\ref{def:times-main-prop}, for $k\in \mathbb{N}$ and
 $\pP$-almost all  $\omega\in\Omega$,
\begin{equation*}
 \lim_{L\rightarrow\infty}\left(\sup_{x: |x| \leq L^{3}}\sup_{y\in V_L(x)}\Erw_{y,\omega}\left[\tau^k_{V_L(x)}\right]/L^{2k} \right)\leq 2^kk!\,.
\end{equation*}
\end{corollary}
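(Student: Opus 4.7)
The plan is a two-step quenched argument. First I would convert the summable bound on the first moment from Proposition~\ref{def:times-main-prop}(ii) into an almost sure statement via Borel-Cantelli. Then, for each fixed environment, I would upgrade it to a bound on higher moments using the strong Markov property and a Green's function identity.

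For the first step, fix $\eta'\in(0,1)$ and set $M_L(\omega) := \sup_{|x|\leq L^3}\sup_{y\in V_L(x)}\Erw_{y,\omega}[\tau_{V_L(x)}]$. Applying Proposition~\ref{def:times-main-prop}(ii) with $\eta'$ in place of $\eta$ and $\e\leq\e_0(\eta')$ yields $\pP(M_L > (1+\eta')L^2) = o(L^{-d})$. Since $d\geq 3$, these probabilities are summable in $L\in\mathbb{N}$, so by the first Borel-Cantelli lemma, $\pP$-a.s.\ $M_L(\omega)\leq (1+\eta')L^2$ for all $L\geq L_0(\omega)$.

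For the moment bootstrap, fix $\omega$ and $x$ with $|x|\leq L^3$, write $V=V_L(x)$, $\tau=\tau_V$, and let $G_V(y,z) = \sum_{n\geq 0}\Prw_{y,\omega}(X_n=z,\,\tau>n)$ be the Green's kernel of the walk killed on leaving $V$. Writing the falling factorial $\tau^{(k)} := \tau(\tau-1)\cdots(\tau-k+1)$ as $k!$ times the number of ordered $k$-tuples in $\{0,\ldots,\tau-1\}$ and applying the strong Markov property at each coordinate gives
\[
\sup_{y\in V}\Erw_{y,\omega}\!\left[\tau^{(k)}\right] \leq k!\,\sup_{y\in V}G_V^k\mathbf{1}(y)\leq k!\,M_L(\omega)^k,
\]
where the second inequality comes from iterating $\sup_{y\in V}G_V\mathbf{1}(y) = \sup_{y\in V}\Erw_{y,\omega}[\tau] \leq M_L(\omega)$. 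Since $\tau^k-\tau^{(k)}$ is a polynomial in $\tau$ of degree at most $k-1$ (explicitly, $\tau^k = \sum_{j=1}^k S(k,j)\tau^{(j)}$ with $S(k,j)$ the Stirling numbers of the second kind), taking expectations yields
\[
\sup_{y\in V}\Erw_{y,\omega}\!\left[\tau^k\right]\leq k!\,M_L(\omega)^k + O_k\!\left(M_L(\omega)^{k-1}\right).
\]

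Combining both steps, on a full $\pP$-measure set and for $L\geq L_0(\omega)$,
\[
\frac{\sup_{|x|\leq L^3,\,y\in V_L(x)}\Erw_{y,\omega}[\tau_{V_L(x)}^k]}{L^{2k}}\leq k!\,(1+\eta')^k + O_k(L^{-2}),
\]
and since $\eta'<1$, the right-hand side is bounded above by $2^k k! + o(1)$ as $L\to\infty$, giving the claim. The only genuinely nontrivial ingredient is the sharp $k!\,M_L^k$ bound on $\Erw_{y,\omega}[\tau^{(k)}]$ via the Green's operator iteration; the Borel-Cantelli step and the passage from falling to ordinary moments are routine bookkeeping.
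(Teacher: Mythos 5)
Your argument is correct and essentially the paper's: Borel--Cantelli applied to Proposition~\ref{def:times-main-prop}~(ii) gives the almost sure first-moment bound, and your Green's-operator iteration $\Erw_{y,\omega}[\tau^{(k)}]\leq k!\,G_V^k\mathbf{1}(y)\leq k!\,M_L^k$ is the same computation as the paper's induction over ordered times $0\leq l_1\leq\cdots\leq l_k$ via the strong Markov property (the paper bounds $\tau^k$ directly with $\leq$-ordered tuples, so your detour through falling factorials and Stirling numbers is unnecessary but harmless). One small remark: there is no need to re-invoke Proposition~\ref{def:times-main-prop}~(ii) with a new $\eta'$ (the framework only guarantees $\e\leq\e_0(\eta)$ for the given $\eta$); using the framework's $\eta<1$ directly already gives $(1+\eta)^k\leq 2^k$, which is exactly how the paper reaches $2^kk!$.
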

The bounds on the quenched moments for $k=2$ are useful to prove
\begin{theorem}
\label{def:times-thm2}
Assume $\czero$ and {\bf A2}. Given $0<\eta < 1$, one can find $\e_0=\e_0(\eta) >
0$ such that if $\cone(\e)$ is satisfied for some $\e\leq \e_0$, then the
following holds: There exist $D_1$, $D_2$ $\in [1-\eta, 1+\eta]$ such that for
$\pP$-almost all $\omega$,
\begin{eqnarray*}
  \liminf_{L\rightarrow\infty}\left(\sup_{x: |x| \leq L^{3}}\sup_{y\in
      V_L(x)}\Erw_{y,\omega}\left[\tau_{V_L(x)}\right]/L^2 \right)&=& D_1,\\
  \limsup_{L\rightarrow\infty}\left(\sup_{x: |x| \leq L^{3}}\sup_{y\in
      V_L(x)}\Erw_{y,\omega}\left[\tau_{V_L(x)}\right]/L^2 \right) &=&D_2.
\end{eqnarray*}
\end{theorem}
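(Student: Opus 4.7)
The plan is to combine Borel--Cantelli, applied to Proposition~\ref{def:times-main-prop}(ii), with an ergodicity argument. Write
\[ S_L(\omega) := \sup_{|x|\leq L^3}\sup_{y\in V_L(x)} \Erw_{y,\omega}[\tau_{V_L(x)}]/L^2. \]
Fix $\eta\in(0,1)$ and choose $\e_0(\eta)$ so that Proposition~\ref{def:times-main-prop}(ii) applies under $\cone(\e)$ with $\e\leq\e_0$. Then $A_L := \{S_L\notin[1-\eta,1+\eta]\}$ satisfies $L^d\pP(A_L)\to 0$, hence $\pP(A_L)=o(L^{-d})=o(L^{-3})$, and, since $d\geq 3$, $\sum_{L\in\mathbb{N}}\pP(A_L)<\infty$. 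Borel--Cantelli then yields that $\pP$-a.s.\ $S_L\in[1-\eta,1+\eta]$ for all large $L$, so $\pP$-a.s.\ $\liminf_L S_L\geq 1-\eta$ and $\limsup_L S_L\leq 1+\eta$.

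To upgrade the liminf and limsup from random to deterministic, I would verify that both are invariant under the $\mathbb{Z}^d$-translation action on $\Omega$. For $a\in\mathbb{Z}^d$, denote by $\omega^{(a)}$ the shifted environment $\omega^{(a)}_x=\omega_{x+a}$. The map $X_n\mapsto X_n+a$ identifies the walk in $\omega^{(a)}$ from $y$ with the walk in $\omega$ from $y+a$, giving $\Erw_{y,\omega^{(a)}}[\tau_{V_L(x)}] = \Erw_{y+a,\omega}[\tau_{V_L(x+a)}]$. Consequently, $S_L(\omega^{(a)})$ equals the same supremum but taken over translated centers $x'\in a+V_{L^3}$. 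Using the containment $a+V_{L^3}\subset V_{(L+1)^3}$ for $L$ large and the trivial monotonicity $\tau_{V_L(x)}\leq\tau_{V_{L+1}(x)}$ (applied also to extend the inner sup over $y$ from $V_L(x)$ to $V_{L+1}(x)$), one obtains the deterministic comparison
\[ S_L(\omega^{(a)})\leq \bigl((L+1)/L\bigr)^2\,S_{L+1}(\omega), \]
with the reverse inequality following by applying the same argument with $-a$ in place of $a$. Taking $\limsup_L$ and $\liminf_L$ gives the pointwise identity $\limsup_L S_L(\omega^{(a)}) = \limsup_L S_L(\omega)$ for every $\omega\in\Omega$, and similarly for $\liminf_L$. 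Since $\pP=\pP_\mu=\mu^{\otimes\mathbb{Z}^d}$ is a product measure and therefore ergodic (indeed mixing) under the $\mathbb{Z}^d$-translations, every shift-invariant random variable is $\pP$-a.s.\ constant. Combining with the previous paragraph, there exist deterministic $D_1,D_2\in[1-\eta,1+\eta]$ with $\liminf_L S_L=D_1$ and $\limsup_L S_L=D_2$ $\pP$-a.s.

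The delicate step is the sandwich in the shift-invariance argument, because the outer radius $L^3$ grows super-linearly in $L$ and the symmetric difference $(a+V_{L^3})\triangle V_{L^3}$ contains $O(L^{3(d-1)}|a|)$ extra lattice centers, over which a naive union bound would be too weak. The monotonicity $\tau_{V_L(x)}\leq\tau_{V_{L+1}(x)}$ together with $(L+1)^2/L^2=1+O(1/L)$ nonetheless makes the comparison between the sup of interest and $S_{L+1}$ automatic. The quenched second-moment bound of Corollary~\ref{def:times-cormoments} plays here a supplementary role: via Jensen's inequality $\Erw[\tau_{V_L(x)}]\leq\Erw[\tau_{V_L(x)}^2]^{1/2}$ it produces the deterministic a.s.\ envelope $S_L\leq 2\sqrt{2}$ for large $L$, so that the liminf and limsup are finite random variables and the sandwich argument is meaningful.
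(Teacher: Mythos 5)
Your proof is correct, and its first (Borel--Cantelli) step is identical to the paper's; where you genuinely diverge is the argument that the liminf and limsup are deterministic. The paper proves that the limsup is measurable with respect to the tail $\sigma$-field of the radial filtration: for each fixed $l$ it shows the quantity is unchanged if one restricts to trajectories with $T_l=\infty$, which requires transience (Corollary~\ref{def:main-transience}), a Cauchy--Schwarz step and the quenched second-moment bound of Corollary~\ref{def:times-cormoments}, and then invokes Kolmogorov's 0-1 law. You instead observe that, because the outer supremum runs over the growing window $|x|\le L^3$ and $\Erw_{y,\omega}[\tau_{V_L(x)}]$ is a local, translation-covariant functional of the environment, shifting $\omega$ by a fixed $a$ merely translates that window, and the deterministic sandwich $S_L(\omega^{(a)})\le (1+1/L)^2\,S_{L+1}(\omega)$ together with its mirror image for $-a$ makes $\liminf_L S_L$ and $\limsup_L S_L$ exactly shift-invariant for every $\omega$; ergodicity (indeed mixing) of the product measure $\pP_\mu$ under $\mathbb{Z}^d$-translations then yields constants $D_1,D_2$, which lie in $[1-\eta,1+\eta]$ by the Borel--Cantelli step. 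Your route is more elementary: it uses neither transience nor the quenched moment bounds (the $2\sqrt{2}$ envelope you invoke is in fact superfluous, since the Borel--Cantelli step already pins the limits into $[1-\eta,1+\eta]$ almost surely), whereas the paper's tail argument buys the additional information that the limits do not depend on the environment in any fixed ball, which is of independent interest but not needed for the theorem. The only bookkeeping point, common to both proofs, is the passage between real and integer $L$, which is handled by exactly the monotonicity you already exploit.
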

\begin{remark}
  (i) Given $\eta$ and $L_1$, we can always guarantee (by making $\e$ smaller if
  necessary) that $\cone(\e)$ implies
  $\ctime(\eta,L_1)$.\\
 (ii) The factor $L^{-6d}$ in the definition of condition
 $\ctime(\eta,L_1)$, the factor $L^d$ 
  and also the choice of $L^3$ inside the probability in the statement of
  Proposition~\ref{def:times-main-prop} (ii) are connected to
  Assumption {\bf A2}. If, for instance, one could prove that for some
  $\alpha > 1$ and large $L$,
\begin{equation*}
\pP\left(\Erw_{0,\omega}\left[\tau_L\right]>(\log L)^4L^2\right)\leq\exp\left(-(\log L)^{\alpha}\right), 
\end{equation*}
then Proposition~\ref{def:times-main-prop} (ii) would hold with $L^d$ replaced by $L^r$ for
every $r\in\mathbb{N}$.\\
(iii) In the last theorem, we strongly believe that $D_1=D_2$.
\end{remark} 

\subsection{Perturbation expansion}
Our approach of comparing the behavior of the RWRE in space and time with
that of simple random walk is based on a perturbation argument. Namely, the resolvent
equation allows us to express Green's functions of the RWRE in terms of
ordinary Green's functions. More generally, let $p =
\left(p(x,y)\right)_{x,y\in\mathbb{Z}^d}$ be a family of finite range
transition probabilities on $\mathbb{Z}^d$, and let $V\subset\mathbb{Z}^d$
be a finite set. The corresponding Green's kernel or Green's function for 
$V$ is defined by
\begin{equation*}
g_V(p)(x,y) = \sum_{k=0}^\infty \left(1_Vp\right)^k(x,y).
\end{equation*}
The connection with the exit measure is given by the fact that for $z\notin
V$, we have
\begin{equation*}
g_V(p)(\cdot,z) = \mbox{ex}_{V}(\cdot,z;p).
\end{equation*}
Now write $g$ for $g_V(p)$ and let $P$ be another transition
kernel with corresponding Green's function $G$ for $V$. With $\Delta=
1_V\left(P-p\right)$, we have by the resolvent equation 
\begin{equation}
\label{eq:prel-pbe1}
G -g = g\Delta G = G\Delta g.
\end{equation}
In order to get rid of $G$ on the right hand side, we
iterate~\eqref{eq:prel-pbe1} and obtain
\begin{equation}
\label{eq:prel-pbe2}
G -g = \sum_{k=1}^\infty \left(g\Delta\right)^kg, 
\end{equation}
provided the infinite series converges, which will always be the case in our
setting, due to $\cone(\e)$ and $V$ being finite.
A modification of~\eqref{eq:prel-pbe2} turns out to
be particularly useful. Note that by~\eqref{eq:prel-pbe2},
\begin{equation*}
G = g\sum_{k=0}^\infty\left(\Delta g\right)^k.
\end{equation*}
Replacing the rightmost $g$ by $g(x,\cdot)= \delta_x(\cdot) + 1_Vpg(x,\cdot)$
and reordering terms, we arrive at
\begin{equation}
\label{eq:prel-pbe3}
G = g\sum_{m=0}^\infty{\left(Rg\right)}^m\sum_{k=0}^\infty\Delta^k,
\end{equation}
where $R= \sum_{k=1}^\infty\Delta^kp$.

\subsection{A short reading guide}
\label{readingguide}
The key idea behind our results on exit laws from $V_L$ is to compare the
RWRE exit measure  with that of simple random walk by means of
the perturbation expansion
\begin{equation*}
\Pi_L-\pi_L = \Gh1_{V_L}(\Ph-\ph)\pi_L.
\end{equation*}
Here, $\Ph$ is a coarse grained RWRE transition kernel inside $V_L$,
$\ph$ is a coarse grained simple random walk kernel, and $\Gh =
\Gh(\Ph)$ is the Green's function associated to $\Ph$.
 
Our coarse grained transition kernels are given by exit distributions from
smaller balls inside $V_L$, and we obtain our results by transfering
inductively estimates on smaller scales to scale $L$. The coarse graining
schemes defined in Section~\ref{smoothbad} determine the radii of the
smaller balls. In the bulk of $V_L$, we choose the radius $s_L
=L/(\log L)^3$, but we refine the radii when approaching the boundary. Our
schemes are parametrized by a real number $r$, which determines the
distance to the boundary $\partial V_L$ at which the refinement stops.  We
choose $r$ equal to $r_L=L/(\log L)^{15}$ for the estimates involving a
global smoothing, and equals a large constant for the non- or locally
smoothed estimates.

Besides the coarse graining schemes, Section~\ref{smoothbad} introduces the
concept of ``good'' and ``bad'' points and so-called goodified Green's
functions. Roughly speaking, we call a point $x\in V_L$ {\it
  good} if the exit measure on such a smaller ball around $x$ is close to
the exit measure of simple random walk, in both a smoothed and non-smoothed
way. If inside $V_L$ all points are good, then the estimates on smaller
balls can be transferred to a (globally smoothed) estimate on the larger
ball $V_L$ (Lemma~\ref{def:lemma-goodpart}), using some averaging argument
and an exponential inequality.

But {\it bad} points can appear, and in fact we have to distinguish four
different levels of badness (Section~\ref{s1}).  When bad points are
present, it is convenient to ``goodify'' the environment, that is to
replace bad points by good ones. This important concept is first explained
in Section~\ref{smoothbad} and then further developed in
Section~\ref{super}. However, for the globally smoothed estimate, due to
the additional smoothing step we only have to deal with the case where all
bad points are enclosed in a comparably small region - two or more such
regions are too unlikely (Lemma~\ref{def:smoothbad-lemmamanybad}). Some
special care is required for the worst class of bad points in the interior
of the ball. For environments containing such points, we slightly modify
the coarse graining scheme inside $V_L$, as described in
Section~\ref{super-cgmod}. In Lemma~\ref{def:lemma-badpart}, we prove the
smoothed estimates on environments with bad points and show that the degree
of badness decreases by one from one scale to the next.

Concerning exit measures where no or only a local last smoothing step is
added (Section~\ref{nonsmv-exits}, Lemmata~\ref{def:nonsmv-lemma1}
and~\ref{def:nonsmv-lemma2}, respectively), bad points near the boundary of
$V_L$ are much more delicate to handle, since we have to take into account
several possibly bad regions. However, they do not occur too frequently
(Lemma~\ref{def:smoothbad-lemmabbad}) and can be controlled by capacity arguments.

All these estimates require precise bounds on coarse grained Green's
functions, which are developed in Section~\ref{super}. Basically, we show
that on environments with no bad points, the coarse grained RWRE Green's
function for the ball can be estimated from above by the analogous quantity
coming from simple random walk. 

Section~\ref{estimates} is devoted to technical bounds on hitting
probabilities of both simple random walk and Brownian motion, and to
difference estimates of smoothed exit measures. The reason for working
sometimes with Brownian motion instead of a random walk is of technical
nature - some estimates are easier to prove for the former, as for example
Lemma~\ref{def:est-phi} (iii). They can then be transferred to random walks
via coupling arguments provided in the appendix.  

The statements from
Sections~\ref{smv-exits} and~\ref{smv-exits} are finally used in
Section~\ref{proofmain} to prove the main results on exit measures,
including the proof of transience of the RWRE.

The object of interest in Section~\ref{times} is the mean sojourn time of
the RWRE in the ball $V_L$. Employing the Markov property, we represent
this quantity as a convolution of a coarse grained RWRE Green's
function $\Gh$ and mean sojourn times in smaller balls $\Lambda_L(y)$,
\begin{equation*}
  \Erw_{x,\omega}\left[\tau_L\right] = \sum_{y\in V_L}\Gh(x,y)\Lambda_L(y).
\end{equation*}
Again, multiscale analysis is used to transport time estimates on a smaller
to a bigger scale. It turns out that we need control over space and
time on the next two lower levels. This requires a stronger
notion of good and bad points concerning both spatial and temporal
behavior. In Section~\ref{proofmaintimes}, we 
prove our main results on mean sojourn times.

Finally, in the appendix we prove the main statements of
Section~\ref{estimates}, as well as a local central limit theorem for the coarse
grained simple random walk.

   \section{Coarse graining schemes and notion of badness}
\label{smoothbad}
The purpose of this section is to introduce coarse graining schemes in the
ball as well as the concept of ``good'' and ``bad'' points. Also, we prove
two estimates ensuring that we do not have to consider environments with
bad points that are widely spread out in the ball or densely packed in the
boundary region.
\subsection{Coarse graining schemes in the ball}
\label{smoothbad-cgs}
Once for all, define
\begin{equation*}
s_L = \frac{L}{(\log L)^3} \quad\mbox{and}\quad r_L= \frac{L}{(\log L)^{15}}.
\end{equation*}
We will use particular coarse graining schemes indexed by a parameter $r$,
which can either be a constant $\geq 100$, but much smaller than $r_L$, or,
in most of the cases, $r=r_L$. We fix a smooth function $h :
\mathbb{R}_+\rightarrow\mathbb{R}_+$ satisfying
\begin{equation*}
  h(x)= \left\{\begin{array}{l@{\quad \mbox{for\ }}l}
      x & x\leq \frac{1}{2}\\
      1 & x\geq 2\end{array}\right.,
\end{equation*}
such that $h$ is strictly monotone and concave on $(1/2,2)$, with first derivative bounded uniformly by $1$.
Define $h_{L,r} : \overline{C}_L \rightarrow\mathbb{R}_+$ by
\begin{equation}
\label{eq:smoothbad-hLr}
 h_{L,r}(x) = \frac{1}{20}\max\left\{s_L
  h\left(\frac{\dL(x)}{s_L}\right),\, r\right\}.
\end{equation}
Since we mostly work with $r=r_L$, we use the abbreviation $h_L = h_{L,r_L}$.
We write $\Ph_{L,r}$ for the coarse grained RWRE transition kernel
associated to $(\psi= \left(h_{L,r}(x)\right)_{x\in V_L},\,p_\omega)$,
\begin{equation*}
\Ph_{L,r}(x,\cdot) =
\frac{1}{h_{L,r}(x)}\int_{\mathbb{R}_+}
  \varphi\left(\frac{t}{h_{L,r}(x)}\right)\Pi_{V_t(x)\cap V_L}(x,\cdot)\dt
  t,
\end{equation*}
and $\ph_{L,r}$ for that coming from simple random walk, where $\Pi$ is
replaced by $\pi$.  For convenience, we set $\Ph_{L,r}(x,\cdot)
=\ph_{L,r}(x,\cdot) = \delta_x(\cdot)$ for $x\in\mathbb{Z}^d\backslash
V_L$. Notice that by the strong Markov property, the exit measures from the
ball $V_L$ remain unchanged under these transition kernels, i.e. 
\begin{equation*}
\mbox{ex}_{V_L}\left(x,\cdot;\Ph_{L,r}\right) = \Pi_L(x,\cdot)
\quad\mbox{and}\quad
\mbox{ex}_{V_L}\left(x,\cdot;\ph_{L,r}\right) = \pi_L(x,\cdot).
\end{equation*}
We denote by $\Gh_{L,r}$ the (coarse grained) RWRE Green's function coming
from $\Ph_{L,r}$, and by $\gh_{L,r}$ the Green's function from $\ph_{L,r}$,
everything in $V_L$.
\begin{remark}
  \label{def:prel-rem}
  (i) Later on, we will also work with slightly modified transition kernels
  $\Pho$ and $\pho$, which depend on the environment.  We elaborate on this in Section~\ref{super-cgmod}.\\
  (ii) Due to the lack of the last smoothing step outside $V_L$, we need to
  zoom in near the boundary in order to handle non-smoothed exit
  distributions in Section~\ref{nonsmv-exits}. The parameter $r$ allows
  us to adjust the step size in the boundary region. \\
  (ii) Note that for every choice of $r$,
\begin{equation*}
h_{L,r}(x) = \left\{\begin{array}{l@{\quad \mbox{for\ }}l}
\dL(x)/20 & x \in V_L \mbox{ with } r_L\leq\dist_L(x) \leq s_L/2\\
s_L/20 & x \in V_L \mbox{ with } \dist_L(x) \geq 2s_L
\end{array}\right..
\end{equation*}
\end{remark}
\begin{figure}
\begin{center}\parbox{5.5cm}{\includegraphics[width=5cm]{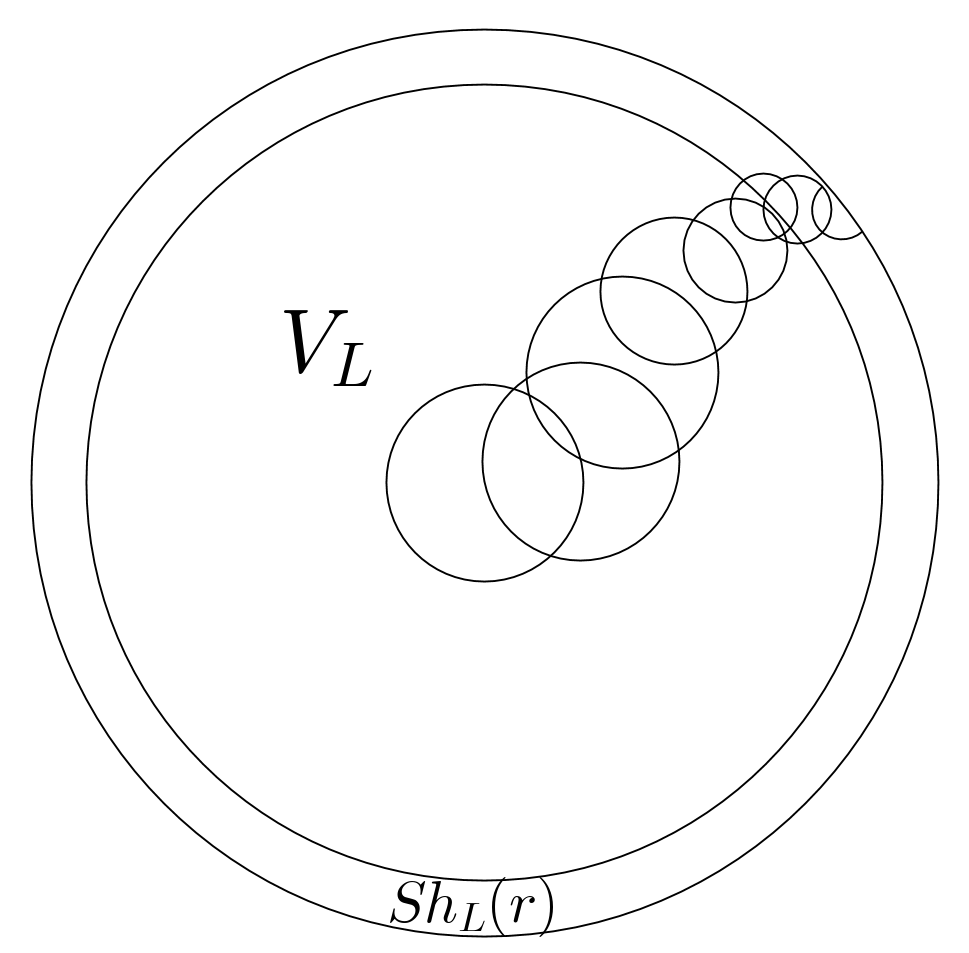}}
\parbox{9.5cm}{
\caption{The coarse graining scheme in $V_L$. In the bulk $\{x\in
  V_L:\,d_L(x) \geq 2s_L\}$, the exit
  distributions are taken from balls of radii between
  $(1/20)s_L$ and $(1/10)s_L$. When entering $\sh_L(2s_L)$, the coarse
  graining radii start to shrink, up to the boundary layer
  $\sh_L(r)$, where the exit distributions are taken from intersected balls
  $V_t(x)\cap V_L$, $t\in[(1/20)r,\,(1/10)r]$.}} 
\end{center}
\end{figure}
\subsection{Good and bad points}
\label{goodandbad}
We shall partition the grid points inside $V_L$ according to
their influence on the exit behavior. We say that a point
$x\in V_L$ is {\it good} (with respect to $L$, $\delta >0$ and $r$,
$100\leq r\leq r_L$) if  
\begin{itemize}
\item For all $t\in[h_{L,r}(x),2h_{L,r}(x)]$,
  $\left|\left|(\Pi_{V_t(x)}-\pi_{V_t(x)})(x,\cdot)\right|\right|_1 \leq \delta$.
\item If $\dL(x) > 2r$, then additionally 
\begin{equation*}
\left|\left|(\Ph_{L,r} - \ph_{L,r})\ph_{L,r}(x,\cdot)\right|\right|_1 \leq \left(\log
     h_{L,r}(x)\right)^{-9}.
\end{equation*}
\end{itemize}
A point $x\in V_L$ which is not good is called {\it bad}. 
We denote by $\badP_{L,r} = \badP_{L,r}(\omega)$ the set of all bad points
inside $V_L$ and write $\badP_L = \badP_{L,r_L}$ for short. Furthermore,
set $\badP_{L,r}^\partial = \badP_{L,r}\cap\sh_L(r_L)$
and $\badPo_{L,r} =
\badP_{L,r}\cup\badP_{L} = \badP_{L,r}^\partial\cup\badP_L$.
Of course, the set of bad points depends also on $\delta$, but
we do not indicate this. 
\begin{remark}
  (i) For the coarse graining scheme associated to $r=r_L$, we have by
  definition $\badPo_{L,r_L} = \badP_L$. When performing the 
  estimates in Section~\ref{nonsmv-exits}, we work with constant $r$. In
  this case, $\badPo_{L,r}$ can contain more points than $\badP_L$.\\
  (ii) Assume $L$ large. If $x\in V_L$ with $\dist_L(x) > 2r$, then the
  function $h_{L,r}(x+\cdot)$, defined in~\eqref{eq:smoothbad-hLr}, lies in
  $\mathcal{M}_t$ for each $t\in[h_{L,r}(x),2h_{L,r}(x)]$. Thus, for all
  $x\ \in V_L$, we can use $\ctwo(\delta,L_1)$ to control the event
  $\{x\in\badP_{L,r}\}$, provided $2h_{L,r}(x) \leq L_1$. We make use of
  this in Lemma~\ref{def:smoothbad-lemmamanybad}.
\end{remark}
\subsubsection{Goodified transition kernels}
It is difficult to obtain estimates for the RWRE in the presence of bad points.
For all environments, we therefore introduce ``goodified'' transition kernels
$\Phg_{L,r}$,
\begin{equation}
  \label{eq:smoothbad-goodifiedkernel}
  \Phg_{L,r}(x,\cdot) = \left\{\begin{array}{l@{\ \mbox{for\ }}l}
      \Ph_{L,r}(x,\cdot) & x \in V_L\backslash \badPo_{L,r}\\
      \ph_{L,r}(x,\cdot) & x \in \badPo_{L,r}\end{array}\right..
\end{equation}
Furthermore, we write $\Ghg_{L,r}$  for the corresponding (random) Green's function. 
\subsection{Bad regions in the case $r=r_L$}
\label{s1}
The following lemma shows that with high probability, all bad points
with respect to $r=r_L$ are contained in a ball of radius $4h_L(x)$. Let
\begin{equation*}
  \mathcal{D}_L = \left\{V_{4h_L(x)}(x) : x\in V_L\right\}.
\end{equation*}
We will look at the events $\onebad= \left\{\badP_L\subset D\mbox{ for
    some } D\in\mathcal{D}_L\right\}$ and
$\manybad={\left(\onebad\right)}^c$. It is also useful to define the set
of {\it good} environments, $\good = \{\badP_L =
\emptyset\}\subset\onebad$.
\begin{lemma}
  \label{def:smoothbad-lemmamanybad}
  For large $L_1$, $\ctwo(\delta,L_1)$ implies that for $L$ with
  $L_1\leq L \leq L_1(\log L_1)^2$,
  \begin{equation*}
    \pP\left(\manybad\right) \leq
    \exp\left(-\frac{19}{10}(\log L)^2\right). 
  \end{equation*}
\end{lemma}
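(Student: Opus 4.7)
The plan is to exploit the i.i.d.\ structure of the environment together with $\ctwo(\delta,L_1)$, which controls the probability that a single point is bad, via a union bound over pairs of bad points with disjoint dependency regions. The first step is to observe that the event $\{x \in \badP_L\}$ is measurable with respect to the environment in $V_{2h_L(x)}(x) \cap V_L$: the first clause in the definition of goodness depends on $\Pi_{V_t(x)}$ for $t \in [h_L(x), 2h_L(x)]$; the second clause involves $\Ph_L(x,\cdot)$ (same measurability) composed with the deterministic kernel $\ph_L$. Consequently, if $|x_1 - x_2| > 2h_L(x_1) + 2h_L(x_2)$, then $\{x_1 \in \badP_L\}$ and $\{x_2 \in \badP_L\}$ are independent.

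Next, on $\manybad$ I would exhibit such a pair by a maximum argument: pick $x_1 \in \badP_L$ with maximal value of $h_L$. Since $\badP_L \not\subset V_{4h_L(x_1)}(x_1)$, there exists $x_2 \in \badP_L$ with $|x_1 - x_2| > 4h_L(x_1)$; by maximality $h_L(x_2) \leq h_L(x_1)$, so $4h_L(x_1) \geq 2h_L(x_1) + 2h_L(x_2)$, giving the desired disjointness. A union bound then yields
\begin{equation*}
\pP(\manybad) \leq \sum_{\substack{x_1, x_2 \in V_L \\ |x_1 - x_2| > 4\max(h_L(x_1), h_L(x_2))}} \pP(x_1 \in \badP_L)\,\pP(x_2 \in \badP_L).
\end{equation*}

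Then I would bound $\pP(x \in \badP_L)$ using $\ctwo(\delta,L_1)$. By translation invariance, $\pP(D_{t}(x) > \delta) \leq \pP(D_t^\ast > \delta)$, and a union bound over the finite set of $t \in [h_L(x), 2h_L(x)]$ at which $V_t(x)$ changes (at most $O(L^d)$ values) costs only a polynomial factor. For the smoothed clause, when $d_L(x) > 2r_L$ the remark following the definition of good points shows that $h_L(x+\cdot)$ restricted to the relevant annulus lies in $\mathcal{M}_t$, so $\ctwo(\delta,L_1)$ is directly applicable; the condition $2h_L(x) \leq s_L/10 \leq L_1$ is guaranteed by our assumption $L \leq L_1(\log L_1)^2$. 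Combining everything gives $\pP(x \in \badP_L) \leq L^{O(1)} \exp(-(\log h_L(x))^2)$.

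Finally, since $h_L(x) \geq r_L/20 = L/(20(\log L)^{15})$ for every $x \in V_L$, one has $(\log h_L(x))^2 \geq (\log L)^2 - C(\log L)(\log\log L)$. Plugging in the bound, the double sum is dominated by $L^{2d}\exp(-2(\log L)^2 + C'(\log L)(\log\log L))$, which is smaller than $\exp(-(19/10)(\log L)^2)$ for all sufficiently large $L$, as required. The main technical obstacle is purely bookkeeping: handling the union bound over the continuous parameter $t$ so that it contributes only a polynomial factor, and tracking the loss from $(\log h_L(x))^2$ vs.\ $(\log L)^2$ precisely enough to land below the target exponent $-(19/10)(\log L)^2$ rather than $-2(\log L)^2$. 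The conceptual novelty is minimal; the key input is the independence afforded by choosing $x_1$ with maximal $h_L$.
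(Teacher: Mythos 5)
Your proposal is correct and follows essentially the same route as the paper: bound the single-point badness probability by a union over the discretely many radii $t\in[h_L(x),2h_L(x)]$ using $\ctwo(\delta,L_1)$ and translation invariance, then on $\manybad$ extract two bad points whose dependency balls are disjoint, and conclude by independence and a union bound over pairs, with the same bookkeeping $(\log(r_L/20))^2=(\log L)^2-O((\log L)\log\log L)$ landing below the exponent $\tfrac{19}{10}(\log L)^2$. The only difference is cosmetic: you spell out the maximality argument producing the pair with $|x_1-x_2|>2h_L(x_1)+2h_L(x_2)$ and the measurability justification for independence, which the paper leaves implicit.
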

\begin{prooof}
Set $\Delta = 1_{V_L}(\Ph_{L,r_L} - \ph_{L,r_L})$, $\ph =
\ph_{L,r_L}$. For all $x \in V_L$ with $\dL(x) > 2r_L$, using $\frac{1}{20} r_L \leq
  h_L(x) \leq s_L\leq L_1/2$,
  \begin{eqnarray*}
    \pP\left(x\in \badP_{L}\right) &=& \pP\left(\left\{\left|\left|\Delta\ph(x,\cdot)\right|\right|_1 >(\log
        h_L(x))^{-9}\right\}\cup\left\{\left|\left|\Delta(x,\cdot)\right|\right|_1 > \delta\right\}\right)\\ 
    &\leq& 
    \pP\left(\bigcup\limits_{t\in\left[h_L(x),2h_L(x)\right]}\left\{D_{t,h_L}(x)
        > (\log h_L(x))^{-9}\right\}\cup\left\{D_{t}(x) > \delta\right\}\right)\nonumber\\    
    &\leq& Cs_L^{d}\exp\left(-\left(\log(r_L/20)\right)^{2}\right),
  \end{eqnarray*}
  and a similar estimate holds in the case $\dL(x) \leq 2r_L$.  On the event
  $\manybad$, there exist $x,y \in \badP_L$ with $|x-y| > 2h_L(x)
  +2h_L(y)$. But for such $x,y$, the events $\{x\in \badP_L\}$ and $\{y\in
  \badP_L\}$ are independent, whence for $L$ large
\begin{equation*}
  \pP\left(\manybad\right) \leq C
  L^{2d}s_L^{2d}\left[\exp\left(-(\log(r_L/20))^2\right)\right]^2\leq
  \exp\left(-(19/10)(\log L)^2\right). 
\end{equation*}
\end{prooof}
\begin{figure}
\begin{center}\parbox{5.5cm}{\includegraphics[width=5cm]{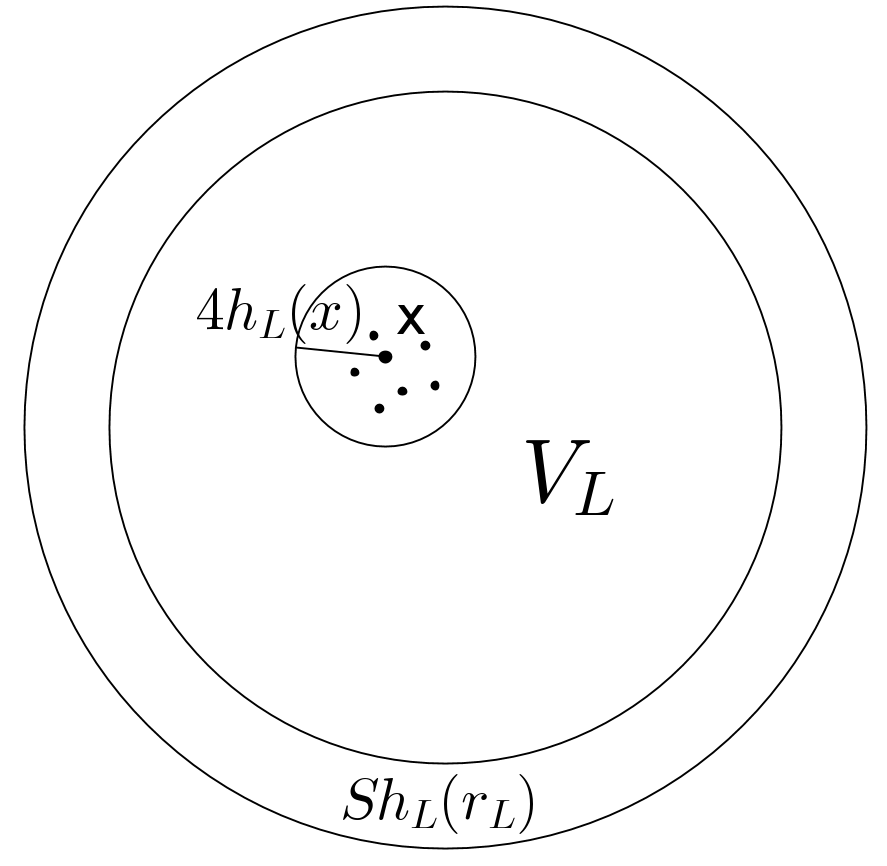}}
\parbox{9.5cm}{
\caption{On environments $\omega\in\onebad$, all bad points are 
  enclosed in a ball $V_{4h_L(x)}(x)$.}} 
\end{center}
\end{figure}
The estimate is good enough for our inductive procedure, so we only have to
deal with the case where all bad points are enclosed in a ball
$D\in\mathcal{D}_L$.  However, inside $D$ we need to look closer at the
degree of badness.  We say that $\omega\in\onebad$ is {\it bad on level}
$i$, $i=1,2,3$, if the following holds:
\begin{itemize}
\item For all $x\in V_L$, for all $t\in[h_L(x),2h_L(x)]$, $\left|\left|(\Pi_{V_t(x)} - \pi_{V_t(x)})(x,\cdot)\right|\right|_1 \leq \delta$.
\item For all $x\in V_L$ with $\dL(x) > 2r_L$, additionally 
\begin{equation*}
\left|\left|(\Ph_{L,r_L} - \ph_{L,r_L})\ph_{L,r_L}(x,\cdot)\right|\right|_1 \leq \left(\log
     h_L(x)\right)^{-9+9i/4}.
\end{equation*}
\item There exists $x\in \badP_L(\omega)$ with $\dL(x) > 2r_L$ such that
  \begin{equation*}
    \left|\left|(\Ph_{L,r_L} - \ph_{L,r_L})\ph_{L,r_L}(x,\cdot)\right|\right|_1 > \left(\log
      h_L(x)\right)^{-9+9(i-1)/4}.
  \end{equation*}
\end{itemize}
If $\omega\in\onebad$ is neither bad on level $i=1,2,3$ nor good, we call
$\omega$ {\it bad on level} $4$.  In this case, $\badP_L(\omega)$ contains ``really bad'' points.  We write $\onebadi\subset\onebad$ for
the subset of all those $\omega$ which are bad on level
$i=1,2,3,4$. Observe that
\begin{equation*}
  \onebad = \good\mathbin{\dot{\cup}}\mathbin{\mathop{\dot{\bigcup}}_{i=1}^4}\onebadi.   
\end{equation*}
On $\good$, $\Phg_{L,r_L}=\Ph_{L,r_L}$ and therefore $\Ghg_{L,r_L}=\Gh_{L,r_L}$.
\subsection{Bad regions when $r$ is a constant}
\label{s2}
When estimating the non-smoothed quantity $D_L^\ast$, we cannot stop the
refinement of the coarse graining in the boundary region $\sh_L(r_L)$. Instead, we
will choose $r$ as a (large) constant. However, now it is no longer true
that essentially all bad points are contained in one single region
$D\in\mathcal{D}_L$. For example, if $x\in V_L$ such that
$\dist_L(x)$ is of order $\log L$, we only have a bound of the form
\begin{equation*}
  \pP\left(x\in \badP_{L,r}\right) \leq \exp\left(-c{\left(\log
        \log L\right)}^2\right), 
\end{equation*}
which is clearly not enough to get an estimate as in
Lemma~\ref{def:smoothbad-lemmamanybad}.  We therefore choose a different strategy to handle bad points
within $\sh_L(r_L)$. We split the boundary region into layers of an
appropriate size and use independence to show that with high probability, bad
regions are rather sparse within those layers. Then the Green's function estimates of
Corollary~\ref{def:super-cor} will ensure that on such environments, there is a
high chance to never hit points in $\badP_{L,r}^\partial$ before leaving the ball.

To begin with the first part, fix $r$ with $r\geq r_0\geq
100$, where $r_0 = r_0(d)$ is a constant that will be chosen
below. Let $L$ be large enough such that $r < r_L$, and set $J_1 = J_1(L)
= \left\lfloor \frac{\log(r_L/r)}{\log 2}\right\rfloor +1$. We define 
layers $\Lambda_0 = \sh_L(2r)$ and $\Lambda_j = \sh_L(r2^j,r2^{j+1})$,
$1\leq j\leq J_1$. Then,
\begin{equation*}
  \sh_L(2r_L) \subset \bigcup_{0\leq j\leq J_1}\Lambda_j\subset \sh_L(4r_L).
\end{equation*}
Let $j\in\mathbb{N}$. For $k\in\mathbb{Z}$, consider the interval
$I_k^{(j)} = (kr2^j,(k+1)r2^j]\cap \mathbb{Z}$. We divide $\Lambda_j$
into subsets by setting $D_{\bf k}^{(j)} = \Lambda_j\cap
\left(I_{k_1}\times\ldots\times I_{k_d}\right)$, where ${\bf k} =
(k_1,\ldots,k_d) \in\mathbb{Z}^d$.  Denote by $\mathcal{Q}_{j,r}$ the set
of these subsets which are not empty.  Setting $N_{j,r} =
|\mathcal{Q}_{j,r}|$, it follows that
\begin{equation*}
  \frac{1}{C}{\left(\frac{L}{r2^j}\right)}^{d-1} \leq N_{j,r} \leq
  C{\left(\frac{L}{r2^j}\right)}^{d-1}. 
\end{equation*}
We say that a set $D\in\mathcal{Q}_{j,r}$ is {\it bad} if
$\badP^\partial_{L,r}\cap D \neq \emptyset$. As we want to make use of
independence, we partition $\mathcal{Q}_{j,r}$ into disjoint sets
$\mathcal{Q}_{j,r}^{(1)},\ldots, \mathcal{Q}_{j,r}^{(R)}$, such that for each
$1\leq m \leq R$ we have
\begin{itemize}
\item $\dist(D,D') > 4\max_{x\in\Lambda_j}h_{L,r}(x)$ for all $D\neq
  D'\in\mathcal{Q}_{j,r}^{(m)}$,
\item $N_{j,r}^{(m)} = \left |\mathcal{Q}_{j,r}^{(m)}\right| \geq
  \frac{N_{j,r}}{2R}$.
\end{itemize}
\begin{figure}
\begin{center}\parbox{5.5cm}{\includegraphics[width=5cm]{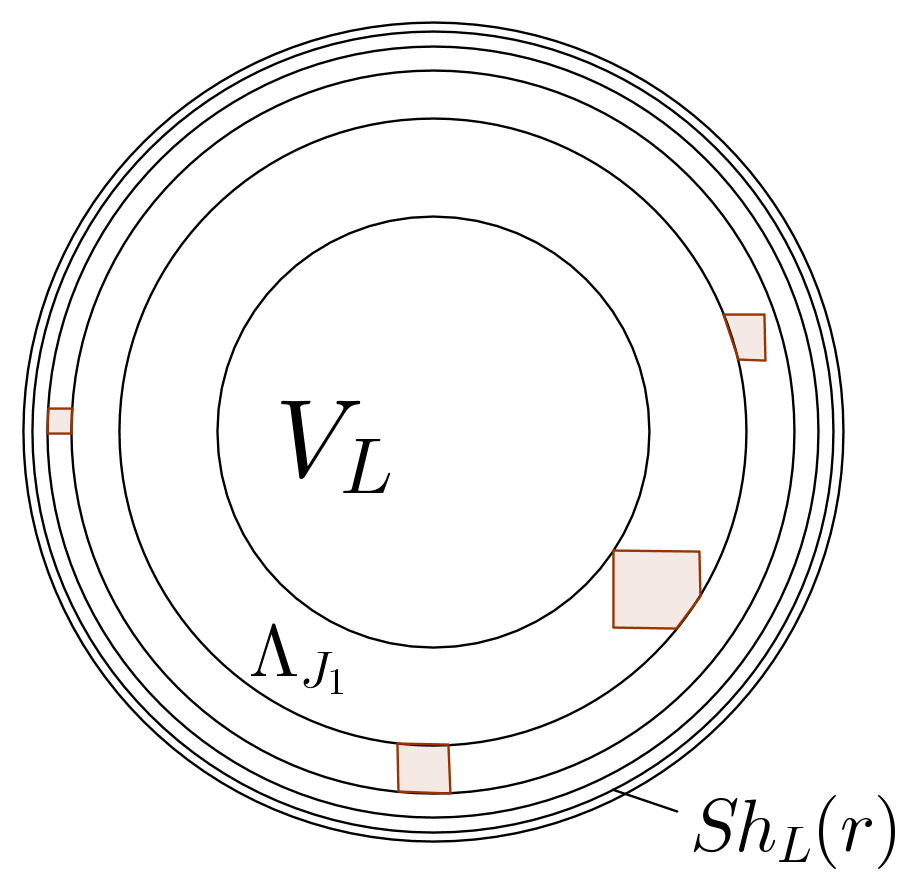}}
\parbox{8.5cm}{
\caption{The layers $\Lambda_j$, $0\leq j\leq J_1$, with $\Lambda_0=\sh_L(2r)$. Subsets $D_{\bf
    k}^{(j)}\subset\Lambda_j$ containing bad points are shaded.}} 
\end{center}
\end{figure}
Notice that $R\in\mathbb{N}$ can be chosen to depend on the dimension only.
Then the events $\{D$ is bad$\}$, $D\in\mathcal{Q}_{j,r}^{(m)}$, are
independent. Further, if $L_1\leq L \leq L_1(\log L_1)^2$, it follows that
under $\ctwo(\delta,L_1)$, 
\begin{equation*}
  \pP\left(D\mbox{ is bad}\right) \leq
  C(r2^j)^{2d}\exp\left(-\left(\log(r2^j/20)\right)^2\right)
  \leq \exp\left(-\left(\log r + j\right)^{5/3}\right)= p_{j,r}, 
\end{equation*}
for all $r \geq r_0$ and $j\in\mathbb{N}$, if $r_0$ is big enough. Let
$Y_{j,r}$ and $Y_{j,r}^{(m)}$ be the number of bad sets in
$\mathcal{Q}_{j,r}$ and $\mathcal{Q}_{j,r}^{(m)}$, respectively. For $r\geq
5$, we have $p_{j,r} \leq (\log r +j)^{-3/2}\leq 1/2$.  A standard large
deviation estimate for Bernoulli random variables yields
\begin{equation*}
  \pP\left(Y_{j,r}^{(m)} \geq (\log r + j)^{-3/2}N_{j,r}^{(m)}\right) \leq
  \exp\left(-N_{j,r}^{(m)}\ I\left((\log r + j)^{-3/2}\ \big|\ p_{j,r}\right)\right), 
\end{equation*}
with $I(x\ |\ p) = x\log(x/p) + (1-x)\log((1-x)/(1-p))$. By enlarging $r_0$
if necessary, we get $I\left((\log r + j)^{-3/2}\ |\ p_{j,r}\right) \geq 2
R(\log r+j)^{1/7}$ for $r\geq r_0$, whence
\begin{eqnarray*}
  \lefteqn{\pP\left(Y_{j,r} \geq (\log r + j)^{-3/2}N_{j,r}\right)}\\ 
  &\leq&
  R\max_{m=1,\ldots,R}\pP\left(Y_{j,r}^{(m)} \geq
    (\log r + j)^{-3/2}N_{j,r}^{(m)}\right)\leq R\exp\left(-(\log r +j)^{1/7}N_{j,r}\right)\\
  &\leq&R\exp\left(-\frac{1}{C}\ (\log r +j)^{1/7}{\left(\frac{L}{r2^j}\right)}^{d-1}\right)\leq
  \exp\left(-(\log r +j)^{1/7}(\log L)^{29}\right),
\end{eqnarray*}
for $r_0\leq r < r_L$, $0\leq j\leq J_1(L)$ and $L$ large enough. In
particular,
\begin{equation*}
  \sum_{0\leq j\leq
    J_1(L)}\pP\left(Y_{j,r}\geq (\log r + j)^{-3/2}N_{j,r}\right) \leq
  \exp\left(-(\log L)^{28}\right).
\end{equation*}
Therefore, setting
\begin{equation*}
  \bbad = \bigcup_{0\leq j\leq J_1(L)}\left\{Y_{j,r} \geq
    (\log r + j)^{-3/2}N_{j,r}\right\},
\end{equation*}
we have proved the following
\begin{lemma}
  \label{def:smoothbad-lemmabbad}
  There exists a constant $r_0>0$ such that if $r\geq r_0$ and
  $L_1$ is large enough, then $\ctwo(\delta,L_1)$  implies
  that for $L$ with $L_1\leq L \leq L_1(\log L_1)^2$,
\begin{equation*}
  \pP\left(\bbad\right) \leq \exp\left(-(\log L)^{28}\right).
\end{equation*}
\end{lemma}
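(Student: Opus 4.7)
The plan is to exploit the layered decomposition of the boundary shell $\sh_L(2r_L)$ already set up above. Partition it into pieces $\Lambda_j$ of geometrically growing thickness $r2^j$, and within each layer subdivide into cubic cells $D_{\bf k}^{(j)}\in\mathcal{Q}_{j,r}$ of side $r2^j$. On $\Lambda_j$ the coarse-graining radius $h_{L,r}(x)$ is comparable to $r2^j$, so a union bound over $x$ in one cell together with $\ctwo(\delta,L_1)$ yields a stretched-exponentially small upper bound $p_{j,r}\le\exp(-(\log r+j)^{5/3})$ for the probability that the cell contains a point of $\badP_{L,r}^{\partial}$.

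To handle the dependence coming from overlapping coarse-graining balls, I split $\mathcal{Q}_{j,r}$ into a bounded number $R=R(d)$ of subcollections $\mathcal{Q}_{j,r}^{(m)}$ whose elements are separated by more than $4\max_{x\in\Lambda_j}h_{L,r}(x)$; within one subcollection the bad events are independent and $N_{j,r}^{(m)}\gtrsim N_{j,r}/R$. On each subcollection I apply a standard Chernoff bound for sums of Bernoulli$(p_{j,r})$ variables, controlling $Y_{j,r}^{(m)}$ by means of the rate function $I(x\mid p)=x\log(x/p)+(1-x)\log((1-x)/(1-p))$ at $x=(\log r+j)^{-3/2}$. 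A final union bound over $m\le R$ transfers the estimate from $Y_{j,r}^{(m)}$ to $Y_{j,r}$.

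Because $p_{j,r}$ is so small, for $r\ge r_0$ with $r_0$ large enough the rate $I((\log r+j)^{-3/2}\mid p_{j,r})$ dominates $2R(\log r+j)^{1/7}$. Multiplying by $N_{j,r}^{(m)}\gtrsim (L/(r2^j))^{d-1}$ and using $r2^j\le 4r_L=4L/(\log L)^{15}$ produces an exponent of order at least $(\log r+j)^{1/7}(\log L)^{29}$ per layer. A final union bound over the $J_1(L)=O(\log L)$ layers then yields the desired bound $\exp(-(\log L)^{28})$ for $\pP(\bbad)$.

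The main subtlety is balancing the scales so that two competing requirements are both met: cells must be small enough that $N_{j,r}^{(m)}$ is large (so that the Bernoulli large-deviation exponent actually dominates the combinatorial factors) yet large enough that distinct cells in the same subcollection induce truly independent bad events. The choice of cell side $r2^j$ in layer $\Lambda_j$ and of the threshold $(\log r+j)^{-3/2}$ for ``too many bad cells'' is precisely what makes the rate function beat the combinatorics, and this trade-off is what forces the hypothesis $r\ge r_0$, with $r_0$ absorbing the constants arising in the estimate of $I(\cdot\mid p_{j,r})$ and in the comparison of $N_{j,r}$ with $(L/(r2^j))^{d-1}$.
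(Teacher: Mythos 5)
Your proposal follows the paper's own argument essentially verbatim: the same geometric layers $\Lambda_j$ and cells of side $r2^j$, the same splitting into $R(d)$ well-separated subcollections to gain independence, the same cell-badness bound $p_{j,r}\le\exp(-(\log r+j)^{5/3})$ from $\ctwo(\delta,L_1)$, the same Bernoulli rate-function estimate at threshold $(\log r+j)^{-3/2}$ dominating $2R(\log r+j)^{1/7}$ for $r\ge r_0$, and the same final counting $N_{j,r}\gtrsim (L/(r2^j))^{d-1}\gtrsim(\log L)^{29}$ followed by a union bound over the $O(\log L)$ layers. The plan is correct and coincides with the proof given in Section~\ref{s2}.
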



   \section{Some important estimates}
\label{estimates}
In this section, we present various results on exit and hitting
proababilities for both simple random walk and Brownian motion.
\subsection{Hitting probabilities}
The first two lemmata concern simple random walk.
\begin{lemma}
\label{def:lemmalawler}
Let $0<\eta < 1$. 
\begin{enumerate}
\item There exists $C=C(\eta)>0$ such that  
for all $x\in V_{\eta L}$, 
  $y\in\partial V_L$,
\begin{equation*}
C^{-1}L^{-d+1} \leq \pi_L(x,y) \leq CL^{-d+1}.
\end{equation*}
\item
There exists $C=C(\eta)>0$ such that for all $x,x'\in V_{\eta L}$,
$y\in\partial V_L$,
\begin{equation*}
\left|\pi_L(x,y)-\pi_L(x',y)\right| \leq C|x-x'|L^{-d}.
\end{equation*}
\item Let $0 < l < L$ and $x \in V_L$ with $l < |x| < L$. Then
\begin{equation*}
\Prw_x(\tau_L < T_{V_l}) = \frac{l^{-d+2}-|x|^{-d+2} + O(l^{-d+1})}{l^{-d+2}-L^{-d+2}}.
\end{equation*}
\end{enumerate}
\end{lemma}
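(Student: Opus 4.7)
The plan is to treat parts (i) and (ii) as standard estimates on the discrete Poisson kernel of a ball via comparison with Brownian motion, and to prove (iii) by optional stopping applied to the approximately harmonic function $u(x)=|x|^{-(d-2)}$.

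For (i), one writes down the explicit Brownian Poisson kernel on $C_L$,
\[
  P_L^{\mathrm{BM}}(x,y) \;=\; \frac{L^2 - |x|^2}{\omega_d\,L\,|y-x|^d},
\]
and reads off that for $x\in V_{\eta L}$ both $L^2-|x|^2\asymp L^2$ and $|y-x|\asymp L$ (with constants depending on $\eta$), so that $P_L^{\mathrm{BM}}(x,y)\asymp L^{-(d-1)}$. The transfer to $\pi_L$ can be done through the Green-function representation
\[
  \pi_L(x,y)=\frac{1}{2d}\sum_{z\in V_L,\,z\sim y}G_{V_L}(x,z),\qquad G_{V_L}(x,z)=G(x,z)-\Erw_x[G(X_{\tau_L},z)],
\]
together with the classical asymptotic $G(x,z)=a_d|x-z|^{-(d-2)}+O(|x-z|^{-d})$. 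Part (ii) follows from the same comparison: differentiating $P_L^{\mathrm{BM}}$ in $x$ yields the gradient bound $O(L^{-d})$ on $V_{\eta L}$, and this Lipschitz estimate transfers to $\pi_L$ through the same representation (or by chaining the one-step estimate $\|\pi_L(x,\cdot)-\pi_L(x',\cdot)\|_1=O(L^{-1})$ along a shortest lattice path from $x$ to $x'$, which, combined with the uniform bound in (i), gives the pointwise gradient).

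For (iii), I take $u(x)=|x|^{-(d-2)}$, which is harmonic for BM off the origin. A Taylor expansion of $u$ gives the discrete approximate harmonicity
\[
  |(p^{\mathrm{RW}}u-u)(x)|\;\leq\;C|x|^{-d}\qquad(|x|\geq 1),
\]
so that $M_n=u(X_n)-\sum_{k=0}^{n-1}(p^{\mathrm{RW}}u-u)(X_k)$ is a true martingale. With $\sigma=\tau_L\wedge T_{V_l}$ and $A=V_L\setminus V_l$, optional stopping yields
\[
  \Erw_x[u(X_\sigma)]\;=\;u(x)\;+\;\Erw_x\!\Bigl[\sum_{k=0}^{\sigma-1}(p^{\mathrm{RW}}u-u)(X_k)\Bigr].
\]
Because SRW overshoots each boundary by at most one step, $|X_{\tau_L}|=L+O(1)$ on $\{\tau_L<T_{V_l}\}$ and $|X_{T_{V_l}}|\in(l-1,l]$ on $\{T_{V_l}<\tau_L\}$, so
\[
  u(X_\sigma)=L^{-(d-2)}+O(L^{-(d-1)})\quad\text{and}\quad u(X_\sigma)=l^{-(d-2)}+O(l^{-(d-1)})
\]
respectively. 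Writing $p=\Prw_x(\tau_L<T_{V_l})$, plugging these into the optional stopping identity and solving linearly for $p$ produces exactly the stated formula, provided the error term $\Erw_x\bigl[\sum_{k<\sigma}|X_k|^{-d}\bigr]=\sum_{z\in A}G_A(x,z)|z|^{-d}$ is $O(l^{-(d-1)})$.

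The main obstacle is the bookkeeping of this last error term. I would bound it by splitting the sum over $A$ according to whether $|z|\leq|x|/2$, $|x|/2\leq|z|\leq 2|x|$, or $|z|\geq 2|x|$, and using $G_A(x,z)\leq G(x,z)=O(|x-z|^{-(d-2)})$ together with $|z|\geq l$; in each regime the contribution is $O(|x|^{-(d-2)}\cdot |x|^{-1})\leq O(l^{-(d-1)})$ uniformly in the starting point $x\in\{l<|x|<L\}$ and in the ratio $L/l$. Once this bound is secured, it is of the right order to appear as the $O(l^{-d+1})$ correction in the numerator of the formula, and no further matching against the denominator $l^{-(d-2)}-L^{-(d-2)}$ is needed.
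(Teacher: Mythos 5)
Your strategy for (iii) has a genuine quantitative gap at exactly the step you flag as "bookkeeping". For $u(x)=|x|^{-(d-2)}$ the bound $|(p^{\mathrm{RW}}u-u)(x)|\leq C|x|^{-d}$ is true but far too weak: with it, the accumulated error $\sum_{z\in A}G_A(x,z)|z|^{-d}$ is \emph{not} $O(l^{-(d-1)})$. Take $|x|=2l$ and $L\gg l$: for $z$ with $|z-x|\leq \epsilon l$ one has $G_A(x,z)\geq c|x-z|^{-(d-2)}$, so already these sites contribute at least $c\,l^{-d}\sum_{|z-x|\leq\epsilon l}|x-z|^{-(d-2)}\asymp l^{-(d-2)}$, i.e.\ the same order as the main terms $l^{-(d-2)}-|x|^{-(d-2)}$; correspondingly, in your middle regime $|x|/2\leq|z|\leq 2|x|$ the sum is $\asymp|x|^{-(d-2)}$, not $|x|^{-(d-2)}\cdot|x|^{-1}$. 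The proof is rescued only by an idea you did not state: since $|x|^{-(d-2)}$ is harmonic in the continuum, the second-order Taylor terms cancel and the discrete defect is actually $O(|x|^{-(d+2)})$ (fourth derivatives), after which the error sum is $O(l^{-d})$ and the stated $O(l^{-d+1})$ comes solely from the unit overshoot at the inner boundary. Alternatively one can avoid the defect altogether by stopping the \emph{exact} lattice Green's function, which is literally Lawler's Proposition 1.5.10 -- the paper simply cites it.

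For (i) and (ii) the "transfer" from the Brownian Poisson kernel to $\pi_L$ is the hard part and your sketch does not carry it. In the last-exit identity $\pi_L(x,y)=\frac{1}{2d}\sum_{z\sim y,\,z\in V_L}G_{V_L}(x,z)$ with $G_{V_L}(x,z)=G(x,z)-\Erw_x[G(X_{\tau_L},z)]$, the two terms on the right are each of order $L^{-(d-2)}$ and cancel down to $L^{-(d-1)}$; moreover the subtracted term contains $\sum_w\pi_L(x,w)G(w,z)$ with $G(w,z)=O(1)$ for exit points $w$ near $z$, i.e.\ the very harmonic-measure values you are trying to estimate, so plugging in the free asymptotics $G=a_d|\cdot|^{-(d-2)}+O(|\cdot|^{-d})$ is circular and the $O(|x-z|^{-d})$ errors do not control the cancellation. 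The fallback you offer for (ii) is also invalid: a total-variation estimate $||\pi_L(x,\cdot)-\pi_L(x',\cdot)||_1=O(L^{-1})$, even combined with the sup bound from (i), only yields a pointwise difference $O(L^{-(d-1)})$ for neighbouring points, a factor $L$ short of the required $O(L^{-d})$. The paper's route avoids all of this: $\pi_L(\cdot,y)$ is discrete harmonic in $V_L$, so (i) follows from the known estimate at the centre (Lawler--Limic Lemma 6.3.7) spread over $V_{\eta L}$ by the discrete Harnack inequality, and (ii) follows from the discrete gradient estimate for harmonic functions (Lawler--Limic Theorem 6.3.8) together with (i), chained along a lattice path.
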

\begin{prooof}
  (i) $\pi_L(\cdot,y)$ is harmonic inside $V_L$. Applying a discrete
  Harnack inequality, as, for example, provided by Theorem 6.3.9 in the book of
  Lawler and Limic~\cite{LawLim}, we see that
  $C^{-1}\pi_L(0,y)\leq\pi_L(\cdot,y)\leq C\pi_L(0,y)$ on $V_{\eta L}$, for
  some $C=C(d,\eta)$. Part (i) then follows from Lemma 6.3.7 in the same book.\\
  (ii) By the triangle inequality,
  \begin{equation*}
    \left|\pi_L(x,y)-\pi_L(x',y)\right|\leq C|x-x'|\max_{u,v\in V_{\eta
        L}: |u-v|\leq 1}|\pi_L(u,y)-\pi_L(v,y)|.
  \end{equation*}
  For $u\in V_{\eta L}$, the function $\pi_L(u+\cdot,y)$ is harmonic inside
  $V_{(1-\eta)L}$. The claim now follows from~\cite{LawLim} Theorem 6.3.8, (6.19),
  together with (i).\\
 (iii) This is Proposition 1.5.10 of~\cite{LAW}. 
\end{prooof}
A good control over hitting probabilities is given by
\begin{lemma}
\label{def:hittingprob}
Let $a \geq 1$ and $x,y \in
\mathbb{Z}^d$ with $x\notin V_a(y)$. Then
\begin{enumerate}
\item
\begin{equation*}
\Prw_x\left(T_{V_a(y)} < \infty\right) = \left(\frac{a}{|x-y|}\right)^{d-2}\left(1+O(a^{-1})\right).
\end{equation*}
\item There exists $C>0$,
  independent of $a$, such that when $|x-y|>7a$,
\begin{equation*}
\Prw_x\left(T_{V_a(y)} < \tau_L\right) \leq C\frac{a^{d-2}\max\{a,\dL(y)\}\max\{1,\dL(x)\}}{|x-y|^d}.
\end{equation*}
\item There exists $C>0$ such that for all $x\in V_L$, $y\in\partial V_L$,
\begin{equation*}
C^{-1}\frac{\dL(x)}{|x-y|^{d}}\leq \pi_L(x,y) \leq C\frac{\max\{1,\dL(x)\}}{|x-y|^{d}}.
\end{equation*}
\end{enumerate}
\end{lemma}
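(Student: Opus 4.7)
The three parts are of increasing difficulty, and I would treat them in order. For (i), I would invoke the classical asymptotic for the full-space Green's function of simple random walk in $d\geq 3$, namely $G(x,y) = c_d|x-y|^{-d+2} + O(|x-y|^{-d})$, together with the balayage identity
\[
\Prw_x\bigl(T_{V_a(y)}<\infty\bigr) = \sum_{z\in V_a(y)} G(x,z)\, e_{V_a(y)}(z),
\]
where $e_{V_a(y)}$ is the equilibrium measure of the target ball, with total mass $\mathrm{cap}(V_a(y)) = c_d^{-1}a^{d-2}(1+O(a^{-1}))$. Since $|x-z| = |x-y|(1+O(a/|x-y|))$ uniformly in $z \in V_a(y)$ and $a\geq 1$, expanding $G(x,z)$ around $|x-y|$ and summing against $e_{V_a(y)}$ yields the claimed $(a/|x-y|)^{d-2}(1+O(a^{-1}))$.

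For (iii), upper and lower bounds are essentially dual and I would prove them separately. The upper bound follows from (ii) specialized to $a=1$ with $y$ replaced by a neighbouring site in $V_L$, together with uniform ellipticity of simple random walk to cover the short range case $|x-y|\leq 7$. For the lower bound, I would force the walk from $x$ to first reach a site $x'$ at distance of order $\dL(x)$ from $y$ while staying inside $V_L$; the probability of this event is bounded from below by a constant depending only on the ratio $\dL(x)/|x-y|$ via Lemma~\ref{def:lemmalawler}(iii). Once at $x'$, the conditional probability of exiting precisely at $y$ is at least $c\dL(x)^{-d+1}$ by a direct application of Lemma~\ref{def:lemmalawler}(i) inside a ball of radius comparable to $\dL(x)$.

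The main obstacle is (ii), where the bulk bound $(a/|x-y|)^{d-2}$ from (i) must be sharpened by the two boundary corrections $\max\{1,\dL(x)\}$ and $\max\{a,\dL(y)\}$. My plan is to first establish the corresponding estimate for Brownian motion and then transfer to simple random walk via the coupling worked out in the appendix. For Brownian motion in $C_L$, writing
\[
\Pbm_x\bigl(T_{C_a(y)}<\tau_{C_L}\bigr) = \int_{\partial C_a(y)\cap C_L} K(x,z)\,\sigma(dz),
\]
where $K$ denotes the Poisson kernel of $C_L\setminus \overline{C_a(y)}$, one can estimate $K$ by combining the explicit Poisson kernel of $C_L$ with a Carleson/boundary Harnack-type argument. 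The factor $\dL(x)$ arises from the linear vanishing of bounded harmonic functions at the smooth boundary $\partial C_L$, while the factor $\max\{a,\dL(y)\}$ reflects the surface content of $\partial C_a(y)\cap C_L$, which is of order $a^{d-2}\max\{a,\dL(y)\}$. A Kelvin-transform symmetry between the roles of $x$ and $y$ keeps the bound balanced. Transferring the Brownian estimate to simple random walk then absorbs the discrete boundary fluctuations of scale $O(1)$ into the $\max\{1,\cdot\}$ around $\dL(x)$.
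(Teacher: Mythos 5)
Your plan for part (ii) — prove the Brownian bound in $C_L$ and transfer to the walk by the KMT/Zaitsev coupling — breaks down exactly in the regime the lemma is needed for, namely when $x$, $y$ or the target ball is close to $\partial V_L$. The coupling error over the relevant time horizon is of order $\log L$, not $O(1)$, so after the transfer you can only control the walk's distance to the boundary up to an additive $O(\log L)$; this destroys the factors $\max\{1,\dL(x)\}$ and $\max\{a,\dL(y)\}$ whenever $\dL(x)$ or $\dL(y)$ is of order $1$ (or of order $a\ll\log L$), which is precisely the case arising in Sections~\ref{s2} and~\ref{nonsmv-exits}. Note also that the Brownian estimate, Lemma~\ref{def:hittingprob-bm} (ii), is only stated (and only true in this clean form) under $C_{2a}(y)\subset C_L$, so there is no continuous statement to transfer when $y$ sits in the boundary layer. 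This is exactly why the paper, after the remark following Lemma~\ref{def:hittingprob-bm}, gives a self-contained discrete proof: it first proves the upper bound of (iii) (Lemma~\ref{def:lemma1}, via reversibility, the Harnack-type bounds of Lemma~\ref{def:lemmalawler} and a bootstrap over scales $l\mapsto l/3$), and then deduces (ii) from (i), (iii) and a ``reverse'' argument bounding $\Prw_x(T_{V_a(y)}<\tau_L)$ by the exit measure of a boundary patch near $y$. Your architecture runs the implication the other way (deduce the upper bound of (iii) from (ii) with $a=1$), which would be legitimate and non-circular if (ii) were established independently — but with the coupling route, (ii) is not, so the gap propagates to (iii) as well.

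Two further points. For the lower bound in (iii), reaching a site at distance of order $\dL(x)$ from $y$ does \emph{not} have probability bounded below by a constant (even one depending on the ratio $\dL(x)/|x-y|$) when $\dL(x)\ll|x-y|$; that probability itself decays like a power of $\dL(x)/|x-y|$, and Lemma~\ref{def:lemmalawler} (iii) concerns annuli around the origin, not lateral displacement along the boundary. The correct bookkeeping, as in the paper, is to pay $c\,\dL(x)/|x-y|$ to reach a region at distance of order $t=|x-y|$ from $\partial V_L$ inside an inscribed ball $V_t(x')$ with $y\in\partial V_t(x')$, and then $c\,t^{-(d-1)}$ to exit at $y$, giving $c\,\dL(x)/|x-y|^d$. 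For part (i), the capacity/last-exit route is the standard one (it is how the cited result in~\cite{LawLim} is proved), but expanding $G(x,z)$ crudely over the ball only gives a relative error $O(a/|x-y|)$, which is not $O(a^{-1})$ in the intermediate regime $a\ll|x-y|\ll a^2$; to reach $O(a^{-1})$ one must exploit the near-rotational symmetry of the equilibrium measure (mean-value cancellation), or simply cite the result as the paper does.
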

This lemma will be proved in the appendix. 

We need analogous results for Brownian motion in $\mathbb{R}^d$. Denote by
$\pibm_L(y,dz)$ the exit measure of $d$-dimensional Brownian motion from
$C_L$, started at $y\in C_L$. By a small abuse of notation, we also write
$\pibm_L(y,z)$ for the (continuous version of the) density with respect to
surface measure on $C_L$, which is given by the Poisson kernel
\begin{equation}
\label{eq:est-poissonkernel}
\pibm_L(y,z)= \frac{1}{d\,\alpha(d)\,L}\frac{L^2-|y|^2}{|y-z|^d},
\end{equation}
where $\alpha(d)$ is the volume of the unit ball. From this explicit form,
we can directly read off the analogous statements of
Lemma~\ref{def:lemmalawler} (i), (ii) and Lemma~\ref{def:hittingprob}
(iii), with $V_L$ replaced by $C_L$. Let us now formulate and prove the
analog of parts (i) and (ii) from the last lemma.  Denote by $\Pbm_x$ the
law of standard $d$-dimensional Brownian motion, started at
$x\in\mathbb{R}^d$. For the following statement, $T_{C_a(y)}$ and
$\tau_{C_L}$ are defined in the obvious way in terms of Brownian motion.
\begin{lemma}
\label{def:hittingprob-bm}
Let $a>0$ and $x,y\in\mathbb{R}^d$ with $x\notin C_a(y)$. Then
\begin{enumerate}
\item 
\begin{equation*}
\Pbm_x\left(T_{C_a(y)} < \infty\right) = \left(\frac{a}{|x-y|}\right)^{d-2}.
\end{equation*}
\item Assume $C_{2a}(y)\subset C_L$. There exists $K>0$ such
  that
\begin{equation*}
\Pbm_x\left(T_{C_a(y)} < \tau_{C_L}\right)\leq K\frac{a^{d-2}\dist_L(y)\dist_L(x)}{|x-y|^d}.
\end{equation*}
\end{enumerate}
\end{lemma}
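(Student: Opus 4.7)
For (i), I would appeal to the classical optional stopping argument. The function $v(w)=|w-y|^{-(d-2)}$ is harmonic in $\mathbb{R}^d\setminus\{y\}$; for $x\notin\overline{C_a(y)}$ and $R>|x-y|$, applying the optional stopping theorem to $v(X_\cdot)$ at the exit time $\sigma_R=T_{C_a(y)}\wedge T_{\partial C_R(y)}$ yields
\[
|x-y|^{-(d-2)} = a^{-(d-2)}\,\Pbm_x\!\left(T_{C_a(y)}<T_{\partial C_R(y)}\right)+R^{-(d-2)}\,\Pbm_x\!\left(T_{\partial C_R(y)}<T_{C_a(y)}\right).
\]
Since Brownian motion in $d\geq 3$ is transient, letting $R\to\infty$ kills the second term and gives the claimed identity.

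\textbf{Plan for (ii).}  Write $f(w):=(a/|w-y|)^{d-2}$, the hitting probability from (i). Using the decomposition $\{T_{C_a(y)}<\infty\}=\{T_{C_a(y)}<\tau_{C_L}\}\sqcup\{\tau_{C_L}<T_{C_a(y)}<\infty\}$ together with the strong Markov property at $\tau_{C_L}$ (noting that $\partial C_L$ is disjoint from $\overline{C_a(y)}$ by $C_{2a}(y)\subset C_L$), I would obtain
\[
\Pbm_x\!\left(T_{C_a(y)}<\tau_{C_L}\right) \;=\; f(x)\;-\;\int_{\partial C_L}\pibm_L(x,dz)\,f(z).
\]
The function $f$ is a constant multiple of the Newtonian potential centered at $y$, so $-\Delta f = a^{d-2}(d-2)|S^{d-1}|\,\delta_y$ in the distributional sense. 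The Poisson--Green representation for $C_L$ with Dirichlet Green's function $G_L^{BM}$ therefore gives
\[
\Pbm_x\!\left(T_{C_a(y)}<\tau_{C_L}\right) \;=\; a^{d-2}(d-2)|S^{d-1}|\,G_L^{BM}(x,y).
\]

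\textbf{Bounding the Green's function.}  By the method of images, $G_L^{BM}(x,y)=c_d\bigl[|x-y|^{-(d-2)}-(L/|y|)^{d-2}|x-y^*|^{-(d-2)}\bigr]$ with $y^*=L^2 y/|y|^2$. A direct expansion of the squares produces the identity
\[
\left(\tfrac{|y|}{L}\right)^{\!2}|x-y^*|^2 \;-\; |x-y|^2 \;=\; \frac{(L^2-|x|^2)(L^2-|y|^2)}{L^2},
\]
so with $A:=(|y|/L)|x-y^*|$ and $B:=|x-y|$ one has $A\geq B$ and $A^2-B^2\leq 4\,\dist_L(x)\,\dist_L(y)$ (since $L^2-|w|^2\leq 2L\,\dist_L(w)$). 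The mean-value bound $B^{-(d-2)}-A^{-(d-2)}\leq (d-2)(A-B)/B^{d-1}$ together with $A-B=(A^2-B^2)/(A+B)\leq (A^2-B^2)/B$ gives
\[
G_L^{BM}(x,y)\;\leq\; C\,\frac{\dist_L(x)\,\dist_L(y)}{|x-y|^d}.
\]
Substituting into the formula for the hitting probability yields the bound in (ii).

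\textbf{Main obstacle.}  The proof is largely a bookkeeping exercise once one has a quantitative grip on $G_L^{BM}(x,y)$ near $\partial C_L$. The decisive step is the algebraic identity for $(|y|/L)|x-y^*|-|x-y|$, which converts the difference of two explicit inverse powers into the product $\dist_L(x)\dist_L(y)$; the rest is a one-line mean value argument. Care is also needed to justify the Poisson--Green representation for the singular function $f$, but this is standard distributional calculus once one recognizes $f$ as a rescaled Newtonian potential.
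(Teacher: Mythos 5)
Your part (i) is correct and standard (the paper simply cites Durrett for this identity). In part (ii), however, the pivotal identity
\begin{equation*}
\Pbm_x\left(T_{C_a(y)}<\tau_{C_L}\right)\;=\;f(x)-\int_{\partial C_L}\pibm_L(x,dz)\,f(z)
\end{equation*}
is false. The strong Markov property at $\tau_{C_L}$ gives
\begin{equation*}
\Pbm_x\left(T_{C_a(y)}<\tau_{C_L}\right)\;=\;f(x)-\Erw_x\left[f\left(X_{\tau_{C_L}}\right)\right]
+\Erw_x\left[f\left(X_{\tau_{C_L}}\right);\,T_{C_a(y)}<\tau_{C_L}\right],
\end{equation*}
and you dropped the last, nonnegative, term; so the quantity you go on to bound is only a \emph{lower} bound for the hitting probability, which is the wrong direction for the desired upper estimate. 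The concentric case $y=0$ makes this concrete: the true probability is $(|x|^{2-d}-L^{2-d})/(a^{2-d}-L^{2-d})$, while your right-hand side equals $a^{d-2}(|x|^{2-d}-L^{2-d})$, which is strictly smaller for finite $L$. The gap is repairable in two lines: since $C_{2a}(y)\subset C_L$, every $z\in\partial C_L$ has $|z-y|\geq 2a$, so $f\leq 2^{-(d-2)}\leq 1/2$ on $\partial C_L$; hence the omitted term is at most $\tfrac12\Pbm_x(T_{C_a(y)}<\tau_{C_L})$ and can be absorbed, yielding $\Pbm_x(T_{C_a(y)}<\tau_{C_L})\leq 2(d-2)\,\omega_{d-1}\,a^{d-2}\,G(x,y)$, where $G$ is the Dirichlet Green's function of $C_L$ and $\omega_{d-1}$ the surface area of the unit sphere. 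With that correction the rest of your computation is sound: the algebraic identity for $(|y|/L)^2|x-y^*|^2-|x-y|^2$ is exact, and the mean-value step does give $G(x,y)\leq C\,\dL(x)\dL(y)/|x-y|^d$, which finishes the proof.

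It is worth noting that your (corrected) route is genuinely different from the paper's. The paper never uses a potential-theoretic identity at the single point $y$; it compares occupation integrals,
\begin{equation*}
\int_{C_{a/2}(y)}\gbm(x,z)\,\dt z\;\geq\;\Pbm_x\left(T_{C_a(y)}<\tau_{C_L}\right)\inf_{v\in\partial C_a(y)}\int_{C_{a/2}(y)}\gbm(v,z)\,\dt z,
\end{equation*}
and then imports from Chung both the lower bound $c\,a^2$ for the infimum and the upper bound $\gbm(x,z)\leq C\,\dL(x)\dL(z)/|x-z|^d$. Your image-method estimate essentially reproves Chung's upper bound, so your argument is more self-contained, at the price of the extra care needed above to convert the hitting probability into a multiple of $G(x,y)$.
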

\begin{prooof2}{\bf of Lemma~\ref{def:hittingprob-bm}:}
(i) See for example the book of Durrett~\cite{DUR}, $(1.12)$. \\
(ii) Recall that the Green's function of Brownian motion for $C_L$ is given by
\begin{equation*}
\gbm(x,y) = A_d\left(\left(\frac{1}{|x-y|}\right)^{d-2} -
  \left(\frac{L}{\left|x\right|\left|x^\star-y\right|}\right)^{d-2}\right), 
\end{equation*}
where $A_d$ is an explicit constant, and for $x\neq 0$, $x^\star =
(L^2/|x|^2)x$ is the inversion of $x$ with respect to $C_L$.
Now, for $a > 0$ and $x\notin C_a(y)$, we have 
\begin{equation*}
\int\limits_{C_{a/2}(y)}\gbm(x,z)\dt z \geq
\Pbm_x\left(T_{C_a(y)}<\tau_{C_L}\right)\inf\limits_{v\in\partial
  C_a(y)}\int\limits_{C_{a/2}(y)}\gbm(v,z)\dt z.
\end{equation*}
By Proposition $1$ of~\cite{CHU}, the infimum on the right-hand side can
be bounded from below by $c\,a^2$. Using the second upper bound on $\gbm(x,z)$
from the same proposition, we get
\begin{equation*}
\Pbm_x\left(T_{C_a(y)}<\tau_{C_L}\right) \leq c^{-1}a^{-2}
\int\limits_{C_{a/2}(y)}\gbm(x,z)\dt z \leq K\,\frac{a^{d-2}\dist_L(y)\dist_L(x)}{|x-y|^d}.
\end{equation*}
\end{prooof2}
\begin{remark}
  Lemma~\ref{def:hittingprob} (ii) can be proved in the same way if $|x|,
  |y| \leq cL$ for some $c<1$, for example by using Proposition 8.4.1 of~\cite{LawLim},
  which is based on a coupling argument. Since we need an estimate
  including the case when $x$ or $y$ are near the boundary, we give a
  self-contained proof in the appendix.
\end{remark}
Probabilities of the above type will often be estimated by the following
\begin{lemma}
\label{def:hittingprob-technical}
Let $a > 0$, $l, m\geq 1$ and $x\in\mathbb{Z}^d$. Set $R_l = V_l\backslash
V_{l-1}$, $\alpha=\max\left\{\left||x|-l\right|,a\right\}$. Then for some constant $C = C(m) > 0$
\begin{equation*}
\sum_{y\in R_l}\frac{1}{\left(a+|x-y|\right)^m} \leq C\left\{\begin{array}{l@{\quad \mbox{\textup{for}\ }}l}
   l^{d-(m+1)} & 1 \leq m < d-1\\
  \max\{\log(l/\alpha), 1\} & m = d-1\\      
   \alpha^{d-(m+1)}& m \geq d\end{array}\right..
\end{equation*}
\end{lemma}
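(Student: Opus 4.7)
The strategy is a dyadic decomposition of the shell $R_l$ according to the distance from $x$, combined with a standard spherical-cap counting estimate and a three-way case analysis for the geometric series that results. First I would set, for $k \geq 0$,
\[
A_k = \bigl\{y\in R_l : a+|x-y| \in [2^k\alpha,\,2^{k+1}\alpha)\bigr\}.
\]
Since $|x-y|\geq ||x|-l|-1$ for every $y\in R_l$ and $\alpha = \max\{||x|-l|,a\}$, we have $a+|x-y|\geq \alpha$ up to an absorbed constant, so the $A_k$ cover $R_l$. Consequently
\[
\sum_{y\in R_l}\frac{1}{(a+|x-y|)^m} \leq \sum_{k\geq 0} \frac{|A_k|}{(2^k\alpha)^m}.
\]

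Next I would invoke the geometric bound
\[
|R_l \cap \{y : |x-y| \leq r\}|\ \leq\ C\,\min\bigl\{l^{d-1},\,r^{d-1}\bigr\}\qquad (r\geq 1),
\]
which gives $|A_k|\leq C\min\{l^{d-1},(2^{k+1}\alpha)^{d-1}\}$. The $l^{d-1}$ bound is the trivial count for the whole shell, while the $r^{d-1}$ bound expresses the fact that $R_l$ is essentially the discretization of a $(d-1)$-sphere of radius $l$, so its intersection with a Euclidean ball of radius $r$ is a spherical cap whose projection onto the tangent hyperplane at the closest point to $x$ has $(d-1)$-volume $O(r^{d-1})$. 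Proving this cleanly — in particular handling the case where $x$ sits close to $\partial V_l$ so that $r$ may exceed $l$ — is the main technical obstacle, but it is standard lattice geometry.

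With this in hand, let $k^\star$ be the least integer satisfying $2^{k^\star}\alpha\geq l$; then $k^\star\leq \log_2(l/\alpha)+1$ when $\alpha\leq l$ and $k^\star=0$ otherwise. Splitting the sum at $k^\star$ gives
\[
\sum_{k<k^\star}(2^k\alpha)^{d-1-m} \ +\ l^{d-1}\sum_{k\geq k^\star}(2^k\alpha)^{-m},
\]
and the tail is a geometric series dominated by $C l^{d-1}(2^{k^\star}\alpha)^{-m}\leq C l^{d-1-m}$. The head depends on the sign of $d-1-m$: for $1\leq m<d-1$ the terms are geometric and dominated by the largest, yielding $Cl^{d-1-m}$; for $m=d-1$ every term equals $1$ and there are at most $C\max\{\log(l/\alpha),1\}$ of them; for $m\geq d$ the terms decay geometrically in $k$ and are dominated by the first, giving $C\alpha^{d-1-m}$.

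Finally, I would combine head and tail in each case. For $m<d-1$ both are $Cl^{d-(m+1)}$. For $m=d-1$ the tail contributes $Cl^0=C$, absorbed by $C\max\{\log(l/\alpha),1\}$. For $m\geq d$ one has $d-1-m<0$, so when $\alpha\leq l$ the estimate $l^{d-1-m}\leq \alpha^{d-1-m}$ holds, while when $\alpha>l$ the head sum is empty and the tail gives $Cl^{d-1}\alpha^{-m}\leq C\alpha^{d-1-m}$. In every case the total is bounded by $C\alpha^{d-(m+1)}$, completing the proof. Beyond the cap-counting inequality, the only delicate point is checking that the additive constant hidden in $a+|x-y|\geq\alpha$ does not spoil the dyadic partition; this is resolved by treating the first few $k$ separately or by enlarging the base of the dyadic scale.
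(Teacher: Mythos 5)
Your argument is correct and is essentially the paper's own proof: both decompose $R_l$ into annuli centered at $x$, bound the number of shell points in each annulus by a geometric cap-counting estimate, and then evaluate the resulting power sum separately in the regimes $1\le m<d-1$, $m=d-1$ and $m\ge d$ (with the far region handled by the trivial count $Cl^{d-1}$ times the smallest term). The only difference is cosmetic --- you use dyadic annuli with the cumulative count $|R_l\cap V_r(x)|\le C\min\{l^{d-1},r^{d-1}\}$, whereas the paper uses annuli of width $\alpha$ with the per-annulus count $|A_k|\le C\alpha(k\alpha)^{d-2}$ (likewise asserted without proof), so your unproved cap estimate and the small additive-constant adjustment are at the same level of rigor as the paper's argument.
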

\begin{prooof}
If $\alpha > l$, then the left-hand side
is bounded by 
\begin{equation*}
Cl^{d-1}\alpha^{-m}\leq C\max\left\{\alpha^{d-(m+1)},l^{d-(m+1)}\right\}.
\end{equation*}
If $\alpha \leq
l$, we set $A_k = \{y\in R_l: |x-y| \in
[(k-1)\alpha,k\alpha)\}$. Then, for all $k\geq 1$,
\begin{equation*}
  \max_{y\in A_k}\frac{1}{\left(a+|x-y|\right)^m} \leq 2^mk^{-m}\alpha^{-m}.
\end{equation*}
Since for $k\alpha \leq l/10$ we have $|A_k| \leq C\alpha(k\alpha)^{d-2}$,
the claim then follows from
\begin{eqnarray*}
  \sum_{y\in R_l}\frac{1}{\left(a+|x-y|\right)^m} &\leq&
  C\left(\sum_{1\leq k\leq
      \lfloor l/(10\alpha)\rfloor}\frac{\alpha(k\alpha)^{d-2}}{(k\alpha)^m}\right) + Cl^{d-1}l^{-m}\\
  &\leq& C\alpha^{d-{(m+1)}}\sum_{1\leq k\leq
    \lfloor l/(10\alpha)\rfloor}k^{d-(m+2)} + Cl^{d-(m+1)}.
\end{eqnarray*}
\end{prooof}

\subsection{Smoothed exit measures}
We will compare exit laws of simple random walk with exit laws of Brownian motion.
Given a field of positive real numbers $\psi =\left(m_x\right)_{x\in
  \mathbb{R}^d}$, we define the smoothed exit law from $V_L$ of simple
random walk as
\begin{equation*}
  \phi_{L,\psi}(x,z) = \pi_L\ph_{\psi}(x,z) = \sum_{y\in\partial
    V_L}\pi_L(x,y)\frac{1}{m_y}\int_{\mathbb{R}_+}\varphi\left(\frac{t}{m_y}\right)
  \pi_{V_t(y)}(y,z)\dt t.
\end{equation*}
Denoting by $\pibm_{C_t(x)}(x,dz)$ the exit measure of $d$-dimensional Brownian
motion from $C_t(x)$, started at $x$, we let analogous to~\eqref{eq:prel-cgpsi},
\begin{equation*}
  \phbm_{\psi}(x,dz) = \frac{1}{m_x}\int_{\mathbb{R}_+}\varphi\left(\frac{t}{m_x}\right)
  \pibm_{C_t(x)}(x,dz)\dt t.
\end{equation*} 
Then define the smoothed Brownian exit measure from $C_L$ as
\begin{equation*}
  \phibm_{L,\psi}(x,dz) =
  \pibm_L\phbm_{\psi}(x,dz) = \int_{\partial
    C_L}\pibm_L(x,dy)
  \frac{1}{m_y}\int_{\mathbb{R}_+}\varphi\left(\frac{t}{m_y}\right)
   \pibm_{C_t(y)}(y,dz)\dt t.
\end{equation*}
By $\phibm_{L,\psi}(x,z)$ we denote the density of $\phibm_{L,\psi}(x,dz)$
with respect to $d$-dimensional Lebesgue measure.
\begin{lemma}
\label{def:est-phi}
Let $\psi \in \mathcal{M}_L$. There exists a constant $C > 0$
such that
\begin{enumerate}
\item
\begin{equation*}
  \sup_{x\in V_L}\sup_{z\in\mathbb{Z}^d}\left|\left(\phi_{L,\psi}-\phibm_{L,\psi}\right)(x,z)\right|\leq
  CL^{-(d+1/4)}.
\end{equation*}
\item
\begin{equation*}
  \sup_{z\in\mathbb{R}^d}{\left|\left|D^i\phibm_{L,\psi}(\cdot,z)\right|\right|}_{C_L}
  \leq CL^{-(d+i)},\quad i =0,1,2,3.
\end{equation*}
\item \begin{equation*}
\sup_{x,x'\in V_L\cup \partial V_L}\sup_ {z\in\mathbb{Z}^d}\left|\phi_{L,\psi}(x,z)-\phi_{L,\psi}(x',z)\right| \leq C\left(L^{-(d+1/4)}
  + |x-x'|L^{-(d+1)}\right).
\end{equation*}
\end{enumerate}
\end{lemma}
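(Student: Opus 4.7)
The plan is to handle the three parts in the order (ii), (i), (iii), with (i) being the genuine work and (ii), (iii) being essentially soft consequences.

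For (ii), I would first rewrite $\phibm_{L,\psi}(x,z)=\pibm_L(x,\phbm_\psi(\cdot,z))$, which exhibits the map $x\mapsto \phibm_{L,\psi}(x,z)$ as the harmonic function in $C_L$ with boundary value $y\mapsto \phbm_\psi(y,z)$ on $\partial C_L$. Since Brownian motion started at the center of a ball exits uniformly on its sphere, a direct computation in polar coordinates gives
\begin{equation*}
\phbm_\psi(y,z)\;=\;\frac{1}{\omega_{d-1}\,\psi(y)\,|z-y|^{d-1}}\,\varphi\!\left(\frac{|z-y|}{\psi(y)}\right),
\end{equation*}
where $\omega_{d-1}$ is the surface area of the unit $(d-1)$-sphere. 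Because $\varphi\in C^\infty$ has compact support in $(1,2)$ and $\psi\in\mathcal M_L$ takes values in $(L/10,5L)$ with $\|D^i\psi\|\le 10$ for $i\le 4$, this boundary datum is globally $C^4$ in $y$ with $\|D^i\phbm_\psi(\cdot,z)\|_\infty\le CL^{-(d+i)}$, uniformly in $z$, for $i=0,1,2,3$. Rescaling $\tilde u(\xi)=L^d\phibm_{L,\psi}(L\xi,z)$, which is harmonic in $C_1$ with $C^4$-boundary data of size $O(1)$, and applying standard Schauder estimates up to $\partial C_1$ gives $\|\tilde u\|_{C^3(\overline C_1)}\le C$, which after unscaling is the claim.

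For (i), the key input is a quantitative local central limit theorem for the coarse-grained simple random walk (proved in the appendix). Writing
\begin{equation*}
\phi_{L,\psi}(x,z)=\sum_{y\in\partial V_L}\pi_L(x,y)\,\ph_\psi(y,z),\quad \phibm_{L,\psi}(x,z)=\int_{\partial C_L}\pibm_L(x,dy)\,\phbm_\psi(y,z),
\end{equation*}
the difference splits into two contributions: a comparison of $\pi_L$ with $\pibm_L$ at the boundary, tested against the smooth density $\phbm_\psi(\cdot,z)$, plus a pointwise comparison of $\ph_\psi(y,\cdot)$ with $\phbm_\psi(y,\cdot)$ at scale $\psi(y)\asymp L$. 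Both comparisons benefit crucially from the smoothness and compact support of $\varphi$: the first smoothing produces a boundary datum that is $C^4$ on scale $L$, which improves the LCLT error beyond the naive $L^{-d}$ size of the density, and the second smoothing plays the same role at the ``exit from a ball of radius $\sim L$'' step. Summing the two error contributions yields the bound $CL^{-(d+1/4)}$, the $1/4$ arising from the effective polynomial rate of convergence in the LCLT.

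For (iii), extend $\phibm_{L,\psi}(\cdot,z)$ smoothly to a neighborhood of $\overline C_L$ by its defining formula. For $x,x'\in V_L\cup\partial V_L$, the triangle inequality gives
\begin{equation*}
|\phi_{L,\psi}(x,z)-\phi_{L,\psi}(x',z)|\le 2\sup_{\xi}|\phi_{L,\psi}(\xi,z)-\phibm_{L,\psi}(\xi,z)|\,+\,|\phibm_{L,\psi}(x,z)-\phibm_{L,\psi}(x',z)|.
\end{equation*}
By (i) the first term is $O(L^{-(d+1/4)})$, and by (ii) together with the mean value theorem the second is $O(|x-x'|L^{-(d+1)})$, which gives (iii). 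The main obstacle is the quantitative LCLT underlying (i): one must track boundary effects in $\pi_L(x,y)$ carefully (including the regime in which $x$ is close to $\partial V_L$) and exploit the $C^4$-regularity of $\psi$ together with the smoothness of $\varphi$ to extract the quantitative $L^{-1/4}$ improvement over the leading order. Everything else follows from elliptic regularity and the triangle inequality.
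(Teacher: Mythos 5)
Your treatment of part (ii) is essentially the paper's proof (explicit density of $\phbm_{\psi}$, rescaling to the unit ball, Schauder/elliptic estimates up to the boundary with the $C^4$ bounds coming from $\psi\in\mathcal{M}_L$ and the smoothness of $\varphi$), and your reduction of (iii) to (i), (ii) and the mean value theorem is also the paper's route, except that for $x\in\partial V_L$ part (i) does not apply (there $\phi_{L,\psi}(x,\cdot)=\ph_{\psi}(x,\cdot)$, and your "extension of $\phibm_{L,\psi}$ beyond $\overline{C}_L$ by its defining formula" is not available); the paper handles this boundary case by a separate first-exit argument splitting $\partial V_L$ into points within and beyond distance $L^{1/4}$ of $x$ and using the hitting estimates together with the kernel difference bounds. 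That is a fixable omission.

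The genuine gap is in part (i), which is the only hard part. You attribute the whole quantitative comparison to "a quantitative local central limit theorem for the coarse-grained simple random walk (proved in the appendix)", but the appendix LCLT (Proposition~\ref{def:super-localclt}) concerns the $n$-step kernel $\ph_m^n$ on all of $\mathbb{Z}^d$; it says nothing about exit measures from $V_L$, and it is not what the paper uses here. What is actually needed is a comparison of the discrete harmonic measure $\pi_L(x,\cdot)$ on $\partial V_L$ with the Poisson kernel $\pibm_L(x,\cdot)$ on $\partial C_L$, and of $\ph_{\psi}(y,\cdot)$ with $\phbm_{\psi}(y,\cdot)$, with a polynomial rate. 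A pointwise comparison at single boundary lattice points is false at the needed precision (lattice boundary effects are order one relative to the $L^{-(d-1)}$ density), so one must aggregate: the paper partitions $\partial V_L$ into patches $W_i$ of diameter $L^{2/3}$, fattens them by $L^{1/3}$, and compares $\pi_L(x,W_i)$ with $\pibm_L(x,W_i^{\beta})$ via a KMT/Zaitsev strong coupling of the walk with Brownian motion together with the hitting estimates of Lemma~\ref{def:hittingprob} (this is Lemma~\ref{def:app-kernelest-technical}); the smoothness of $\ph_{\psi}$, $\phbm_{\psi}$ on scale $L$ (Lemma~\ref{def:app-kernelest} (iii), (v)--(vii)) is then what lets one pass from patch-wise to pointwise statements, and the exponent $1/4$ comes from balancing the patch size, the fattening, and the logarithmic coupling error — not from "the effective polynomial rate of convergence in the LCLT". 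Your proposal names the right two-term decomposition but supplies no mechanism for either comparison and rests on a tool that cannot produce them, so the central estimate of the lemma remains unproved as written.
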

For the proof, we refer to the appendix.  The next proposition will be
applied at the end of the proof of Lemma~\ref{def:lemma-goodpart}. At this
point, the invariance condition $\czero$ comes into play.  We give a
general formulation in terms of a signed measure $\nu$. Let us introduce
the following notation.  For $x=(x_1,\ldots,x_d)\in \mathbb{Z}^d$, put
\begin{eqnarray*}
x^{(i)} &=& \left(x_1,\ldots,x_{i-1},-x_i,x_{i+1},\ldots,x_d\right),\\
x^{\leftrightarrow (i,j)} &=&
\left(x_1,\ldots,x_{i-1},x_j,x_{i+1},\ldots,x_{j-1},x_i,x_{j+1},\ldots,x_d\right),\,\textup{if
  }i<j.
\end{eqnarray*}
\begin{proposition}
\label{def:est-isometry}
Let $l>0$. Consider a measure $\nu$ on $V_l$ with total mass zero
satisfying
\begin{enumerate}
\item $\nu(x) = \nu(x^{(i)})$ for all $x$ and all $i=1,\ldots, d$.
\item $\nu(x)=\nu(x^{\leftrightarrow (i,j)})$ for all $x$ and all $i,j = 1, \ldots, d$, $i<j$. 
\end{enumerate}
Then there exists $C>0$ such that for $y'\in V_L$ with $V_l(y')
\subset V_L$ and all $z\in\mathbb{Z}^d$, $\psi\in\mathcal{M}_L$,
\begin{equation*}
\left | \sum_{y\in V_l(y')}\nu(y-y')\phi_{L,\psi}(y,z)\right| \leq
C||\nu||_1\left(L^{-(d+1/4)} + \left(\frac{l}{L}\right)^3L^{-d}\right).
\end{equation*}
\end{proposition}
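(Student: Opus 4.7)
The plan is to reduce the sum to a Taylor expansion of the \emph{Brownian} smoothed exit density $\phibm_{L,\psi}(\cdot,z)$ about $y'$, and then to exploit the symmetries of $\nu$ together with the harmonicity of $\phibm_{L,\psi}(\cdot,z)$ inside $C_L$ to kill the first few terms of that expansion.

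First I would use Lemma~\ref{def:est-phi}~(i) to replace $\phi_{L,\psi}$ by $\phibm_{L,\psi}$ inside the sum. Since $|\phi_{L,\psi}-\phibm_{L,\psi}|\leq CL^{-(d+1/4)}$ uniformly, this substitution produces a total error of at most $C||\nu||_1 L^{-(d+1/4)}$, which accounts for the first term in the claimed bound. It then remains to estimate $S:=\sum_{y\in V_l(y')}\nu(y-y')\phibm_{L,\psi}(y,z)$.

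Because $V_l(y')\subset V_L$ and the relevant segments lie in $\overline{C_L}$, I would Taylor-expand $\phibm_{L,\psi}(\cdot,z)$ about $y'$ up to second order with integral remainder of order three:
\[
\phibm_{L,\psi}(y,z)=\sum_{|\alpha|\leq 2}\frac{(y-y')^\alpha}{\alpha!}\,\partial^\alpha\phibm_{L,\psi}(y',z)+R_3(y,z),
\]
with $|R_3(y,z)|\leq C|y-y'|^3\,||D^3\phibm_{L,\psi}(\cdot,z)||_{C_L}$. By Lemma~\ref{def:est-phi}~(ii) the third derivative is $O(L^{-(d+3)})$, so $|R_3|\leq Cl^3L^{-(d+3)}$, and summing against $\nu$ this contributes at most $C||\nu||_1(l/L)^3L^{-d}$, which is the second term of the desired bound.

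It therefore suffices to show that the three polynomial moments $M_\alpha:=\sum_{x\in V_l}\nu(x)\,x^\alpha$ for $|\alpha|\leq 2$ yield zero after pairing with $\partial^\alpha\phibm_{L,\psi}(y',z)$. For $\alpha=0$ this is the total-mass-zero hypothesis. For $|\alpha|=1$, changing variables $x\mapsto x^{(i)}$ and invoking hypothesis (i) gives $M_i=-M_i$, so $M_i=0$. For $|\alpha|=2$ with $i\neq j$ the same sign-flip in the $i$th coordinate yields $\sum_x\nu(x)x_ix_j=0$; the diagonal second moments $\sigma_i:=\sum_x\nu(x)x_i^2$ are all equal by hypothesis (ii) (apply $x\mapsto x^{\leftrightarrow(i,j)}$), so the diagonal part collapses to
\[
\tfrac12\sigma_1\sum_{i=1}^d\partial_{ii}\phibm_{L,\psi}(y',z)=\tfrac12\sigma_1\,\Delta\phibm_{L,\psi}(y',z),
\]
and this vanishes because $\phibm_{L,\psi}(\cdot,z)$ is the Poisson integral of $\phbm_\psi(\cdot,z)$ against the Brownian exit measure $\pibm_L$, hence harmonic inside $C_L$. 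No individual step is hard; the main bookkeeping point is to line up the three powers of $l/L$ gained from the third-order Taylor remainder with the three cancellations produced by total mass, condition (i), and conditions (i)+(ii) together with harmonicity.
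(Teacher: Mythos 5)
Your proposal is correct and follows essentially the same route as the paper: replace $\phi_{L,\psi}$ by $\phibm_{L,\psi}$ via Lemma~\ref{def:est-phi}~(i), Taylor-expand to second order with a third-order remainder controlled by Lemma~\ref{def:est-phi}~(ii), and kill the zeroth, first, and second order terms using total mass zero, the symmetry hypotheses, and harmonicity of the Poisson integral, respectively. No meaningful differences from the paper's argument.
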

\begin{prooof}
  Since the proof is the same for all $y'\in V_L$ with $V_l(y')
\subset V_L$, we can assume $y'=0$. By
  Lemma~\ref{def:est-phi} (i),
\begin{equation*}
\left|\sum_{y}\nu(y)\phi_{L,\psi}(y,z) -
  \sum_{y}\nu(y)\phibm_{L,\psi}(y,z)\right| \leq
C||\nu||_1L^{-(d+1/4)}.
\end{equation*}
Taylor's expansion gives
\begin{eqnarray}
\label{eq:est-isometry-1} 
\lefteqn{\sum_{y}\nu(y)\phibm_{L,\psi}(y,z)}\nonumber\\&=&
\sum_{y}\nu(y)\left[\phibm_{L,\psi}(y,z) -
  \phibm_{L,\psi}(0,z)\right]\nonumber\\
&=&\sum_{y}\nu(y)\nabla_x\phibm_{L,\psi}(0,z)\cdot y
+\ \frac{1}{2}\sum_{y}\nu(y)y\cdot H_x\phibm_{L,\psi}(0,z)y + R(\nu,0,z),\nonumber\\
\end{eqnarray}
where $\nabla_x\phibm_{L,\psi}$ is the gradient, $H_x\phibm_{L,\psi}$ the
Hessian of $\phibm_{L,\psi}$ with respect to the first variable, and
$R(\nu,0,z)$ is the remainder term, which can be bounded by
Lemma~\ref{def:est-phi} (ii), namely
\begin{equation*}
|R(\nu,0,z)| \leq C ||\nu||_1\left(\frac{l}{L}\right)^3L^{-d}.
\end{equation*}
Since $\nu$ satisfies property (i), the first summand
on the right side of~\eqref{eq:est-isometry-1} vanishes. Due to the same
reason, the second summand equals 
\begin{equation*}
\frac{1}{2}\sum_{i=1}^d\frac{\partial^2}{\partial x_i^2}\phibm_{L,\psi}(0,z)\sum_{y}\nu(y)(y_i)^2.
\end{equation*}
By property (ii), the sum over $y$ does not depend on $i$, so a multiple of the Laplacian of $\phibm_{L,\psi}$ remains. But
for each $v\in\partial C_L$, $\pibm_L(\cdot,v)$ is harmonic in $C_L$, thus
also the Laplacian vanishes. This proves the proposition. 
\end{prooof}

   \section{Green's functions for the ball}
\label{super}
One principal task of our approach aims at developing good estimates on Green's
functions for the ball of both coarse grained (goodified) RWRE as well as coarse grained
simple random walk. The main result is
Lemma~\ref{def:superlemma}. For the coarse grained simple random walk, the estimates on
hitting probabilities of the last section together with
Proposition~\ref{def:super-behaviorgreen} yield the right control.

On a certain class of environments, we need to modify
the transition kernels in order to ensure that bad
points are not visited too often by the coarse grained random walks. This
modification will be described in Section~\ref{super-cgmod}.
 
\subsection{A local central limit theorem}
\label{clt}
Let $m\geq 1$. Denote by $\ph_m$ the coarse grained transition
probabilities on $\mathbb{Z}^d$ belonging to the field $\psi =
{(m_x)}_{x\in\mathbb{Z}^d}$, where $m_x= m$ is chosen constant in $x$.
Notice that $\ph_m$ is centered, and the covariances satisfy
\begin{equation*}
\sum_{y\in\mathbb{Z}^d}(y_i-x_i)(y_j-x_j)\ph_m(x,y) = \gamma_m\delta_i(j),
\end{equation*}
where for large $m$ (recall the coarse graining scheme) $1/d<\gamma_m/m^2< 4/d$.
\begin{proposition}[Local central limit theorem]
\label{def:super-localclt}
Let $x,y\in\mathbb{Z}^d$. For $m \geq 1$ and all integers $n\geq 1$,
\begin{equation*}
  \ph_m^n(x,y) = \frac{1}{(2\pi
    \gamma_mn)^{d/2}}\exp\left(-\frac{|x-y|^2}{2\gamma_mn}\right) +
  O\left(m^{-d}n^{-(d+2)/2}\right).
\end{equation*}
\end{proposition}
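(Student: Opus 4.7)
The plan is to apply the standard Fourier method for local central limit theorems, the only nonstandard feature being that the step distribution $\hat\pi_m(0,\cdot)$ is a mixture of simple-random-walk exit distributions from balls of radius $t\in[m,2m]$, weighted by the smooth density $\varphi(t/m)/m$. By translation invariance of simple random walk the kernel $\hat\pi_m$ is a convolution operator, so $\hat\pi_m^n$ is the $n$-fold convolution of $\hat\pi_m(0,\cdot)$, a probability measure on $\mathbb{Z}^d$ with support in the annulus $\{m\leq|z|\leq 2m\}$.

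First I would write the inversion formula
\begin{equation*}
\hat\pi_m^n(x,y)=\frac{1}{(2\pi)^d}\int_{[-\pi,\pi]^d}\Phi_m(\theta)^n\,e^{-i\theta\cdot(y-x)}\,d\theta,
\end{equation*}
with $\Phi_m(\theta):=\sum_{z}\hat\pi_m(0,z)e^{i\theta\cdot z}$, and then derive two bounds on $\Phi_m$. For $|\theta|\leq c_0/m$, the symmetry $\hat\pi_m(0,z)=\hat\pi_m(0,-z)$ (inherited from simple random walk) and the bounded support give a fourth-order expansion
\begin{equation*}
\Phi_m(\theta)=1-\tfrac{1}{2}\gamma_m|\theta|^2+O(m^4|\theta|^4),
\end{equation*}
which in particular implies $|\Phi_m(\theta)|\leq\exp(-c_1m^2|\theta|^2)$. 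For $c_0/m\leq|\theta|\leq\pi$ I would exploit the smoothing in the radial variable by writing
\begin{equation*}
\Phi_m(\theta)=\frac{1}{m}\int_{\mathbb{R}_+}\varphi(t/m)\Bigl(\sum_{z}\pi_{V_t}(0,z)e^{i\theta\cdot z}\Bigr)dt
\end{equation*}
and integrating by parts in $t$ several times, using $\varphi\in C^\infty_c((1,2))$ to gain a factor $(m|\theta|)^{-N}$ and hence the uniform bound $|\Phi_m(\theta)|\leq 1-c_2$ throughout this regime.

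With these estimates in hand I would split the inversion integral at a radius $r_n$ of order $(\log n)/\sqrt{\gamma_m n}$. On $\{|\theta|\leq r_n\}$ I substitute the Taylor expansion, write $\Phi_m(\theta)^n=\exp(-n\gamma_m|\theta|^2/2)\exp(O(nm^4|\theta|^4))$, and use $|e^u-1|\leq|u|e^{|u|}$ to bound the difference from the Gaussian by $Cnm^4|\theta|^4\exp(-cn\gamma_m|\theta|^2)$; integrating over $\mathbb{R}^d$ and using $\gamma_m\asymp m^2$ gives an error of size $O(m^{-d}n^{-(d+2)/2})$. Extending the remaining Gaussian integral from $\{|\theta|\leq r_n\}$ to all of $\mathbb{R}^d$ produces the main term with an exponentially small correction. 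On $\{r_n<|\theta|\leq\pi\}$, combining the two estimates on $\Phi_m$ gives $|\Phi_m(\theta)|^n\leq\exp(-c(\log n)^2)$, whose integrated contribution is absorbed in the error.

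The main obstacle is the decay estimate on $\Phi_m(\theta)$ for $|\theta|$ of order one. The step measure is concentrated on thin spherical shells of lattice points, so one cannot rule out \emph{a priori} the existence of bad frequencies $\theta$ at which many contributing lattice points line up in phase for some single $t$; it is only the smoothing over $t\in(m,2m)$ by the $C^\infty$ density $\varphi$ that destroys such resonances. Making this quantitative, for instance via the integration-by-parts argument above or an estimate in the spirit of Lemma~\ref{def:est-phi}, is the only step that requires genuine work beyond the routine Fourier inversion calculus.
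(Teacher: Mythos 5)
Your low-frequency analysis (fourth-order expansion of the characteristic function by symmetry and bounded support, Gaussian main term, error of size $O(m^{-d}n^{-(d+2)/2})$ after rescaling) is sound and coincides with the paper's treatment of $I_0,I_1,I_2$. The genuine gap is in the regime $c_0/m\le|\theta|\le\pi$, and it is twofold. First, quantitatively: the bound you aim for there, $|\Phi_m(\theta)|\le 1-c_2$, would not suffice even if proved. That region has Lebesgue measure of order one and carries no $m^{-d}$ prefactor, so under your bound it contributes $O((1-c_2)^n)$ to $\ph_m^n(x,y)$; for fixed $n$ this does not tend to zero as $m\to\infty$, whereas the asserted error $O(m^{-d}n^{-(d+2)/2})$ does, and the proposition is claimed uniformly in $m\ge1$, $n\ge1$. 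One needs estimates that improve with $m|\theta|$ (equivalently, that are small, like a power of $m^{-1}$, at unit frequencies). This is exactly what the paper proves: in the rescaled variable $\theta\in B_m=[-\sqrt{\gamma_m}\pi,\sqrt{\gamma_m}\pi]^d$ it shows $|\phi_m(\theta)|\le C\bigl(|\theta|^{-1/2}+|\theta|/m\bigr)$ for $A<|\theta|\le m^\alpha$ and $|\phi_m(\theta)|\le Cm^{1/2}|\theta|^{-1}$ for $m^\alpha<|\theta|$, which together with the prefactor $\gamma_m^{-d/2}\asymp m^{-d}$ (and with bounded $n$ or bounded $m$ handled separately by the trivial bound $\ph_m^n(x,y)\le Cm^{-d}$ and the classical LCLT) yields the stated error.

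Second, mechanically: the device you propose to obtain decay, repeated integration by parts in $t$ against $\varphi(t/m)$ gaining $(m|\theta|)^{-N}$, cannot work as written. The phase $e^{i\theta\cdot z}$ does not depend on $t$, so there is no oscillation in the $t$-variable to integrate against, and $t\mapsto\pi_{V_t}(0,z)$ is a jump function of $t$ (it changes only when lattice points cross $\partial V_t$), so it admits no $t$-derivatives to move $\varphi$ onto. What the $t$-averaging really buys is spatial regularity of the kernel: $z\mapsto\ph_m(0,z)$ is H\"older-regular on scale $m$, which is the content of Lemma~\ref{def:app-kernelest} and Corollary~\ref{def:app-kernelest-cor} (proved via the strong coupling with Brownian motion), rather than of Lemma~\ref{def:est-phi}. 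The paper converts this regularity into characteristic-function decay by an Abel-summation/blocking argument in the lattice coordinate of the largest component of $\theta$ (blocks of length $\lfloor 2\pi\sqrt{\gamma_m}/\theta_1\rfloor$, with differences of $\ph_m$ across a block controlled by the corollary, and a telescoping variant near the edge of the Brillouin zone); this step is the real work and is missing from your outline. If you prefer to stay with the Brownian comparison, note that the smoothed Brownian exit density is a genuinely smooth compactly supported density whose Fourier transform does decay rapidly, but the lattice-versus-continuum errors of order $m^{-(d+1/4)}$ per site, summed over the $\asymp m^d$ support points, leave a floor of order $m^{-1/4}$ independent of $\theta$; so a uniform gain of $(m|\theta|)^{-N}$ for the lattice characteristic function is not available, and any proof must settle for (and then exploit) bounds of the weaker, $m$-improving type above.
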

For the corresponding Green's function $\gh_{m,{\mathbb{Z}^d}}(x,y) =
\sum_{n=0}^\infty\ph_m^n(x,y)$ we obtain
\begin{proposition}
\label{def:super-behaviorgreen}
Let $x,y\in\mathbb{Z}^d$. There exists $m_0 > 0$ such that if $m\geq m_0$, then 
\begin{enumerate}
\item For $|x-y| < 3m$,
\begin{equation*}
\gh_{m,{\mathbb{Z}^d}}(x,y) = \delta_{x}(y) + O(m^{-d}).
\end{equation*}
\item For $|x-y| \geq 3m$, there exists a constant $c(d) > 0$ such that
\begin{equation*}
\gh_{m,{\mathbb{Z}^d}}(x,y )= \frac{c(d)}{\gamma_m|x-y|^{d-2}} +
O\left(\frac{1}{|x-y|^{d}}\left(\log\frac{|x-y|}{m}\right)^d\right). 
\end{equation*}
\end{enumerate}
Here, the constants in the $O$-notation are independent of $m$ and $|x-y|$.
\end{proposition}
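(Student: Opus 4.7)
The plan is to derive the Green's function asymptotics directly from the local central limit theorem (Proposition~\ref{def:super-localclt}) applied term-by-term in the series $\gh_{m,\mathbb{Z}^d}(x,y)=\delta_x(y)+\sum_{n\geq 1}\ph_m^n(x,y)$. Write $r=|x-y|$, set $G_n=(2\pi\gamma_m n)^{-d/2}\exp(-r^2/(2\gamma_m n))$ for the Gaussian piece, and let $E_n=\ph_m^n(x,y)-G_n$, so that $|E_n|\leq Cm^{-d}n^{-(d+2)/2}$ by the local CLT. Both cases then reduce to estimating $\sum_{n\geq 1}G_n$ and $\sum_{n\geq 1}|E_n|$.

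For (i) with $r<3m$, the exponential factor $\exp(-r^2/(2\gamma_m n))$ lies in $[e^{-9/2},1]$ for every $n\geq 1$, since $\gamma_m\asymp m^2$. Hence $\sum_{n\geq 1}G_n\asymp \sum_{n\geq 1}(\gamma_m n)^{-d/2}=O(\gamma_m^{-d/2})=O(m^{-d})$, where convergence of the series uses $d\geq 3$. The error contribution is of the same order: $\sum_n|E_n|\leq Cm^{-d}\sum_n n^{-(d+2)/2}=O(m^{-d})$. Together these yield (i).

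For (ii) with $r\geq 3m$, three ingredients combine. First, evaluating $\sum_{n\geq 1}G_n$ by comparison with the integral $\int_0^\infty(2\pi\gamma_m t)^{-d/2}\exp(-r^2/(2\gamma_m t))\,dt$: the substitution $s=\gamma_m t$ and the classical heat-kernel representation of the Riesz potential produce the main term $c(d)/(\gamma_m r^{d-2})$, while the integral over $t\in(0,1)$ is super-exponentially small in $(r/m)^2$. Second, the sum-to-integral correction is controlled by Euler--Maclaurin: the integrand is unimodal with peak of order $r^{-d}$ at $t\asymp r^2/\gamma_m$, so its total variation on $[1,\infty)$ is $O(r^{-d})$, and $G_1$ is super-exponentially small. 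Third, the error sum $\sum_n|E_n|$ is handled by splitting at the scale $(r/m)^2/(\log(r/m))^2$: for $n<r/(2m)$ the support restriction on $\ph_m$ forces $\ph_m^n(x,y)=0$, so $|E_n|=G_n$ is super-exponentially small; for $n$ above the threshold the bound $|E_n|\leq Cm^{-d}n^{-(d+2)/2}$ is summable into $O(m^{-d})\cdot((r/m)^2/(\log(r/m))^2)^{-d/2}=O(r^{-d}(\log(r/m))^d)$; the intermediate band is dominated using $G_n\leq e^{-c(\log(r/m))^2}$ times the pointwise peak and contributes a comparable order.

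The main obstacle is the third step. The naive bound $\sum_n |E_n|=O(m^{-d})$ that comes straight out of the local CLT is too weak whenever $r\gg m$, because $m^{-d}$ exceeds the target $r^{-d}(\log(r/m))^d$. The sharper estimate requires combining the deterministic support constraint on the coarse-grained walk (which makes small-$n$ contributions vanish exactly, not merely be small) with a careful cutoff choice that trades the $n^{-(d+2)/2}$ tail against the logarithmic slack absorbed in $(\log(r/m))^d$. Once this is in place, adding the three pieces gives the stated expansion.
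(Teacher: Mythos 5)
Your part (i) and your treatment of the large-$n$ regime are fine and essentially coincide with the paper's argument: the cutoff you choose, $N\asymp (r/m)^2(\log(r/m))^{-2}$, is exactly the paper's $N=\frac{|x|^2}{\gamma_m}\bigl(\log\frac{|x|^2}{\gamma_m}\bigr)^{-2}$, the tail of the local CLT errors sums to $O\bigl(r^{-d}(\log(r/m))^d\bigr)$, and the Gaussian sum is compared to the Riesz-potential integral with error $O(r^{-d})$, which is how the constant $c(d)$ arises in the paper as well.

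The gap is in the intermediate band $r/(2m)\le n\le N$. There you control only the Gaussian piece $G_n$ (which is indeed $e^{-c(\log(r/m))^2}$-small), but you never bound $\ph_m^n(x,y)$ itself, and $E_n=\ph_m^n(x,y)-G_n$ requires both. The deterministic support constraint kills only $n<r/(2m)$, and the only remaining tool you invoke, the local CLT error bound $|E_n|\le Cm^{-d}n^{-(d+2)/2}$, is quantitatively insufficient on the band: summing it from $n\asymp r/(2m)$ gives $O\bigl(m^{-d}(r/m)^{-d/2}\bigr)=O\bigl(m^{-d/2}r^{-d/2}\bigr)$, which for $r\gg m$ dwarfs the target $r^{-d}(\log(r/m))^d$ (e.g. $r=m^2$, $d=3$: $m^{-9/2}$ versus $m^{-6}(\log m)^3$); likewise the trivial bound $\ph_m^n(x,y)\le Cm^{-d}$ multiplied by the band length $\asymp (r/m)^2$ is far too large. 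What is missing is an off-diagonal, Gaussian-type upper bound on the $n$-step kernel itself, namely $\ph_m^n(x)\le c_1m^{-d}\exp\bigl(-|x|^2/(c_2nm^2)\bigr)$, which for $n\le N$ is $e^{-c(\log(r/m))^2}$-small and makes the whole range $n\le N$ contribute $O(r^{-d})$. This is exactly the paper's Lemma~\ref{def:app-ldestim}, proved by an exponential-moment (Hoeffding/martingale) argument exploiting that one coarse-grained step is symmetric and bounded by $2m$; your proof needs this (or an equivalent large-deviation input) before the three pieces can be added as you describe.
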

In our applications, $m$ will be a function of $L$. Although these results
look rather standard, we cannot directly refer to the literature because we
have to keep track of the $m$-dependency. We give a proof of both
statements in the appendix.

We will use the last proposition to estimate the Green's function for the
ball $V_L$, $\gh_{m}(x,y) =
\sum_{n=0}^\infty\left(1_{V_L}\ph_m\right)^n(x,y)$. Clearly, $\gh_{m}$
is bounded from above by $\gh_{m,{\mathbb{Z}^d}}$, and more precisely,
the strong Markov property shows
\begin{equation}
\label{eq:clt-greenonball}
  \gh_m(x,y) = \Erw_{x,\ph_m}\left[\sum_{k=0}^{\tau_L-1}1_{\{X_k = y\}}\right] =
  \gh_{m,\mathbb{Z}^d}(x,y) - \Erw_{x,\ph_m}\left[\gh_{m,\mathbb{Z}^d}\left(X_{\tau_L},y\right)\right]. 
\end{equation}

\subsection{Estimates on coarse grained Green's functions}
\label{super-Ghestimates}
As we will show, the perturbation expansion enables us to control the
goodified Green's function $\Ghg_{L,r}$ essentially in terms of
$\gh_{L,r}$. The boundary region $\sh_L(r)$ turns out to be problematic,
since even for good $x$, we cannot estimate the variational distance
between the transition kernels by $\delta$.  We therefore work in this (and
only in this) section with slightly modified transition kernels
$\Pt_{L,r}$, $\pt_{L,r}$, $\Ptg_{L,r}$ in the enlarged ball $V_{L+r}$,
taking the exit measure in $\sh_L(r)$ from uncut balls $V_t(x)\subset
V_{L+r}$, $t\in[h_{L,r}(x),2h_{L,r}(x)]$. More precisely, setting
$h_{L,r}(x) = (1/20)r$ for $x\notin C_L$, we let $\pt_{L,r}$ be the coarse
grained simple random walk kernel under $\tilde{\psi} = (h_{L,r}(x))_{x\in
  V_{L+r}}$, that is
\begin{equation*}
  \pt_{L,r}(x,\cdot) =
      \frac{1}{h_{L,r}(x)}\int_{\mathbb{R}_+}
      \varphi\left(\frac{t}{h_{L,r}(x)}\right)\pi_{V_t(x)\cap
        V_{L+r}}(x,\cdot)\dt t.
\end{equation*}
For the corresponding RWRE kernel, we forget about the environment on
$V_{L+r}\backslash V_L$ and set
\begin{equation*}
  \Pt_{L,r}(x,\cdot) = \left\{\begin{array}{l@{\quad \mbox{for\ }}l}
      \frac{1}{h_{L,r}(x)}\int_{\mathbb{R}_+}
      \varphi\left(\frac{t}{h_{L,r}(x)}\right)\Pi_{V_t(x)}(x,\cdot)\dt t &
      x\in V_L\\
      \pt_{L,r}(x,\cdot) & x\in V_{L+r}\backslash V_L
  \end{array}\right..
\end{equation*}
For all good $x\in V_L$ we now have $||(\Pt_{L,r}-\pt_{L,r})(x,\cdot)||_1 \leq \delta$,
while for $x\in V_{L+r}\backslash V_L$, the difference vanishes anyway.
The goodified version of $\Pt_{L,r}$ is then
obtained in an analogous way to~\eqref{eq:smoothbad-goodifiedkernel},
\begin{equation*}
\Ptg_{L,r}(x,\cdot) = \left\{\begin{array}{l@{\quad \mbox{for\ }}l} 
       \Pt_{L,r}(x,\cdot) & x\notin \badPo_{L,r}\\
       \pt_{L,r}(x,\cdot) & x\in \badPo_{L,r}\end{array}\right..
 \end{equation*}
We write $\Gt_{L,r},\gt_{L,r}$ and $\Gtg_{L,r}$ for the corresponding
Green's functions on $V_{L+r}$. Note that 
\begin{equation}
\label{eq:super-greensfcts-pointwisebdd}
\Gh_{L,r}\leq \Gt_{L,r},\quad \gh_{L,r} \leq
\gt_{L,r},\quad\Ghg_{L,r} \leq \Gtg_{L,r}\quad\mbox{pointwise on }V_{L+r}\times \left(V_{L+r}\backslash \partial V_L\right).
\end{equation}
Since we do not have exact expressions for $\gt_{L,r}$ or $\Gt_{L,r}$, we will construct a
(deterministic) kernel $\Gamma_{L,r}$ that bounds the Green's functions
from above. For $x\in V_{L+r}$, set
\begin{equation*}
  \dtL(x) = \max\left(\frac{\dist_{L+r}(x)}{2},3r\right),\quad a(x) =\min\left(\dtL(x),s_L\right).
\end{equation*}
Further, let
\begin{equation*}
  \Gamma^{(1)}_{L,r}(x,y) = \frac{\tilde{d}(x)\tilde{d}(y)}{a(y)^2(a(y) +
    |x-y|)^d},\quad
  \Gamma^{(2)}_{L,r}(x,y) = \frac{1}{a(y)^2(a(y) + |x-y|)^{d-2}}.
\end{equation*}
The kernel $\Gamma_{L,r}$ is defined as the pointwise minimum 
\begin{equation}
\label{eq:defgamma}
  \Gamma_{L,r} = \min\left\{\Gamma^{(1)}_{L,r},\Gamma^{(2)}_{L,r}\right\}.
\end{equation}
We cannot derive pointwise estimates on the Green's functions in terms of
$\Gamma_{L,r}$, but we can use this kernel to obtain upper bounds on
neighborhoods $U(x) = V_{a(x)}(x)\cap V_{L+r}$. Call a function $F:
V_{L+r}\times V_{L+r}\rightarrow\mathbb{R}_+$ a {\it positive
  kernel}. Given two positive kernels $F$ and $G$, we write $F\preceq G$ if
for all $x,y\in V_{L+r}$,
\begin{equation*}
  F(x,U(y)) \leq G(x,U(y)),
\end{equation*}
where $F(x,U)$ stands for $\sum_{y\in U\cap\mathbb{Z}^d}F(x,y)$. Further,
we write $F\asymp 1$, if there is a constant $C>0$ such that for all
$x,y\in V_{L+r}$, 
\begin{equation*}
\frac{1}{C}F(x,y)\leq F(\cdot,\cdot)\leq CF(x,y)\quad\mbox{on }U(x)\times U(y).
\end{equation*}
We adopt this notation to positive functions of one argument: For
$f:V_{L+r}\rightarrow\mathbb{R}_+$, $f\asymp 1$ means that for some $C>0$, $C^{-1}f(x)\leq
f(\cdot)\leq Cf(x)$ on any $U(x)\subset V_{L+r}$.
Finally, given $0 <\eta < 1$, we say that a positive kernel $A$ on $V_{L+r}$ is
$\eta$-{\it smoothing}, if for all $x\in V_{L+r}$, $A(x,U(x)) \leq \eta$, and
$A(x,y) = 0$ whenever $y\notin U(x)$.

Now we are in the position to formulate our main statement of this
section. Recall our convention concerning constants: They only depend on
the dimension unless stated otherwise.  
\begin{lemma}\ 
\label{def:superlemma}
\begin{enumerate}
\item There exists a constant $C_1>0$ such that
\begin{equation*}
      \gh_{L,r}\preceq C_1\Gamma_{L,r}\quad\mbox{and}\quad\gt_{L,r} \preceq C_1\Gamma_{L,r}.
    \end{equation*}
\item  There exists a constant $C>0$ such that for small $\delta>0$,  
\begin{equation*}
  \Ghg_{L,r} \preceq C\Gamma_{L,r}\quad\mbox{and}\quad\Gtg_{L,r} \preceq C\Gamma_{L,r}.
\end{equation*}  
\end{enumerate}
\end{lemma}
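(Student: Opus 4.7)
Since $\gh_{L,r}\leq\gt_{L,r}$ pointwise by \eqref{eq:super-greensfcts-pointwisebdd}, it suffices to establish $\gt_{L,r}\preceq C_1\Gamma_{L,r}$; the bound for $\gh_{L,r}$ then comes for free. The plan is to decompose $\gt_{L,r}(x,U(y))$ as the product of a hitting probability and an expected local time. By the strong Markov property at the first entry time $T_{U(y)}$,
\begin{equation*}
\gt_{L,r}(x,U(y))\leq\Prw_{x,\pt_{L,r}}\bigl(T_{U(y)}<\tau_{L+r}\bigr)\cdot\sup_{v\in U(y)}\gt_{L,r}(v,U(y)).
\end{equation*}
Because $\pt_{L,r}$ is simple random walk sampled at exit times from a nested sequence of balls, a coarse grained trajectory hits $U(y)$ iff the underlying SRW does, so the first factor is dominated by the SRW hitting probability, to which I would apply Lemma \ref{def:hittingprob}. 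Part (i) of that lemma yields $(a(y)/|x-y|)^{d-2}$, matching $\Gamma^{(2)}_{L,r}(x,U(y))$ after multiplication by $|U(y)|\asymp a(y)^d$, whereas part (ii) (used together with $\max\{1,\dL(\cdot)\}\asymp\dtL(\cdot)$ on $V_{L+r}$) yields a bound of order $a(y)^{d-2}\dtL(x)\dtL(y)/|x-y|^d$, matching $\Gamma^{(1)}_{L,r}(x,U(y))$; taking the sharper of the two reproduces the minimum in \eqref{eq:defgamma}. For the expected local time, once at $v\in U(y)$ the next coarse grained step exits a ball of radius $\asymp a(y)$ around $v$, so with probability bounded below by a dimensional constant it leaves $U(y)$, yielding $\gt_{L,r}(v,U(y))\leq C$.

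\textbf{Part (ii).} The strategy is to bootstrap part (i) via the perturbation expansion. Set $\Delta=\Ptg_{L,r}-\pt_{L,r}$; by construction $\Delta(x,\cdot)\equiv 0$ on $\badPo_{L,r}$, and at every good point $x$ with $\dL(x)>2r$ the defining inequalities of Section \ref{goodandbad} give
\begin{equation*}
\|\Delta(x,\cdot)\|_1\leq\delta\qquad\text{and}\qquad\|\Delta\pt_{L,r}(x,\cdot)\|_1\leq(\log h_{L,r}(x))^{-9}.
\end{equation*}
The resolvent identity gives $\Gtg_{L,r}=\gt_{L,r}+\gt_{L,r}\,\Delta\,\Gtg_{L,r}$, which I plan to iterate. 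Using part (i) on the first factor at each step, I aim to show that the operator $\gt_{L,r}\Delta$, viewed as acting on the neighborhoods $U(\cdot)$, is $\eta$-smoothing in the sense defined just before the lemma, for some $\eta<1$ once $\delta$ is chosen small, uniformly in $L$, $r$ and the environment. Once this is in hand, the geometric series yields $\Gtg_{L,r}\preceq(1-\eta)^{-1}\gt_{L,r}\preceq C\Gamma_{L,r}$, and the bound on $\Ghg_{L,r}$ follows from \eqref{eq:super-greensfcts-pointwisebdd}. In the bulk the second of the displayed inequalities will be crucial: convolving $\Delta$ against the smoothing already built into $\gt_{L,r}$ (via $\pt_{L,r}$) lets us trade the weak bound $\delta$ for the much stronger bound $(\log h_{L,r})^{-9}$, which easily absorbs the logarithmic losses coming from Proposition \ref{def:super-behaviorgreen}.

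\textbf{Main obstacle.} The delicate part of the iteration is the boundary layer $\sh_L(2r)$, where only the weak bound $\|\Delta(x,\cdot)\|_1\leq\delta$ is available and the coarse grained walker may linger for many steps. The geometry baked into $\Gamma_{L,r}$ is what rescues the argument: the factor $\dtL(x)\dtL(y)/a(y)^2$ in $\Gamma^{(1)}_{L,r}$ encodes precisely the boundary decay of $\gt_{L,r}$, and I expect convolution with $\Delta$ to preserve this decay up to a multiplicative constant depending only on $\delta$. The min-definition in \eqref{eq:defgamma} is essential, because $\Gamma^{(1)}_{L,r}$ is the sharp comparison near the boundary while $\Gamma^{(2)}_{L,r}$ is sharp in the bulk; the perturbation expansion must therefore be handled piecewise with respect to this dichotomy and the two regimes recombined into a single $\preceq$ bound at the end. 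Verifying the $\eta$-smoothing property under this case split, and keeping a finite geometric constant after the recombination, is where the bulk of the technical work is likely to lie.
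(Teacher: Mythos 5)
Your overall architecture does mirror the paper's (hitting probability times bounded occupation for (i); perturbation expansion with smallness from $\delta$ and the $(\log)^{-9}$ smoothed bound for (ii)), but at both places where the real work lies there are genuine gaps. In part (i), the reduction $\gt_{L,r}(x,U(y))\leq \Prw_{x}\left(T_{U(y)}<\tau_{V_{L+r}}\right)\sup_{v\in U(y)}\gt_{L,r}(v,U(y))$ together with Lemma~\ref{def:hittingprob} is indeed the paper's last step, but your justification of $\sup_{v\in U(y)}\gt_{L,r}(v,U(y))\leq C$ is exactly the content of the paper's estimate~\eqref{eq:super-keyest-g1} and does not hold up as stated. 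The coarse-graining radius at $v\in U(y)$ is of order $a(y)/20$, not $a(y)$, so a single step does not leave $U(y)$ with probability bounded below; more importantly, leaving $U(y)$ once does not bound the expected occupation time, since the walk may return to $U(y)$ many times before exiting $V_{L+r}$. What is needed is escape \emph{without return}: in the bulk this comes from transience of the coarse-grained walk in $d\geq 3$ (Lemma~\ref{def:super-greenbound}, via Proposition~\ref{def:super-behaviorgreen}), and in the boundary layers from the gambler's-ruin estimate of Lemma~\ref{def:lemmalawler} (iii), which shows that after moving a constant fraction closer to $\partial V_{L+r}$ the walk exits $V_{L+r}$ before revisiting the shell containing $U(y)$, with probability bounded below. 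Your proposal assumes this key bound rather than proving it. (Minor: $\max\{1,\dL(\cdot)\}\asymp\dtL(\cdot)$ is false near $\partial V_L$, where $\dtL\geq 3r$ while $\dL$ can be of order $1$; only $\max\{1,\dL\}\leq C\dtL$ holds, which fortunately is the direction you need.)

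In part (ii), iterating $\Gtg_{L,r}=\gt_{L,r}+\gt_{L,r}\Delta\Gtg_{L,r}$ and summing a geometric series requires a contraction statement of the form $F\preceq\Gamma_{L,r}\Rightarrow F|\Delta|\Gamma_{L,r}\leq \eta'\,\Gamma_{L,r}$; your phrase ``$\gt_{L,r}\Delta$ is $\eta$-smoothing'' cannot be meant literally, since $\gt_{L,r}\Delta(x,\cdot)$ is not supported on $U(x)$. Such a contraction cannot be extracted from $\|\Delta(x,\cdot)\|_1\leq\delta$ alone: the trailing Green's factor has mass $\Gamma_{L,r}(x,V_L)\sim(\log L)^6$ in the bulk (Lemma~\ref{def:super-gammalemma} (v)), so each concatenation must gain a $(\log)^{-9}$ there, and the goodness definition provides this only for the smoothed difference $(\Ph_{L,r}-\ph_{L,r})\ph_{L,r}$, i.e.\ for $\Delta$ followed by one coarse-grained step $\pt_{L,r}$, not by $\gt_{L,r}$. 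This is precisely why the paper does not iterate the plain resolvent identity but uses the rearranged expansion~\eqref{eq:prel-pbe3}, $\Gtg_{L,r}=\gt_{L,r}\sum_m(R\gt_{L,r})^m\sum_k\Delta^k$ with $R=\sum_k\Delta^k\pt_{L,r}$, so that Proposition~\ref{def:super-keyest} (ii), applied with $A=|\Delta\pt_{L,r}|$, yields a factor $\delta^{1/2}$ per block, while the boundary region, where only the $\delta$-bound is available, is absorbed because $\Gamma_{L,r}(x,\mathcal{E}_2)\leq C$. You correctly name the ingredients (trading $\delta$ for $(\log h_{L,r})^{-9}$, care at the boundary), but the concatenation estimates that carry the proof (Lemma~\ref{def:super-gammalemma}, Propositions~\ref{def:super-concatenating} and~\ref{def:super-keyest}) are exactly what is left unproved, and without the rearrangement of the expansion the geometric series does not close.
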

\begin{remark}
  Thanks to~\eqref{eq:super-greensfcts-pointwisebdd}, it suffices to prove
  the bounds for $\gt_{L,r}$ and $\Gtg_{L,r}$.  For later use, we keep
  track of the constant in part (i) of the lemma.
\end{remark}
We first prove part (i), which is a straightforward consequence of the estimates on hitting
probabilities in Section~\ref{estimates} and the next lemma. 
\begin{lemma}
  \label{def:super-greenbound}
  There exists $C>0$ such that for all $x\in V_{L+r}$ and $y \in V_{L}$ with $\dL(y) \geq 4s_L$,
    \begin{equation*}
      \gt_{L,r}(x,y)\leq C\left\{ \begin{array}{l@{,\quad}l}\frac{1}{s_L^2\max\{|x-y|,s_L\}^{d-2}}
          & y\neq x\\
          1 & y=x\end{array}\right..
    \end{equation*}
\end{lemma}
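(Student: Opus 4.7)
The strategy is to reduce the bound on $\gt_{L,r}(x,y)$ to the full-space estimates for the constant-radius coarse-grained simple random walk given in Proposition~\ref{def:super-behaviorgreen}. The key structural observation is that at every point $z\in B := \{u\in V_L : \dL(u)\geq 2s_L\}$ the coarse-graining radius is exactly $h_{L,r}(z)=s_L/20=:m$, and moreover the ball of size $2h_{L,r}(z)$ around $z$ lies inside $V_{L+r}$, so the kernel $\pt_{L,r}(z,\cdot)$ coincides with $\ph_m(z,\cdot)$ on $B$. Since $\dL(y)\geq 4s_L$ the point $y$ sits well inside $B$, so any visit of the $\pt_{L,r}$-walk to $y$ must happen while the walk is in $B$, where it behaves exactly like $\ph_m$.

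For the diagonal case $y=x$, I would apply the renewal identity $\gt_{L,r}(y,y)=1/(1-F_y)$, where $F_y$ is the probability that the $\pt_{L,r}$-walk returns to $y$ before exiting $V_{L+r}$. Splitting this according to whether the walk leaves $B$ before its first return gives
\begin{equation*}
F_y \leq \Prw_{y,\ph_m}\bigl(\text{return to }y\bigr) + \sup_{z\in\sh_L(2s_L)}\Prw_{z,\pt_{L,r}}\bigl(T_y<\tau_{V_{L+r}}\bigr).
\end{equation*}
The first term equals $1-1/\gh_{m,\mathbb{Z}^d}(y,y)=O(s_L^{-d})$ by Proposition~\ref{def:super-behaviorgreen}(i). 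The second is $O(s_L^{-d})$ as well, since for $|z-y|\geq 2s_L$ one has $\gh_{m,\mathbb{Z}^d}(z,y)\leq Cs_L^{-d}$ from Proposition~\ref{def:super-behaviorgreen}(ii), and the identity $\Prw_{z,\ph_m}(T_y<\infty)=\gh_{m,\mathbb{Z}^d}(z,y)/\gh_{m,\mathbb{Z}^d}(y,y)$ applies (plus a second, identical excursion split). Thus $F_y\leq 1/2$ for $L$ large, giving $\gt_{L,r}(y,y)\leq 2$. For $x\neq y$, the strong Markov property at $T_y$ gives $\gt_{L,r}(x,y)=\Prw_{x,\pt_{L,r}}(T_y<\tau_{V_{L+r}})\cdot\gt_{L,r}(y,y)$, and the same excursion decomposition yields
\begin{equation*}
\Prw_{x,\pt_{L,r}}(T_y<\tau_{V_{L+r}}) \leq \frac{\gh_{m,\mathbb{Z}^d}(x,y)}{\gh_{m,\mathbb{Z}^d}(y,y)} + O(s_L^{-d}).
\end{equation*}
The right-hand side is bounded by $Cs_L^{-d}$ when $|x-y|<3s_L$ (via Proposition~\ref{def:super-behaviorgreen}(i)) and by $C/(s_L^2|x-y|^{d-2})$ when $|x-y|\geq 3s_L$ (via Proposition~\ref{def:super-behaviorgreen}(ii), with the logarithmic correction absorbed). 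In both regimes the bound matches $C/(s_L^2\max\{|x-y|,s_L\}^{d-2})$.

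The main obstacle is making the two excursion-splitting inequalities rigorous without circularity, since the boundary supremum $\sup_{z\in\sh_L(2s_L)}\Prw_{z,\pt_{L,r}}(T_y<\tau)$ appears in both the diagonal and off-diagonal arguments. The clean way is to handle the supremum first and in isolation: one iterates the split, but each time the walk re-enters from $\sh_L(2s_L)$ one uses only a \emph{one-sided} bound of the form ``probability to hit $y$ $\leq \gh_{m,\mathbb{Z}^d}(z,y)/\gh_{m,\mathbb{Z}^d}(y,y)$ plus a factor times the same supremum''. The prefactor of the supremum can be made $\leq 1/2$ (because every full excursion that returns to $y$ must cross the gap of width $2s_L$, costing a factor $O(s_L^{-d})$), so the supremum can be solved for and bounded directly by $O(s_L^{-d})$. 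Once this is in hand, the diagonal and off-diagonal bounds follow immediately from Proposition~\ref{def:super-behaviorgreen}, and the proof is complete.
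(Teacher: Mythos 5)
Your reduction to Proposition~\ref{def:super-behaviorgreen} on the bulk region $B=\{u:\dL(u)\geq 2s_L\}$, and your self-consistent treatment of the boundary supremum $S=\sup_{z}\Prw_{z,\pt_{L,r}}(T_y<\tau_{V_{L+r}})$, are in the same spirit as the paper's argument, and the diagonal bound works (though the paper gets $\gt_{L,r}(y,y)\leq C$ for free from transience of simple random walk, since the coarse grained chain's visits to $y$ are a subset of the underlying walk's visits). But there is a genuine gap in the off-diagonal case: your excursion decomposition gives
\begin{equation*}
\Prw_{x,\pt_{L,r}}\left(T_y<\tau_{V_{L+r}}\right)\leq \frac{\gh_{m,\mathbb{Z}^d}(x,y)}{\gh_{m,\mathbb{Z}^d}(y,y)}+O\left(s_L^{-d}\right),
\end{equation*}
with an additive error that is \emph{uniform in} $x$, because the walk reaches the boundary shell with probability of order one and from there you only bound the hitting probability of $y$ by $S=O(s_L^{-d})$; the shell is at distance $\geq 2s_L$ from $y$ but knows nothing about $|x-y|$. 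The target bound for $|x-y|\gg s_L$ is $C/(s_L^2|x-y|^{d-2})$, which is smaller than $s_L^{-d}$ by the factor $(|x-y|/s_L)^{d-2}$ (e.g.\ for $|x-y|\asymp L$ it is $(\log L)^6L^{-d}$ versus $(\log L)^{3d}L^{-d}$). So your claim that the right-hand side ``matches in both regimes'' is false once $|x-y|/s_L$ is large, and the lemma does not follow as written. The fix is to extract the distance decay \emph{before} any boundary excursion can occur: decompose at the first entrance time of $V_{s_L}(y)$ (which cannot be at $y$ itself, since all coarse grained steps have length $\leq s_L/10$), use Lemma~\ref{def:hittingprob}~(i) to get the factor $\Prw_x(T_{V_{s_L}(y)}<\infty)\leq C(s_L/|x-y|)^{d-2}$, and only then apply the local estimate $\sup_{w\neq y,\,|w-y|\leq s_L}\gt_{L,r}(w,y)\leq Cs_L^{-d}$; this is exactly the two-case structure of the paper's proof.

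A minor further point: your justification of the prefactor in the self-bounding inequality for $S$ is garbled. Crossing the gap of width $2s_L$ to re-enter $V_{s_L}(y)$ costs a \emph{constant} factor $\rho\leq 2^{-(d-2)}(1+o(1))<1$ (Lemma~\ref{def:hittingprob}~(i)), not $O(s_L^{-d})$; it is the subsequent pointwise hit of $y$ inside $B$ that costs $O(s_L^{-d})$. With that reading the inequality $S\leq \rho\left(Cs_L^{-d}+S\right)$ does close and gives $S=O(s_L^{-d})$, so that part of your argument is sound once restated correctly.
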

\begin{prooof}
If $x=y$, then the claim follows from transience of simple random
  walk. Now assume $x\neq y$, and always $\dL(y) \geq 4s_L$. Consider first
  the case $|x-y| \leq s_L$. Let $\gh_m$ be defined as in the beginning of
  Section~\ref{clt}. Recall our coarse graining scheme. With $m= s_L/20$
  we have
  \begin{equation*}
    \gt(x,y) \leq \gh_m(x,y) +
    \sup_{v\in\sh_L(2s_L)}\Prw_v\left(T_{V_{s_L }(y)} <
      \tau_{V_{L+r}}\right)\sup_{w: w\neq y, \atop |w-y| \leq s_L}\gt(w,y).
  \end{equation*}
  Since
  \begin{equation*}
    \sup_{v\in\sh_L(2s_L)}\Prw_v\left(T_{V_{s_L}(y)} < \tau_{V_{L+r}}\right) < 1
  \end{equation*}
  uniformly in $L$, it follows from Proposition~\ref{def:super-behaviorgreen}
  that
  \begin{equation*}
   \gt(x,y) \leq C\sup_{w: w\neq y, \atop |w-y| \leq s_L} \gh_m(w,y)\leq C\sup_{w: w\neq y, \atop |w-y| \leq s_L} \gh_{m,\mathbb{Z}^d}(w,y) \leq \frac{C}{s_L^d}.
  \end{equation*}
  If $|x-y| > s_L$ we use Lemma~\ref{def:hittingprob} (i) and the first
  case to get
  \begin{equation*}
    \gt(x,y) \leq \Prw_x\left(T_{V_{s_L}(y)} < \infty\right)\sup_{w: w\neq
        y, \atop |w-y| \leq s_L}\gt(w,y) \leq \frac{C}{s_L^2|x-y|^{d-2}}. 
  \end{equation*}  
\end{prooof}
\begin{prooof2}{\bf of Lemma~\ref{def:superlemma} (i):}
 It suffices to prove the bound for $\gt$. 
  First we show that there exists a constant $C
  > 0$ such that for all $y \in V_{L+r}$,
  \begin{equation}
    \label{eq:super-keyest-g1}
    \sup_{x\in V_{L+r}}\gt\left(x,U(y)\right) \leq C.
  \end{equation}  
  At first let $\dist_{L+r}(y) \leq 6r$. Then $U(y)\subset
  \sh_{L+r}(10r)$. We claim that 
  \begin{equation}
    \label{eq:super-keyest-g2}
    \sup_{x\in V_{L+r}}\gt\left(x,\sh_{L+r}(10r)\right)\leq C.
  \end{equation}
  for some $C> 0$. Indeed, if $z\in\sh_{L+r}(10r)$, then $\pt(z,\cdot)$ is
  an (averaging) exit distribution from balls $V_l(z)\cap V_{L+r}$, where
  $l\geq r/20$. Using Lemma~\ref{def:lemmalawler} (i), we find a
  constant $k_1 = k_1(d)$ such that starting at any $z\in\sh_{L+r}(10r)$,
  $V_{L+r}$ is left after $k_1$ steps with probability $ > 0$, uniformly in
  $z$. This together with the strong Markov property
  implies~\eqref{eq:super-keyest-g2}.  Next assume $6r<\dist_{L+r}(y)\leq
  6s_L$. Then $U(y) \subset
  S(y)=\sh_{L+r}\left(\frac{1}{2}\dist_{L+r}(y),2\dist_{L+r}(y)\right)$. We
  claim that
  \begin{equation}
    \label{eq:super-keyest-g3}
    \sup_{x\in V_{L+r}}\gt\left(x,S(y)\right)\leq C.
  \end{equation}
  For $z\in S(y)$, $\pt(z,\cdot)$ is an averaging exit distribution from
  balls $V_l(z)$, where $l\geq \dist_{L+r}(y)/240$. By 
  Lemma~\ref{def:lemmalawler} (i), we find some small $0 < c
  < 1$ and a constant $k_2(c,d)$
  such that after $k_2$ steps, the walk has probability $> 0$ to be in
  $\sh_{L+r}\left(\frac{1-c}{2}\dist_{L+r}(y)\right)$, uniformly in $z$ and $y$. But starting in
  $\sh_{L+r}\left(\frac{1-c}{2}\dist_{L+r}(y)\right)$, Lemma~\ref{def:lemmalawler} (iii) shows that with probability $> 0$, the
  ball $V_{L+r}$ is left before $S(y)$ is visited
  again. Therefore~\eqref{eq:super-keyest-g3} and
  hence~\eqref{eq:super-keyest-g1} hold in this case.
  At last, let $\dist_{L+r}(y) > 6s_L$. Then $\dL(w) \geq 4s_L$ for
  $w\in U(y)$. Estimating
  \begin{equation*}
  \gt(x,w) \leq 1 + \sup_{v: v\neq w}\gt(v,w),
  \end{equation*}
  we get with part (i) that
 \begin{equation*}
  \sup_{w\in U(y)}\gt(x,w) \leq 1 + \frac{C}{s_L^d}.
  \end{equation*}
  Summing over $w\in U(y)$, \eqref{eq:super-keyest-g1} follows. Finally, note
  that for any $x\in V_{L+r}$,
  \begin{equation*}
  \gt(x,U(y)) \leq \Prw_x\left(T_{U(y)} \leq \tau_{V_{L+r}}\right)\sup_{w\in U(y)}\gt(w,U(y)).
  \end{equation*}
  Now $\gt\preceq C\Gamma$ follows from \eqref{eq:super-keyest-g1} and
  the hitting estimates of
  Lemma~\ref{def:hittingprob}.
\end{prooof2}
Let us now explain our strategy for proving part (ii). By
version~\eqref{eq:prel-pbe3} of the perturbation expansion, we can express
$\Gtg_{L,r}$ in a series involving $\gt_{L,r}$ and differences of
exit measures. The Green's
function $\gt_{L,r}$ is already controlled by means of $\Gamma_{L,r}$.
Looking at~\eqref{eq:prel-pbe3}, we thus have to understand what happens if
$\Gamma_{L,r}$ is concatenated with certain smoothing kernels. This will be
the content of Proposition~\ref{def:super-concatenating}. 

We start with collecting some important properties of $\Gamma_{L,r}$, which will be
used throughout this text. Define for $j\in\mathbb{N}$
\begin{equation*}
  \mathcal{L}_j = \{y\in V_L : j\leq\dL(y) < j+1\},\quad \mathcal{E}_j=\{y\in V_{L+r} : \dtL(y)\leq 3jr\}.
\end{equation*}

\begin{lemma}[Properties of $\Gamma_{L,r}$]\     
  \label{def:super-gammalemma}
  \begin{enumerate}
  \item Both $\dtL$ and $a$ are Lipschitz with constant $1/2$. Moreover,
    for $x,y \in V_{L+r}$,  
    \begin{equation*}
      a(y) + |x-y| \leq a(x) + \frac{3}{2}|x-y|.
    \end{equation*}
  \item
    \begin{equation*}
      \Gamma_{L,r}\asymp 1.
    \end{equation*}
  \item For $0\leq j \leq 2s_L$, $x\in V_{L+r}$,
    \begin{equation*}
      \sum_{y\in\mathcal{L}_j}\left(\max\left\{1,\frac{\dtL(x)}{a(y)}\right\}\frac{1}{(a(y)+
          |x-y|)^d}\right) \leq C\frac{1}{j\vee r}. 
    \end{equation*}
\item For $1\leq j \leq \frac{1}{3r}s_L$,
\begin{equation*}
  \sup_{x\in V_{L+r}}\Gamma_{L,r}(x,\mathcal{E}_j) \leq C\log(j+1),
\end{equation*}
and for $0\leq \alpha < 3$,
\begin{equation*}
  \sup_{x\in V_{L+r}}\Gamma_{L,r}\left(x,\sh_L\left(s_L, L/(\log
      L)^\alpha\right)\right) \leq C(\log\log L)(\log L)^{6-2\alpha}. 
\end{equation*}
\item For $x\in V_{L+r}$, in the case of constant $r$,
\begin{equation*}
  \Gamma_{L,r}(x,V_L) \leq C\max\left\{\frac{\dtL(x)}{L}(\log L)^6,\,
    \left(\frac{\dtL(x)}{r}
 \wedge \log L\right)\right\}.
\end{equation*}
In the case $r= r_L$,
\begin{equation*}
  \Gamma_{L,r_L}(x,V_L) \leq C\max\left\{\frac{\dtL(x)}{L}(\log L)^6,\,
    \left(\frac{\dtL(x)}{r_L}\wedge \log\log L\right)\right\}.
\end{equation*}
\end{enumerate}
\end{lemma}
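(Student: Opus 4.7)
The plan is to establish the five items in order, exploiting the explicit form of $\Gamma_{L,r}=\min\{\Gamma^{(1)}_{L,r},\Gamma^{(2)}_{L,r}\}$ together with the annular summation estimate of Lemma~\ref{def:hittingprob-technical}. For (i), the function $\dLk$ is $1$-Lipschitz, so dividing by two and taking a maximum with $3r$ (respectively a minimum with $s_L$) preserves the Lipschitz constant $1/2$ and yields $\dtL$ (respectively $a$); the inequality $a(y)+|x-y|\leq a(x)+\frac{3}{2}|x-y|$ is just $a(y)\leq a(x)+|x-y|/2$ plus $|x-y|$ on both sides. For (ii), for $x'\in U(x)$ and $y'\in U(y)$ the Lipschitz bounds from (i) together with $a(x)\geq 3r$ show that $a(x'),a(y'),\dtL(x'),\dtL(y')$ all lie within a constant factor of their counterparts at $(x,y)$, and similarly $|x-x'|+|y-y'|\leq a(x)+a(y)$ is absorbed by $a(y)+|x-y|$ up to a multiplicative constant, so both $\Gamma^{(1)}_{L,r}$ and $\Gamma^{(2)}_{L,r}$, and hence their minimum, satisfy $\asymp 1$.

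For (iii), the key observation is that $a(y)\asymp j\vee r$ for every $y\in\mathcal{L}_j$ with $j\leq 2s_L$. Split into two cases. If $\dtL(x)\leq 2(j\vee r)$, the factor $\max\{1,\dtL(x)/a(y)\}$ is bounded by a constant, and Lemma~\ref{def:hittingprob-technical} with $m=d$, $a=j\vee r$ and $l=L-j$ delivers $C/(j\vee r)$ directly. If on the contrary $\dtL(x)>2(j\vee r)$, the Lipschitz bound $|\dtL(x)-\dtL(y)|\leq|x-y|/2$ combined with $\dtL(y)\lesssim j\vee r$ forces $||x|-(L-j)|\gtrsim\dtL(x)$; the same application of Lemma~\ref{def:hittingprob-technical} now contributes a factor of order $1/\dtL(x)$, which cancels the factor $\dtL(x)/a(y)$ and again yields $C/(j\vee r)$.

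For (iv), use the pointwise bound $\Gamma_{L,r}\leq\Gamma^{(1)}_{L,r}\leq\max\{1,\dtL(x)/a(y)\}/(a(y)+|x-y|)^d$, valid whenever $\dtL(y)\leq s_L$; summing the conclusion of (iii) over the annuli $\mathcal{L}_i$ that cover $\mathcal{E}_j$ (those with $i\lesssim jr$) gives $\sum 1/(i\vee r)\leq C\log(j+1)$, the first bound. For the second bound, on $\sh_L(s_L,L/(\log L)^\alpha)$ one has $a(y)\asymp s_L$; a crude application of $\Gamma_{L,r}\leq\Gamma^{(2)}_{L,r}$ with Lemma~\ref{def:hittingprob-technical} at $m=d-2$ yields only $CL/s_L^2$ per annulus and a total of $(\log L)^{6-\alpha}$. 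The sharper bound $(\log\log L)(\log L)^{6-2\alpha}$ is obtained by switching to $\Gamma^{(1)}_{L,r}$ in the far field $|x-y|\gtrsim\sqrt{\dtL(x)\dtL(y)}$, where Lemma~\ref{def:hittingprob-technical} at $m=d$ trims the sum by an additional factor $1/(\log L)^\alpha$; the residual $\log\log L$ arises from the logarithmic cross-over window between the two regimes.

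Finally, (v) is deduced by decomposing $V_L$ into the boundary layer $\sh_L(2s_L)$ and its bulk complement. The boundary layer is handled by the first estimate of (iv) with $j\asymp s_L/(3r)$, producing $\log(s_L/r)\asymp\log L$ when $r$ is constant and $\log\log L$ when $r=r_L$; the bulk contribution is of order $\dtL(x)(\log L)^6/L$ by the $\alpha\to 0$ case of the second estimate of (iv). The maximum of the two is the stated bound, with the transition at $\dtL(x)\asymp r$ (respectively $r_L$) marking the regime in which $\Gamma^{(1)}$ starts to dominate over $\Gamma^{(2)}$. The main obstacle is the second estimate of (iv): the sharp exponent $6-2\alpha$ requires a careful window-splitting argument combining both expressions for $\Gamma_{L,r}$ and precise bookkeeping of the shell counts, the $s_L^{-2}$ prefactor and the cut-off at $L/(\log L)^\alpha$.
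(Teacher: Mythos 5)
Your items (i)--(iii) follow essentially the paper's own argument (Lipschitz property, comparability on neighborhoods, and the layer estimate via Lemma~\ref{def:hittingprob-technical}); in (iii) your case threshold $\dtL(x)>2(j\vee r)$ does not by itself force the layer to be at distance $\gtrsim\dtL(x)$ (take $j\le r$ and $\dtL(x)=3r$), but in that sub-case the factor $\max\{1,\dtL(x)/a(y)\}$ is bounded anyway, so this is only a constant-level slip. The first half of (iv) is the paper's argument verbatim, and your sketch for the second half (playing $\Gamma^{(1)}_{L,r}$ against $\Gamma^{(2)}_{L,r}$, with the $\log\log L$ coming from a logarithmic sum over intermediate layers) can be made to work; the paper itself omits these details.

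The genuine gap is in (v). The bulk contribution cannot be obtained from the second estimate of (iv) ``with $\alpha\to0$'': that estimate already contains a supremum over $x$, so it cannot produce the factor $\dtL(x)/L$, and it carries an extra $\log\log L$. For $x$ near the boundary -- e.g.\ $x\in\sh_L(3r_L)$, which is exactly how (v) is used in Lemma~\ref{def:super-greendifference}~(ii) and in Lemma~\ref{def:lemma-goodpart}, where one needs $\Gamma_{L,r_L}(x,V_L)\le C\log\log L$ -- the stated bulk term is $\frac{\dtL(x)}{L}(\log L)^6\le C(\log L)^{-9}$, whereas your route only yields $C(\log\log L)(\log L)^6$, which is useless there. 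Similarly, invoking (iv) with $j\asymp s_L/(3r)$ for the boundary layer gives a bound uniform in $x$, namely $\log L$ (resp.\ $\log\log L$), and so misses the refinement $\frac{\dtL(x)}{r}\wedge\log L$ claimed in the statement. The repair is the paper's direct decomposition: with $B=\{y\in V_L:\dtL(y)\le s_L\vee2\dtL(x)\}$, off $B$ one has $a(y)=s_L$ and $|x-y|\ge\dtL(y)$, hence $\Gamma^{(1)}_{L,r}(x,y)\le \dtL(x)\,s_L^{-2}(s_L+|x-y|)^{-(d-1)}$, whose sum is $C\dtL(x)L/s_L^2=C\frac{\dtL(x)}{L}(\log L)^6$; on $B\setminus\sh_L(2s_L)$ use $\Gamma^{(2)}_{L,r}$ and Lemma~\ref{def:hittingprob-technical} (again $C\frac{\dtL(x)}{L}(\log L)^6$, since only $O(\dtL(x))$ layers occur); and on $\sh_L(2s_L)$ bound each layer $\mathcal{L}_i$ by $\min\{\dtL(x)/(i\vee r)^2,\,1/(i\vee r)\}$ (the first from the explicit layer sum, the second from (iii)) and sum over $i$, which is what produces $\min\{\dtL(x)/r,\log(s_L/r)\}$, i.e.\ the $\wedge\log L$ for constant $r$ and $\wedge\log\log L$ for $r=r_L$.
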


\begin{prooof}
  (i) The second statement is a direct consequence of the Lipschitz property,
  which in turn follows immediately from the definitions of $\dtL$ and $a$. \\
  (ii) As for $y'\in U(y)$, $\frac{1}{2}a(y)\leq a(y') \leq
  \frac{3}{2}a(y)$ and similarly with $a$ replaced by $\dtL$, it suffices
  to show that for $x'\in U(x)$, $y'\in U(y)$,
  \begin{equation}
    \label{eq:super-gammaest-ineq}
    \frac{1}{C}\left(a(y)+|x-y|\right) \leq a(y')+|x'-y'| \leq C\left(a(y)+|x-y|\right).
\end{equation}
First consider the case $|x-y| \geq 4\max\{a(x), a(y)\}$. Then
\begin{equation*}
  a(y) +|x-y| \leq 2a(y') + 2\left(|x-y| - a(x) - a(y)\right) \leq 2\left(a(y') + |x'-y'|\right).
\end{equation*}
If $|x-y| \leq 4 a(y) $ then
\begin{equation*}
  a(y) +|x-y| \leq 5a(y) \leq 5a(y) + |x'-y'| \leq 10\left(a(y') + |x'-y'|\right),
\end{equation*}
while for $|x-y| \leq 4 a(x)$, using part (i) in the first inequality,
\begin{equation*}
  a(y) +|x-y| \leq a(x)+\frac{3}{2}|x-y| \leq 7a(x) \leq  14\left(a(y') + |x'-y'|\right).
\end{equation*}
This proves the first inequality in~\eqref{eq:super-gammaest-ineq}. The
second one follows from
\begin{equation*}
  a(y') + |x'-y'| \leq \frac{5}{2}a(y) + a(x) + |x-y| \leq \frac{7}{2}\left(a(y) + |x-y|\right).
\end{equation*}
(iii) If $j\leq 2s_L$ and $y\in\mathcal{L}_j$, then $a(y)$ is of order
$j\vee r$. By Lemma~\ref{def:hittingprob-technical} we have
\begin{equation*}
  \sum_{y\in\mathcal{L}_j}\frac{1}{(j\vee r + |x-y|)^d}\leq
  C\min\left\{\frac{1}{j\vee r},\frac{1}{\left|\dLk(x)-(j+r)\right|}\right\}. 
\end{equation*}
It remains to show that 
\begin{equation}
\label{eq:super-gammalemma-layerbound2}
\max\left\{1,\frac{\dtL(x)}{j\vee r}\right\}\min\left\{\frac{1}{j\vee r},
    \frac{1}{\left|\dLk(x)-(j+r)\right|}\right\} \leq C\frac{1}{j\vee r}.
\end{equation}
If $\dtL(x)\leq (j\vee 3r)$, this is clear. If
$\dtL(x) > (j\vee 3r)$, ~\eqref{eq:super-gammalemma-layerbound2}
follows from
$|\dLk(x)-(j+r)| \geq \dtL(x)/2$.\\
(iv) If $\dtL(y) \leq 3jr$, then $\dL(y) \leq 6jr$. Estimating $\Gamma$ by
$\Gamma^{(1)}$, we get
\begin{equation*}
  \Gamma(x,\mathcal{E}_j) \leq C\sum_{i=0}^{6jr}
  \sum_{y\in\mathcal{L}_i}\frac{\dtL(x)}{a(y)}\frac{1}{(a(y) + |x-y|)^d}.
\end{equation*}
Now the first assertion of (iv) follows from (iii). The second is proved 
similarly, so we omit the details.\\
(v) Set $B= \{y\in V_L: \dtL(y) \leq s_L\vee 2\dtL(x)\}$. For $y\in
V_L\backslash B$, it holds that $a(y) = s_L$ and $|x-y| \geq
\dtL(y)$. Therefore,
\begin{equation*}
\Gamma\left(x, V_L\backslash B\right)\leq\Gamma^{(1)}\left(x, V_L\backslash
  B\right)\leq  \frac{\dtL(x)}{s_L^2}\sum_{y\in V_{2L}}\frac{1}{(s_L +
  |y|)^{d-1}}\leq C\frac{\dtL(x)}{L}(\log L)^6. 
\end{equation*}
Furthermore,
\begin{equation*}
\Gamma\left(x, B\right)\leq \sum_{i=0}^{2s_L}
 \sum_{y\in\mathcal{L}_i}\frac{\dtL(x)}{a(y)}\frac{1}{(a(y) + |x-y|)^d} +
 \frac{1}{s_L^2}\sum_{y\in V_L:\atop s_L\leq \dtL(y)\leq 2\dtL(x)}\frac{1}{(s_L+|x-y|)^{d-2}}.
\end{equation*}
Lemma~\ref{def:hittingprob-technical} bounds the second term by
$C(\dtL(x)/L)(\log L)^6$. For the first term, we use twice part (iii) and
once Lemma~\ref{def:hittingprob-technical} to get
\begin{equation*}
\sum_{i=0}^{2s_L}\sum_{y\in\mathcal{L}_i}\frac{\dtL(x)}{a(y)}\frac{1}{(a(y) + |x-y|)^d}
 \leq C\sum_{i=0}^{5r}\frac{1}{i\vee r} + C\min\left\{
 \dtL(x)\sum_{i=5r}^{2s_L}\frac{1}{i^2}, \sum_{i=5r}^{2s_L}\frac{1}{i}\right\}.
\end{equation*}
This proves (v). 
\end{prooof}
\begin{proposition}[Concatenating]\
  \label{def:super-concatenating}
  Let $F, G$ be positive kernels with $F\preceq G$.
  \begin{enumerate}
  \item If $A$ is $\eta$-smoothing and $G\asymp 1$, then for some constant $C = C(d,G) > 0$,
    \begin{equation*}
      FA \preceq C\eta G.
    \end{equation*}
  \item If $\Phi$ is a positive function on $V_{L+r}$ with $\Phi\asymp
    1$, then for some $C = C(d,\Phi) > 0$,
    \begin{equation*}
      F\Phi \leq CG\Phi.
    \end{equation*}
  \end{enumerate}
\end{proposition}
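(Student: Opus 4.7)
The proof rests on the Whitney-type structure of the neighborhoods $U(y) = V_{a(y)}(y) \cap V_{L+r}$. By Lemma~\ref{def:super-gammalemma}~(i), $a$ is Lipschitz with constant $1/2$, so whenever $U(z)\cap U(y)\neq\emptyset$ one has $|z-y| \leq a(z)+a(y) \leq 4a(y)$ and $a(z)\in[a(y)/3,\,3a(y)]$. Consequently, the enlarged region $V_{4a(y)}(y)\cap V_{L+r}$ can be covered by a dimension-dependent number $K=K(d)$ of neighborhoods $U(y_1),\ldots,U(y_K)$, and $V_{L+r}$ itself admits a cover $\{U(y_i)\}_{i\in I}$ with overlap multiplicity bounded by some $M=M(d)$.

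For part (i), since $A(z,\cdot)$ is supported on $U(z)$ with $A(z,U(z))\leq\eta$, I would first write
\begin{equation*}
FA(x,U(y)) \;=\; \sum_z F(x,z)\,A(z,U(y)) \;\leq\; \eta\sum_{z\,:\,U(z)\cap U(y)\neq\emptyset} F(x,z),
\end{equation*}
noting that only $z$ with $U(z)\cap U(y)\neq\emptyset$ contribute. The geometric observation above places these $z$ inside $\bigcup_{i=1}^K U(y_i)$, so $F\preceq G$ yields
\begin{equation*}
FA(x,U(y)) \;\leq\; \eta\sum_{i=1}^K G(x,U(y_i)).
\end{equation*}
It remains to absorb each $G(x,U(y_i))$ into $G(x,U(y))$. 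Since $|y_i-y|\leq 4a(y)$ and $a$ varies slowly, one can join $y$ to $y_i$ by a chain $y=z_0,z_1,\ldots,z_N=y_i$ of uniformly bounded length $N\leq N(d)$ with $z_{j+1}\in U(z_j)$; iterating $G\asymp 1$ along $(x,z_j)\in U(x)\times U(z_j)$ gives $G(x,y_i)\leq C^N G(x,y)$, and the corresponding comparison on the $U$-neighborhoods (using $|U(y_i)|\asymp|U(y)|$) yields $G(x,U(y_i))\leq C'G(x,U(y))$.

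For part (ii), taking the bounded-overlap cover $\{U(y_i)\}$ and using $\Phi\asymp 1$ on each piece,
\begin{equation*}
F\Phi(x) \;\leq\; \sum_i \sup_{U(y_i)} \Phi\cdot F(x,U(y_i)) \;\leq\; C\sum_i \Phi(y_i)\,G(x,U(y_i))
\end{equation*}
by $F\preceq G$. Pushing $\Phi(y_i)$ back inside via $\Phi\asymp 1$ and exploiting the overlap bound $M$,
\begin{equation*}
\sum_i \Phi(y_i)\,G(x,U(y_i)) \;\leq\; C\sum_i \sum_{y\in U(y_i)} \Phi(y)\,G(x,y) \;\leq\; CM\cdot G\Phi(x).
\end{equation*}

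The only delicate step is in part (i): after invoking $F\preceq G$ to pass to $G$, the hypothesis $G\asymp 1$ provides comparability only within a single $U(x)\times U(y)$-rectangle, and so one must bridge $U(y)$ and $U(y_i)$ across several such rectangles even though $y_i$ need not lie in $U(y)$. The Lipschitz behaviour of $a$ keeps this chain of bounded length, which is precisely what ensures the final constant depends only on the dimension and the $\asymp 1$-constants of $G$ (respectively $\Phi$).
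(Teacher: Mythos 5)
Your proof is correct and follows essentially the same route as the paper's: in (i) you bound $FA(x,U(y))$ by $\eta$ times $F(x,\cdot)$ summed over the centers $z$ with $U(z)\cap U(y)\neq\emptyset$, cover these by $K(d)$ neighborhoods $U(y_i)$, pass to $G$ via $F\preceq G$, and transfer $G(x,U(y_i))$ back to $G(x,U(y))$ using $G\asymp 1$ (the paper does this transfer by two direct comparisons through a point of $U(y_i)\cap U(y)$ together with $|U(y_i)|\leq C|U(y)|$ instead of your short chain, a purely cosmetic difference), while your part (ii) coincides with the paper's bounded-overlap covering argument. One imprecision to fix: the claim that all of $V_{4a(y)}(y)\cap V_{L+r}$ can be covered by $K(d)$ neighborhoods is false in general, since this region can reach the boundary layer where $a(\cdot)\ll a(y)$ and the admissible neighborhoods there are forced to be small; what is true, and all your argument actually uses, is that the set $\{z: U(z)\cap U(y)\neq\emptyset\}$ (whose points satisfy $a(z)\geq a(y)/3$ and $|z-y|\leq 4a(y)$) admits such a cover with centers $y_i$ chosen from this set, as in the paper, and with that choice your bounded-length chain giving $G(x,y_i)\leq C^N G(x,y)$ indeed exists (along the segment from $y$ to $y_i$ one has $a(\cdot)\geq\min(a(y),a(y_i))\geq a(y)/3$).
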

\begin{prooof}
  (i) As $a$ is Lipschitz with constant $1/2$, we can choose $K=K(d)$
  points $y_k$ out of the set $M=\{y'\in V_{L+r} : U(y')\cap U(y) \neq
  \emptyset\}$ such that $M$ is covered by the union of the $U(y_k), k =
  1,\ldots,K.$ Since $A(y',U(y))\neq 0$ implies $y'\in M$, we then have
  \begin{eqnarray*}
    FA(x,U(y)) &=& \sum_{y' \in M}F(x,y')
    \sum_{y''\in U(y)}A(y',y'')
    \leq \eta\sum_{k=1}^KF(x,U(y_k))\\
    &\leq& \eta\sum_{k=1}^KG(x,U(y_k)).
  \end{eqnarray*}
  Using $G \asymp 1$, we get $G(x,U(y_k))\leq C|U(y_k)|G(x,y)$. Clearly
  $|U(y_k)| \leq C |U(y)|$, so that
  \begin{equation*}
    FA(x,U(y)) \leq CK\eta|U(y)|G(x,y).
  \end{equation*}
  A second application of $G\asymp 1$ yields the claim.\\
  (ii) We can find a constant $K = K(d)$ and a covering of $V_{L+r}$ by
  neighborhoods $U(y_k)$, $y_k \in V_{L+r}$, such that every
  $y\in V_{L+r}$ is contained in at most $K$ many of the sets $U(y_k)$. Using
  $\Phi\asymp 1$, it follows that for $x\in V_{L+r}$,
  \begin{eqnarray*}
    F\Phi(x) = \sum_{y\in V_{L+r}}F(x,y)\Phi(y) &\leq&
    C\sum_{k=1}^{\infty}F(x,U(y_k))\Phi(y_k)\leq
    C\sum_{k=1}^{\infty}G(x,U(y_k))\Phi(y_k)\\
    &\leq& C\sum_{k=1}^{\infty}\sum_{y\in U(y_k)}G(x,y)\Phi(y) \leq CK\sum_{y\in
      V_{L+r}}G(x,y)\Phi(y). 
  \end{eqnarray*}
\end{prooof}
In terms of our specific kernel $\Gamma_{L,r}$, we obtain
\begin{proposition}\ 
\label{def:super-keyest}
Let $A$ be $\eta$-smoothing, and let $F$ be a positive kernel satisfying
$F\preceq \Gamma_{L,r}$.
\begin{enumerate}
\item There exists a constant $C_2 > 0$ not depending on $F$ and $A$ such that
  \begin{equation*}
    FA \preceq C_2\eta\Gamma_{L,r}.
  \end{equation*}
\item If additionally $A(x,y) = 0$ for $x\notin V_L$ and $A(x,U(x)) \leq
  \left(\log (a(x)/20)\right)^{-9}$ for $x \in V_L\backslash \mathcal{E}_1$, then
  there exists a constant $C_3 > 0$ not depending on $F$ and $A$ such that for all $x,z\in V_{L+r}$,
  \begin{equation*}
    FA\Gamma_{L,r}(x,z) \leq C_3\eta^{1/2}\Gamma_{L,r}(x,z).
  \end{equation*}
\end{enumerate}
\end{proposition}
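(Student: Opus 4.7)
Part (i) is immediate from the machinery already in place: since $F\preceq\Gamma_{L,r}$ and $\Gamma_{L,r}\asymp 1$ by Lemma~\ref{def:super-gammalemma}(ii), Proposition~\ref{def:super-concatenating}(i) applied with $G=\Gamma_{L,r}$ yields $FA\preceq C_2\eta\,\Gamma_{L,r}$.

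For part (ii), the key new ingredient is an AM--GM trick exploiting the two simultaneous bounds on $A$ in the bulk. Writing $\alpha(y):=A(y,U(y))$, the geometric mean of $\alpha(y)\leq\eta$ and $\alpha(y)\leq(\log(a(y)/20))^{-9}$ gives
\[
\alpha(y)\leq\eta^{1/2}(\log(a(y)/20))^{-9/2}\quad\text{for }y\in V_L\setminus\mathcal{E}_1,
\]
while on $\mathcal{E}_1\cap V_L$ only $\alpha(y)\leq\eta\leq\eta^{1/2}$ is available, and $\alpha(y)=0$ outside $V_L$. The plan is to first reduce $FA\Gamma_{L,r}(x,z)$ to a weighted convolution: the inner sum collapses as
\[
\sum_{y'}A(y,y')\Gamma_{L,r}(y',z)\leq C\,\alpha(y)\Gamma_{L,r}(y,z)
\]
because $A(y,\cdot)$ is supported in $U(y)$ and $\Gamma_{L,r}(\cdot,z)\asymp 1$ by Lemma~\ref{def:super-gammalemma}(ii); then Proposition~\ref{def:super-concatenating}(ii) applied with $\Phi(y)=\alpha(y)\Gamma_{L,r}(y,z)$ (which is $\asymp 1$ on $U$-neighborhoods, because $a$ is Lipschitz and $\alpha$ is in turn dominated by the slowly varying function $\min\{\eta,(\log(a(y)/20))^{-9}\}$) replaces $F$ by $\Gamma_{L,r}$. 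Everything thus reduces to establishing
\[
\sum_{y\in V_L\setminus\mathcal{E}_1}\Gamma_{L,r}(x,y)(\log(a(y)/20))^{-9/2}\Gamma_{L,r}(y,z)\leq C\,\Gamma_{L,r}(x,z),\qquad\text{(B1)}
\]
\[
\sum_{y\in\mathcal{E}_1\cap V_L}\Gamma_{L,r}(x,y)\Gamma_{L,r}(y,z)\leq C\,\Gamma_{L,r}(x,z).\qquad\text{(B2)}
\]
Granted (B1) and (B2), the bulk contribution is $\leq C\eta^{1/2}\Gamma_{L,r}(x,z)$ via (B1), and the $\mathcal{E}_1$ contribution is $\leq C\eta\,\Gamma_{L,r}(x,z)\leq C\eta^{1/2}\Gamma_{L,r}(x,z)$ via (B2), yielding the desired $C_3$.

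For (B1) the plan is to decompose $V_L\setminus\mathcal{E}_1$ into the shells $(\mathcal{E}_{j+1}\setminus\mathcal{E}_j)\cap V_L$ with $1\leq j\lesssim s_L/(3r)$, plus the deep bulk $\{a(y)=s_L\}$. On the $j$th shell $a(y)$ is of order $jr$, the weight is essentially the constant $(\log(jr))^{-9/2}$, and the $y$-summation is carried out using Lemma~\ref{def:super-gammalemma}(iv); the resulting tail is summable in $j$ because of the exponent $9/2$. The deep bulk is treated by Lemma~\ref{def:super-gammalemma}(v) with the uniform weight $(\log s_L)^{-9/2}$. For (B2), $\dtL(y)=3r$ on $\mathcal{E}_1$, so the boundary-smallness factor in $\Gamma^{(1)}_{L,r}$ delivers an extra factor $r$ in both $\Gamma_{L,r}(x,y)$ and $\Gamma_{L,r}(y,z)$; combined with the capacity-type bound $\Gamma_{L,r}(x,\mathcal{E}_1)\leq C$ from Lemma~\ref{def:super-gammalemma}(iv) this closes the estimate. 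The hard part is (B1): without the logarithmic weight, $\sum_y\Gamma_{L,r}(x,y)\Gamma_{L,r}(y,z)$ can exceed $\Gamma_{L,r}(x,z)$ by powers of $L$ (already in $d=3$), and only the $(\log a(y)/20)^{-9/2}$ factor, together with a careful case analysis --- whether $x$ or $z$ lies near the boundary or deep in the bulk, and which of $\Gamma^{(1)}_{L,r},\Gamma^{(2)}_{L,r}$ is the effective bound in each regime --- allows the shell-by-shell sum to close against $\Gamma_{L,r}(x,z)$.
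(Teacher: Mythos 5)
Part (i) of your proposal is exactly the paper's argument (Proposition~\ref{def:super-concatenating}(i) with $G=\Gamma_{L,r}$), so nothing to add there. Part (ii), however, has a genuine gap: your reduction (B1) is false. The AM--GM step, which trades the available bound $\min\{\eta,(\log(a(y)/20))^{-9}\}$ for the geometric mean $\eta^{1/2}(\log(a(y)/20))^{-9/2}$, destroys precisely the logarithmic margin that is needed in the deep bulk, where $a(y)=s_L$ and the weight is only $(\log s_L)^{-9/2}\asymp(\log L)^{-9/2}$. Concretely, take $x,z\in V_{L+r}$ with $\dtL(x)\asymp\dtL(z)\asymp r$ and $|x-z|\asymp L$; then $\Gamma_{L,r}(x,z)\asymp L^{-d}$ (the minimum is attained by $\Gamma^{(1)}_{L,r}$). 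For the $\asymp L^d$ points $y$ with $|y|\leq L/2$ one has $a(y)=s_L$, $\dtL(y)\asymp L$, and
\begin{equation*}
\Gamma_{L,r}(x,y)\asymp\frac{r}{s_L^2L^{d-1}},\qquad \Gamma_{L,r}(y,z)\asymp\frac{1}{rL^{d-1}},
\end{equation*}
so that the bulk part of your left-hand side in (B1) is at least of order
\begin{equation*}
L^d\cdot\frac{r}{s_L^2L^{d-1}}\cdot(\log L)^{-9/2}\cdot\frac{1}{rL^{d-1}}
=\frac{(\log L)^{6-9/2}}{L^{d}}\asymp(\log L)^{3/2}\,\Gamma_{L,r}(x,z),
\end{equation*}
since $s_L^{-2}=(\log L)^6L^{-2}$. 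Thus (B1) fails by a factor $(\log L)^{3/2}$; with exponent $\beta$ in place of $9/2$ the excess is $(\log L)^{6-\beta}$, so one needs the full decay $\beta\geq 6$ in the bulk --- this is exactly why the exponent $9$ appears in the definition of good points, and why it cannot be halved.

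The paper's proof avoids this loss by never taking a geometric mean: it carries the envelope $\sigma(y)=\min\{\eta,(\log a(y))^{-9}\}$ through the kernel estimate (note also that passing to this deterministic envelope is what legitimizes the use of Proposition~\ref{def:super-concatenating}(ii), since for a general $\eta$-smoothing $A$ the function $y\mapsto A(y,U(y))\Gamma_{L,r}(y,z)$ need not satisfy the $\asymp 1$ property --- being dominated by a slowly varying function is not enough), uses the full $(\log L)^{-9}$ against the $(\log L)^{6}$ coming from $a(y)^{-2}$ in the region $\{a(y)=s_L\}$, and extracts the factor $\eta^{1/2}$ only at the very end from the one-dimensional boundary-layer sum $\sum_{j}\min\{\eta,(\log(j+1))^{-9}\}/(j\vee r)\leq C\eta^{1/2}$, where the minimum genuinely interpolates between the two regimes. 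Your treatment of the $\mathcal{E}_1$ part (your (B2)) is fine and essentially coincides with the paper's $\mathcal{E}_2$ estimate, but the bulk part of your scheme cannot be repaired without abandoning the upfront AM--GM step.
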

\begin{prooof}
  (i) This is Proposition~\ref{def:super-concatenating} (i) with $G =
  \Gamma$.\\ 
  (ii) We set $B = V_L\backslash \mathcal{E}_1$ and split into
  \begin{equation}
    \label{eq:super-keyest-splitting1}
    FA\Gamma = F1_{\mathcal{E}_1}A\Gamma + F1_BA\Gamma.
  \end{equation}
  Let $x,z\in V_{L+r}$ be fixed, and consider first
  $F1_{\mathcal{E}_1}A\Gamma(x,z)$. Using $\Gamma\asymp 1$, $A\Gamma(y,z) \leq
  C\eta\Gamma(y,z)$. As $\Gamma(\cdot,z) \asymp 1$ and $F1_{\mathcal{E}_1} \preceq
  \Gamma1_{\mathcal{E}_2}$, we get by Proposition~\ref{def:super-concatenating} ii)
  \begin{equation*}
    F1_{\mathcal{E}_1}A\Gamma(x,z) \leq C\eta \Gamma1_{\mathcal{E}_2}\Gamma(x,z).
  \end{equation*}
  Setting $\mathcal{E}_2^1 = \{y\in \mathcal{E}_2 : |y-z| \geq |x-z|/2\}$, $\mathcal{E}_2^2=
  \mathcal{E}_2\backslash \mathcal{E}_2^1$, we split further into
  \begin{equation*}
    \Gamma1_{\mathcal{E}_2}\Gamma = \Gamma1_{\mathcal{E}_2^1}\Gamma +
    \Gamma1_{\mathcal{E}_2^2}\Gamma.
  \end{equation*}
  If $y\in \mathcal{E}_2^1$, then $\Gamma(y,z) \leq C\Gamma(x,z)$. By
  Lemma~\ref{def:super-gammalemma} (iv), $\Gamma(x, \mathcal{E}_2) \leq
  C$. Together we obtain
  \begin{equation*}
    \Gamma1_{\mathcal{E}_2^1}\Gamma(x,z) \leq C\Gamma(x,z).
  \end{equation*}
  If $y \in \mathcal{E}_2^2$, then $\Gamma(x,y) \leq C\frac{a(z)^2}{r^2}\Gamma(x,z)$
  and $\Gamma^{(1)}(y,z) \leq C\frac{r^2}{a(z)^2}\Gamma^{(1)}(z,y)$, whence
  \begin{equation*}
    \Gamma1_{\mathcal{E}_2\backslash
      \mathcal{E}_2^2}\Gamma(x,z) \leq C\Gamma(x,z)\Gamma^{(1)}(z,\mathcal{E}_2) \leq C\Gamma(x,z).
  \end{equation*}
  We therefore have shown that
  \begin{equation*}
    F1_{\mathcal{E}_1}A\Gamma(x,z) \leq C\eta\Gamma(x,z).
  \end{equation*} 
  To handle the second summand of~\eqref{eq:super-keyest-splitting1}, set
  $\sigma(y) =
  \min\left\{\eta,\left(\log a(y)\right)^{-9}\right\}$, $y\in
  V_{L+r}$.  Clearly, $1_BA\Gamma(y,z) \leq C\sigma(y)\Gamma(y,z)$ and $F1_B
  \preceq \Gamma1_{V_L}$. Furthermore, $\sigma(\cdot)\Gamma(\cdot,z)\asymp
  1$, so that by Proposition~\ref{def:super-concatenating} ii)
  \begin{equation*}
    F1_BA\Gamma(x,z) \leq C\Gamma1_{V_L}\sigma\Gamma(x,z).
  \end{equation*} 
  Consider $D^1 = \{y\in V_L : |y-z| \geq |x-z|/2\}$, $D^2 =
  V_L\backslash D^1$ and split into
  \begin{equation*}
    \Gamma1_{V_L}\sigma\Gamma = \Gamma1_{D^1}\sigma\Gamma + \Gamma1_{D^2}\sigma\Gamma.
  \end{equation*} 
  If $y\in D^1$, then $\Gamma(y,z) \leq
  C\max\left\{1,\frac{\dtL(y)}{\dtL(x)}\right\}\Gamma(x,z)$, implying
  $\Gamma1_{D^1}\sigma\Gamma(x,z) \leq C\eta^{1/2}\Gamma(x,z)$ if we prove
  \begin{equation}
    \label{eq:super-keyest-toprove}
    \sum_{y\in
      V_L}\max\left\{1,\frac{\dtL(y)}{\dtL(x)}\right\}\Gamma(x,y)\sigma(y) \leq C\eta^{1/2}.
  \end{equation}
  To this end, we treat the summation over $S^1=\{y \in V_L: \dL(y) \leq
  2s_L\}$ and $S^2= V_L\backslash S_1$ separately. If $y\in S^2$, then
  $a(y) = s_L$. Estimating $\Gamma$ by $\Gamma^{(1)}$ and $\dtL(y)$, $\dtL(x)$
  simply by $L$, we get
  \begin{equation}
    \label{eq:super-keyest-s2}
    \sum_{y\in
      S^2}\max\left\{1,\frac{\dtL(y)}{\dtL(x)}\right\}\Gamma(x,y)\sigma(y) \leq
    \frac{C}{(\log L)^3}\sum_{y\in V_{2L}}\frac{1}{\left(s_L +
        |y|\right)^d}\leq \frac{C\log\log L}{(\log L)^3}.
  \end{equation}
  If $y\in S^1$, we estimate $\Gamma$ again by $\Gamma^{(1)}$ and split the
  summation into the layers $\mathcal{L}_j$, $j = 0,\ldots,2s_L$. On
  $\mathcal{L}_j$, $\sigma(y) \leq C\min\left\{\eta,(\log
    (j+ 1))^{-9}\right\}$. Thus, by Lemma~\ref{def:super-gammalemma} (iii),
  \begin{eqnarray*}
    \lefteqn{\sum_{y\in S^1}\max\left\{1,\frac{\dtL(y)}{\dtL(x)}\right\}
    \Gamma(x,y)\sigma(y)}\nonumber\\ 
    &\leq& C\sum_{j=0}^{2s_L}\sum_{y\in\mathcal{L}_j}\max\left\{1,\frac{\dtL(x)}{a(y)}\right\}\frac{\min\left\{\eta,(\log
        (j+1))^{-9}\right\}}{(a(y)+|x-y|)^d}\\ 
    &\leq& C\sum_{j=0}^{2s_L}\frac{\min\left\{\eta,(\log(
        j+1))^{-9}\right\}}{j\vee r}\leq C\eta^{1/2}.
  \end{eqnarray*}
  Together with~\eqref{eq:super-keyest-s2}, we have
  proved~\eqref{eq:super-keyest-toprove}. It remains to bound the
  term $\Gamma1_{D^2}\sigma\Gamma(x,z)$.  But if $y\in D^2$, then
  \begin{equation*}
    a(y) + |x-y| \geq a(y) + \frac{1}{2}|x-z| \geq a(z) - \frac{1}{2}|y-z| + \frac{1}{2}|x-z| \geq
    \frac{1}{4}\left(a(z) + |x-z|\right),
  \end{equation*}
  whence $\Gamma(x,y) \leq
  C\frac{a(z)^2}{a(y)^2}\max\left\{1,\frac{\dtL(y)}{\dtL(z)}\right\}\Gamma(x,z)$.
  Using Lemma~\ref{def:super-gammalemma} (i), we have
\begin{equation*} 
  \frac{a(z)^2}{a(y)^2}\Gamma(y,z) \leq C\Gamma(z,y),
  \end{equation*} 
so that
  $\Gamma1_{D^2}\sigma\Gamma(x,z) \leq C\eta^{1/2}\Gamma(x,z)$ follows
  again from~\eqref{eq:super-keyest-toprove}.
 \end{prooof}
Now we have collected all ingredients to finally prove part (ii) of our main Lemma~\ref{def:superlemma}.
\begin{prooof2}{\bf of Lemma~\ref{def:superlemma} (ii):}
  As already remarked, we only have to prove the statement involving $\Gtg$. The
  perturbation expansion~\eqref{eq:prel-pbe3} yields
  \begin{equation*}
    \Gtg = \gt\sum_{m=0}^\infty(R\gt)^m\sum_{k=0}^\infty\Delta^k,
  \end{equation*}
  where $\Delta = 1_{V_{L+r}}(\Ptg-\pt)$, $R =
  \sum_{k=1}^\infty\Delta^k\pt$. With the constants $C_1$ of
  Lemma~\ref{def:superlemma} (i) and $C_2,C_3$ of
  Proposition~\ref{def:super-keyest} we choose
\begin{equation*}
\delta \leq
  \frac{1}{16}\left(\frac{1}{C_2\vee C_1^2C_3^2}\right).
\end{equation*}
 From Lemma~\ref{def:superlemma} (i) and
 Proposition~\ref{def:super-keyest} (i) with $A = |\Delta|$, 
 $\eta = \delta$ we then deduce that $\gt|\Delta| \preceq (C_1/2)\Gamma$,
 and, by iterating,
\begin{equation*}
  \sum_{k=1}^\infty\gt|\Delta|^{k-1}\preceq 2C_1 \Gamma.
\end{equation*}
Furthermore, by part (ii) of Proposition~\ref{def:super-keyest} with $A=|\Delta\pt|$ and Lemma~\ref{def:superlemma} (i),
\begin{equation*}
  \sum_{k=1}^\infty\gt|\Delta|^{k-1}\left|\Delta\pt\right|\gt \preceq (C_1/2)\Gamma.
\end{equation*} 
Repeating this procedure shows that for $m\in\mathbb{N}$,
\begin{equation*}
  \gt(|R|\gt)^m \preceq C_12^{-m}\Gamma.
\end{equation*}
Finally, by a further application of Proposition~\ref{def:super-keyest}
(i),
\begin{equation*}
  \gt\sum_{m=0}^\infty(|R|\gt)^m\sum_{k=0}^\infty|\Delta|^k \preceq 4C_1\Gamma.
\end{equation*}
This proves the lemma.
\end{prooof2}
\subsection{Difference estimates}
\label{super-difference}
The results from the preceding section enable us to prove some difference
estimates on the coarse grained Green's functions, which will be used in
the part on mean sojourn times. The reader who is only interested in the
exit measures may skip this section.
\begin{lemma}
\label{def:super-greendifference}
There exists a constant $C >0$ such that
\begin{enumerate}
\item
\begin{equation*}
\sup_{x,x'\in V_L: |x-x'|\leq s_L}\sum_{y\in V_L}\left|\gh_{L,r}(x,y)-\gh_{L,r}(x',y)\right|
\leq C(\log\log L)(\log L)^3.
\end{equation*}
\item For $\delta > 0$ small,
\begin{equation*}
\sup_{x,x'\in V_L: |x-x'|\leq s_L}\sum_{y\in V_L}\left|\Ghg_{L,r_L}(x,y)-\Ghg_{L,r_L}(x',y)\right|
\leq C(\log\log L)(\log L)^3.
\end{equation*}
\end{enumerate}
\end{lemma}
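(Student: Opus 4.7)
The plan is to prove (i) by a Markov-time decomposition combined with a coupling/CLT argument, and to deduce (ii) from (i) by propagating the bound through the perturbation expansion for $\Ghg_{L,r_L}$.

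For (i), I would write, for $x\in V_L$ and $y\in V_L$,
\begin{equation*}
\gh_{L,r}(x,y) = \sum_{k=0}^{n-1}(1_{V_L}\ph_{L,r})^k(x,y) + \sum_{z\in V_L}(1_{V_L}\ph_{L,r})^n(x,z)\,\gh_{L,r}(z,y),
\end{equation*}
with $n = \lfloor(\log L)^6\rfloor$, take the difference in $x$, and sum $|\cdot|$ over $y\in V_L$. The tail is bounded by $\|(1_{V_L}\ph_{L,r})^n(x,\cdot)-(1_{V_L}\ph_{L,r})^n(x',\cdot)\|_1\cdot\sup_z \gh_{L,r}(z,V_L)$, and Lemma~\ref{def:superlemma}(i) together with Lemma~\ref{def:super-gammalemma}(v) gives $\sup_z\gh_{L,r}(z,V_L)\leq C(\log L)^6$. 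The terms with $k<n$ are bounded by a coupling argument: in the bulk $\{\dL(\cdot)\geq 2s_L\}$ the kernel $\ph_{L,r}$ coincides with the translation-invariant kernel $\ph_m$, $m=s_L/20$, and Proposition~\ref{def:super-localclt} together with a standard Gaussian smoothing estimate yields $\|(1_{V_L}\ph_{L,r})^k(x,\cdot)-(1_{V_L}\ph_{L,r})^k(x',\cdot)\|_1\leq C|x-x'|/(s_L\sqrt{k})\leq C/\sqrt{k}$ as long as the walks remain predominantly in the bulk. Summing over $k<n$ contributes $C\sqrt{n}=C(\log L)^3$, and the tail contributes $C(\log L)^6/\sqrt{n}=C(\log L)^3$.

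The main obstacle is dealing with the boundary region $\sh_L(s_L)$, where the coarse graining step size shrinks from $s_L$ down to $r/20$ and the translation-coupling is no longer uniform. I would stop the bulk coupling once either walk enters $\sh_L(2s_L)$ and then use the Markov property, bounding the expected contribution from subsequent visits to boundary shells via the layer bounds $\Gamma_{L,r}(x,\mathcal{E}_j)\leq C\log(j+1)$ of Lemma~\ref{def:super-gammalemma}(iv). Summing these contributions over dyadic shells yields the additional factor $\log\log L$.

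For (ii), use the resolvent equation $\Ghg_{L,r_L}=\gh_{L,r_L}+\gh_{L,r_L}\Delta\Ghg_{L,r_L}$ with $\Delta=\mathbf{1}_{V_L}(\Phg_{L,r_L}-\ph_{L,r_L})$, which iterates to
\begin{equation*}
\Ghg_{L,r_L}(x,\cdot)-\Ghg_{L,r_L}(x',\cdot)=\sum_{k\geq 0}\bigl(\gh_{L,r_L}(x,\cdot)-\gh_{L,r_L}(x',\cdot)\bigr)(\Delta\gh_{L,r_L})^k.
\end{equation*}
Since $\Delta$ vanishes on $\badP_L$ and $\sum_w|\Delta(z,w)|\leq\delta$ for good $z$ with support inside $U(z)$, $\Delta$ is $\delta$-smoothing in the sense of Section~\ref{super-Ghestimates}, and by Lemma~\ref{def:superlemma}(i), $\gh_{L,r_L}\preceq C\Gamma_{L,r_L}$. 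Proposition~\ref{def:super-keyest}(ii) then shows that one application of $\Delta\gh_{L,r_L}$ gains a factor of order $\delta^{1/2}$ in the $\Gamma$-bound, and iterating yields geometric convergence for $\delta$ sufficiently small. Carefully propagating the $L^1$-estimate from part (i) through this expansion (using the $\delta^{1/2}$ contractivity to absorb the $(\log L)^6$ factors coming from the row-sums of $\gh_{L,r_L}$) transfers the bound $C(\log\log L)(\log L)^3$ from $\gh_{L,r_L}$ to $\Ghg_{L,r_L}$. The technical challenge here is ensuring that the signed-measure structure is compatible with the positive-kernel machinery of Proposition~\ref{def:super-keyest}, which I would handle by decomposing into positive and negative parts.
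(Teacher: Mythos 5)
Your proposal contains genuine gaps in both parts. For (i), the central unproven step is the claimed bound $\|(1_{V_L}\ph_{L,r})^k(x,\cdot)-(1_{V_L}\ph_{L,r})^k(x',\cdot)\|_1\leq C|x-x'|/(s_L\sqrt{k})$. Proposition~\ref{def:super-localclt} applies to the translation-invariant, unkilled kernel $\ph_m$ on $\mathbb{Z}^d$; once you kill at $\partial V_L$ and let the step size shrink in $\sh_L(2s_L)$, the $k$-step kernels differ both in shape and in total mass (survival probability), and no $1/\sqrt{k}$ smoothing bound is available -- for the tail term with $n=(\log L)^6$ the walk has typically already reached the boundary region, so the caveat ``predominantly in the bulk'' fails exactly where you need the estimate. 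Your proposed fix (stop the coupling on entering $\sh_L(2s_L)$ and sum the layer bounds $\Gamma_{L,r}(x,\mathcal{E}_j)\leq C\log(j+1)$ over dyadic shells) does not engage with the actual difficulty: once a walk is in the shell its remaining Green's mass is already $\leq C(\log L)^3$ by Lemma~\ref{def:super-gammalemma}~(v), so no $\log\log L$ arises there; the real issue, and the source of the $\log\log L$ in the paper, is that two coupled $\ph_m$-walks kept within distance $s_L$ of each other can nevertheless exit $V_L$ (or enter the boundary region) at locations far apart, because one may exit while the other wanders along the boundary. The paper controls this via $\gh_m-\gh=\gh_m\Delta\gh$ with $\Delta$ supported on $\sh_L(2s_L)$, the representation \eqref{eq:clt-greenonball}, and a gambler's-ruin estimate giving $\mathbb{E}|Y_\tau-\tilde{Y}_{\tilde\tau}|\leq Cs_L\log\log L$; some argument of this type is indispensable and is missing from your sketch.

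For (ii), two steps fail as stated. First, $\Delta=1_{V_L}(\Phg_{L,r_L}-\ph_{L,r_L})$ is \emph{not} $\delta$-smoothing on all of $V_L$: in the boundary layer $\sh_L(r_L)$ the coarse-graining balls are cut by $V_L$, and goodness only controls $\|(\Pi_{V_t(x)}-\pi_{V_t(x)})(x,\cdot)\|_1$ for full balls, so $\|\Delta(z,\cdot)\|_1$ may be of order one there (this is precisely why Section~\ref{super-Ghestimates} introduces the kernels $\Pt_{L,r},\pt_{L,r}$ on $V_{L+r}$, and why the paper's proof of part (ii) splits off $B^c=\sh_L(2r_L)$ and bounds its contribution by $C\Gamma(x,\sh_L(2r_L))\sup_v\Gamma(v,V_L)\leq C\log\log L$ rather than feeding it into the expansion). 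Second, even on $B=V_L\setminus\sh_L(2r_L)$, propagating an $L^1$ difference bound through $\sum_k(\Delta\gh)^k$ cannot be done by the ``$\delta^{1/2}$ contractivity'': the row sums of $\gh$ are of order $(\log L)^6$, and a fixed constant $\delta^{1/2}$ does not absorb them; Proposition~\ref{def:super-keyest}~(ii) is a statement about $\Gamma$-bounds, not about differences in $x$. What makes the argument work in the paper is the smoothed smallness $\|1_B\Delta\ph(v,\cdot)\|_1\leq C(\log L)^{-9}$, which beats $(\log L)^6$, combined with the rearrangement $F=\gh\sum_k(1_B\Delta)^k+\gh RF$, $R=\sum_{k\geq1}(1_B\Delta)^k\ph$, so that all $x$-dependence sits in a leading factor $\gh(x,\cdot)$ to which part (i) applies. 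Without isolating the boundary layer and without this rearrangement (or an equivalent device exploiting the $(\log L)^{-9}$ gain), your expansion neither converges in the required $L^1$ sense nor yields the stated bound.
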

\begin{prooof}
(i) 
Set $m = s_L/20$. Recall the definitions of $\ph_m$ and $\gh_m$ from
Section~\ref{clt}. We write
\begin{eqnarray}
\label{eq:super-greendifference-eq0}
  \lefteqn{\sum_{y\in V_L}\left|\gh(x,y)-\gh(x',y)\right|}\nonumber\\
  &\leq& \sum_{y\in
      V_L}\left|\left(\gh-\gh_m\right)(x,y)\right| + \sum_{y\in
      V_L}\left|\gh_m(x,y)-\gh_m(x',y)\right| + \sum_{y\in
      V_L}\left|\left(\gh_m-\gh\right)(x',y)\right|.\nonumber\\
\end{eqnarray}
If $x\in V_L\backslash \sh_L(2s_L)$, we have $\ph(x,\cdot)= \ph_m(x,\cdot)$. Clearly,
$\sup_{x\in V_L}\gh_m(x,\sh_L(2s_L)) \leq C$. Thus, with $\Delta=
1_{V_L}\left(\ph_m-\ph\right)$, expansion~\eqref{eq:prel-pbe1} and
Lemma~\ref{def:super-gammalemma} yield (remember $\gh\preceq C\Gamma$) 
\begin{eqnarray*} \sum_{y\in V_L}\left|(\gh_m-\gh)(x,y)\right|&=&
  \sum_{y\in V_L}|\gh_m\Delta\gh(x,y)|\\ 
&\leq& 2\,\gh_m(x,\sh_L(2s_L))\sup_{v\in
    \sh_L(3s_L)}\gh(v,V_L) \leq C(\log L)^3.
\end{eqnarray*}
It remains to handle the middle term
of~\eqref{eq:super-greendifference-eq0}. By~\eqref{eq:clt-greenonball}, 
\begin{eqnarray*}
  \lefteqn{\gh_m(x,y)-\gh_m(x',y)}\\
  & =& \gh_{m,\mathbb{Z}^d}(x,y)-
  \gh_{m,\mathbb{Z}^d}(x',y) +
  \Erw_{x',\ph_m}\left[\gh_{m,\mathbb{Z}^d}(X_{\tau_L},y)\right] -
  \Erw_{x,\ph_m}\left[\gh_{m,\mathbb{Z}^d}(X_{\tau_L},y)\right].  
\end{eqnarray*}
Using Proposition~\ref{def:super-behaviorgreen}, it follows that for $|x-x'|
\leq s_L$,
\begin{equation*}
  \sum_{y\in
      V_L}\left|\gh_{m,\mathbb{Z}^d}(x,y)-\gh_{m,\mathbb{Z}^d}(x',y)\right| \leq
  C(\log L)^3. 
\end{equation*}
At last, we claim that
\begin{equation}
\label{eq:super-greendifference-eq1}
  \sum_{y\in V_L}\left|\Erw_{x',\ph_m}\left[\gh_{m,\mathbb{Z}^d}(X_{\tau_L},y)\right] -
    \Erw_{x,\ph_m}\left[\gh_{m,\mathbb{Z}^d}(X_{\tau_L},y)\right]\right|
  \leq C(\log\log L){(\log L)}^3.  
\end{equation}
Since $|x-x'| \leq m$, we can define on the same probability space, whose
probability measure we denote by $\mathbb{Q}$, a random walk $(Y_n)_{n\geq
  0}$ starting at $x$ and a random walk $(\tilde{Y}_n)_{n\geq 0}$
starting at $x'$, both moving according to $\ph_m$ on $\mathbb{Z}^d$,
such that for all times $n$, $|Y_n - \tilde{Y}_n| \leq s_L$. However, with
$\tau= \inf\{n \geq 0 : Y_n \notin V_L\}$, $\tilde{\tau}$ the same for
$\tilde{Y}_n$, we cannot deduce that
$|Y_{\tau}-\tilde{Y}_{\tilde{\tau}}| \leq s_L$, since it is
possible that one of the walks, say $Y_n$, exits $V_L$ and then moves far
away from the exit point, while staying close to both $V_L$ and the walk
$\tilde{Y}_n$, which might still be inside $V_L$. In order to show that
such an event has a small probability, we argue in a similar way to~\cite{LawLim}, Proposition 7.7.1. Define
\begin{equation*}
\sigma(s_L) = \inf\left\{n\geq 0 : Y_n\in \sh_L(s_L)\right\},
\end{equation*}
and analogously $\tilde{\sigma}(s_L)$. Let $\vartheta =
\sigma(s_L)\wedge\tilde{\sigma}(s_L)$.  Since
$|Y_{\vartheta}-\tilde{Y}_{\vartheta}| \leq s_L$, 
\begin{equation*}
\sigma(2s_L) \vee \tilde{\sigma}(2s_L) \leq \vartheta.
\end{equation*}
For $k\geq 1$, we introduce the events
\begin{eqnarray*}
B_k&=&\left\{\left|Y_i-Y_{\sigma(2s_L)}\right| > ks_L\mbox{ for
      all } i=\sigma(2s_L),\ldots,\tau\right\},\\
\tilde{B}_k&=&\left\{\left|\tilde{Y}_i-\tilde{Y}_{\tilde{\sigma}(2s_L)}\right| > ks_L\mbox{ for
      all } i=\tilde{\sigma}(2s_L),\ldots,\tilde{\tau}\right\}.
\end{eqnarray*}
By the strong Markov property and the gambler's ruin estimate
of~\cite{LawLim}, p. 223 (7.26),
\begin{equation*}
\mathbb{Q}\left(B_k\cup \tilde{B}_k\right) \leq C_1/k
\end{equation*}
for some $C_1>0$ independent of $k$. Applying the triangle inequality to
\begin{equation*}
  Y_{\tau} - \tilde{Y}_{\tilde{\tau}} =
  \left(Y_{\tau}-Y_{\vartheta}\right) +
  \left(Y_{\vartheta}-\tilde{Y}_{\vartheta}\right) + \left(\tilde{Y}_{\vartheta}-\tilde{Y}_{\tilde{\tau}}\right),
\end{equation*}
we deduce, for $k \geq 3$,  
\begin{equation*}
\mathbb{Q}\left(\left|Y_\tau - \tilde{Y}_{\tilde{\tau}}\right|\geq
  ks_L\right) \leq 2C_1/(k-1).
\end{equation*}
Since $|Y_\tau-\tilde{Y}_{\tilde{\tau}}| \leq 2(L +s_L) \leq 3L $, it follows that
\begin{equation*}
\mathbb{E}_{\mathbb{Q}}\left[\left|Y_\tau-\tilde{Y}_{\tilde{\tau}}\right|\right] \leq
\sum_{k=1}^{3L}\mathbb{Q}\left(\left|Y_\tau-\tilde{Y}_{\tilde{\tau}}\right|
  \geq k\right)\leq C(\log\log L)s_L.
\end{equation*}
Also, for $v,w$ outside and $y$ inside $V_L$,
\begin{equation*}
  \left|\frac{1}{|v-y|^{d-2}}-\frac{1}{|w-y|^{d-2}}\right|
  \leq 
  C\frac{|v-w|}{(L+1-|y|)^{d-1}}.  
\end{equation*}
By Proposition~\ref{def:super-behaviorgreen},~\eqref{eq:super-greendifference-eq1} now follows from summing over
$y\in V_L$.\\
(ii) Let $x, x'\in V_L$ with $|x-x'|\leq s_L$ and set $\Delta =
1_{V_L}(\Phg-\ph)$. With $B = V_L\backslash\sh_L(2r_L)$,  
\begin{equation*} 
\Ghg = \gh 1_{B}\Delta\Ghg +\gh 1_{B^c}\Delta\Ghg +\gh.
\end{equation*}
Replacing successively $\Ghg$ in the first summand on the right-hand
side,
\begin{equation*}
\Ghg= \sum_{k=0}^\infty{\left(\gh1_B\Delta\right)}^k\gh +
    \sum_{k=0}^\infty{\left(\gh1_B\Delta\right)}^k\gh1_{B^c}\Delta\Ghg
    = F+F1_{B^c}\Delta\Ghg,  
\end{equation*}
where we have set $F=\sum_{k=0}^\infty{\left(\gh1_B\Delta\right)}^k\gh$.
With $R = \sum_{k=1}^{\infty}(1_B\Delta)^k\ph$,
expansion~\eqref{eq:prel-pbe3} gives
\begin{equation}
\label{eq:super-diffF1}
F = \gh\sum_{m=0}^\infty
(R\gh)^m\sum_{k=0}^\infty\left(1_B\Delta\right)^k =
\gh\sum_{k=0}^\infty\left(1_B\Delta\right)^k + \gh RF.
\end{equation}
Following the proof of Lemma~\ref{def:superlemma} (ii), one deduces $|F|\preceq C\Gamma$.
By Lemma~\ref{def:super-gammalemma} (iv) and (v), we see that for large $L$, uniformly in $x\in V_L$,  
\begin{equation*}
|F1_{B^c}\Delta\Ghg(x,V_L)| \leq
C\Gamma(x,\sh_L(2r_L))\sup_{v\in\sh_L(3r_L)}\Gamma(v,V_L) \leq C\log\log L.
\end{equation*}
Therefore,
\begin{equation*}
  \sum_{y\in V_L}\left|\Ghg(x,y)-\Ghg(x',y)\right| \leq C\log\log L +
  \sum_{y\in V_L}\left|F(x,y)-F(x',y)\right|.
\end{equation*}
Using~\eqref{eq:super-diffF1} and twice part (i),
\begin{eqnarray}
\label{eq:super-diffF2}
\lefteqn{\sum_{y\in V_L}\left|F(x,y)-F(x',y)\right|}\nonumber\\ 
&\leq & \sum_{y\in
  V_L}\left|\gh\sum_{k=0}^\infty\left(1_B\Delta\right)^k(x,y)
  -\gh\sum_{k=0}^\infty\left(1_B\Delta\right)^k(x',y)\right| + \sum_{y\in V_L}\left|\gh RF(x,y) - \gh RF(x',y)\right|.\nonumber\\
\end{eqnarray}
The first expression on the right is estimated by
\begin{equation*}
\sum_{y\in V_L}\left|\sum_{w\in
    V_L}\left(\gh(x,w)-\gh(x',w)\right)\sum_{k=0}^\infty\left(1_B\Delta\right)^k(w,y)\right|
\leq C(\log\log L)(\log L)^3,
\end{equation*}
where we have used part (i) and the fact that $||1_B\Delta(w,\cdot)||_1\leq \delta$.
The second factor of~\eqref{eq:super-diffF2} is again bounded by (i) and the fact
that for $u \in V_L$,
\begin{eqnarray*}
\sum_{y\in V_L}|RF(u,y)| &=& \sum_{y\in
  V_L}\left|\sum_{k=1}^\infty\left(1_B\Delta\right)^k\ph F(u,y)\right|\\
& \leq& \sum_{k=0}^\infty ||1_B\Delta(u,\cdot)||_1^k\sup_{v\in
  B}||1_B\Delta\ph(v,\cdot)||_1\sup_{w\in V_L}\sum_{y\in V_L}|F(w,y)|\\ 
&\leq& C(\log L)^{-9+6} = C (\log L)^{-3}.
\end{eqnarray*}
Altogether, this proves part (ii).
\end{prooof}

\subsection{Modified transitions on environments bad on level $4$}
\label{super-cgmod}
We shall now describe an environment-depending second version of the coarse
graining scheme, which leads to modified transition kernels $\Pho_{L,r}$,
$\Phgo_{L,r}$, $\pho_{L,r}$ on ``really bad'' environments.

Assume $\omega\in\onebad$ is bad on level $4$, with $\badP_L(\omega)\subset
V_{L/2}$. Then there exists $D=V_{4h_L(z)}(z)\in\mathcal{D}_L$ with
$\badP_L(\omega) \subset D$, $z\in V_{L/2}$. On $D$, $c\,r_L\leq
h_{L,r}(\cdot) \leq C\,r_L$. By Lemma~\ref{def:superlemma} and the
definition of $\Gamma_{L,r}$, it follows easily that we can find a constant
$K_1\geq 2$, depending only on $d$, such that whenever $|x-y| \geq
K_1h_{L,r}(y)$ for some $y\in \badP_L$, we have

\begin{equation}
\label{eq:super-kernelmod}
  \Ghg_{L,r}(x,\badP_L) \leq C\Gamma_{L,r}(x,D)\leq \frac{1}{10}.
\end{equation}
On such $\omega$, we let $t(x) = K_1h_{L,r}(x)$ and define on $V_L$, 
\begin{equation*}
\Pho_{L,r}(x,\cdot) = \left\{\begin{array}{l@{\quad}l}
\mbox{ex}_{V_{t(x)}(x)}\left(x,\cdot;\Ph_{L,r}\right)& \mbox{for }x\in \badP_L\\
\Ph_{L,r}(x,\cdot)& \mbox{otherwise}\end{array}\right..      
\end{equation*}

By replacing $\Ph$ by $\ph$ on the right side, we define
$\pho_{L,r}(x,\cdot)$ in an analogous way. Note that $\pho_{L,r}$
depends on the environment. We work again with a goodified version
of $\Pho_{L,r}$,
\begin{equation*}
  \Phgo_{L,r}(x,\cdot) = \left\{\begin{array}{l@{\quad}l}
\mbox{ex}_{V_{t(x)}(x)}\left(x,\cdot;\Phg_{L,r}\right)& \mbox{for }x\in \badP_L\\
\Phg_{L,r}(x,\cdot)& \mbox{otherwise}\end{array}\right..      
\end{equation*}  

For all other environments falling not into the above class, we change nothing and put $\Pho_{L,r}= \Ph_{L,r}$,
$\Phgo_{L,r}=\Phg_{L,r}$, $\pho_{L,r}=\ph_{L,r}$. This defines
$\Pho_{L,r}$, $\Phgo_{L,r}$ and $\pho_{L,r}$ on
all environments. 
We write $\Gho_{L,r}$, $\Ghgo_{L,r}$, $\gho_{L,r}$ for the Green's
functions  corresponding to $\Pho_{L,r}$, $\Phgo_{L,r}$ and $\pho_{L,r}$. 

\begin{figure}
\begin{center}\parbox{5.5cm}{\includegraphics[width=5cm]{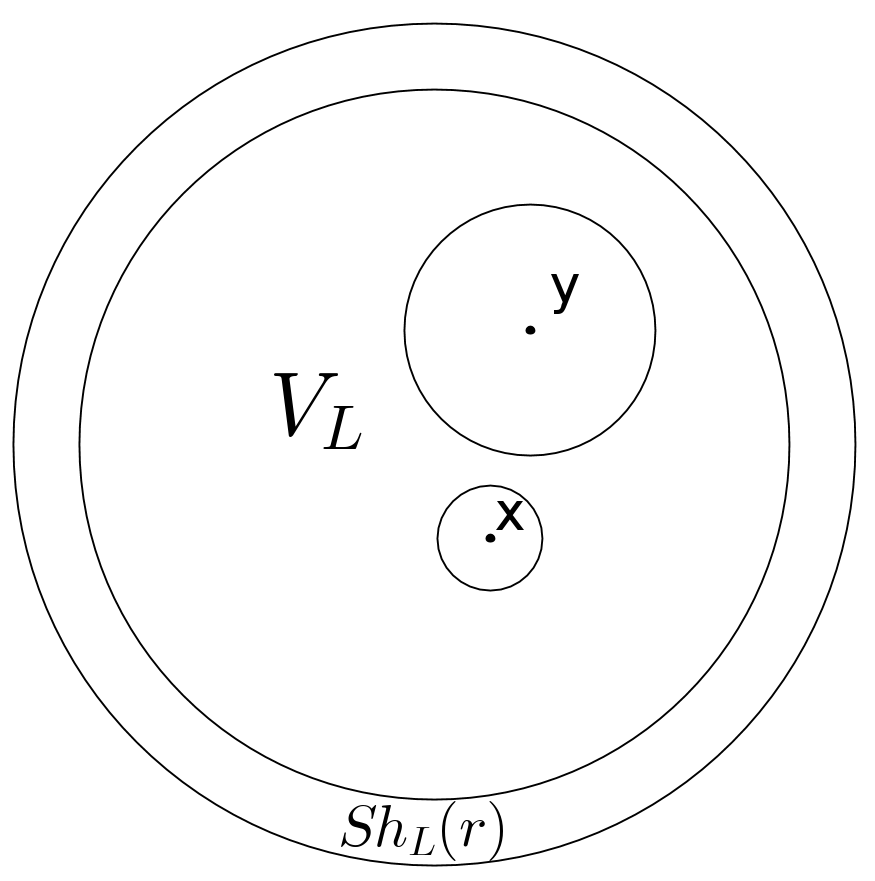}}
\parbox{9.5cm}{
\caption{$\omega\in\onebad$ bad on level $4$, with
  $\badP_L\subset V_{L/2}$. The point $x$ is ``good'', so the
  coarse graining radii do not change at $x$. The point $y$ is ``bad''. Therefore, 
  at $y$, the exit distribution is taken from the larger set $V_{t(y)}(y)$, where $t(y)=K_1h_{L,r}(y)$.}}
\end{center}
\end{figure}

\subsubsection{Some properties of the new transition kernels}
The following observations can be read off the definition and will be
tacitly used below.
\begin{itemize}
\item On environments which are good or bad on level
  at most $3$, the new kernels agree with the old ones, and so do their
  Green's functions, i.e. $\Gh_{L,r} = \Gho_{L,r}$ and $\Ghg_{L,r}  =
  \Ghgo_{L,r}$. On $\good$ with the choice $r=r_L$, we have equality of all
  four Green's functions. 
\item If $\omega$ is not bad on level $4$ with $\badP_L\subset V_{L/2}$, then
\begin{equation*}
1_{V_L}(\Pho_{L,r}-\Phgo_{L,r})  = 1_{V_L}(\Ph_{L,r}-\Phg_{L,r})=
1_{\badPo_{L,r}}(\Ph-\ph).
\end{equation*}
This will be used in Sections~\ref{smv-onebad-exits} and~\ref{nonsmv-exits}.
\item In contrast to $\ph_{L,r}$, the kernel $\pho_{L,r}$ depends on the
  environment, too. However, $\Pho_{L,r}$, $\Phgo_{L,r}$ and $\pho_{L,r}$
  do not change the exit measure from $V_L$, i.e. for example,
 \begin{equation*}
 \mbox{ex}_{V_L}\left(x,\cdot;\Phgo_{L,r}\right) =  \mbox{ex}_{V_L}\left(x,\cdot;\Phg_{L,r}\right).
  \end{equation*}
\item The old transition kernels are finer in the sense that the (new) Green's
  functions $\Gho$, $\Ghgo$, $\gho$ are pointwise bounded
  from above by $\Gh$, $\Ghg$ and $\gh$,
  respectively. In particular, we obtain with the same constants as in Lemma~\ref{def:superlemma},
\end{itemize}
\begin{lemma}\ 
\label{def:superlemma2}
\begin{enumerate}
\item 
\begin{equation*}
      \gho_{L,r}\preceq C_1\Gamma_{L,r}.
    \end{equation*}
\item  For $\delta>0$ small,  
\begin{equation*}
  \Ghgo_{L,r} \preceq C\Gamma_{L,r}.
\end{equation*}  
\end{enumerate}
\end{lemma}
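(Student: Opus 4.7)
My strategy is to reduce everything to Lemma~\ref{def:superlemma} by first establishing the pointwise dominations $\gho_{L,r}\leq\gh_{L,r}$ and $\Ghgo_{L,r}\leq\Ghg_{L,r}$ on $V_L\times V_L$, as already flagged in the last bullet preceding the lemma. Since the relation $F\preceq G$ is defined by summing over the neighborhoods $U(y)$, any pointwise bound on $V_L\times V_L$ automatically passes to a $\preceq$-bound, so combining it with parts~(i) and (ii) of Lemma~\ref{def:superlemma} will deliver both claims with the very same constants $C_1$ and $C$.

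To justify the pointwise bound I would run a subsampling coupling. On environments outside the modification class one has $\pho_{L,r}=\ph_{L,r}$ and $\Phgo_{L,r}=\Phg_{L,r}$ by definition, so there is nothing to prove. On environments where the modification is active, i.e.\ $\omega\in\onebad$ is bad on level $4$ with $\badP_L\subset V_{L/2}$, fix $x_0\in V_L$, run a walk $(X_n)_{n\geq 0}$ under $\Phg_{L,r}$ started at $x_0$ and killed at $\tau_L$, and set $\sigma_0=0$ together with
\begin{equation*}
\sigma_{k+1}=\left\{\begin{array}{l@{\quad\mbox{if\ }}l}\sigma_k+1 & X_{\sigma_k}\notin\badP_L\\
\inf\{n>\sigma_k : X_n\notin V_{t(X_{\sigma_k})}(X_{\sigma_k})\}\wedge\tau_L & X_{\sigma_k}\in\badP_L\end{array}\right..
\end{equation*}
Since $\badP_L\subset V_{L/2}$ and $t(x)=K_1 h_{L,r}(x)=O(s_L)=o(L)$, we have $V_{t(x)}(x)\subset V_L$ for every $x\in\badP_L$, so all $\sigma_k$ are well defined and bounded by $\tau_L$. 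The strong Markov property shows that $(X_{\sigma_k})_k$ has transition kernel $\Phgo_{L,r}$, and by construction its trajectory inside $V_L$ is a (possibly strict) subsequence of that of $(X_n)$. Taking expected numbers of visits to any $y\in V_L$ thus gives $\Ghgo_{L,r}(x_0,y)\leq\Ghg_{L,r}(x_0,y)$. The identical coupling, with $\ph_{L,r}$ in place of $\Phg_{L,r}$, yields $\gho_{L,r}\leq\gh_{L,r}$.

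With the pointwise bounds in hand, part~(i) of the lemma follows by summing over $U(y)$ and invoking Lemma~\ref{def:superlemma}~(i):
\begin{equation*}
\gho_{L,r}(x,U(y))\leq\gh_{L,r}(x,U(y))\leq C_1\Gamma_{L,r}(x,U(y)),
\end{equation*}
and part~(ii) is identical using part~(ii) of that lemma. The only delicate point is the inclusion $V_{t(x)}(x)\subset V_L$ for every $x\in\badP_L$, which is what makes the coupling an honest subsampling within $V_L$ and lets the Green's function comparison go through unscathed; this inclusion is precisely guaranteed by the restriction $\badP_L\subset V_{L/2}$ that is part of the triggering condition for the modification. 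No further ingredients are needed.
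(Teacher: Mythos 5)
Your proposal is correct and follows essentially the same route as the paper: the paper simply records (in the observations preceding the lemma) that $\gho_{L,r}$, $\Ghgo_{L,r}$ are pointwise dominated by $\gh_{L,r}$, $\Ghg_{L,r}$ because the modified kernels arise by observing the original coarse-grained chains at exit times from the enlarged balls $V_{t(x)}(x)\subset V_L$, and then invokes Lemma~\ref{def:superlemma} with the same constants. Your subsampling coupling just makes explicit the visit-count comparison that the paper leaves implicit, including the key point that $\badP_L\subset V_{L/2}$ and $t(x)=O(r_L)$ guarantee $V_{t(x)}(x)\subset V_L$.
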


%
For the new goodified Green's function, we have
\begin{corollary}
There exists a constant $C > 0$ such that for $\delta > 0$ small, 
  \label{def:super-cor}
  \begin{enumerate}
  \item On $\onebad$, if $\badP_L\cap\sh_L(r_L)=\emptyset$ or for general
    $\badP_L$ in the case $r=r_L$, 
    \begin{equation*}
    \sup_{x\in V_L}\Ghgo_{L,r}(x,\badP_L)\leq C.
    \end{equation*}
    On $\onebad$, if $\badP_L\not\subset V_{L/4}$, then, with
    $t=\dist(\badP_L,\partial V_L)$,
    \begin{equation*} 
    \sup_{x\in
      V_{L/5}}\Ghgo_{L,r}(x,\badP_L) \leq C\left(\frac{s_L\wedge(t\vee
        r_L)}{L}\right)^{d-2}.
    \end{equation*}
  \item On ${(\bbad)}^c$, $\sup_{x\in
      V_{2L/3}}\Ghgo_{L,r}\left(x,\badP_{L,r}^\partial\right) \leq C(\log
    r)^{-1/2}$.
  \item For $\omega \in \onebad$ bad on level at most $3$ with
    $\badP_L\cap\sh_L(r_L)=\emptyset$, or for $\omega$ bad on level $4$
    with $\badP_L\subset V_{L/2}$, putting $\Delta= 1_{V_L}(\Pho_{L,r}-\Phgo_{L,r})$,
    \begin{equation*}
      \sup_{x\in V_L}\sum_{k=0}^\infty
      {\left|\left|\left(\Ghgo_{L,r}1_{\badP_L}\Delta\right)^k(x,\cdot)\right|\right|}_1\leq C.
    \end{equation*}
   \end{enumerate}
\end{corollary}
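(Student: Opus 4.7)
The proof will rely on two ingredients: the Green's-function envelope $\Ghgo_{L,r}\preceq C\,\Gamma_{L,r}$ from Lemma~\ref{def:superlemma2}, and the structural bounds on $\Gamma_{L,r}$ collected in Lemma~\ref{def:super-gammalemma}.

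For part (i), I would exploit that on $\onebad$ all bad points lie inside a single ball $D=V_{4h_L(z)}(z)\in\mathcal{D}_L$. Covering $D$ by an $O(1)$ collection of neighborhoods $U(y_k)$ and invoking the $\preceq$-bound reduces the claim to controlling $\sum_k\Gamma_{L,r}(x,U(y_k))$. The first, uniform assertion then follows from $\Gamma_{L,r}\asymp 1$ together with bounded $\Gamma$-mass on each neighborhood. For the refined assertion, when $\badP_L\not\subset V_{L/4}$ and $x\in V_{L/5}$, every $y\in\badP_L$ satisfies $|x-y|\sim L$ and $a(y)\sim s_L\wedge(t\vee r_L)$, so that $\Gamma_{L,r}^{(2)}(x,y)$ is of order $1/(a(y)^2 L^{d-2})$; summing over $D$ produces the factor $((s_L\wedge(t\vee r_L))/L)^{d-2}$.

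For part (ii), I would decompose $\badP_{L,r}^\partial$ into the boundary layers $\Lambda_j$, $0\leq j\leq J_1(L)$, from the definition of $\bbad$. On ${(\bbad)}^c$ each layer contains at most $(\log r+j)^{-3/2}N_{j,r}$ bad cubes of side $\sim r2^j$. For $x\in V_{2L/3}$ the $\Gamma_{L,r}^{(1)}$-mass of one such cube is of order $(r2^j)^{d-1}/L^{d-1}$; multiplying by the cube count gives a contribution of order $(\log r+j)^{-3/2}$ at scale $j$, and summing over $j$ yields $\sum_{j=0}^{J_1(L)}(\log r+j)^{-3/2}\leq C(\log r)^{-1/2}$.

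For part (iii), I set $A = \Ghgo_{L,r}1_{\badP_L}\Delta$, so that $\sup_x||A^k(x,\cdot)||_1\leq(\sup_x||A(x,\cdot)||_1)^k$ by submultiplicativity of the induced $L^1$-operator norm. There are two regimes. When $\omega$ is bad on level at most $3$ with $\badP_L\cap\sh_L(r_L)=\emptyset$, the defining first condition of the level gives $||\Delta(z,\cdot)||_1\leq\delta$ uniformly in $z$; combined with part (i) this yields $||A(x,\cdot)||_1\leq C\delta\leq 1/2$ for $\delta$ small, and the series converges geometrically. When $\omega$ is bad on level $4$ with $\badP_L\subset V_{L/2}$, the modified transitions force the support of $\Delta(z,\cdot)$ to lie at distance $K_1 h_{L,r}(z)$ from $z\in\badP_L$; by~\eqref{eq:super-kernelmod} together with $\Ghgo\leq\Ghg$ pointwise, this produces $\Ghgo(y,\badP_L)\leq 1/10$ for every $y$ in the support of $A(x,\cdot)$, so a single-step-then-iterate argument yields $||A^k(x,\cdot)||_1\leq C\,(1/5)^{k-1}$ for $k\geq 1$. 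The main obstacle in this last step is to verify, via the Lipschitz property of $h_{L,r}$ and the bounded diameter of $D$, that the support of a single modified jump lies far from all of $\badP_L$, not merely from the starting point $z$, so that~\eqref{eq:super-kernelmod} applies with a dimension-only constant $K_1$.
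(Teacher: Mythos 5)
Your treatment of parts (ii) and (iii) and of the first assertion of (i) follows essentially the paper's route (envelope $\Ghgo_{L,r}\preceq C\Gamma_{L,r}$, layer-by-layer counting on $(\bbad)^c$, a factor $\delta$ per step on levels $\leq 3$, and the pairing $1_{\badP_L}\Delta\Ghgo 1_{\badP_L}$ with~\eqref{eq:super-kernelmod} on level $4$), and these parts are sound; note only that~\eqref{eq:super-kernelmod} is stated with ``$|x-y|\geq K_1h_{L,r}(y)$ for \emph{some} $y\in\badP_L$'', so being far from the starting bad point already suffices and the ``obstacle'' you flag at the end is in fact already absorbed into the paper's choice of $K_1$.

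There is, however, a genuine gap in the second assertion of (i). You claim that every $y\in\badP_L$ satisfies $a(y)\sim s_L\wedge(t\vee r_L)$ and then sum $\Gamma^{(2)}_{L,r}(x,y)\sim a(y)^{-2}L^{-(d-2)}$ over $D$. This is correct when $r=r_L$ (there $a(y)\geq 3r_L$, and the covering ball has radius $\lesssim s_L\wedge(t\vee r_L)$), but the corollary is also needed, and is used in Lemma~\ref{def:nonsmv-lemma1}, with \emph{constant} $r$ and with $\badP_L$ possibly reaching into $\sh_L(r_L)$. In that regime $a(y)=\min(\dtL(y),s_L)$ can be as small as $3r\ll r_L$, while $t\vee r_L=r_L$, so your pointwise claim fails; the $\Gamma^{(2)}$-based sum then gives roughly $r_L^{d-1}/(rL^{d-2})$, which exceeds the target $(r_L/L)^{d-2}$ by a factor of order $r_L/r$. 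The paper handles exactly this case by switching to $\Gamma^{(1)}_{L,r}$: the extra factor $\dtL(y)$ compensates the small $a(y)$, giving
\begin{equation*}
\Ghgo_{L,r}(x,\badP_L)\;\leq\; C\sum_{j=0}^{2r_L}\sum_{y\in\mathcal{L}_j\cap D}\frac{L}{a(y)\,L^{d}}\;\leq\; CL^{-d+1}\sum_{j=0}^{2r_L}\frac{r_L^{d-1}}{j\vee r}\;\leq\; C(\log L)\Bigl(\frac{r_L}{L}\Bigr)^{d-1}\;\leq\;C\Bigl(\frac{r_L}{L}\Bigr)^{d-2},
\end{equation*}
i.e.\ a case distinction according to whether the centre of the covering ball $D$ lies in $\sh_L(r_L)$ or not. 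Without this (or an equivalent use of the harmonic-measure-type decay in $\Gamma^{(1)}_{L,r}$), your argument does not establish the stated bound in the constant-$r$, near-boundary case.
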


\begin{prooof}
  (i) The set $\badP_L$ is contained in a neighborhood $D\in\mathcal{D}_L$. As
  $\Ghgo \preceq C\Gamma$, we have
  \begin{equation}
  \label{eq:super-cor-1}
    \Ghgo\left(x,\badP_L\right) \leq C\Gamma^{(2)}(x,D).
  \end{equation}
  From this, the first statement of (i) follows. Now let $x$ be inside
  $V_{L/5}$, and $\badP_L\not\subset V_{L/4}$. If the midpoint $z$ of $D$
  can be chosen to lie inside $V_L\backslash\sh(r_L)$, $a(\cdot)/h_L(z)$
  and $h_L(z)/a(\cdot)$ are bounded on $D$. Then, the second statement of (i) is
  again a consequence of~\eqref{eq:super-cor-1}. If $z\in \sh(r_L)$, we have
  \begin{eqnarray*}
    \Ghgo\left(x,\badP_L\right) \leq C\Gamma^{(1)}(x,D) &\leq& 
    C\sum_{j=0}^{2r_L}\sum_{y\in\mathcal{L}_j\cap D}\frac{L}{a(y)L^{d}}\\
  &\leq& CL^{-d+1}\sum_{j=0}^{2r_L}\frac{r_L^{d-1}}{j\vee r} \leq C(\log
  L)\left(\frac{r_L}{L}\right)^{d-1}.
  \end{eqnarray*}
  (ii) Recall the notation of Section~\ref{s2}. In order to bound
  $\sup_{x\in V_{2L/3}}\Ghgo(x,\badP^\partial_{L,r})$, we look at the
  different bad sets $D_{j,r} \in\mathcal{Q}_{j,r}$ of layer $\Lambda_j$,
  $0\leq j\leq J_1$. Estimating $\Ghgo$ by $\Gamma^{(1)}$, we have
  \begin{equation*}
    \Ghgo\left(x,D_{j,r}\right) \leq C (r2^j)^{d-1}L^{-d+1}.
  \end{equation*}
  On ${(\bbad)}^c$, the number of bad sets in layer $\Lambda_j$ is bounded by
  \begin{equation*}
  C(\log r +j)^{-3/2}{(L/(r2^j))}^{d-1}.
  \end{equation*}
  Therefore, 
  \begin{equation*}
    \Ghgo\left(x,\badP^\partial_{L,r}\cap\Lambda_j\right) \leq C(\log r + j)^{-3/2}.
  \end{equation*}
  Summing over $0\leq j\leq J_1$, this shows
  \begin{equation*}  
    \Ghgo\left(x,\badP^\partial_{L,r}\right) \leq C(\log r)^{-1/2}. 
 \end{equation*}
 (iii) Assume $\omega\in\good$ or $\omega$ is bad on level $i=1,2,3$. Then
 $1_{\badP_L}\Delta = 1_{\badP_L}(\Ph-\ph)$. Further, if $\badP_L\cap\sh_L(r_L) =
 \emptyset$, we have $||\Ghgo1_{\badP_L}\Delta(x,\cdot)||_1 \leq
 C\delta$. By choosing $\delta$ small enough, the claim follows. If
 $\omega$ is bad on level $4$ and $\badP_L\subset V_{L/2}$, we do not gain
 a factor $\delta$ from $||1_{\badP_L}\Delta(y,\cdot)||_1$. However, thanks
 to our modified transition kernels, using~\eqref{eq:super-kernelmod},
 $||1_{\badP_L}\Delta\Ghgo1_{\badP_L}(y,\cdot)||_1 \leq 1/5$ (recall that
 $\Ghgo\leq \Ghg$ pointwise), so that (ii) follows in this case, too.
\end{prooof}
\begin{remark}
  \label{def:super-remark}
  All $\delta_0 > 0$ and $L_0$ appearing in the next sections are
  understood to be chosen in such a way that if we take $\delta \in
  (0,\delta_0]$ and $L\geq L_0$, then the conclusions of
  Lemmata~\ref{def:superlemma},~\ref{def:super-greendifference},~\ref{def:superlemma2} and Corollary~\ref{def:super-cor} are
  valid.
\end{remark}

   \section{Globally smoothed exits}
\label{smv-exits}
The aim here is to establish the estimates for the smoothed difference
$D_{L,\psi}^\ast$ which are required to propagate condition $\ctwo\left(\delta,L\right)$.
For the entire section, we choose $r=r_L$.  
We start with an auxiliary statement which will be of constant use.
\begin{lemma}
\label{def:est-deltaphi}
Let $\psi\in\mathcal{M}_L$ and set $\Delta =
1_{V_L}(\Phg_{L,r_L}-\ph_{L,r_L})$. Then, for some $C>0$,
\begin{equation*}
\sup_{x\in V_L}\sup_{z\in\mathbb{Z}^d}\left|\Delta\phi_{L,\psi}(x,z)\right| \leq C(\log L)^{-12}L^{-d}.
\end{equation*}
\end{lemma}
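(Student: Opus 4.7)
The key observation is that both $\Delta(x,\cdot)$ and, in the deep-interior case, the iterated kernel $\Delta\ph_{L,r_L}(x,\cdot)$ are signed measures of total mass zero supported in a ball of radius $O(h_{L,r_L}(x))$ around $x$. The naive pointwise bound $|\phi_{L,\psi}|\leq CL^{-d}$ is never sharp enough, so I will always exploit the zero-mean cancellation $\sum_y \nu(y)f(y)=\sum_y \nu(y)\bigl(f(y)-f(x)\bigr)$ together with the smoothness estimate Lemma~\ref{def:est-phi} (iii), which saves a factor $h_{L,r_L}(x)/L$. The proof then splits $x\in V_L$ into three regimes according to the goodness status of $x$ and its distance to $\partial V_L$.

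If $x\in\badPo_{L,r_L}$, then by the definition \eqref{eq:smoothbad-goodifiedkernel} of the goodified kernel one has $\Phg_{L,r_L}(x,\cdot)=\ph_{L,r_L}(x,\cdot)$, hence $\Delta(x,\cdot)=0$ and nothing is to show. If $x$ is good with $\dL(x)\leq 2r_L$, then $h_{L,r_L}(x)\leq r_L/10$, and the support of $\Delta(x,\cdot)$ sits in $V_L\cup\partial V_L$ within distance $\leq 2h_{L,r_L}(x)+1$ of $x$. Only the non-smoothed goodness bound $\|\Delta(x,\cdot)\|_1\leq\delta$ is at our disposal, but the very small support compensates: the zero-mean trick combined with Lemma~\ref{def:est-phi} (iii) gives
\begin{equation*}
|\Delta\phi_{L,\psi}(x,z)|\leq C\delta\bigl(L^{-(d+1/4)}+r_L L^{-(d+1)}\bigr)\leq CL^{-d}(\log L)^{-15},
\end{equation*}
which is considerably better than the claim.

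The substantive regime is $x$ good with $\dL(x)>2r_L$. In this case a short case analysis using $h_{L,r_L}(x)\leq \dL(x)/20$ (for $\dL(x)\leq s_L/2$) or $h_{L,r_L}(x)\leq s_L/20$ (otherwise) shows that $\dL(x)>10\,h_{L,r_L}(x)$, so $\Delta(x,\cdot)$ is supported inside $V_L$. The strong Markov property applied to $\ph_{L,r_L}$ yields $\pi_L=\ph_{L,r_L}\pi_L$ on $V_L$, hence $\phi_{L,\psi}=\ph_{L,r_L}\phi_{L,\psi}$ on $V_L\cup\partial V_L$, and I can slide $\ph_{L,r_L}$ to the left:
\begin{equation*}
\Delta\phi_{L,\psi}(x,z)=(\Delta\ph_{L,r_L})\phi_{L,\psi}(x,z).
\end{equation*}
The smoothed goodness bound now gives $\|(\Delta\ph_{L,r_L})(x,\cdot)\|_1\leq(\log h_{L,r_L}(x))^{-9}$; the signed measure $(\Delta\ph_{L,r_L})(x,\cdot)$ still has total mass zero, and using the Lipschitz property of $h_{L,r_L}$ its support lies within distance $Ch_{L,r_L}(x)$ of $x$, still inside $V_L\cup\partial V_L$. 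Applying the zero-mean trick once more together with Lemma~\ref{def:est-phi} (iii) yields
\begin{equation*}
|\Delta\phi_{L,\psi}(x,z)|\leq C(\log h_{L,r_L}(x))^{-9}\,h_{L,r_L}(x)\,L^{-(d+1)}\leq CL^{-d}(\log L)^{-12},
\end{equation*}
where the final step uses $h_{L,r_L}(x)\leq s_L/20=L/(20(\log L)^3)$ and $\log h_{L,r_L}(x)=\log L+O(\log\log L)$, since $h_{L,r_L}(x)\geq r_L/20$.

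\textbf{Main obstacle.} There is no deep obstacle; the only delicate bookkeeping is to verify, in the interior case, that the support of $\Delta\ph_{L,r_L}(x,\cdot)$ remains in $V_L\cup\partial V_L$ (so that the strong-Markov identity applies with no boundary correction) and that $\log h_{L,r_L}(x)\asymp \log L$ uniformly, so that the $(\log h_{L,r_L}(x))^{-9}$ factor coming from goodness really does translate into $(\log L)^{-9}$ in the final bound.
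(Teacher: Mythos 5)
Your argument is essentially the paper's: pass to $\Delta\ph_{L,r_L}$ (or use directly that the relevant signed measure has total mass zero), invoke the difference estimate of Lemma~\ref{def:est-phi} (iii), use the smoothed goodness bound $(\log h_{L,r_L}(x))^{-9}\leq C(\log L)^{-9}$ together with the $O(s_L)$ support in the interior, and near the boundary let the $O(r_L)$ support do the work. One inaccuracy in your boundary case $\dL(x)\leq 2r_L$: you assert that goodness of $x$ supplies $\|\Delta(x,\cdot)\|_1\leq\delta$, but this is not available once $V_{2h_{L,r_L}(x)}(x)$ is cut by $\partial V_L$, since $\Phg_{L,r_L}$ and $\ph_{L,r_L}$ are exit laws from the intersected balls $V_t(x)\cap V_L$ while the goodness condition only controls exit laws from the uncut balls $V_t(x)$ — precisely the mismatch that forces the introduction of the modified kernels $\Pt_{L,r}$ in Section~\ref{super-Ghestimates}. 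The slip is harmless here: the trivial bound $\|\Delta(x,\cdot)\|_1\leq 2$ combined with the zero-mean trick and the $O(r_L)$ support already gives $C\left(L^{-(d+1/4)}+r_LL^{-(d+1)}\right)\leq C(\log L)^{-15}L^{-d}$, which is exactly how the paper handles $x\in\sh_L(2r_L)$, so your conclusion stands after replacing $\delta$ by $2$.
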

\begin{prooof}
Using $\Delta\phi = \Delta\ph\phi$ and the fact that $\Delta\ph(x,\cdot)$
sums up to zero, 
\begin{eqnarray*}
\left|\Delta\phi(x,z)\right|
&=& \left|\sum_{y\in
  V_L\cup\partial
  V_L}\Delta\ph(x,y)\left(\phi(y,z)-\phi(x,z)\right)\right|\\
&\leq& ||\Delta\ph(x,\cdot)||_1\sup_{y: |\Delta\ph(x,y)| > 0}|\phi(y,z)-\phi(x,z)|.
\end{eqnarray*}
For $x\in V_L\backslash\sh_L(2r_L)$, we have by definition 
$||\Delta\ph(x,\cdot)||_1 \leq C(\log L)^{-9}$. Further, notice that
$|\Delta\ph(x,y)| > 0$ implies $|y-x|\leq s_L$. Bounding
$|\phi(y,z)-\phi(x,z)|$ by Lemma~\ref{def:est-phi} (iii), the statement
follows for those $x$. If $x\in\sh_L(2r_L)$, we simply bound
$||\Delta\ph(x,\cdot)||_1$ by $2$. Now we can restrict the supremum to
those $y\in V_L$ with $|x-y| \leq 3r_L$, so the claim follows again from
Lemma~\ref{def:est-phi} (iii).
\end{prooof}
\subsection{Estimates on ``goodified'' environments}
The following Lemma~\ref{def:lemma-goodpart} compares the ``goodified'' smoothed exit
distribution with that of simple random walk.  In particular, it
provides an estimate for $D_{L,\psi}^\ast$ on $\good$.
Here we will work with the transition kernels 
$\Ph_{L,r_L}$, $\Phg_{L,r_L}$ and $\ph_{L,r_L}$. For the
goodified exit measure from $V_L$ we write
\begin{equation*}
\Pg_L=
 \mbox{ex}_{V_L}\left(x,\cdot;\Phg_{L,r}\right). 
  \end{equation*}
\begin{lemma}
\label{def:lemma-goodpart}
Assume $\czero$. There exist $\delta_0 > 0$ and $L_0>0$ such
that if $\delta\in(0,\delta_0]$ and $L\geq L_0$, then for $\psi\in\mathcal{M}_L$, 
\begin{equation*}
  \pP\left(\sup_{x\in V_L}||(\Pi_L^g - \pi_L)\ph_{\psi}(x,\cdot)||_1 \geq (\log
    L)^{-(9 + 1/6)}\right) \leq \exp\left(-(\log L)^{7/3}\right).
\end{equation*}
\end{lemma}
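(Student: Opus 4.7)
The plan is to feed the perturbation expansion $\Pi_L^g-\pi_L=\Ghg_{L,r_L}\Delta\pi_L$, where $\Delta=1_{V_L}(\Phg_{L,r_L}-\ph_{L,r_L})$, through the smoothing by $\ph_\psi$ to obtain the identity
$$(\Pi_L^g-\pi_L)\ph_\psi(x,\cdot)=\Ghg_{L,r_L}\Delta\phi_{L,\psi}(x,\cdot),\qquad \phi_{L,\psi}:=\pi_L\ph_\psi,$$
and then to control $\sum_z|\Ghg_{L,r_L}\Delta\phi_{L,\psi}(x,z)|$ by combining an isotropy-based bound on the mean with a concentration estimate on the fluctuation. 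Note that by the definition of the goodified kernel, $\Delta(y,\cdot)$ vanishes on $\badPo_{L,r_L}$, so only good $y$ contribute.

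For the mean, I would exploit the identity $\ph_{L,r_L}\pi_L=\pi_L$ on $V_L$, so that $\phi_{L,\psi}=\ph_{L,r_L}\phi_{L,\psi}$ and hence $\Delta\phi_{L,\psi}=\Delta\ph_{L,r_L}\phi_{L,\psi}$. For every $y$ with $\dL(y)>2r_L$ the smoothed good condition combined with the vanishing of $\Delta$ on bad $y$ gives deterministically $||\Delta\ph_{L,r_L}(y,\cdot)||_1\leq(\log h_{L,r_L}(y))^{-9}$, so the signed measure $\nu_y(\cdot):=\pE[\Delta\ph_{L,r_L}(y,y+\cdot)]$ satisfies $||\nu_y||_1\leq(\log L)^{-9}$ and, by $\czero$, is invariant under coordinate reflections and permutations. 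Proposition~\ref{def:est-isometry} with $l=4h_{L,r_L}(y)\leq s_L/5$ therefore yields
$$|\pE[\Delta\phi_{L,\psi}(y,z)]|\leq C(\log L)^{-9}\bigl(L^{-(d+1/4)}+(h_{L,r_L}(y)/L)^3L^{-d}\bigr)\leq CL^{-d}(\log L)^{-18}$$
for bulk $y\in V_L\setminus\sh_L(2r_L)$. For $y\in\sh_L(2r_L)$, Lemma~\ref{def:est-deltaphi} still gives the deterministic bound $|\Delta\phi_{L,\psi}(y,z)|\leq CL^{-d}(\log L)^{-12}$, while Lemma~\ref{def:super-gammalemma} (iv) provides $\Ghg_{L,r_L}(x,\sh_L(2r_L))=O(1)$, so this boundary layer is harmless. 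Integrating against $\gh_{L,r_L}\preceq C\Gamma_{L,r_L}$ using $\Gamma_{L,r_L}(x,V_L)\leq C(\log L)^6$ from Lemma~\ref{def:super-gammalemma} (v), then summing over $z$ on a support of volume $O(L^d)$, produces $\sum_z|\pE[\Ghg_{L,r_L}\Delta\phi_{L,\psi}(x,z)]|\leq C(\log L)^{-12}$, well inside the target.

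For the fluctuation at fixed $(x,z)$, I would write $Y(z)-\pE Y(z)=\sum_y\gh_{L,r_L}(x,y)\tilde\Delta\phi_{L,\psi}(y,z)$ with $\tilde\Delta:=\Delta-\pE\Delta$. Since $\Delta(y,\cdot)$ depends on $\omega$ only through $V_{2h_{L,r_L}(y)}(y)$, a partition of the good $y$'s into $O(1)$ classes separated by more than $4h_{L,r_L}(y)$ makes each class a sum of independent mean-zero variables, each pointwise bounded by $M:=CL^{-d}(\log L)^{-12}$ via Lemma~\ref{def:est-deltaphi}. A Bernstein inequality per class with target deviation $t=c(\log L)^{-(9+1/6)}/L^d$ produces a failure probability of order $\exp(-\Omega(t/M))=\exp(-\Omega((\log L)^{3-1/6}))$; since $3-1/6>7/3$, union-bounding over the $O(L^{2d})$ pairs $(x,z)$ still leaves a bound much better than $\exp(-(\log L)^{7/3})$. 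Combining with the mean bound finishes the lemma.

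The hard part is the joint exploitation of isotropy and the smoothed good condition: the isotropic cancellation applied to $\pE\Delta$ with only $||\pE\Delta||_1\leq\delta$ yields merely $\delta(\log L)^{-3}$ after integration against $\Ghg_{L,r_L}$, while Lemma~\ref{def:est-deltaphi} alone yields only $(\log L)^{-6}$, so neither reaches $(\log L)^{-(9+1/6)}$. The decisive step is to apply Proposition~\ref{def:est-isometry} to the \emph{smoothed} expected signed measure $\nu_y=\pE[\Delta\ph_{L,r_L}(y,y+\cdot)]$, which simultaneously inherits the $(\log L)^{-9}$ smallness from the good condition and the $(h_{L,r_L}/L)^3=(\log L)^{-9}$ cubic-Taylor cancellation from $\czero$, producing the crucial $(\log L)^{-18}$ pointwise bound. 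A secondary subtlety is the boundary layer $\sh_L(2r_L)$, where the ball $V_{4h_{L,r_L}(y)}(y)$ may protrude from $V_L$ and the full-ball isotropy cancellation breaks down; Lemma~\ref{def:super-gammalemma} (iv) rescues the argument by showing that the Green's-function weight in this layer is negligible.
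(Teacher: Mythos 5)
Your high-level strategy is the right one (it is the paper's): expand $(\Pg_L-\pi_L)\ph_\psi=\Ghg_{L,r_L}\Delta\phi_{L,\psi}$, kill the mean by applying Proposition~\ref{def:est-isometry} to the expected signed measure $\pE[\Delta\ph_{L,r_L}(y,y+\cdot)]$, and control fluctuations by independence over well-separated regions plus an exponential inequality, with the boundary layers handled deterministically through the Green's function mass. But there is a genuine gap at the centre of both halves of your argument: you silently replace the \emph{random} Green's function $\Ghg_{L,r_L}$ by the deterministic $\gh_{L,r_L}$. Your mean bound claims $\sum_z|\pE[\Ghg_{L,r_L}\Delta\phi_{L,\psi}(x,z)]|\leq C(\log L)^{-12}$ by integrating the pointwise bound on $|\pE[\Delta\phi_{L,\psi}(y,z)]|$ against the Green's kernel, but $\Ghg_{L,r_L}(x,y)$ and $\Delta\phi_{L,\psi}(y,z)$ are correlated random variables, so the expectation does not factor and the isotropic cancellation in $\pE[\Delta\phi_{L,\psi}(y,\cdot)]$ can be destroyed by these correlations; similarly, your fluctuation identity $Y(z)-\pE Y(z)=\sum_y\gh_{L,r_L}(x,y)\tilde\Delta\phi_{L,\psi}(y,z)$ is simply not the quantity to be estimated. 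Repairing this is exactly the technical heart of the paper's proof: one expands $\Ghg$ around $\gh$ via~\eqref{eq:prel-pbe3}, splits off the shell $\sh_L(2L/(\log L)^2)$ (whose $\Gamma$-mass is only polylogarithmic, so the crude $(\log L)^{-12}L^{-d}$ bound suffices there), truncates the resulting series at $N=\lceil\log\log L\rceil$ terms using the deterministic gains $|F1_{B^c}\Delta\Ghg|$ and $|F1_B\Delta\ph|\leq C(\log L)^{-3}$, and only then applies the isotropy estimate and Hoeffding to $\sum_{k=1}^N\gh 1_{S'^c}\Delta^k\phi$, where the weight really is deterministic. A naive one-step correction ($\Ghg=\gh+\Ghg\Delta\gh$) does not close, since the remainder is only $O(\delta)L^{-d}$; so this is a missing idea, not a missing routine verification. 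Note also that in the corrected argument the local summands are $\sum_{k\leq N}\Delta^k\phi(y,z)$, whose dependence range is of order $Ns_L$, so your independence classes with separation $4h_{L,r_L}$ are too fine.

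A second, more localized error: you invoke Proposition~\ref{def:est-isometry} for all $y$ with $\dL(y)>2r_L$. The proposition requires the expected measure to be invariant under coordinate reflections and permutations centred at $y$, and this symmetry is inherited from $\czero$ only where the coarse-graining field $h_{L,r_L}$ is constant on the whole range the kernels see (and where no balls are cut by $V_L$); in the shrinking-radius shell $2r_L<\dL(y)\lesssim 2s_L$ (and within a few coarse-graining steps of it) the geometry breaks the symmetry, which is precisely why the paper restricts the isotropy step to $y\in S'^c$ with $\dL(y)\geq 3L/(\log L)^2$ and flags this in the remark after the proof. This particular defect is fixable with your own ingredients, since the $\Gamma$-mass of the offending shell is $O((\log\log L)(\log L)^2)$ and the deterministic bound $C(\log L)^{-12}L^{-d}$ from Lemma~\ref{def:est-deltaphi} covers it, but as written the claim is false.
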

\begin{prooof}
Clearly, the claim follows if we show
\begin{equation}
\label{eq:smvgoodenv-toshow}
\sup_{x\in V_L}\sup_{z\in\mathbb{Z}^d}\pP\left(\left|(\Pg - \pi)\ph_{\psi}(x,z)\right| \geq (\log
L)^{-(9+1/5)}L^{-d}\right) \leq \exp\left(-(\log L)^{5/2}\right).
\end{equation}
Using the abbreviations $\phi = \pi\ph_{\psi}$, $\Delta =
1_{V_L}(\Phg-\ph)$, we start with the perturbation expansion
\begin{equation*}
(\Pg-\pi)\hat{\pi}_{\psi} = \Ghg\Delta\phi.
\end{equation*}
Set $S = \sh_L(2L/(\log L)^2)$ and write
\begin{equation}
\label{eq:smvgoodenv-splitting}
\Ghg\Delta\phi =
\Ghg1_{S}\Delta\phi + \Ghg1_{S^c}\Delta\phi.
\end{equation}
Using $\Ghg \preceq C\Gamma$, Lemma~\ref{def:super-gammalemma} (iv) (with
$r= r_L$) and Lemma~\ref{def:est-deltaphi} yield the estimate
\begin{equation*}
|\Ghg1_{S}\Delta\phi(x,z)| \leq\sup_{x\in V_L}\Ghg(x,S)\sup_{y\in
  V_L}|\Delta\phi(y,z)|\leq (\log L)^{-19/2}L^{-d}
\end{equation*}
for $L$ large. It remains to bound $|\Ghg1_{{S}^c}\Delta\phi(x,z)|$. With
$B = V_L\backslash\sh_L(2r_L)$,
\begin{equation*}
\Ghg = \gh 1_{B}\Delta\Ghg +\gh 1_{B^c}\Delta\Ghg +\gh.
\end{equation*}
By replacing successively $\Ghg$ in the first summand on the right-hand
side,
\begin{eqnarray}
\label{eq:smvgoodenv-splitting2}
\Ghg1_{{S}^c}\Delta\phi &=& \left(\sum_{k=0}^\infty{\left(\gh1_B\Delta\right)}^k\gh +
    \sum_{k=0}^\infty{\left(\gh1_B\Delta\right)}^k\gh1_{B^c}\Delta\Ghg\right)1_{{S}^c}\Delta\phi\nonumber\\
    &=& F1_{{S}^c}\Delta\phi+F1_{B^c}\Delta\Ghg1_{{S}^c}\Delta\phi,  
\end{eqnarray}
where $F=\sum_{k=0}^\infty{\left(\gh1_B\Delta\right)}^k\gh$.
With $R = \sum_{k=1}^{\infty}(1_B\Delta)^k\ph$,
expansion~\eqref{eq:prel-pbe3} shows
\begin{equation*}
F = \gh\sum_{m=0}^\infty
(R\gh)^m\sum_{k=0}^\infty\left(1_B\Delta\right)^k.
\end{equation*}
From the proof of Lemma~\ref{def:superlemma} (ii) we learn that $|F|\preceq C\Gamma$.
By Lemma~\ref{def:super-gammalemma} (iv),~(v) and again
Lemma~\ref{def:est-deltaphi}, we see that for large $L$, uniformly in $x\in
V_L$ and $z\in\mathbb{Z}^d$,  
\begin{eqnarray*}
|F1_{B^c}\Delta\Ghg1_{S^c}\Delta\phi(x,z)| &\leq&
C\Gamma(x,\sh_L(2r_L))\sup_{v\in\sh_L(3r_L)}\Gamma(v,S^c\cap V_L)\sup_{w\in
  V_L}|\Delta\phi(w,z)|\\ 
&\leq& (\log L)^{-11}L^{-d}.
\end{eqnarray*}
Thus, the second summand
of~\eqref{eq:smvgoodenv-splitting2} is harmless. However, with the first
summand one has to be more careful.  With $\xi =
\gh\sum_{k=0}^\infty(1_B\Delta)^k1_{S^c}\Delta\phi$, we have
\begin{equation*}
F1_{S^c}\Delta\phi= \xi + \gh\sum_{m=0}^\infty(R\gh)^mR\xi = \xi + F1_{B}\Delta\ph\xi.
\end{equation*}
Clearly, $|F1_B\Delta\ph(x,y)|\leq C(\log L)^{-3}$, so it remains
to estimate $\xi(y,z)$, uniformly in $y$ and $z$. Set $N=N(L)=\lceil
\log\log L\rceil$. For small $\delta$, the summands of $\xi$ with $k\geq N$
are readily bounded by   
\begin{eqnarray*}
\sup_{y\in V_L}\sup_{z\in \mathbb{Z}^d}\sum_{k=N}^\infty|\gh(1_B\Delta)^k1_{S^c}\Delta\phi(y,z)| &\leq&
C(\log L)^6\sum_{k=N}^\infty\delta^k(\log L)^{-12}L^{-d}\\ &\leq& (\log L)^{-10}L^{-d}.
\end{eqnarray*}
Now we look at the summands with $k<N$. Since the coarse grained walk
cannot bridge a gap of length $L/(\log L)^2$ in less than $N$ steps, we can
drop the kernel $1_B$. Defining $S' = \sh_L\left(3L/(\log L)^2\right)$, we
thus have
\begin{equation*}
\gh(1_B\Delta)^k1_{S^c}\Delta\phi = \gh1_{S'}\Delta^k1_{S^c}\Delta\phi +
\gh1_{S'^c}\Delta^k1_{S^c}\Delta\phi.
\end{equation*}
The first summand is bounded in the same way as
$\Ghg1_S\Delta\phi$ from~\eqref{eq:smvgoodenv-splitting}. Further, we can
drop the kernel $1_{S^c}$ in the second
summand. Therefore,~\eqref{eq:smvgoodenv-toshow} follows if we show
\begin{equation*}
\sup_{x\in
  V_L}\sup_{z\in \mathbb{Z}^d}\pP\left(\left|\sum_{k=1}^N\gh1_{S'^c}\Delta^k\phi(x,z)\right|
  \geq \frac{1}{2}(\log L)^{-(9+1/5)}L^{-d}\right)\leq \exp\left(-(\log L)^{5/2}\right).  
\end{equation*} 
For $j\in\mathbb{Z}$, consider the interval $I_j
= (jNs_L,(j+1)Ns_L]\subset \mathbb{Z}$. We divide $ S'^c\cap V_L$ into subsets
$W_{\bj} =  (S'^c\cap V_L)\cap \left(I_{j_1}\times\ldots\times I_{j_d}\right)$,
where ${\bj} = (j_1,\ldots,j_d) \in\mathbb{Z}^d$.  
Let $J$ be the set of those $\bj$ for which $W_{\bj} \neq
\emptyset$. Then we can find a constant $K$ depending only on the dimension
and a disjoint partition of $J$ into sets $J_1,\ldots,J_K$, such that for any $1\leq
l\leq K$,
\begin{equation}
\label{eq:smvgoodenv-distance}
\bj,\bj' \in J_l,\ \bj \neq \bj' \Longrightarrow
\dist(W_{\bj},W_{\bj'}) > Ns_L.
\end{equation}
For $x\in V_L$, $z\in\mathbb{Z}^d$, we set 
\begin{equation*}
\xi_{\bj} = \xi_{\bj}(x,z) = \sum_{y\in W_{\bj}}\sum_{k=1}^N\gh(x,y)\Delta^k\phi(y,z),
\end{equation*}
and further $t = t(d,L) = (1/2)(\log L)^{-(9+1/5)}L^{-d}$.
Assume that we can prove
\begin{equation}
\label{eq:smvgoodenv-expbound}
\left|\sum_{\bj\in J}\pE\left[\xi_{\bj}\right]\right| \leq \frac{t}{2}.
\end{equation}
Then
\begin{equation*}
  \pP\left(\left|\sum_{\bj\in J}\xi_{\bj}\right| \geq t\right)
  \leq
  \pP\left(\left|\sum_{\bj\in
        J}\xi_{\bj}-\pE\left[\xi_{\bj}\right]\right| \geq \frac{t}{2}\right) \leq K \max_{1\leq l\leq K}\pP\left(\left|\sum_{\bj\in
        J_l}\xi_{\bj}-\pE\left[\xi_{\bj}\right]\right| \geq \frac{t}{2K}\right).
\end{equation*}
Due to~\eqref{eq:smvgoodenv-distance},  the random variables
$\xi_{\bj}-\pE\left[\xi_{\bj}\right]$, $\bj\in J_l$,
are independent and centered. Hoeffding's inequality yields, with
${||\xi_{\bj}||}_{\infty} = \sup_{\omega\in\Omega}|\xi_{\bj}(\omega)|$, 
\begin{equation}
\label{eq:smvgoodenv-hoeffding}
 \pP\left(\left|\sum_{\bj\in
      J_l}\xi_{\bj}-\pE\left[\xi_{\bj}\right]\right| \geq
  \frac{t}{2K}\right)
\leq 2\exp\left(-c\frac{L^{-2d}(\log L)^{-(18+2/5)}}{\sum_{\bj \in
      J_l}{||\xi_{\bj}||}^2_{\infty}}\right) 
\end{equation}
for some constant $c>0$. In order to control the $\sup$-norm of the
$\xi_{\bj}$, we use the estimates
\begin{equation*} \gh(x,W_{\bj}) \leq C\Gamma^{(2)}(x,W_{\bj}) \leq
    \frac{CN^ds_L^d}{s_L^2(s_L + \dist(x,W_{\bj}))^{d-2}} =
    CN^d\left(1+\frac{\dist(x,W_{\bj})}{s_L}\right)^{2-d},
\end{equation*}
and, by Lemma~\ref{def:est-deltaphi} for $y\in W_{\bj}$,
$\left|\Delta^k\phi(y,z)\right| \leq C\delta^{k-1}k(\log L)^{-12}L^{-d}$.
Altogether we arrive at
\begin{equation*}
  ||\xi_{\bj}||_{\infty} \leq
  C\left(1+\frac{\dist(x,W_{\bj})}{s_L}\right)^{2-d}N^d(\log L)^{-12}L^{-d},
\end{equation*}
uniformly in $z$. If we put the last display into~\eqref{eq:smvgoodenv-hoeffding}, we get,
using $d\geq 3$ in the last line,
\begin{eqnarray*}
  \pP\left(\left|\sum_{\bj\in
        J_l}\xi_{\bj}-\pE\left[\xi_{\bj}\right]\right| \geq
    \frac{t}{2K}\right)&\leq& 
   2\exp\left(-c\frac{(\log L)^{6-2/5}}{N^4\sum_{r=1}^{C(\log
        L)^3/N}r^{-d+3}}\right)\\
   &\leq& 2\exp\left(-c\frac{(\log L)^{3-2/5}}{N^3}\right).
\end{eqnarray*}
It follows that for $L$ large enough, uniformly in $x$ and $z$,
\begin{equation*}
\pP\left(\left|\sum_{\bj\in J}\xi_{\bj}\right|
  \geq \frac{1}{2}(\log L)^{-(9+1/5)}L^{-d}\right)  \leq \exp\left(-(\log L)^{5/2}\right).
\end{equation*}
It remains to prove~\eqref{eq:smvgoodenv-expbound}. We have
\begin{equation*}
\left|\sum_{\bj\in J}\pE\left[\xi_{\bj}\right]\right| 
\leq  \sum_{y\in S'^c}\gh(x,y)\left|\sum_{y'\in V_L}\pE\left[\sum_{k=1}^N\Delta^k\ph(y,y')\right]\phi(y',z)\right|.
\end{equation*}
Now~\eqref{eq:smvgoodenv-expbound} follows from the estimates $\gh(x,S'^c)
\leq C(\log L)^6$ and
\begin{equation*}
\sup_{y\in S'^c}\left|\sum_{y'\in
    V_L}\pE\left[\sum_{k=1}^N\Delta^k\ph(y,y')\right]\phi(y',z)\right| \leq C(\log
L)^{-18} L^{-d},
\end{equation*}
which in turn follows from Proposition~\ref{def:est-isometry} applied to
$\nu(\cdot) = \pE\left[\sum_{k=1}^N\Delta^k\ph(y,y+\cdot)\right]$.
\end{prooof}


 \begin{remark}
   The reader should notice that for $y\in S'^c$, the signed
   measure $\nu$ fulfills the requirements (i) and (ii) of
   Proposition~\ref{def:est-isometry}. Indeed, after $N=\lceil\log\log
   L\rceil$ steps away from $y$, the coarse grained walks are still in the
   interior part of $V_L$, where the coarse graining radius did not start
   to shrink.  Due to $\czero$, we thus deduce that (i) and (ii)
   hold true for the signed measure
   $\pE[\sum_{k=1}^N(1_{V_L}(\Ph-\ph))^k\ph(y,y+\cdot)]$. Replacing
   $\Ph$ by $\Phg$ does not destroy the symmetries of this
   measure, so that Proposition~\ref{def:est-isometry} can be applied to $\nu$.
 \end{remark}

\subsection{Estimates in the presence of bad points}
\label{smv-onebad-exits}
In the following lemma, we estimate $D_{L,\psi}^\ast$ on environments with
bad points.  We work with the modified kernels $\Pho$, $\Phgo$, $\pho$ from
Section~\ref{super-cgmod}. Recall that the exit measures under these
kernels do not change, e.g. $\Pg_L=
\mbox{ex}_{V_L}(x,\cdot;\Phgo_{L,r})$.  Again, we make the
choice $r=r_L$ for the coarse graining scheme.
\begin{lemma}
\label{def:lemma-badpart}
In the setting of Lemma~\ref{def:lemma-goodpart}, for $i=1,2,3,4$,
  \begin{equation*}
  \begin{split}
    \pP\left(\sup_{x\in V_{L/5}}||(\Pi_L-\pi_L)\ph_{\psi}(x,\cdot)||_1 > (\log L)^{-9+
        9(i-1)/4};\, \onebadi\right)\\ 
        \leq\exp\left(-(\log L)^{7/3}\right).
\end{split}
\end{equation*}
\end{lemma}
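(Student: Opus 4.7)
\noindent\textbf{Proof Plan for Lemma~\ref{def:lemma-badpart}.}
The plan is to mimic the structure of Lemma~\ref{def:lemma-goodpart} but to quantify how much the presence of bad points at level $i$ degrades the bound. Throughout I fix $r = r_L$ and work with the modified kernels $\Pho, \Phgo, \pho$ of Section~\ref{super-cgmod}. First I split
\begin{equation*}
(\Pi_L - \pi_L)\hat\pi_\psi \;=\; (\Pi_L - \Pg_L)\hat\pi_\psi + (\Pg_L - \pi_L)\hat\pi_\psi,
\end{equation*}
so that Lemma~\ref{def:lemma-goodpart} already controls the second summand by $(\log L)^{-9-1/6}$ up to an event of probability $\leq \exp(-(\log L)^{7/3})$. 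For the first summand the resolvent equation gives, with $\Delta = 1_{V_L}(\Pho-\Phgo)$ (which is supported on $\badP_L$), the identity $\Pi_L - \Pg_L = \Gho\Delta\Pg_L$, and writing $\phi = \pi_L\hat\pi_\psi$ and $\Pg_L\hat\pi_\psi = \phi + (\Pg_L - \pi_L)\hat\pi_\psi$ yields
\begin{equation*}
(\Pi_L - \Pg_L)\hat\pi_\psi \;=\; \Gho\Delta\phi \;+\; \Gho\Delta(\Pg_L - \pi_L)\hat\pi_\psi.
\end{equation*}

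The next step exploits the series identity $\Gho\Delta = \sum_{k\geq 1}(\Ghgo\Delta)^k$ together with Corollary~\ref{def:super-cor}~(iii), which (on $\onebadi$ for $i\leq 3$ with $\badP_L\cap\sh_L(r_L)=\emptyset$, and on $\onebad$ bad at level 4 with $\badP_L\subset V_{L/2}$) gives $\sup_x\sum_k\|(\Ghgo\Delta)^k(x,\cdot)\|_1\leq C$. Writing $A = \Ghgo\Delta$, one deduces the operator inequality $\|\Gho\Delta f(x,\cdot)\|_1 \leq C\sup_y\|\Ghgo\Delta f(y,\cdot)\|_1$ for any $f$. Applied to $f = (\Pg_L - \pi_L)\hat\pi_\psi$, the contribution is absorbed into the $(\log L)^{-9-1/6}$ bound from Lemma~\ref{def:lemma-goodpart}.

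The heart of the proof is the estimation of $\sup_y\|\Ghgo\Delta\phi(y,\cdot)\|_1$. I use $\ph\phi = \phi$ (which holds because $\pi_L$ is $\ph$-harmonic in $V_L$) to factor $(\Ph-\ph)\phi = ((\Ph-\ph)\ph)\phi$. At a bad point $y$ with $d_L(y)>2r_L$, the level-$i$ hypothesis bounds $\|(\Ph-\ph)\ph(y,\cdot)\|_1 \leq (\log h_L(y))^{-9+9i/4}$; combining with the smoothness bound of Lemma~\ref{def:est-phi}~(iii) for the support of $(\Ph-\ph)\ph(y,\cdot)$ inside $V_{2s_L}(y)$ gives $\|(\Ph-\ph)\phi(y,\cdot)\|_1 \leq C(\log L)^{-12+9i/4}$. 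At a bad $y$ with $d_L(y)\leq 2r_L$ only $\|(\Ph-\ph)(y,\cdot)\|_1\leq\delta$ is available, but the support shrinks to radius $O(r_L)$, and Lemma~\ref{def:est-phi}~(iii) yields the much better bound $C\delta(\log L)^{-15}$. Summing over $y\in\badP_L$ with the help of Corollary~\ref{def:super-cor}~(i), which gives $\sup_x\Ghgo(x,\badP_L)\leq C$ when $\badP_L\subset V_{L/4}$, produces $\sup_x\|\Ghgo\Delta\phi(x,\cdot)\|_1\leq C(\log L)^{-12+9i/4}$, which is below the target $(\log L)^{-9+9(i-1)/4}$.

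The expected main obstacle is the case analysis for environments where Corollary~\ref{def:super-cor}~(iii) fails, namely levels $i\leq 3$ with $\badP_L$ touching $\sh_L(r_L)$, and level $4$ with $\badP_L\not\subset V_{L/2}$. In both situations $\badP_L$ lies near $\partial V_L$, so Corollary~\ref{def:super-cor}~(i) provides the extremely small bound $\sup_{x\in V_{L/5}}\Ghgo(x,\badP_L) \leq C(r_L/L)^{d-2}$. This smallness is then large enough to absorb the crude pointwise bound $\|(\Pho-\pho)\phi(y,\cdot)\|_1 \leq 2$ and, after a short direct expansion of $\Gho$ around $\Ghgo$ (using that only finitely many returns to $\badP_L$ are possible in this thin layer), to yield the target estimate. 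Piecing everything together, on the good event from Lemma~\ref{def:lemma-goodpart} intersected with $\onebadi$ the sum is at most $C(\log L)^{-12+9i/4} + C(\log L)^{-9-1/6} \leq (\log L)^{-9+9(i-1)/4}$ for $L$ large, completing the proof.
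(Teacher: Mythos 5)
Your main case is essentially the paper's proof: the same triangle-inequality split into $(\Pi_L-\Pg_L)\ph_{\psi}+(\Pg_L-\pi_L)\ph_{\psi}$, the same expansion (with $\Delta=1_{V_L}(\Pho_{L,r_L}-\Phgo_{L,r_L})$, supported on $\badP_L$) into $F_1=\sum_{k\geq1}(\Ghgo_{L,r_L}1_{\badP_L}\Delta)^k\phi$ and $F_2=\sum_{k\geq1}(\Ghgo_{L,r_L}1_{\badP_L}\Delta)^k(\Pg_L-\pi_L)\ph_{\psi}$, controlled via Corollary~\ref{def:super-cor} (iii), and the same estimate $\|\Delta\phi(w,\cdot)\|_1\leq\|\Delta\ph(w,\cdot)\|_1\sup_{w'}\|\phi(w',\cdot)-\phi(w,\cdot)\|_1\leq C(\log L)^{-12+9i/4}$ using the level-$i$ hypothesis and Lemma~\ref{def:est-phi} (iii). (Note that for $i\leq3$ the intersection $\badP_L\cap\sh_L(2r_L)=\emptyset$ is automatic, since points with $\dL\leq2r_L$ can only be bad through the non-smoothed condition, which level-$i$ environments satisfy everywhere; your aside that at a bad $y$ with $\dL(y)\leq2r_L$ "only the $\delta$-bound is available" is backwards — such a point is bad precisely because that bound fails — but it is vacuous where you use it.)

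The genuine gap is your "obstacle case", i.e. level $4$ with $\badP_L\not\subset V_{L/2}$. There neither the kernel modification of Section~\ref{super-cgmod} nor Corollary~\ref{def:super-cor} (iii) is in force, and your route $\Pi_L-\Pg_L=\Gho_{L,r_L}\Delta\Pg_L$ followed by $\Gho_{L,r_L}\Delta=\sum_{k\geq1}(\Ghgo_{L,r_L}\Delta)^k$ is exactly what cannot be justified: at really bad points only $\|\Delta(w,\cdot)\|_1\leq2$ holds, and for $y$ inside or next to the bad region $\Ghgo_{L,r_L}(y,\badP_L)$ is only $O(1)$, so the series has no geometric decay; "only finitely many returns to $\badP_L$" is false for the coarse grained walk and cannot replace the contraction that the modified kernels and~\eqref{eq:super-kernelmod} supply in the $\badP_L\subset V_{L/2}$ case. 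Your quantitative input is also wrong here: $\badP_L\not\subset V_{L/2}$ does not put the bad region near $\partial V_L$; it may lie at distance of order $L$ from the boundary, where Corollary~\ref{def:super-cor} (i) only gives $\sup_{x\in V_{L/5}}\Ghgo_{L,r_L}(x,\badP_L)\leq C(s_L/L)^{d-2}\leq C(\log L)^{-3}$, not $C(r_L/L)^{d-2}$. The repair, which is how the paper argues, is to avoid the series altogether in this case by using the other form of the resolvent identity, $(\Pi_L-\Pg_L)\ph_{\psi}=\Ghgo_{L,r_L}1_{\badP_L}\Delta\,\Pi_L\ph_{\psi}$, whose right factor obeys the crude bound $\|\Delta\Pi_L\ph_{\psi}(w,\cdot)\|_1\leq2$ with no iteration needed; then $C(\log L)^{-3}$ already beats the level-$4$ target $(\log L)^{-9/4}$ (the paper treats in the same one-step way environments of any level with $\badP_L\subset\sh_L(L/(\log L)^{10})$, where $\Ghgo_{L,r_L}(x,\badP_L)\leq C(\log L)^{-10}$ for $x\in V_{L/5}$).
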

\begin{prooof}
  By the triangle inequality, 
  \begin{equation}
  \label{eq:smvbadpart-1}
    ||(\Pi - \pi)\hat{\pi}_{\psi}(x,\cdot)||_1\leq
    ||(\Pi -\Pg)\hat{\pi}_{\psi}(x,\cdot)||_1 +
    ||(\Pg - \pi)\hat{\pi}_{\psi}(x,\cdot)||_1.
 \end{equation}
 The second summand on the right is estimated by
 Lemma~\ref{def:lemma-goodpart}. For the first term we have, with
 $\Delta = 1_{V_L}(\Pho-\Phgo)$,
\begin{equation*}
(\Pi-\Pg)\ph_\psi = \Ghgo1_{\badP_L}\Delta\Pi\ph_\psi.
\end{equation*}
Note that since we are on $\onebad$, the set $\badP_L$ is contained in a small region. First assume that $\badP_L\subset \sh_L(L/(\log
L)^{10})$. Then $\sup_{x\in V_{L/5}}\Ghgo(x,\badP_L) \leq C(\log L)^{-10}$
by Corollary~\ref{def:super-cor}, which bounds the first summand
of~\eqref{eq:smvbadpart-1}. Next assume $\omega$ bad on level
$4$ and $\badP_L\not\subset V_{L/2}$. Then $\sup_{x\in
  V_{L/5}}\Ghgo(x,\badP_L) \leq C(\log L)^{-3}$ by the same corollary,
which is good enough for this case.
  
It remains to consider the cases $\omega$ bad on level at most $3$ with
$\badP_L\not\subset\sh_L(L/(\log L)^{10})$, or $\omega$ bad on level $4$ with
$\badP_L\subset V_{L/2}$. We put $\phi = \pi\ph_{\psi}$ and expand 
  \begin{eqnarray*}
   (\Pi-\Pg)\ph_\psi = \left(\Ghgo1_{\badP_L}\Delta\Pi\right)\ph_\psi &=&
   \sum_{k=1}^\infty\left(\Ghgo1_{\badP_L}\Delta\right)^k\Pg\ph_\psi\\ 
   &=& \sum_{k=1}^\infty\left(\Ghgo1_{\badP_L}\Delta\right)^k\phi +
   \sum_{k=1}^\infty\left(\Ghgo1_{\badP_L}\Delta\right)^k(\Pg-\pi)\ph_{\psi}\\
   &=& F_1 + F_2. 
  \end{eqnarray*}
  By Corollary~\ref{def:super-cor},
  \begin{eqnarray*}
    ||F_1(x,\cdot)||_1 &\leq&
    \sum_{k=0}^\infty||(\Ghgo1_{\badP_L}\Delta)^k(x,\cdot)||_1\sup_{v\in
      V_L}\Ghgo(v,\badP_L)\sup_{w\in
      \badP_L}||\Delta\phi(w,\cdot)||_1\\
    & \leq& C\sup_{w\in
      \badP_L}||\Delta\phi(w,\cdot)||_1. 
  \end{eqnarray*}
  Proceeding as in Lemma~\ref{def:est-deltaphi},
  \begin{equation*}
    ||\Delta\phi(w,\cdot)||_1\leq ||\Delta\ph(w,\cdot)||_1\sup_{w':\,|\Delta\ph(w,w')| >
      0}||\phi(w',\cdot)-\phi(w,\cdot)||_1.
  \end{equation*}
  If $\omega$ is not bad on level $4$, we have on $\badP_L$ the equality 
  $\Delta=\Ph-\ph$.  Since
  $\badP_L\cap\sh_L(2r_L)=\emptyset$, this gives $\sup_{w\in
    \badP_L}||\Delta\ph(w,\cdot)||_1 \leq C(\log L)^{-9+9i/4}$ for every
  $i=1,2,3,4$.  For the second factor in the last display, Lemma~\ref{def:est-phi} (iii) yields
  the bound $C(\log L)^{-3}$. We arrive at $||F_1(x,\cdot)||_1\leq C(\log
  L)^{-12+9i/4}$. For $F_2$, we obtain once more with
  Corollary~\ref{def:super-cor}, 
\begin{equation*}
    ||F_2(x,\cdot)||_1 \leq C\sup_{y\in V_L}
||(\Pg-\pi)\ph_{\psi}(y,\cdot)||_1.
  \end{equation*}
This term is again estimated by Lemma~\ref{def:lemma-goodpart}, and
the lemma is proved.
\end{prooof} 


   \section{Non-smoothed and locally smoothed exits}
\label{nonsmv-exits}
Here, we aim at bounding the total variation distance of the exit measures
without additional smoothing (Lemma~\ref{def:nonsmv-lemma1}), as well as in
the case where a kernel of constant smoothing radius $s$ is added
(Lemma~\ref{def:nonsmv-lemma2}). We use the transition kernels $\Pho,\Phgo$
and $\pho$.

Throughout this section, we work with constant parameter $r$. We always
assume $L$ large enough such that $r< r_L$. The right choice of $r$ depends
on the deviations $\delta$ and $\eta$ we are shooting for and will become
clear from the proofs.  In either case, we choose $r\geq r_0$, where $r_0$
is the constant from Section~\ref{s2}. The value of $r$ will then also
influence the choice of the perturbation $\e_0$ in
Lemma~\ref{def:nonsmv-lemma1} and the smoothing radius $l$ in
Lemma~\ref{def:nonsmv-lemma2}, respectively.

We recall the partition of bad points into the sets $\badP_L$,
$\badP_{L,r}$, $\badP_{L,r}^\partial$, $\badPo_{L,r}$ and the
classification of environments into $\good$, $\onebad$ and $\bbad$ from
Section~\ref{smoothbad}.  

The bounds for $\manybad$
(Lemma~\ref{def:smoothbad-lemmamanybad}) and for $\bbad$
(Lemma~\ref{def:smoothbad-lemmabbad}) ensure that we may restrict ourselves to environments $\omega \in \onebad\cap
{(\bbad)}^c$. For such environments, we introduce two disjoint random sets
$Q_{L,r}^1(\omega)$, $Q_{L,r}^2(\omega) \subset V_L$ as follows:
\begin{itemize}
\item If $\badP_L(\omega)\subset V_{L/2}$, set $Q_{L,r}^1(\omega)=
  \badP_L(\omega)$ and $Q_{L,r}^2(\omega)= \badP_{L,r}^\partial(\omega)$.
\item If $\badP_L(\omega)\not\subset V_{L/2}$, set $Q_{L,r}^1(\omega)=
  \emptyset$ and $Q_{L,r}^2(\omega)= \badPo_{L,r}(\omega)$.
\end{itemize}
Of course, on $\good$, we have $Q_{L,r}^1(\omega) = \emptyset$ and 
$Q_{L,r}^2(\omega) = \badP_{L,r}^\partial(\omega)$.

\begin{lemma}
\label{def:nonsmv-lemma1}
There exists $\delta_0 > 0$ such that if $\delta\in (0,\delta_0]$, there
exist $\e_0 = \e_0(\delta) > 0$ and $L_0 = L_0(\delta)>0$ with the following
property: If $\e \leq \e_0$ and $L_1\geq L_0$, then
$\cone(\e)$, $\ctwo(\delta,L_1)$
imply that for $L_1\leq L\leq L_1(\log L_1)^2$,
\begin{equation*}
  \pP\left(\sup_{x\in V_{L/5}}||\Pi_L-\pi_L)(x,\cdot)||_1 >
    \delta\right) \leq \exp\left(-\frac{9}{5}(\log L)^2\right). 
\end{equation*}
\end{lemma}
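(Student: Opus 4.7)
The plan is to mimic the two-step strategy of Lemmas~\ref{def:lemma-goodpart} and~\ref{def:lemma-badpart} while removing the final smoothing step. As a first reduction, Lemmas~\ref{def:smoothbad-lemmamanybad} and~\ref{def:smoothbad-lemmabbad} give $\pP(\manybad)+\pP(\bbad)\leq\exp(-(19/10)(\log L)^2)+\exp(-(\log L)^{28})$, which for $L$ large is bounded by $\tfrac{1}{3}\exp(-(9/5)(\log L)^2)$ since $19/10>9/5$; hence it suffices to bound the probability in question on $\onebad\cap(\bbad)^c$, where the bad points split into the sets $Q^1_{L,r}$ (interior, confined to a single neighborhood of $\mathcal{D}_L$) and $Q^2_{L,r}$ (boundary, sparse layer-by-layer).

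On this event I pass to the modified kernels $\Pho_{L,r},\Phgo_{L,r},\pho_{L,r}$ of Section~\ref{super-cgmod} (which share the exit measures from $V_L$ with $\Ph_{L,r},\Phg_{L,r},\ph_{L,r}$) and apply the triangle inequality
\begin{equation*}
  ||(\Pi_L-\pi_L)(x,\cdot)||_1\leq||(\Pi_L-\Pg_L)(x,\cdot)||_1+||(\Pg_L-\pi_L)(x,\cdot)||_1,
\end{equation*}
with $\Pg_L=\mbox{ex}_{V_L}(x,\cdot;\Phgo_{L,r})$. The first summand is treated as in the proof of Lemma~\ref{def:lemma-badpart}: iterating the expansion $\Pi_L-\Pg_L=\Ghgo_{L,r}\,1_{\badPo_{L,r}}(\Pho_{L,r}-\pho_{L,r})\Pi_L$, the $Q^1$-contribution is controlled by Corollary~\ref{def:super-cor}(iii) together with $\Ghgo_{L,r}(x,Q^1_{L,r})\leq C(s_L/L)^{d-2}$ for $x\in V_{L/5}$ from Corollary~\ref{def:super-cor}(i), while the $Q^2$-contribution is bounded via $\sup_{x\in V_{2L/3}}\Ghgo_{L,r}(x,Q^2_{L,r})\leq C(\log r)^{-1/2}$ from Corollary~\ref{def:super-cor}(ii). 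Choosing $r$ large depending on $\delta$ makes both contributions $\leq\delta/3$.

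The main term $(\Pg_L-\pi_L)(x,\cdot)$ is estimated in the spirit of Lemma~\ref{def:lemma-goodpart}, with $\pi_L$ playing the role of $\phi_{L,\psi}$. From the perturbation expansion $\Pg_L-\pi_L=\gho_{L,r}\,1_{V_L\setminus\badPo_{L,r}}(\Ph_{L,r}-\ph_{L,r})\Pg_L$, one iterates once to replace $\Pg_L$ on the right by $\pi_L$ up to a controllable remainder, then splits the $y$-sum into a boundary part $y\in\sh_L(L/(\log L)^2)$ and a bulk part. The boundary part is bounded using the zero-sum property of $(\Ph_{L,r}-\ph_{L,r})(y,\cdot)$ paired against the difference $\pi_L(y',z)-\pi_L(y,z)$, together with the capacity bounds of Lemma~\ref{def:super-gammalemma}(iv)--(v) and the regularity estimates of Lemma~\ref{def:lemmalawler}; here the constant-$r$ refinement is essential, since the $O(r)$ step size near $\partial V_L$ compensates the degradation of $\pi_L$'s regularity there. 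For the bulk part, one iterates the expansion $N=\lceil\log\log L\rceil$ times (as in Lemma~\ref{def:lemma-goodpart}), partitions the resulting sums into independent coarse cells of diameter $Ns_L$ mutually separated by more than $Ns_L$, takes expectations, and applies Proposition~\ref{def:est-isometry} adapted to $\pi_L$: at base points lying in $V_L\setminus\sh_L(3L/(\log L)^2)$, the Taylor expansion of $\pibm_L$ plus the harmonic-function derivative bounds $||D^i\pi_L(\cdot,z)||_{V_{3L/4}}\leq CL^{-(d-1+i)}$ replace the smoothed estimate Lemma~\ref{def:est-phi}. Hoeffding's inequality then yields an exceptional tail of order $\exp(-(\log L)^{5/2})$, comfortably below $\tfrac{1}{3}\exp(-(9/5)(\log L)^2)$.

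The principal obstacle is the boundary analysis: Lemma~\ref{def:est-phi}'s uniform regularity of $\phi_{L,\psi}$ gave Lemma~\ref{def:lemma-goodpart} a free factor of $L^{-1}$ which is absent here, and near $\partial V_L$ the function $\pi_L$ has Lipschitz constant of order $|y-z|^{-d}$ rather than $L^{-d}$. This is remedied by the constant-$r$ refinement of the coarse graining, combined with the smallness of per-step perturbations $||(\Ph_{L,r}-\ph_{L,r})(y,\cdot)||_1$ under $\cone(\e)$. The parameter hierarchy is: first $\delta_0>0$ so that Lemmas~\ref{def:superlemma} and~\ref{def:superlemma2} apply, then $r=r(\delta)\geq r_0$ large enough so that the $Q^2$ and boundary-layer contributions fall below $\delta/4$, then $\e_0=\e_0(\delta)$ small enough so $\cone(\e_0)$ makes per-step coarse-grained perturbations much smaller than the Lipschitz scale in the boundary layer, and finally $L_0=L_0(\delta)$ large enough for the Hoeffding concentration and all asymptotic simplifications to take effect.
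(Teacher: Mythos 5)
Your overall architecture (reduce to $\onebad\cap{(\bbad)}^c$, peel off the bad-point corrections with the goodified kernels and Corollary~\ref{def:super-cor}, then control the goodified-versus-SRW part layer by layer with $r$ large and $\e$ small for the innermost shell) matches the paper, but there is a concrete gap in your treatment of the interior bad region. When $Q^1_{L,r}\neq\emptyset$ we have, by construction, $\badP_L\subset V_{L/2}$; the bound $\Ghgo_{L,r}(x,\badP_L)\leq C\big((s_L\wedge(t\vee r_L))/L\big)^{d-2}$ in Corollary~\ref{def:super-cor}~(i) is available only when $\badP_L\not\subset V_{L/4}$, which is exactly the case where the paper sets $Q^1=\emptyset$. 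In the relevant case the bad region may sit right next to $x\in V_{L/5}$ and one only has $\Ghgo_{L,r}(x,\badP_L)\leq C$, so your claimed smallness factor $(s_L/L)^{d-2}$ is not there; moreover, on level-$4$ environments the last factor $\Delta\Pi_L$ (or $\Delta\Pg_L$) at a bad point carries no smallness either, so the product you estimate is only $O(1)$. The paper's fix is structural: it arranges the expansion (the term $F_1$ in~\eqref{eq:nonsmv-lemma1-splitting}) so that the rightmost factor is $\Delta\pi_L$ evaluated at a point of $V_{L/2}$, and then uses that $\Delta(z,\cdot)$ sums to zero together with the harmonic-measure difference estimate of Lemma~\ref{def:lemmalawler}~(ii) to get $\|\Delta\pi_L(z,\cdot)\|_1\leq Cs_L/L=C(\log L)^{-3}$, with Corollary~\ref{def:super-cor}~(iii) absorbing the iterates. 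Without this (or an equivalent) mechanism your bound for the $Q^1$ contribution does not close.

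For the main term $\Pg_L-\pi_L$ your proposal is much heavier than needed and departs from the paper: you import the $N=\lceil\log\log L\rceil$-fold iteration, the independent-cell decomposition, Proposition~\ref{def:est-isometry} and Hoeffding from Lemma~\ref{def:lemma-goodpart}. That machinery is what buys the $(\log L)^{-9}$ accuracy required for the smoothed estimate; here the target is only the constant $\delta$, and the paper's argument is entirely deterministic on the good event: since $\pi_L=\ph_{L,r}\pi_L$, goodness gives $\|\Delta'\pi_L(y,\cdot)\|_1\leq(\log h_{L,r}(y))^{-9}$ pointwise, and the layer estimates $\Ghgo_{L,r}(y,\Lambda_j)\leq C$, $\Ghgo_{L,r}(y,V_L)\leq C(\log L)^6$ already give $C\big((\log L)^{-3}+(\log r)^{-8}\big)$ for everything outside $\Lambda_0$, with $\e$ small handling $\Lambda_0$ exactly as you indicate. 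Your probabilistic detour is not obviously wrong in the bulk (the sup-norms and the target both scale up by one factor of $L$ relative to the smoothed case), but it is unnecessary, and the isotropy/centering step buys nothing at the precision $\delta$; the real content of this lemma lies in the boundary layers and in the $Q^1$ mechanism above, not in concentration.
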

\begin{prooof}
  We choose $\delta_0 > 0$ according to Remark~\ref{def:super-remark} and
  take $\delta \in (0, \delta_0]$. The right choice of $\e_0$ and $L_0$
  will be clear from the course of the proof. From
  Lemmata~\ref{def:smoothbad-lemmamanybad}
  and~\ref{def:smoothbad-lemmabbad} we learn that if we take $L_1$ large
  enough and $L$ with $L_1\leq L \leq L_1(\log L_1)^2$, then under
  $\ctwo(\delta,L_1)$
  \begin{equation*}
    \pP\left(\manybad\cup\bbad\right) \leq 
    \exp\left(-\frac{9}{5}(\log L)^2\right). 
  \end{equation*}
  Therefore, the claim follows if we show that on $\onebad\cap
  {(\bbad)}^c$, we have for all sufficiently small $\e$ and all large $L$,
  $x\in V_{L/5}$,
  \begin{equation*}
   ||(\Pi - \pi)(x,\cdot)||_1 \leq \delta.
  \end{equation*}
  Let $\omega\in\onebad\cap {(\bbad)}^c$. We use 
  the partition of $\badPo_{L,r}$ into the sets $Q^1$, $Q^2$ described
  above. With $\Delta = 1_{V_L}(\Pho-\Phgo)$, we have inside $V_L$
  \begin{equation*}
    \Pi = \Ghgo1_{Q^1}\Delta\Pi + \Ghgo1_{Q^2}\Delta\Pi + \Pg.
  \end{equation*}
  By replacing successively $\Pi$ in the first summand on the right-hand
  side, we arrive at
  \begin{equation*}
    \Pi = \sum_{k=0}^\infty{\left(\Ghgo1_{Q^1}\Delta\right)}^k\Pg +
    \sum_{k=0}^\infty{\left(\Ghgo1_{Q^1}\Delta\right)}^k\Ghgo1_{Q^2}\Delta\Pi.  
  \end{equation*}
  Since with $\Delta'= 1_{V_L}(\Phgo-\pho)$, $\Pg = \pi +
  \Ghgo\Delta'\pi$, we obtain
  \begin{eqnarray}
    \label{eq:nonsmv-lemma1-splitting}
    \lefteqn{\Pi -\pi}\nonumber\\ 
    &=& \sum_{k=1}^\infty{\left(\Ghgo1_{Q^1}\Delta\right)}^k\pi +
    \sum_{k=0}^\infty{\left(\Ghgo1_{Q^1}\Delta\right)}^k\Ghgo 1_{Q^2}\Delta\Pi + 
    \sum_{k=0}^\infty{\left(\Ghgo1_{Q^1}\Delta\right)}^k\Ghgo\Delta'\pi\nonumber\\
    &=&  F_1 + F_2 + F_3. 
  \end{eqnarray}
  We will now prove that each of the three parts $F_1,F_2,F_3$ is bounded
  by $\delta /3$. If $Q^1 \neq \emptyset$, then $Q^1 = \badP_L \subset V_{L/2}$ and $Q^2 =
  \badP_{L,r}^\partial$. Using Corollary~\ref{def:super-cor} in
  the second and Lemma~\ref{def:lemmalawler} (ii) in the third inequality,
  \begin{eqnarray}
    \label{eq:nonsmv-lemma1-a1}
    ||F_1(x,\cdot)||_1 &\leq&
    \sum_{k=0}^\infty||(\Ghgo1_{\badP_L}\Delta)^k(x,\cdot)||_1\sup_{y\in
      V_L}\Ghgo(y,\badP_L)\sup_{z\in\badP_L}||\Delta\pi(z,\cdot)||_1\nonumber\\
    &\leq& C \sup_{z\in V_{L/2}}||\Delta\pi(z,\cdot)||_1 \leq C(\log
    L)^{-3}\leq C (\log L_0)^{-3}
    \leq  \delta/3
  \end{eqnarray}
  for $L_0 = L_0(\delta)$ large enough, $L\geq L_0$.
  Regarding $F_2$, we have in the case $Q^1 \neq \emptyset$ 
  by Corollary~\ref{def:super-cor} (ii)
  \begin{equation*}
    ||F_2(x,\cdot)||_1 \leq C\sup_{y\in
      V_{2L/3}}\Ghgo(y,\badP_{L,r}^\partial) \leq C(\log r)^{-1/2}.
  \end{equation*}
  On the other hand, if $Q^1 = \emptyset$, then $\badP_L$ is outside
  $V_{L/3}$, so that by Corollary~\ref{def:super-cor} (i), (ii)
  \begin{equation*}
    ||F_2(x,\cdot)||_1 \leq 2\Ghgo(x,\badP^\partial_{L,r}\cup\badP_L)\leq
    C\left((\log L)^{-3} + (\log r)^{-1/2}\right). 
  \end{equation*}
  Altogether, for all $L \geq L_0$, by choosing $r = r(\delta)$ and $L_0 =
  L_0(\delta,r)$ large enough,
  \begin{equation}
  \label{eq:smv-lemma1-a2-2}
    ||F_2(x,\cdot)||_1 \leq C\left((\log L_0)^{-3} + (\log r)^{-1/2}\right)
    \leq \delta/3.
  \end{equation}
  It remains to handle $F_3$. Once again with Corollary~\ref{def:super-cor}
  (iii) for some $C_3 > 0$,
  \begin{equation*}
    ||F_3(x,\cdot)||_1 \leq C_3\sup_{y\in V_{2L/3}}\left|\left|\Ghgo\Delta'\pi(y,\cdot)\right|\right|_1. 
  \end{equation*}
 We have by definition of $\Delta'$,
\begin{equation}
\label{eq:nonsmv-lemma1-a3}
\Ghgo\Delta'\pi=\Ghgo1_{V_L\backslash
  \badPo_{L,r}}\Delta'\pi + \Ghgo1_{\badP_L}\Delta'\pi,
\end{equation}
and $\Delta'$ vanishes on $\badP_L$ except for the case $\omega$ bad on
level $4$ with $\badP_L\subset V_{L/2}$. In this case, we use
Corollary~\ref{def:super-cor} (i) and Lemma~\ref{def:lemmalawler} (ii) to obtain
\begin{equation}
\label{eq:nonsmv-lemma1-a3-0}
||\Ghgo1_{\badP_L}\Delta'\pi(y,\cdot)||_1\leq C(\log L)^{-3}\leq C_3^{-1}\delta/12
\end{equation}
for $L_0$ large enough, $L \geq L_0$. Concerning the first term
of~\eqref{eq:nonsmv-lemma1-a3}, we note that on $V_L\backslash
\badPo_{L,r}$, $\Delta'\pi= (\Ph-\ph)\ph\pi$. Therefore, if $z\in
V_L\backslash\left(\badPo_{L,r}\cup\sh_L(2r_L)\right)$, we obtain
$||\Delta'\pi(z,\cdot)||_1 \leq C(\log L)^{-9}$. Since $\Ghgo(y,V_L) \leq
C(\log L)^6$, it follows that
  \begin{equation}
    \label{eq:nonsmv-lemma1-a3-1}
    \sup_{y\in V_{2L/3}}\left|\left|\Ghgo
    1_{V_L\backslash\left(\badPo_{L,r}\cup\sh_L(2r_L)\right)}\Delta'\pi(y,\cdot)\right|\right|_1
    \leq C(\log L)^{-3} \leq C_3^{-1}\delta/12
  \end{equation}
  for $L$ large. Recall the definition of the layers
  $\Lambda_j$ from Section~\ref{s2}. For $z\in
  \Lambda_j\backslash\badPo_{L,r}$, $1\leq j\leq J_1$, we have
  $||\Delta'\pi(z,\cdot)||_1 \leq C(\log r + j)^{-9}$. By
  Lemma~\ref{def:super-gammalemma} (iii), $\Ghgo(y,\Lambda_j) \leq C$ for
  some constant $C$, independent of $r$ and $j$. Therefore,
  \begin{equation}
  \label{eq:nonsmv-lemma1-a3-2}
  \sup_{y\in V_{2L/3}}\left|\left|\Ghgo
  1_{\bigcup_{j=1}^{J_1}\Lambda_j\backslash\badPo_{L,r}}\Delta'\pi(y,\cdot)\right|\right|_1
  \leq C (\log r)^{-8} \leq C_3^{-1}\delta/12,
  \end{equation}
  if $r$ is chosen large enough. Finally, for the first layer $\Lambda_0$, there
  is a constant $C_0$ satisfying
  \begin{equation*}
    \sup_{y\in V_{2L/3}}\left|\left|\Ghgo
    1_{\Lambda_0\backslash\badPo_{L,r}}\Delta'\pi(y,\cdot)\right|\right|_1
    \leq C_0\sup_{z\in\Lambda_0}||\Delta'(z,\cdot)||_1. 
  \end{equation*}
  Now we take $\e_0 = \e_0(\delta,r)$ small enough such that for $\e\leq
  \e_0$, $\sup_{z\in\Lambda_0}||\Delta'(z,\cdot)||_1\leq
  C_0^{-1}C_3^{-1}\delta/12$. We have shown that $||F_3(x,\cdot)||_1 \leq \delta/3$,
  and the lemma is proven.
\end{prooof}
\begin{remark}
  As the proof shows, we do not have to assume $\ctwo(\delta,L_1)$ for the
  desired deviation $\delta$. We could instead assume $\ctwo(\delta',L_1)$ for some
  $0<\delta'\leq\delta_0$. However, $L_1$ has to be larger than $L_0$, which
  depends on $\delta$.  This observation will be useful in the next lemma.
\end{remark}
\begin{lemma}
  \label{def:nonsmv-lemma2}
  There exists $\delta_0 > 0$ with the following property: For each $\eta >
  0$, there exist a smoothing radius $l_0=l_0(\eta)$ and $L_0 =
  L_0(\eta)$ such that if $L_1\geq L_0$, $l\geq l_0$ and
  $\ctwo(\delta,L_1)$ holds for some $\delta\in(0,\delta_0]$, then for
  $L_1\leq L\leq L_1{(\log L_1)}^2$ and $\psi\equiv l$,
\begin{equation*}
  \pP\left(\sup_{x\in V_{L/5}}||(\Pi_L-\pi_L)\ph_{\psi}(x,\cdot)||_1 >
    \eta\right) \leq \exp\left(-\frac{9}{5}(\log L)^2\right). 
\end{equation*}

\end{lemma}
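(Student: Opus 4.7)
The plan is to follow the architecture of the proof of Lemma~\ref{def:nonsmv-lemma1}, extracting the new smallness $\eta$ from the smoothing kernel $\ph_\psi$ rather than from shrinking $\e$. First, use Lemmata~\ref{def:smoothbad-lemmamanybad} and~\ref{def:smoothbad-lemmabbad} to restrict to environments in $\onebad\cap(\bbad)^c$, paying $\exp(-(9/5)(\log L)^2)$ in probability. On this event, apply the same perturbation expansion as in~\eqref{eq:nonsmv-lemma1-splitting},
\begin{equation*}
(\Pi_L-\pi_L)\ph_\psi=F_1\ph_\psi+F_2\ph_\psi+F_3\ph_\psi .
\end{equation*}
Since $\ph_\psi$ is a Markov kernel we have $\|F_i\ph_\psi(x,\cdot)\|_1\leq\|F_i(x,\cdot)\|_1$, so the bounds $\|F_1\|_1\leq C(\log L)^{-3}$ and $\|F_2\|_1\leq C((\log L_0)^{-3}+(\log r)^{-1/2})$ from Lemma~\ref{def:nonsmv-lemma1} still apply. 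Choosing first $r=r(\eta)$ and then $L_0=L_0(\eta)\geq r$ large enough forces each contribution to be at most $\eta/3$.

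The main work lies in $F_3\ph_\psi$. By Corollary~\ref{def:super-cor}~(iii) it suffices to bound $\sup_{y\in V_{2L/3}}\|\Ghgo_{L,r}\Delta'\pi_L\ph_\psi(y,\cdot)\|_1$, and I would use the same layer decomposition as in Lemma~\ref{def:nonsmv-lemma1}. The interior portion $V_L\setminus(\badPo_{L,r}\cup\sh_L(2r_L))$ and the middle layers $\Lambda_1,\ldots,\Lambda_{J_1}$ are handled exactly as before, contributing $C((\log L)^{-3}+(\log r)^{-8})$, which is $\leq\eta/3$ for $r,L_0$ large. Only the boundary layer $\Lambda_0=\sh_L(2r)$ requires a genuinely new argument, because on $\Lambda_0\setminus\badPo_{L,r}$ the raw size $\|\Delta'(z,\cdot)\|_1$ is of order one and cannot be shrunk by taking $\e$ small.

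Here the smoothing kernel $\ph_\psi$ must do the work. For $z\in\Lambda_0\setminus\badPo_{L,r}$, $\Delta'(z,\cdot)=(\Ph_{L,r}-\ph_{L,r})(z,\cdot)$ is a signed measure of total mass zero supported in $V_{r/5}(z)$, so with $\phi_{L,\psi}=\pi_L\ph_\psi$,
\begin{equation*}
\Delta'\pi_L\ph_\psi(z,w)=\sum_{y}\Delta'(z,y)\bigl[\phi_{L,\psi}(y,w)-\phi_{L,\psi}(z,w)\bigr].
\end{equation*}
Combined with a Lipschitz-type estimate of the form $\sum_w|\phi_{L,\psi}(y,w)-\phi_{L,\psi}(z,w)|\leq C|y-z|/l$ for $|y-z|\leq r$, this yields $\|\Delta'\pi_L\ph_\psi(z,\cdot)\|_1\leq Cr/l$. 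Multiplying by $\sup_y\Ghgo_{L,r}(y,\Lambda_0)\leq C$ (Lemma~\ref{def:super-gammalemma}~(iii)) and choosing $l_0=l_0(\eta)$ large enough that $Cr/l_0$ is sufficiently small makes the boundary contribution $\leq\eta/3$; summing the three parts completes the proof.

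The main obstacle will be establishing the Lipschitz estimate for $\phi_{L,l}$ with constant smoothing radius $l$ independent of $L$, since Lemma~\ref{def:est-phi}~(iii) only covers the regime $\psi\in\mathcal{M}_L$ in which the smoothing scale is of order $L$. One would adapt the appendix argument by exploiting that $\ph_{\psi=l}$ acts as a mollifier of width $l$ on the Poisson-kernel-type density $\pi_L(\cdot,y)$ near $\partial V_L$, so that $\phi_{L,l}(\cdot,w)$, though only harmonic inside $V_L$, inherits the expected $1/l$ Lipschitz constant in its first variable on the boundary layer.
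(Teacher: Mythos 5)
Your proof follows the paper's route almost verbatim: the reduction to $\onebad\cap(\bbad)^c$ via Lemmata~\ref{def:smoothbad-lemmamanybad} and~\ref{def:smoothbad-lemmabbad}, the splitting~\eqref{eq:nonsmv-lemma1-splitting} into $F_1,F_2,F_3$, the observation that the Markov kernel $\ph_\psi$ cannot increase $||F_1(x,\cdot)||_1$ and $||F_2(x,\cdot)||_1$, and the reuse of the interior and middle-layer bounds inside $F_3$ are exactly what the paper does. The only divergence is the boundary layer $\Lambda_0$, which you encapsulate in a claimed total-variation Lipschitz bound for $\phi_{L,l}=\pi_L\ph_l$ in its first variable; that bound is true up to a logarithmic correction, but it is precisely where the remaining work lies and where your sketch stops (also, as stated, $C|y-z|/l$ should rather read $C\max\{|y-z|,\dL(y),\dL(z)\}(\log l)/l$: both the heavy tail of $\pi_L(y,\cdot)$ near $\partial V_L$ and the kernel difference estimate produce a $\log l$). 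The paper proves exactly this estimate inline, and in a way that avoids the ``adaptation of the appendix'' you anticipate: for $z\in\Lambda_0$ it writes $\Delta'\pi_L\ph_\psi(z,\cdot)=\sum_{z'}\Delta'(z,z')\sum_{w\in\partial V_L}\pi_L(z',w)\left(\ph_\psi(w,\cdot)-\ph_\psi(z,\cdot)\right)$, splits the exit point into $|z'-w|\leq Km$ and $|z'-w|>Km$ with $m=3r$, bounds the far part by the invariance principle (a $K=K(\eta)$ uniform in $l$ and $L$; the tail bound of Lemma~\ref{def:hittingprob} (iii) would serve as well), and bounds the near part by Lemma~\ref{def:app-kernelest} (iii) applied at scale $l$ with $\psi\equiv l$, which is legitimate since $\psi\equiv l\in\mathcal{M}_l$. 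Thus no new estimate mixing the scales $l$ and $L$ is needed: the scale mixing is absorbed by the exit-measure tail near the boundary, not by smoothness of $\phi_{L,l}$, and one obtains the bound $\eta/6+C(K+1)r\log l/l$ for the $\Lambda_0$ contribution, after which choosing $l_0(\eta,r)$ large closes the argument exactly as in your plan.
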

\begin{prooof}
  The proof is based on a modification of the computations in
  the foregoing lemma. Let $\delta_0$ be as in
  Lemma~\ref{def:nonsmv-lemma1}. We fix an arbitrary $0 <
  \delta\leq\delta_0$ and assume $\ctwo(\delta,L_1)$ for some $L_1 \geq
  L_0$, where $L_0 = L_0(\eta)$ will be chosen later. In the following,
  ``good'' and ``bad'' is always to be understood with respect to
  $\delta$. Again, for $L_1\leq L\leq L_1(\log L_1)^2$,
 \begin{equation*}
   \pP\left(\manybad\cup\bbad\right) \leq 
   \exp\left(-\frac{9}{5}(\log L)^2\right). 
  \end{equation*}
  For $\omega\in\onebad\cap {(\bbad)}^c$, we use the splitting~\eqref{eq:nonsmv-lemma1-splitting} of $\Pi-\pi$
  into the parts $F_1, F_2, F_3$. For the summands $F_1$ and $F_2$,
  we do not need the additional smoothing by $\ph_{\psi}$, since
  by~\eqref{eq:nonsmv-lemma1-a1} 
  \begin{equation*}
  ||F_1(x,\cdot)||_1 \leq C(\log L)^{-3}\leq \eta/3,
 \end{equation*}
  and by~\eqref{eq:smv-lemma1-a2-2}
  \begin{equation*}
  ||F_2(x,\cdot)||_1 \leq C\left((\log L)^{-3} + (\log r)^{-1/2}\right) \leq
  \eta/3,
 \end{equation*} 
 if $L\geq L_0$ and $r$, $L_0$ are chosen large enough, depending on
 $d$ and $\eta$.
 We turn to
 $F_3$. With~\eqref{eq:nonsmv-lemma1-a3-0},~\eqref{eq:nonsmv-lemma1-a3-1}
 and~\eqref{eq:nonsmv-lemma1-a3-2} we have (recall that $\Delta'=
 1_{V_L}(\Phgo-\pho)$)
 \begin{eqnarray}
    \label{eq:nonsmv-lemma2-a3}  
    ||F_3\ph_s(x,\cdot)||_1&\leq&   C\left(\sup_{y\in V_{2L/3}}\left|\left|\Ghgo 1_{V_L\backslash\Lambda_0}\Delta'\pi(y,\cdot)\right|\right|_1 + 
      \sup_{z\in \Lambda_0}||\Delta'\pi\ph_{\psi}(z,\cdot)||_1\right)\nonumber\\
    &\leq& C\left((\log L)^{-3} +(\log r)^{-8} + \sup_{z\in
        \Lambda_0}||\Delta'\pi\ph_{\psi}(z,\cdot)||_1\right)\nonumber\\ 
    &\leq& \eta/6 + C_1\sup_{z\in
      \Lambda_0}||\Delta'\pi\ph_{\psi}(z,\cdot)||_1,
  \end{eqnarray}
  if $L\geq L_0$ and $r$, $L_0$ are sufficiently large. Regarding the
  second summand of~\eqref{eq:nonsmv-lemma2-a3}, set $m = 3r$ and define for
  $K\in\mathbb{N}$
  \begin{equation*}
    \vartheta_K(z) = \min\left\{n\in\mathbb{N} : |X_n^z -z| > Km\right\}\in[0,\infty],
  \end{equation*}
  where 
  $X_n^z$ denotes simple random walk
  with start in $z$. By the invariance principle for simple random walk,
  we can choose $K$ so large such that
  \begin{equation*}
    \max_{z\in V_L:\, \dL(z) \leq m}\Prw_z\left(\vartheta_K(z) \leq \tau_L \right) \leq \frac{\eta}{24C_1}
  \end{equation*}
  uniformly in $L\geq L_0$, where $C_1$ is the constant
  from~\eqref{eq:nonsmv-lemma2-a3}. If
  $z\in\Lambda_0$, $z'\in V_L\cup\partial V_L$ with
  $\Delta'(z,z') \neq 0$, we have $\dL(z') \leq m$ and $|z-z'| \leq
  m$. Thus, using Lemma~\ref{def:app-kernelest} (iii) of the appendix with $\psi\equiv l$,
  \begin{eqnarray*}
    \lefteqn{C_1\sup_{z\in \Lambda_0}||\Delta'\pi\ph_{\psi}(z,\cdot)||_1}\\ 
    &\leq& 
    C_1\sup_{z\in \Lambda_0}\left|\left|\sum_{z'\in V_L\cup\partial
      V_L:\atop \Delta'(z,z')\neq 0}\Delta'(z,z')\right.\right.\\
   &&\,\left.\left.
     \left(\sum_{w\in\partial V_L:\atop |z'-w| > Km}\pi(z',w) + \sum_{w\in\partial
        V_L:\atop |z'-w| \leq Km}\pi(z',w)\right)
    \left(\ph_{\psi}(w,\cdot)-\ph_{\psi}(z,\cdot)\right)\right|\right|_1
    \\
&\leq& \frac{\eta}{6}+C(K+1)m\frac{\log l}{l} \leq \eta/3,
  \end{eqnarray*}
  if we choose $l = l(\eta,r)$ large enough. This proves the lemma.
\end{prooof}




   \section{Proofs of the main results on exit laws}
\label{proofmain}
\begin{prooof2}{\bf of Proposition~\ref{def:main-prop}:}
  We take $\delta_0$ small enough and, for $\delta \leq \delta_0$, we
  choose $L_0 = L_0(\delta)$ large enough according to
  Remark~\ref{def:super-remark} and the statements of
  Sections~\ref{smv-exits},~\ref{nonsmv-exits}.\\
  (ii) is a consequence of Lemma~\ref{def:nonsmv-lemma2}, so we have to
  prove (i). Let $L_1 \geq L_0$, and assume that $\ctwo(\delta,L_1)$
  holds. Then, for $i = 1,2,3$ and $L_1\leq L\leq L_1(\log L_1)^2$,
  $\psi\in\mathcal{M}_L$, using Lemma~\ref{def:smoothbad-lemmamanybad},
  \begin{eqnarray*}
    b_i(L,\psi,\delta) &\leq& 
    \pP\left(D_{L,\psi}^\ast > (\log L)^{-9 +
        9(i-1)/4}\right)\\ 
    &\leq& \pP\left(\manybad\right) + \pP\left(D_{L,\psi}^\ast > (\log L)^{-9 +
        9(i-1)/4};\, \onebad\right)\\
    &\leq& \exp\left(-\frac{19}{10}(\log L)^2\right) + \pP\left(D_{L,\psi}^\ast >
      (\log L)^{-9 + 9(i-1)/4};\, \onebad\right).  
  \end{eqnarray*}
  For the last summand, we have by
  Lemmata~\ref{def:lemma-goodpart},~\ref{def:lemma-badpart}, under $\ctwo(\delta,L_1)$,
  \begin{eqnarray*}
    \lefteqn{\pP\left(D_{L,\psi}^\ast > (\log L)^{-9 + 9(i-1)/4};\, \onebad\right)}\\ 
    &\leq&  
    \pP\left(D_{L,\psi}^\ast > (\log L)^{-9};\, \good\right) + 
    \sum_{j=1}^4
    \pP\left(D_{L,\psi}^\ast > (\log L)^{-9 +
        9(i-1)/4};\, \onebadj\right)\\
    &\leq& 
    \exp\left(-(\log L)^{7/3}\right) + 
    \sum_{j=1}^i\pP\left(D_{L,\psi}^\ast > (\log L)^{-9 +
        9(i-1)/4};\, \onebadj\right)\\
    &&\, +\sum_{j=i+1}^4\pP\left(\onebadj\right)\\
      &\leq& 4\ \exp\left(-(\log L)^{7/3}\right) +
    CL^ds_L^d\exp\left(-\left((3+i+1)/4\right)\left(\log(r_L/20)\right)^2\right).
  \end{eqnarray*}
  Therefore, by enlarging $L$ if necessary,
  \begin{equation*}
    \pP\left(D_{L,\psi}^\ast > (\log L)^{-9 +
          9(i-1)/4};\, \onebad\right)\leq
    \frac{1}{8}\exp\left(-\left((3+i)/4\right)\left(\log L\right)^2\right),
  \end{equation*}
  and
  \begin{equation*}
    b_i(L,\psi,\delta) \leq
    \frac{1}{4}\exp\left(-\left((3+i)/4\right)\left(\log L\right)^2\right). 
  \end{equation*}
  For the case $i=4$, notice that
  \begin{equation*}
    b_4(L,\psi,\delta) \leq \pP\left(D_{L,\psi}^\ast > (\log L)^{-9/4}\right) +
    \pP\left(D_L^\ast > \delta\right). 
  \end{equation*}
  The first summand can be estimated as the corresponding terms in the case
  $i=1,2,3$, while for the last term we use Lemma~\ref{def:nonsmv-lemma1}.  
\end{prooof2}
As Theorem~\ref{def:main-theorem} now follows immediately, we turn to the
proof of the local estimates. Here, the results from
Section~\ref{super} play again a key role.

\begin{prooof2}{\bf of Theorem~\ref{local-thm-exitmeas}:}
  As usual, we mostly drop $L$ as index, so always $\pi =\pi_L$, $\Ph=
  \Ph_L$ and so on.  For the whole proof, we let $r=r_L$. Choose $\delta_0$ and $L_0$ as in
  Proposition~\ref{def:main-prop}. Recall the definition of $\good$ from
  Section~\ref{smoothbad}. By Proposition~\ref{def:main-prop}, we find
  $\delta$, $\e > 0$ and $L_0>0$ such that under $\cone(\e)$ and
  $\czero$, condition $\ctwo(\delta,L)$ holds true for all $L\geq L_0$. We
  put $A_L = \good$ and note that similar to 
  Lemma~\ref{def:smoothbad-lemmamanybad}, if $L\geq L_0$,
  \begin{equation*}
  \pP(A_L^c) \leq \exp\left(-(1/2)(\log L)^2\right).
  \end{equation*}
  For the rest of the proof, take $\omega \in A_L$. On such environments,
  $\Gh$ equals $\Ghg$ by our choice $r=r_L$.
  Now let us prove part (i). Observe that $W_t$ can be covered by $K|W_t|r^{-(d-1)}$ many neighborhoods
$V_{3r}(y)$, $y\in \sh_L(r)$, as defined in
Section~\ref{super-Ghestimates}, where $K$ depends on the dimension only. In
particular, $\Gamma(x,W_t) \leq C(t/L)^{d-1}$. Applying
Lemma~\ref{def:superlemma} (ii), we deduce that
\begin{equation*}
\Pi_L(x,W_t)=\Ghg(x,W_t)\leq C(t/L)^{d-1}.
\end{equation*}    
From 
Lemma~\ref{def:lemmalawler} (i) we know that if $x\in V_{\eta L}$, then for some
constant $c=c(d,\eta)$,
\begin{equation*}
\pi(x,z) \geq cL^{-(d-1)}.
\end{equation*}
Together with the preceding equation, this shows (i).\\
\begin{figure}
\begin{center}\parbox{5.5cm}{\includegraphics[width=5cm]{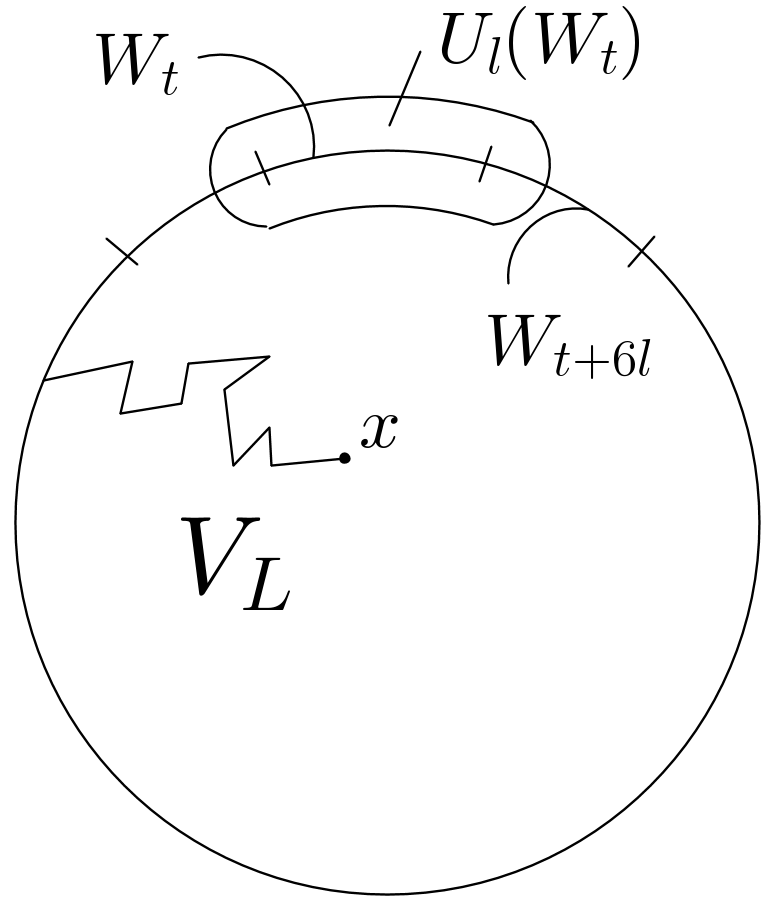}}
\parbox{9.5cm}{
  \caption{On the proof of Theorem~\ref{local-thm-exitmeas} (ii). There,
    $t\geq L/(\log L)^6 > l=L/(\log L)^{17/2}$. If the walk exits $V_L$ through
    $\partial V_L\backslash W_{t+6l}$, it cannot enter $U_l(W_t)$ in one
    step with $\ph_l$.}}
\end{center}
\end{figure}
(ii) Set $l=(\log L)^{13/2} r$ and
consider the smoothing kernel $\ph_\psi$ with $\psi\equiv
l\in\mathcal{M}_l$. Let 
\begin{equation*}
U_l(W_t) = \{y\in\mathbb{Z}^d : \dist(y,W_t) \leq 2l\}.
\end{equation*}
 We claim that
\begin{eqnarray}
\label{eq:local-eq1}
\Pi(x,W_t) -\pi(x,W_{t+6l}) &\leq& \left(\Pi-\pi\right)\ph_\psi(x,U_l(W_t)),\\
\label{eq:local-eq2}
\pi(x,W_{t-6l}) -\Pi(x,W_t) &\leq& \left(\pi-\Pi\right)\ph_\psi(x,U_l(W_{t-6l})).
\end{eqnarray}
Concerning the first inequality,
\begin{equation*}
\Pi\ph_\psi(x,U_l(W_t)) \geq \sum_{y\in W_t}\Pi(x,y)\ph_\psi(y,U_l(W_t)) = \Pi(x,W_t),
\end{equation*}
since $\ph_\psi(y,U_l(W_t))=1$ for $y \in W_t$. Also,
\begin{equation*}
\pi\ph_\psi(x,U_l(W_t)) = \sum_{y\in W_{t+6l}}\pi(x,y)\ph_\psi(y,U_l(W_t)) \leq \pi(x,W_{t+6l}),
\end{equation*}
since $\ph_\psi(y,U_l(W_t))=0$ for $y\in\partial V_L\backslash
W_{t+6l}$. This proves~\eqref{eq:local-eq1}, while~\eqref{eq:local-eq2} is
entirely similar. In the remainder of this proof, we often write $|F|(x,y)$ for $|F(x,y)|$. If we show 
\begin{equation}
\label{eq:local-eq3}
\left|\left(\pi-\Pi\right)\ph_\psi\right|(x,U_l(W_t)) \leq O\left((\log L)^{-5/2}\right)\pi(x,W_t),
\end{equation}
then by~\eqref{eq:local-eq1},
\begin{eqnarray*}
\Pi(x,W_t) &\leq& \pi(x,W_{t+6l}) + O((\log
L)^{-5/2})\pi(x,W_t)\\
&=& \pi(x,W_t) + \pi(x,W_{t+6l}\backslash W_t) +O((\log
L)^{-5/2})\pi(x,W_t)\\
&=& \pi(x,W_t)\left(1+O\left(\max\{l/t,(\log L)^{-5/2}\}\right)\right)\\
&=& \pi(x,W_t)\left(1+O((\log L)^{-5/2})\right).
\end{eqnarray*}
On the other hand, by~\eqref{eq:local-eq2} and still assuming~\eqref{eq:local-eq3},
\begin{equation*}
\Pi(x,W_t) \geq \pi(x,W_{t-6l})-O((\log
L)^{-5/2})\pi(x,W_t)=\pi(x,W_{t})\left(1-O((\log
L)^{-5/2})\right),
\end{equation*}
so that indeed
\begin{equation*}
\Pi(x,W_t) = \pi(x,W_t)\left(1+O\left((\log L)^{-5/2}\right)\right),
\end{equation*}
provided we prove~\eqref{eq:local-eq3}. In that direction, set $B =
V_L\backslash\sh_L(5r)$ and write, with $\Delta=1_{V_L}(\Phg-\ph)$, 
\begin{equation}
\label{eq:local-eq4}
(\pi-\Pi)\ph_\psi = \Ghg\Delta\pi\ph_\psi = \Ghg 1_B\Delta\pi\ph_\psi +
\Ghg 1_{\sh_L(5r)}\Delta\pi\ph_\psi.
\end{equation}
Looking at the first summand we have
\begin{equation*}
|\Ghg1_B\Delta\pi\ph_\psi|(x,U_l(W_t)) \leq (\Ghg1_B|\Delta\ph|\pi)(x,W_{t+6l}).
\end{equation*}
Following the proof of Proposition~\ref{def:super-keyest} (ii), we deduce
\begin{equation*}
\Ghg1_B|\Delta\ph|\Gamma(x,z) \leq C(\log L)^{-5/2}\Gamma(x,z).
\end{equation*}
Together with $\pi \preceq C\Gamma$ and $\pi(x,z)\geq c(d,\eta)L^{-(d-1)}$ this yields the bound  
\begin{equation*}
\Ghg1_B|\Delta\ph|\pi(x,W_{t+6l}) \leq C(\log L)^{-5/2}\Gamma(x,W_t)\leq C(\log L)^{-5/2}\pi(x,W_t).
\end{equation*}
To obtain~\eqref{eq:local-eq3}, it remains to handle the second summand
of~\eqref{eq:local-eq4}, i.e. we have to bound
\begin{equation*}
|\Ghg 1_{\sh_L(5r)}\Delta\pi\ph_\psi|(x,U_l(W_t)).
\end{equation*}
We abbreviate $S= \sh_L(5r)$ and split into
\begin{eqnarray*}
\lefteqn{\Ghg 1_S\Delta\pi\ph_\psi(x,U_l(W_t))}\\ &=& \sum_{y\in
  S}\Ghg(x,y)\sum_{z\in\partial
    V_L}\Delta\pi(y,z)(\ph_\psi(z,U_l(W_t))-\ph_\psi(y,U_l(W_t)))\\
&=&\sum_{y\in S}\Ghg(x,y)\sum_{z\in
  W_{t+6l}}\Delta\pi(y,z)(\ph_\psi(z,U_l(W_t))-\ph_\psi(y,U_l(W_t)))\\
&&\ -\sum_{y\in S}\Ghg(x,y)\sum_{z\in \partial V_L\backslash
  W_{t+6l}}\Delta\pi(y,z)\ph_\psi(y,U_l(W_t)).
\end{eqnarray*}
First note that since $\ph_\psi(y,z')=0$ if $|y-z'|> 2l$,
\begin{eqnarray*}
\lefteqn{\left|\sum_{y\in S}\Ghg(x,y)\sum_{z\in \partial V_L\backslash W_{t+6l}}
\Delta\pi(y,z)\ph_\psi(y,U_l(W_t))\right|}\\ 
&\leq& (\Ghg1_{U_{2l}(W_t)\cap S}|\Delta\pi|)(x,\partial V_L\backslash W_{t+6l}). 
\end{eqnarray*}
For $y\in U_{2l}(W_t)\cap S$, we apply Lemma~\ref{def:hittingprob} (iii) together
with Lemma~\ref{def:hittingprob-technical} and obtain
\begin{equation*}
|\Delta\pi|(y,\partial V_L\backslash W_{t+6l}) \leq \sup_{y':\,
  \dist(y',U_{2l}(W_t)\cap S)\leq r}\pi(y',\partial V_L\backslash W_{t+6l})\leq C\frac{r}{l}\leq
C(\log L)^{-13/2}.
\end{equation*}
Since $\Ghg\preceq \Gamma$ and $\pi(x,z)\geq cL^{-d-1}$,
$\Ghg(x,U_{2l}(W_t)\cap S)\leq C\pi(x,W_t)$, and thus
\begin{equation*}
(\Ghg1_{U_{2l}(W_t)\cap S}|\Delta\pi|)(x,\partial V_L\backslash W_{t+6l})
\leq C(\log L)^{-13/2}\pi(x,W_t).
\end{equation*}
It remains to bound
\begin{equation*}
\left|\sum_{y\in S}\Ghg(x,y)\sum_{z\in
  W_{t+6l}}\Delta\pi(y,z)(\ph_\psi(z,U_l(W_t))-\ph_\psi(y,U_l(W_t)))\right|.
\end{equation*}
Set $D^1(y) = \{z\in W_{t+6l}: |z-y| \leq l(\log l)^{-4}\}$. If
$D^1(y)\neq\emptyset$, then $\dist(y,W_t)\leq 7l$. Using Lemma~\ref{def:app-kernelest}
(iii) for the difference of the smoothing steps and the usual estimate for $\Ghg$,
\begin{eqnarray*}
\lefteqn{\sum_{y\in S}\Ghg(x,y)\sum_{z\in
  D^1(y)}|\Delta\pi(y,z)||\ph_\psi(z,U_l(W_t))-\ph_\psi(y,U_l(W_t))|
}\\&\leq& C\frac{t^{d-1}}{L^{d-1}}(\log l)^{-3}\leq C(\log L)^{-5/2}\pi(x,W_t).
\end{eqnarray*}
The region $W_{t+6l}\backslash D^1(y)$ we split into $B_0(y) = \{z\in
W_{t+6l}: |z-y| \in (l(\log l)^{-4}, t]\}$, and 
\begin{equation*}
B_i(y) = \{z\in W_{t+6l}: |z-y| \in (it, (i+1)t]\},\quad i=1,2,\ldots,
  \lfloor 2L/t\rfloor.
\end{equation*}
Furthermore, let
\begin{equation*}
S_i = \{y\in S : B_i(y) \neq \emptyset\},\quad i=0,1,\ldots, \lfloor 2L/t\rfloor.
\end{equation*}
Then
\begin{equation*}
\sum_{y\in S}\Ghg(x,y)\sum_{z\in W_{t+6l}\backslash
  D^1(y)}|\Delta\pi(y,z)|\leq
 C\sum_{i=0}^{\lfloor 2L/t\rfloor}\Ghg(x,S_i)\sup_{y\in S_i}|\Delta\pi|(y,B_i(y)).
\end{equation*}
If $i\geq 1$ and $y\in S_i$, then by Lemma~\ref{def:hittingprob} (iii) 
\begin{equation*}
|\Delta\pi|(y,B_i(y)) \leq \sup_{y':|y'-y|\leq r}\pi(y',B_i(y)) \leq
C\frac{rt^{d-1}}{(it)^d}\leq C\frac{r}{i^dt},
\end{equation*}
while in the case $i=0$, using the same lemma and additionally Lemma~\ref{def:hittingprob-technical},
\begin{eqnarray*}
  \sup_{y':|y'-y|\leq r}\pi(y',B_i(y))&\leq& C\,r\sum_{z\in\partial V_L}\frac{1}{((1/2)l\,(\log
    l)^{-4} + |y-z|)^d}\\
  & \leq& C\frac{r(\log l)^4}{l}\leq C (\log L)^{-5/2}.
\end{eqnarray*}
For the Green's function, we use the estimates
\begin{equation*}
\Ghg(x,S_0) \leq C\frac{t^{d-1}}{L^{d-1}},\quad \Ghg(x,\cup_{i \geq (1/10)L/t}S_i) \leq C,
\end{equation*}
while for $i=1,2,\ldots,\lfloor (1/10)L/t\rfloor$, it holds that $|S_i| \leq Cr(it)^{d-2}t$,
whence
\begin{equation*}
\Ghg(x,S_i) \leq C \frac{i^{d-2}t^{d-1}}{L^{d-1}}. 
\end{equation*}
Altogether, we obtain
\begin{eqnarray*}
  \lefteqn{\sum_{i=0}^{\lfloor 2L/t\rfloor}\Ghg(x,S_i)\sup_{y\in
      S_i}|\Delta\pi|(y,B_i(y))}\\ 
  &\leq& C\left((\log L)^{-5/2}\frac{t^{d-1}}{L^{d-1}} +
    \left(\frac{r}{t}\frac{t^{d-1}}{L^{d-1}}\sum_{i=1}^{\lfloor(1/10)L/t\rfloor}\frac{1}{i^2}\right)
    +\frac{t^{d-1}}{L^{d-1}}\frac{r}{L}\right)\\
  &\leq& C(\log L)^{-5/2}\frac{t^{d-1}}{L^{d-1}}. 
\end{eqnarray*}
This finishes the proof of part (ii). 
\end{prooof2}

Let us finally show how to obtain transience of the RWRE.\\
\begin{prooof2}{\bf of Corollary~\ref{def:main-transience}:}
  Fix numbers $\rho \geq 3$, $\alpha \in (0,(4\rho)^{-1})$ to be specified
  below. With these parameters and $n\geq 1$, we set
\begin{equation*}
q_{n,\alpha,\rho}= \ph_\psi,
\end{equation*}
where $\psi={\left(m_x\right)}_{x\in\mathbb{Z}^d}$ is chosen constant in
$x$, namely $m_x= \alpha\rho^n$. Define
\begin{equation*}
A_n= \bigcap_{|x|\leq
  \rho^{n^{3/2}}}\bigcap_{t\in [\alpha\rho^n,2\alpha\rho^n]}\left\{D_{t,\psi}(x)
    \leq (\log t)^{-9}\right\}.
\end{equation*}
By Proposition~\ref{def:main-prop} (i), there exists $\e_0 > 0$ such that
given $\e\in
(0,\e_0]$, $\cone(\e)$ implies that for $n$ large enough, we have 
\begin{equation*}
  \pP\left(A_n^c\right) \leq C\alpha^d\rho^{(d+1)n^{3/2}}\exp\left(-\left(\log\left(\alpha\rho^n\right)\right)^2\right). 
\end{equation*}
Therefore, for any choice of $\alpha, \rho$ it holds that
\begin{equation*}
\sum_{n=1}^\infty\pP\left(A_n^c\right) < \infty,
\end{equation*}
whence by Borel-Cantelli 
\begin{equation}
\label{eq:main-transience-bc}
\pP\left(\liminf_{n\rightarrow \infty} A_n\right)= 1. 
\end{equation}
We denote the coarse grained RWRE transition kernel by
\begin{equation*}
  Q_{n,\alpha,\rho}(x,\cdot) =
  \frac{1}{\alpha\rho^n}\int_{\mathbb{R}_+}\varphi\left(\frac{t}{\alpha\rho^n}\right)\Pi_{V_t(x)}(x,\cdot)\mbox{d}t. 
\end{equation*}
If $n$ is large enough and $|x| \leq \rho^{n^{3/2}}$, we have on $A_n$
\begin{equation*}
  \left|\left|\left(Q_{n,\alpha,\rho}-q_{n,\alpha,\rho}\right)q_{n,\alpha,\rho}(x,\cdot)\right|\right|_1
  \leq \left(\log(\alpha\rho^n)\right)^{-9} \leq C(\alpha,\rho)n^{-9}.
\end{equation*}
Now assume $|x| \leq \rho^n+1$. For $N$ fixed, $n$ large and $\omega\in A_n$, it follows that for $1\leq M \leq N$
\begin{equation}
\label{eq:main-transience-1}
\left|\left|\left(\left(Q_{n,\alpha,\rho}\right)^M-\left(q_{n,\alpha,\rho}\right)^M\right)q_{n,\alpha,\rho}(x,\cdot)\right|\right|_1 
\leq C(\alpha,\rho)Mn^{-9}.
\end{equation}
For fixed $\omega$, let $\left(\xi_k\right)_{k\geq 0}$ be the Markov chain
running with transition kernel $Q_{n,\alpha,\rho}$. Clearly,
$\left(\xi_k\right)_{k\geq 0}$ can be obtained by observing the basic RWRE
$\left(X_k\right)_{k\geq 0}$ at randomized stopping times. Then
\begin{eqnarray*}
  \lefteqn{\Prw_{x,\omega}\left(\xi_{N-1} \in
      V_{\rho^{n+1}+2\alpha\rho^n}\right)}\\ 
  &\leq& \left(Q_{n,\alpha,\rho}\right)^{N-1}q_{n,\alpha,\rho}\left(x, V_{\rho^{n+1}+4\alpha\rho^n}\right)\\
  &\leq &
  \left|\left|\left(\left(Q_{n,\alpha,\rho}\right)^{N-1}-\left(q_{n,\alpha,\rho}\right)^{N-1}\right)q_{n,\alpha,\rho}(x,\cdot)\right|\right|_1   + \left(q_{n,\alpha,\rho}\right)^N(x,V_{2\rho^{n+1}}).
\end{eqnarray*}
Using Proposition~\ref{def:super-localclt}, we can find 
$N=N(\alpha,\rho)\in\mathbb{N}$, depending not on $n$, such that for any
$x$ with $|x| \leq \rho^n+1$, it holds that
${(q_{n,\alpha,\rho})}^N(x,V_{2\rho^{n+1}}) \leq
1/10$. With~\eqref{eq:main-transience-1}, we conclude that for such $x$,
$n\geq n_0(\alpha,\rho,N)$ large enough and $\omega\in A_n$,
\begin{equation}
\label{eq:main-transience-2}
\Prw_{x,\omega}\left(\xi_{N-1} \in V_{\rho^{n+1}+2\alpha\rho^n}\right) \leq
C(\alpha,\rho)Nn^{-9} + 1/10 \leq 1/5.
\end{equation}
On the other hand, if $x$ is outside $V_{\rho^{n-1}+2\alpha\rho^n}$,
\begin{eqnarray*}
  \lefteqn{\Prw_{x,\omega}\left(\xi_{M} \in V_{\rho^{n-1}+2\alpha\rho^n}\mbox{ for
        some }0\leq M\leq N-1\right)}\\
  &\leq& \sum_{M=1}^{N-1} \left(Q_{n,\alpha,\rho}\right)^Mq_{n,\alpha,\rho}\left(x, V_{\rho^{n-1}+4\alpha\rho^n}\right)\\
  &\leq&
  \sum_{M=1}^{N-1}\left|\left|\left(\left(Q_{n,\alpha,\rho}\right)^M-\left(q_{n,\alpha,\rho}\right)^M\right)q_{n,\alpha,\rho}(x,\cdot)\right|\right|_1
  + \sum_{k=2}^{N}\left(q_{n,\alpha,\rho}\right)^k(x,V_{2\rho^{n-1}}).
\end{eqnarray*}
If $\rho^n-1 \leq |x|$, then
$\left(q_{n,\alpha,\rho}\right)^k(x,V_{2\rho^{n-1}}) = 0$ as long as $k
\leq (1-3/\rho)/(2\alpha)$. By first choosing $\rho$ large enough, then
$\alpha$ small enough and estimating the higher summands again
with Proposition~\ref{def:super-localclt}, we deduce that for such $x$ and
all large $n$,
\begin{equation*}
\sum_{k=1}^{\infty}\left(q_{n,\alpha,\rho}\right)^k(x,V_{2\rho^{n-1}}) \leq 1/10.
\end{equation*}
Together with~\eqref{eq:main-transience-1}, we have for large $n$, $\omega
\in A_n$ and $\rho^n-1 \leq |x|\leq \rho^n+1$,
\begin{equation}
\label{eq:main-transience-3}
  \Prw_{x,\omega}\left(\xi_{M} \in V_{\rho^{n-1}+2\alpha\rho^n}\mbox{ for
        some } 0\leq M\leq N-1\right)\leq C(\alpha,\rho)N^2n^{-9} + 1/10 \leq 1/5.
\end{equation}
Let $B$ be the event that the walk $\left(\xi_k\right)_{k\geq
  0}$ leaves $V_{\rho^{n+1}+2\alpha\rho^n}$ before reaching
$V_{\rho^{n-1}+2\alpha\rho^n}$.  From~\eqref{eq:main-transience-2}
and~\eqref{eq:main-transience-3} we deduce that
$\Prw_{x,\omega}\left(B\right) \geq 3/5$, provided $n$ is large enough,
$\omega \in A_n$ and $\rho^n-1\leq |x|\leq \rho^n +1$. But on $B$, the
underlying basic RWRE $\left(X_k\right)_{k\geq 0}$ clearly leaves $V_{\rho^{n+1}}$
before reaching $V_{\rho^{n-1}}$. Hence if $\omega \in \{\liminf A_n\}$,
there exists $m_0=m_0(\omega)\in\mathbb{N}$ such that
\begin{equation*}
\Prw_{x,\omega}\left(\tau_{V_{\rho^{n+1}}}< T_{V_{\rho^{n-1}}}\right) \geq 3/5
\end{equation*}
for all $n\geq m_0$, $x$ with $|x|\geq\rho^n-1$ (of course, we may now drop the
constraint $|x| \leq \rho^n+1$). From this property, transience easily
follows.
\begin{figure}
\begin{center}\parbox{5.5cm}{\includegraphics[width=5cm]{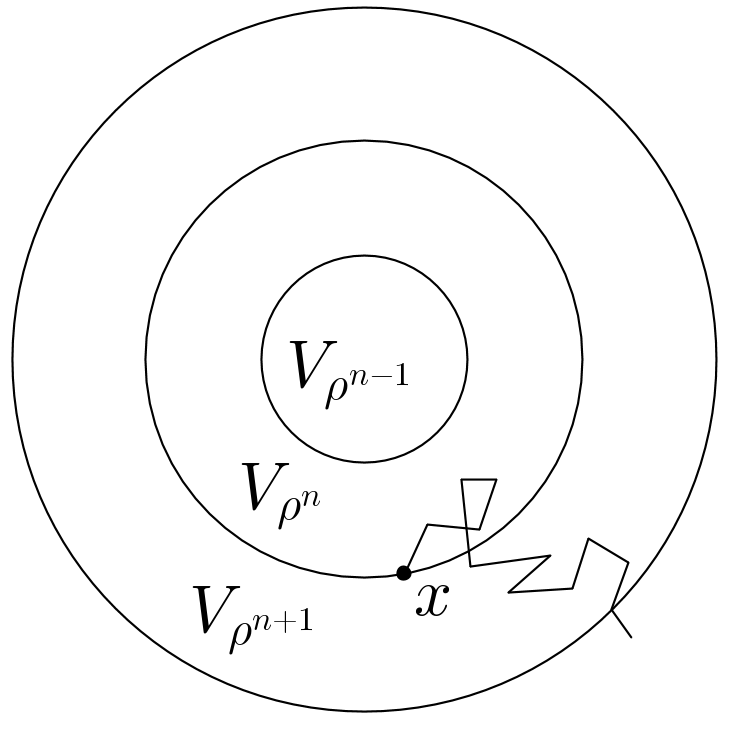}}
\parbox{9.5cm}{
\caption{On a set of environments with mass $1$, the RWRE started at any $x$
  with $|x|\geq \rho^n -1$ leaves the ball $V_{\rho^{n+1}}$ before hitting
  $V_{\rho^{n-1}}$ with probability at least $3/5$. This implies transience
  of the RWRE.}}
\end{center}
\end{figure}
Indeed, for $m, M, k \in \mathbb{N}$ satisfying $M > m\geq m_0$ and $0\leq
k \leq M+1-m$, set
\begin{equation*}
h_M(k) = \sup_{x: |x| \geq \rho^{m+k}-1}\Prw_{x,\omega}\left(T_{V_{\rho^m}}< \tau_{V_{\rho^M}}\right).
\end{equation*}
Then $h_M$ solves the difference inequality
\begin{equation*}
h_M(k) \leq \frac{2}{5}h_M(k-1) + \frac{3}{5}h_M(k+1)
\end{equation*}
with boundary conditions $h_M(0) = 1$, $h_M(M+1-m) = 0$. Further, by
either applying a discrete maximum principle or by a direct computation, we
see that $h_M \leq \overline{h}_M$, where $\overline{h}_M$
is the solution of the difference equality 
\begin{equation}
\label{eq:main-transience-4}
\overline{h}_M(k) = \frac{2}{5}\overline{h}_M(k-1) +
\frac{3}{5}\overline{h}_M(k+1)
\end{equation}
with boundary conditions $\overline{h}_M(0) = 1$, $\overline{h}_M(M+1-m)=0$. Solving
\eqref{eq:main-transience-4}, we get
\begin{equation*}
  \overline{h}_M(k) = \frac{1}{1-\left(3/2\right)^{M+1-m}} +
  \frac{1}{1-\left(2/3\right)^{M+1-m}}\left(\frac{2}{3}\right)^k. 
\end{equation*}
Letting $M\rightarrow\infty$, we deduce that for $|x|\geq \rho^{m+k}$,
\begin{equation}
\Prw_{x,\omega}\left(T_{V_{\rho^m}}< \infty\right) \leq
\lim_{M\rightarrow \infty}\overline{h}_M(k) = \left(\frac{2}{3}\right)^k.
\end{equation}
Together with~\eqref{eq:main-transience-bc}, this proves that for almost all
$\omega\in\Omega$, the random walk is transient under $\Prw_{\cdot,\omega}$.
\end{prooof2}

   \section{Mean sojourn times in the ball}
\label{times}
Using the results about the variational difference of the exit measures and
the estimates of Section~\ref{super}, we provide in this section the basis
for the proof of Proposition~\ref{def:times-main-prop}, which then leads to
Theorem~\ref{def:times-thm2}. Recall that we work under Assumption {\bf A2}.
 \subsection{Preliminaries}
 Given three real numbers $a\leq b$ and $R$, we write $[a,b]\cdot R$ for
 the interval $[aR, bR]$.  Recall the definition of $h_L$ and the
 corresponding coarse graining scheme on $V_L$ from
 Section~\ref{smoothbad-cgs}. In this part, we take a closer look at
 movements in balls $V_t(x)$ inside $V_L$, where $t>0$ is large. As in
 Section~\ref{smoothbad-cgs}, we let
\begin{equation*}
s_t = \frac{t}{(\log t)^3} \quad\mbox{and}\quad r_t= \frac{t}{(\log t)^{15}}.
\end{equation*}
We transfer the coarse graining schemes on $V_L$ in the obvious way to
$V_t(x)$. We write $\Ph_t^x$ for the transition probabilities in $V_t(x)$
belonging to $((h_t^x(y))_{y\in V_t(x)},p_\omega)$, where $h_t^x(\cdot)$
stands for $h_{t,r_t}(\cdot-x)$, which is defined
in~\eqref{eq:smoothbad-hLr}. The kernel $\ph_t^x$ is defined similarly,
with $p_\omega$ replaced by $p^{\mbox{\tiny RW}}$.

For the corresponding Green's functions we use the expressions $\Gh_t^x$ and
$\gh_t^x$.  If we do not keep $x$ as an index, we
always mean $x=0$ as before. Notice that for $y,z \in V_t(x)$, we have
$\ph_t^x(y,z) = \ph_t(y-x,z-x)$ and $\gh_t^x(y,z) =
\gh_t(y-x,z-x)$. Plainly, this is in general not true for $\Ph_t^x$ and
$\Gh_t^x$.
  
We will readily use the fact that for
simple random walk starting in $y\in V_{L}(x)$ (cf.~\cite{LawLim},
Proposition 6.2.6),
\begin{equation}
\label{eq:times-srw}
L^2-|y|^2\leq
\Erw_{y}\left[\tau_{V_L(x)}\right]\leq (L+1)^2-|y|^2.
\end{equation}
Define the ``coarse grained'' RWRE sojourn times
\begin{equation*}
  \Lambda_L(x) = 1_{V_L}(x)\, 
  \frac{1}{h_L(x)}\int\limits_{\mathbb{R}_+}\varphi\left(\frac{t}{h_L(x)}\right)\Erw_{x,\omega}\left[\tau_{V_t(x)\cap
      V_L}\right]\dt t,    
\end{equation*}
and the analog for simple random walk,
\begin{equation*}
  \lambda_L(x) = 1_{V_L}(x)\,
      \frac{1}{h_L(x)}\int\limits_{\mathbb{R}_+}\varphi\left(\frac{t}{h_L(x)}\right)\Erw_x\left[\tau_{V_t(x)\cap
        V_L}\right]\dt t.  
\end{equation*} 
We will also consider the corresponding quantities $\Lambda_t^x$, $\lambda_t^x$ for
balls $V_t(x)$. For example, 
\begin{equation*}
\Lambda_t^x(y) = 1_{V_t(x)}(y)\,\frac{1}{h_t^x(y)}\int\limits_{\mathbb{R}_+}\varphi\left(\frac{s}{h_t^x(y)}\right)\Erw_{y,\omega}\left[\tau_{V_s(y)\cap
  V_t(x)}\right]\dt s.  
\end{equation*}
We often let kernels operate on mean sojourn times from the left. As an example,
\begin{equation*}
\Gh_{L,r}\Lambda_L(x) = \sum_{y\in V_L}\Gh_{L,r}(x,y)\Lambda_L(y).
\end{equation*}
The basis for our inductive scheme is established by 
\begin{lemma}
\label{times-keylemma}
For environments $\omega\in\mathcal{P}_\varepsilon$,
$x\in\mathbb{Z}^d$,
\begin{equation*}
  \Erw_{x,\omega}\left[\tau_L\right] = \Gh_{L,r}\Lambda_L(x).
\end{equation*}
In particular, 
\begin{equation*}
\Erw_{x}\left[\tau_L\right] = \gh_{L,r}\lambda_L(x).
\end{equation*}
\end{lemma}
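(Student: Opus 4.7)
The plan is to realize the coarse grained walk associated with $\Ph_{L,r}$ as the sequence of exit points of the underlying RWRE from randomly sized balls, and then decompose $\tau_L$ accordingly. Concretely, I would define (on a suitably enlarged probability space, independently of $(X_n)$) a sequence $(T_n)_{n\geq 0}$ of independent random radii together with a sequence $(\xi_n)_{n\geq 0}$ as follows: set $\xi_0 = x$ and $S_0 = 0$; given $\xi_n\in V_L$ and the time $S_n$ at which $X$ sits at $\xi_n$, sample $T_n$ from the density $s\mapsto h_{L,r}(\xi_n)^{-1}\varphi(s/h_{L,r}(\xi_n))$, let $S_{n+1}$ be the first exit time after $S_n$ of $X$ from $V_{T_n}(\xi_n)\cap V_L$, and put $\xi_{n+1} = X_{S_{n+1}}$. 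Stop at $N = \inf\{n : \xi_n\notin V_L\}$. By construction of $\Ph_{L,r}$ and the strong Markov property of $(X_n)$ under $\Prw_{x,\omega}$, the chain $(\xi_n)_{n\geq 0}$ is Markov with transition kernel $\Ph_{L,r}$ (extended by $\delta_x$ outside $V_L$), and $S_N = \tau_L$ almost surely.

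Writing $\Delta_n = S_{n+1}-S_n$ for the $n$th macro-step duration, one has the telescoping identity
\begin{equation*}
\tau_L = S_N = \sum_{n=0}^\infty \Delta_n\,1_{\{n<N\}} = \sum_{n=0}^\infty \Delta_n\,1_{\{\xi_n\in V_L\}}.
\end{equation*}
Conditionally on $\xi_n = y\in V_L$ and on the history up to $S_n$, the strong Markov property and the independence of $T_n$ from the past yield
\begin{equation*}
\Erw_{x,\omega}\bigl[\Delta_n\,\big|\,\xi_n=y,\ \mathcal{F}_{S_n}\bigr] = \frac{1}{h_{L,r}(y)}\int_{\mathbb{R}_+}\varphi\!\left(\frac{t}{h_{L,r}(y)}\right)\Erw_{y,\omega}\bigl[\tau_{V_t(y)\cap V_L}\bigr]\dt t = \Lambda_L(y),
\end{equation*}
where the last equality is the definition of $\Lambda_L$ (note that $h_{L,r}=h_L$ since $r=r_L$ here, or one carries $r$ along in the obvious way).

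Taking expectations and exchanging sum and expectation via Tonelli (all quantities are nonnegative, and $\Erw_{x,\omega}[\tau_L]<\infty$ by standard elliptic arguments),
\begin{equation*}
\Erw_{x,\omega}[\tau_L] = \sum_{n=0}^\infty \Erw_{x,\omega}\bigl[\Lambda_L(\xi_n)1_{\{\xi_n\in V_L\}}\bigr] = \sum_{y\in V_L}\Lambda_L(y)\sum_{n=0}^\infty \bigl(1_{V_L}\Ph_{L,r}\bigr)^n(x,y),
\end{equation*}
and the inner sum is exactly $\Gh_{L,r}(x,y)$, proving the first claim. The second claim follows verbatim from the same argument with $p_\omega$ replaced by $p^{\mathrm{RW}}$, $\Ph_{L,r}$ by $\ph_{L,r}$, $\Lambda_L$ by $\lambda_L$ and $\Gh_{L,r}$ by $\gh_{L,r}$.

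There is no real obstacle here beyond bookkeeping: the only subtlety is to verify that the randomized macro-step $(\xi_n\to\xi_{n+1})$ really is governed by $\Ph_{L,r}$ and that the conditional mean duration equals $\Lambda_L(\xi_n)$, both of which are immediate from the definitions of $\Ph_{L,r}$ and $\Lambda_L$ and the strong Markov property applied at the stopping times $S_n$.
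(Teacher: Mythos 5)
Your proposal is correct and follows essentially the same route as the paper: both realize the coarse grained chain as the RWRE observed at randomized stopping times (with independently sampled radii), decompose $\tau_L$ into the macro-step durations, and identify the conditional mean of each step as $\Lambda_L$ at the current coarse-grained position via the strong Markov property, summing with the Green's function $\Gh_{L,r}$. The only difference is cosmetic bookkeeping (the paper pre-samples the radii as a site-and-time indexed i.i.d.\ family and decomposes via occupation counts, while you sample radii along the way and telescope the durations directly), which does not change the argument.
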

\begin{prooof}
  We take a probability space $(\Xi, \mathcal{A}, \pQ)$ carrying
  independently for each $x\in V_L$ a family of independent real-valued
  random variables $(\xi_x^{(n)})_{n\in \mathbb{N}}$, distributed according
  to $\frac{1}{h_L(x)}\varphi\left(\frac{t}{h_L(x)}\right)\dt t$. For the
  sake of convenience set $\xi_x^{(n)} = 1$ for all
  $x\in\mathbb{Z}^d\backslash V_L$ and all $n\in\mathbb{N}$.
  Define the filtration $\pG_n =
  \sigma\left(X_0,\ldots,X_n,\xi_{X_0}^{(0)}, \ldots, \xi_{X_{n-1}}^{(n-1)}\right)$. Here, $X_n$ is the
  projection on the $n$th component of the first factor of
  $\left(\mathbb{Z}^d\right)^{\mathbb{N}}\times \Xi$. Then $(X_n,\pG_n)$
  is a Markov chain on
  $\left(\left(\mathbb{Z}^d\right)^{\mathbb{N}}\times\Xi,
    \pG\otimes\mathcal{A}, \Prw_{x,\omega}\otimes\pQ\right)$ with
  transition kernel $p_{\omega}$ and starting point $x$. With $T_0 =
  0$, and iteratively
\begin{equation*}
T_{n+1} = \inf\left\{m>
T_n: X_m\notin
V_{\xi_{X_{T_n}}^{(T_n)}}\left(X_{T_n}\right)\right\}\wedge \tau_L,
\end{equation*}
one shows by induction that $T_n$ is a stopping time with respect to
$\pG_k$. Moreover, in $V_L$, the coarse grained chain running with
transition kernel $\Ph(\omega)$ can be obtained from $X_n$ by looking at
times $T_n$, that is by considering $(X_{T_n})_{n\geq 0}$. Denote by
$\widetilde{\Erw}_{x,\omega}$ the expectation with respect to
$\widetilde{\Prw}_{x,\omega}= \Prw_{x,\omega}\otimes\pQ$. Then, using the strong Markov property in the next to last equality,
\begin{eqnarray*}
\Erw_{x,\omega}\left[\tau_L\right]&=&\sum_{z\in
  V_L}\Erw_{x,\omega}\left[\sum_{n=0}^\infty 1_{\{z\}}(X_n)1_{\{n <
  \tau_L\}}\right] = \sum_{z\in V_L}\widetilde{\Erw}_{x,\omega}\left[\sum_{n=0}^\infty 1_{\{z\}}(X_n)1_{\{n <
  \tau_L\}}\right]\\
&=&\sum_{z\in
  V_L}\widetilde{\Erw}_{x,\omega}\left[\sum_{n=0}^\infty\sum_{k=T_n}^{T_{n+1}-1}
  1_{\{z\}}(X_k)\right]\\
&=&\sum_{z\in
  V_L}\widetilde{\Erw}_{x,\omega}\left[\sum_{n=0}^\infty\left(\sum_{y\in
    V_L}1_{\{y\}}\left(X_{T_n}\right)\right)\sum_{k=T_n}^{T_{n+1}-1}
1_{\{z\}}(X_k)\right]\\
&=&\sum_{y\in
  V_L}\sum_{n=0}^{\infty}\widetilde{\Erw}_{x,\omega}\left[1_{\{y\}}\left(X_{T_n}\right)
\widetilde{\Erw}_{x,\omega}\left[\sum_{z\in
    V_L}\sum_{k=T_n}^{T_{n+1}-1}1_{\{z\}}\left(X_k\right)\bigm|\pG_{T_n}\right]\right]\\ 
&=& \sum_{y\in
  V_L}\sum_{n=0}^\infty\widetilde{\Erw}_{x,\omega}\left[1_{\{y\}}\left(X_{T_n}\right)\right]\Lambda_L(y)
=  \Gh_{L,r}\Lambda_L(x).
\end{eqnarray*}
\end{prooof}
Note that the proof of the statement does not depend on the particular
form of the coarse graining scheme.
\subsection{Good and bad points}
As in our study of exit laws, we introduce the terminology of good and
bad points, but now with respect to both space and time. It turns out that
we need simultaneous control over two levels, which is reflected in a stronger
notion of ``goodness''.
\subsubsection{Space-good and space-bad points}
We say that $x\in V_L$ is {\it
  space-good}, if
\begin{itemize}
\item $x\in V_L\backslash \badP_L$, that is $x$ is good in the sense of
  Section~\ref{goodandbad}.
\item If $\dL(x) > 2s_L$, then additionally for all $t\in [h_L(x),2h_L(x)]$ and for all
  $y\in V_t(x)$,
  \begin{itemize}
  \item For all $t'\in [h_t^x(y), 2h_t^x(y)]$,
    $||(\Pi_{V_{t'}(y)}-\pi_{V_{t'}(y)})(y,\cdot)||_1 \leq \delta$.
  \item If $t-|y-x|  > 2r_t$, then additionally
   \begin{equation*}
     \left|\left|(\Ph_t^x -
         \ph_t^x)\ph_t^x(y,\cdot)\right|\right|_1 \leq (\log h_t^x(y))^{-9}.  
   \end{equation*}
 \end{itemize}
\end{itemize}
A point $x\in V_L$ which is not space-good is called {\it space-bad}.  The
set of all space-bad points inside $V_L$ is denoted by $\badPs$. We
classify the environments into $\goods = \{\badPs = \emptyset\}$ and
$\bads= \left\{\badPs\neq\emptyset\right\}$. Notice that $\badP_L \subset
\badPs$ and $\goods\subset \good$.
As an immediate consequence of the definition,
\begin{lemma}
\label{def:times-superlemma3}
There exists $C>0$ such that if $\delta >0$ is small, then
on $\goods$,
\begin{enumerate} 
 \item $\Gh_{L,r_L} \preceq C\Gamma_{L,r_L}$.
 \item If $x\in V_L$ with $\dL(x) > 2s_L$, then for all $t\in[h_L(x),
   2h_L(x)]$,
   \begin{equation*}
  \Gh_t^x \preceq C\Gamma_{t,r_t}(\cdot-x,\cdot-x).
\end{equation*}
\end{enumerate}
\end{lemma}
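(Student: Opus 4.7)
The plan is to reduce both statements to Lemma~\ref{def:superlemma} (ii) by observing that on $\goods$ every relevant point is good in the sense of Section~\ref{goodandbad} on the appropriate scale, so no goodification takes place and the RWRE Green's function coincides with the goodified one already controlled in Section~\ref{super}.

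For part (i), I would first note that a space-good point is \emph{a fortiori} good with respect to $L$, $\delta$, $r_L$, because the two bullets defining space-goodness (before the clause beginning ``If $\dL(x) > 2s_L$'') are identical to the two bullets defining goodness. Hence $\badP_L\subset\badPs$, and on $\goods$ we have $\badP_L=\emptyset$, i.e., $\omega\in\good$. On this event the modified kernel $\Phg_{L,r_L}$ from~\eqref{eq:smoothbad-goodifiedkernel} agrees with $\Ph_{L,r_L}$ and therefore $\Gh_{L,r_L}=\Ghg_{L,r_L}$. Lemma~\ref{def:superlemma} (ii) applied with $r=r_L$ (valid for $\delta\leq\delta_0$ small enough, cf.~Remark~\ref{def:super-remark}) then gives the desired $\Gh_{L,r_L}\preceq C\Gamma_{L,r_L}$.

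For part (ii), fix $x\in V_L$ with $\dL(x)>2s_L$ and $t\in[h_L(x),2h_L(x)]$. The key observation is that the second clause in the space-good definition, once evaluated at a point $y\in V_t(x)$, is precisely the condition that $y$ be good in the sense of Section~\ref{goodandbad} for the ball $V_t(x)$ with radius $t$, perturbation parameter $\delta$ and boundary parameter $r_t$: the distance from $y$ to $\partial V_t(x)$ is exactly $t-|y-x|$, and the coarse graining scheme $h_t^x$ on $V_t(x)$ is by definition the translate of $h_{t,r_t}$ by $x$. Consequently, on $\goods$ the set of bad points in $V_t(x)$ is empty, i.e., $\omega$ lies in the analog of $\good$ for $V_t(x)$. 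Applying Lemma~\ref{def:superlemma} (ii) to this ball (after shifting by $-x$ so that the ball is centered at the origin) yields
\[
  \Gh_t^x \preceq C\,\Gamma_{t,r_t}(\cdot-x,\cdot-x).
\]

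The only point needing verification is that the constant $C$ produced by Lemma~\ref{def:superlemma} (ii) is dimensional and hence uniform in $x$ and $t$; this is immediate from our convention on constants stated in Section~\ref{prel}. There is no real obstacle, which is why the lemma is stated as an immediate consequence of the definitions.
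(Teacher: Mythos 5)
Your proposal is correct and follows the same route as the paper: on $\goods$ no goodification occurs (for part (i) because $\goods\subset\good$, for part (ii) because the second clause of space-goodness makes every $y\in V_t(x)$ good for the ball $V_t(x)$ with parameter $r_t$), so $\Gh_{L,r_L}=\Ghg_{L,r_L}$ and $\Gh_t^x$ equals its goodified version, and Lemma~\ref{def:superlemma} (ii) gives both bounds. Your write-up just spells out the identification of the space-good clauses with goodness on the respective scales, which the paper leaves implicit.
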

\begin{prooof}
  (i) Since $\goods\subset\good$, we have $\Gh = \Ghg$ on $\goods$, and
  Lemma~\ref{def:superlemma} can be applied.\\
  (ii) Take $x$ and $t$ as in the statement. On $\goods$, the kernel $\Gh_t^x$
  coincides with its goodified version, since within $V_t(x)$, there are no
  bad points. The claim now follows again from
  Lemma~\ref{def:superlemma}.
\end{prooof}
\begin{lemma}
\label{def:times-lemmabads}
If $L_1$ is large enough, then $\ctwo(\delta,L_1)$ implies that for
$L_1\leq L \leq L_1(\log L_1)^2$,
\begin{equation*}
\pP(\bads) \leq \exp\left(-(2/3)(\log L)^{2}\right).
\end{equation*}
\end{lemma}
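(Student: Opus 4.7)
The strategy is a straightforward union bound:
\begin{equation*}
  \pP(\bads) \;\leq\; \sum_{x\in V_L}\pP(x\in\badPs).
\end{equation*}
I will split the inner probability according to whether $\dL(x)\leq 2s_L$ or $\dL(x)>2s_L$, and in each case reduce to an event controlled by $\ctwo(\delta,L_1)$ applied at a suitable smaller scale. The admissibility of these scales under $L_1\leq L\leq L_1(\log L_1)^2$ will be easy to check, since the outer coarse graining radius satisfies $2h_L(x)\leq s_L/10\leq L_1$ and the inner one $2h_t^x(y)\leq s_t/10\leq L_1$ for all relevant $t$ and $y$, once $L_1$ is large enough.

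If $\dL(x)\leq 2s_L$, the space-good condition reduces to $x\notin\badP_L$, and I bound $\pP(x\in\badP_L)$ exactly as in the proof of Lemma~\ref{def:smoothbad-lemmamanybad}, using translation invariance of $\pP$ together with Remark (ii) following the definition of good points to reduce to $\ctwo(\delta,L_1)$ at scale $2h_L(x)$; this gives $Cs_L^d\exp(-(\log(r_L/20))^2)$. If $\dL(x)>2s_L$, I take a further union bound over $t\in[h_L(x),2h_L(x)]$ and $y\in V_t(x)$, whose combined cardinality is polynomial in $L$. For each such pair $(t,y)$, the event that $y$ fails the ``good'' condition relative to the inner ball $V_t(x)$ has, by translation so that $x\mapsto 0$, the same $\pP$-probability as the corresponding inner-level event at $y-x$ inside the ball $V_t$ centered at the origin; this is the direct analog of $\{0\in\badP_{t,r_t}\}$ with the smoothing field $h_t^x$ in place of $h_{t,r_t}$. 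Invoking $\ctwo(\delta,L_1)$ at the inner scale $2h_t^x(y)\leq s_t/10\leq L_1$ yields a bound of order $Cs_t^d\exp(-(\log(r_t/20))^2)$ per $(t,y)$.

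Combining, the total probability is bounded by $L^{O(d)}\exp(-(\log(r_t/20))^2)$ with $t\asymp s_L$, hence $r_t\geq cL/(\log L)^{18}$. This gives $(\log(r_t/20))^2\geq (\log L)^2-C(\log L)(\log\log L)$, so the polynomial prefactor is absorbed and the total sits comfortably below $\exp(-(2/3)(\log L)^2)$ for $L_1$ large. The only delicate point, and the main obstacle, is verifying that the \emph{inner} smoothing field $h_t^x(y+\cdot)$ lies in $\mathcal{M}_{t'}$ for $t'\in[h_t^x(y),2h_t^x(y)]$ whenever the second bullet in the definition of space-good is in force (i.e.\ $t-|y-x|>2r_t$), so that $\ctwo(\delta,L_1)$ actually applies to the translated inner event. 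This is the inner-level analog of Remark (ii) after the definition of good points and follows from the same Lipschitz and concavity properties of the profile $h$ that were used at the outer level, applied now to the coarse graining inside $V_t(x)$ rather than inside $V_L$.
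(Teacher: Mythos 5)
Your proof is correct and follows essentially the route the paper intends: the paper's own proof of this lemma simply says to proceed as in Lemma~\ref{def:smoothbad-lemmamanybad}, i.e.\ a union bound over $x\in V_L$ (and, for bulk points, over the inner scales $t\in[h_L(x),2h_L(x)]$ and $y\in V_t(x)$), reducing each single-site event to $\ctwo(\delta,L_1)$ at the admissible scales $2h_L(x)\leq s_L/10\leq L_1$ and $2h_t^x(y)\leq s_t/10\leq L_1$, with the polynomial prefactor absorbed since $(\log(r_t/20))^2\geq(\log L)^2-C(\log L)(\log\log L)$. The point you flag as delicate is in fact immediate: since $h_t^x(\cdot)=h_{t,r_t}(\cdot-x)$, the inner admissibility claim is literally Remark~3.2(ii) with $(L,r)$ replaced by $(t,r_t)$, the condition $t-|y-x|>2r_t$ being exactly $\dist_t(y-x)>2r_t$.
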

\begin{prooof}
One can proceed as in the proof of Lemma~\ref{def:smoothbad-lemmamanybad}. We omit the details.
\end{prooof}
\subsubsection{Time-good and time-bad points}
We will also judge points inside $V_L$ according to their influence on the
time the RWRE spends in the ball. Remember the definitions of $f_\eta$ and
condition $\ctime(\eta,L_1)$ from Section~\ref{mainresults-meantime}. We
fix $0<\eta<1$.  For points in the bulk, we again shall control two levels. We say that a point $x\in V_L$ is {\it
  time-good}  if the following holds:
\begin{itemize}
\item For all $x\in V_L$, $t\in[h_L(x),2h_L(x)]$,
\begin{equation*}\Erw_{x,\omega}\left[\tau_{V_{t}(x)}\right]\in
  \left[1-f_\eta(s_L),\, 1+f_\eta(s_L)\right]\cdot
  \Erw_x\left[\tau_{V_{t}(x)}\right]
\end{equation*}  
\item If $\dL(x) > 2s_L$, then additionally for all $t\in
  [h_L(x),2h_L(x)]$, $y\in V_t(x)$ and for all
  $t'\in[h_t^x(y),2h_t^x(y)]$,
\begin{equation*}
\Erw_{y,\omega}\left[\tau_{V_{t'}(y)}\right]\in
  \left[1-f_\eta(s_t),\, 1+f_\eta(s_t)\right]\cdot
  \Erw_y\left[\tau_{V_{t'}(y)}\right]. 
\end{equation*}  
\end{itemize}
A point $x\in V_L$ which is not time-good is called {\it time-bad}.  We
denote by $\badPt = \badPt(\omega)$ the set of all time-bad points inside
$V_L$. Recall the definition $\mathcal{D}_L$ from
Section~\ref{smoothbad}. We let 
$\onebadt= \left\{\badPt\subset D\mbox{ for some }
  D\in\mathcal{D}_L\right\}$, $\manybadt={\left(\onebadt\right)}^c$, and 
$\goodt = \{\badPt = \emptyset\}\subset\onebadt$.

\subsubsection{Important remark}
The second point in the definition of time-good provides control over
coarse grained mean times on the preceding level, which will be crucial for
the proof of Lemma~\ref{def:times-auxillemma1}. Let us look at the first point.
If $x\in V_L$ is time-good and $\dL(x) > r_L$, then by definition of the coarse-graining,
  \begin{equation*}
    \Lambda_L(x) \in \left[1-f_\eta(s_L),\, 1+f_\eta(s_L)\right]\cdot\lambda_L(x).
  \end{equation*}
If $x\in V_L$ is time-good and $\dL(x) \leq r_L$, then at least
  \begin{equation*}
    \Lambda_L(x) \leq (1+f_\eta(s_L))\Erw_x\left[\tau_{V_{r_L}(x)}\right].
  \end{equation*}
Due to~\eqref{eq:times-srw}, this implies 
\begin{equation*}
\Lambda_L(x) \leq C(\log L)^{-6}L^2\quad\mbox{for all time-good } x\in V_L.
\end{equation*}
\begin{figure}
\begin{center}\parbox{3.5cm}{\includegraphics[width=3cm]{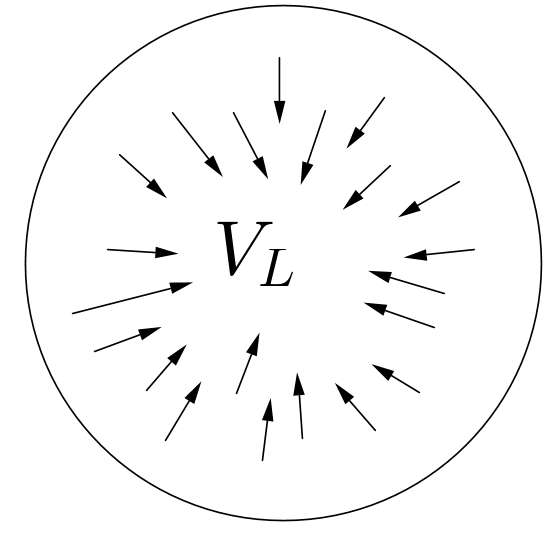}}
\parbox{10cm}{
\caption{A ``trap'': Starting at the origin, the walker is
  pushed back to the center, no
  matter in which direction he walks. On average, he needs time of
  order $\exp(cL)$ to leave the ball.}}
\end{center}
\end{figure}
However, time-bad points could possibly be very bad and give rise to a 
sojourn time which is visible on many subsequent larger scales. For example,
assume that all transition probabilities inside a ball of radius $L$ have
the tendency to push the walker towards the center of the ball (see Figure~\thefigure). Then the
mean sojourn time will be of order $\exp(cL)$ for some $c>0$. The probability of
such an event should however be exponentially small in the volume $L^d$. Of
course, between this extreme case and a well-behaved environment, there are
many intermediate configurations.  One needs to show that ``very
(time-)bad'' environments do not occur too often, which seems to be a
challenging problem. This is the point where Assumption {\bf A2} helps
out. It allows us to concentrate on the event
\begin{equation}
\label{eq:times-badnessbound}
\nottoobadt = \left\{\omega\in \Omega : \Lambda_L(x) \leq (\log L)^{-2}L^2\,\,\mbox{for all } x\in V_L\right\}.
\end{equation}
\begin{lemma}
\label{def:times-lemmanottoobadt}
If {\bf A2} holds, then for $L$ sufficiently large,
\begin{equation*}
\pP\left(\left(\nottoobadt\right)^c\right) \leq (1/2) L^{-6d}.
\end{equation*}
\end{lemma}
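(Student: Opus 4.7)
The strategy is to bound $\Lambda_L(x)$ uniformly in $x \in V_L$ by the quenched mean exit time from a suitably small ball around $x$, and then to apply Assumption {\bf A2} to that small ball via the translation invariance of the product measure $\pP$. Since $\varphi$ is supported in $(1,2)$ and the map $t\mapsto\tau_{V_t(x)\cap V_L}$ is monotone non-decreasing in $t$, we have the pointwise bound
\begin{equation*}
\Lambda_L(x) \;\leq\; \Erw_{x,\omega}\!\left[\tau_{V_{2h_L(x)}(x)}\right]
\end{equation*}
for every $x\in V_L$. Hence, writing $t(x) := 2h_L(x)$, which satisfies $r_L/10 \leq t(x) \leq s_L/10$,
\begin{equation*}
(\nottoobadt)^c \;\subset\; \bigcup_{x \in V_L}\bigl\{\Erw_{x,\omega}[\tau_{V_{t(x)}(x)}] > (\log L)^{-2}L^2\bigr\}.
\end{equation*}

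The deterministic comparison that makes the argument succeed is that the {\bf A2}-threshold applied on $V_{t(x)}(x)$ is strictly smaller than the $\nottoobadt$-threshold: using $t(x) \leq L/(10(\log L)^3)$,
\begin{equation*}
(\log t(x))^4\, t(x)^2 \;\leq\; (\log L)^4 \cdot \frac{L^2}{100(\log L)^6} \;=\; \frac{L^2}{100(\log L)^2} \;<\; (\log L)^{-2}L^2.
\end{equation*}
Since $\pP$ is a product measure and hence invariant under lattice shifts, {\bf A2} applied with $L$ replaced by $t(x)$ (which tends to infinity with $L$) gives
\begin{equation*}
\pP\bigl(\Erw_{x,\omega}[\tau_{V_{t(x)}(x)}] > (\log t(x))^4\, t(x)^2\bigr) \;\leq\; t(x)^{-8d}
\end{equation*}
for all large $L$. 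On the complementary event we have $\Lambda_L(x) \leq (\log L)^{-2}L^2$, so $x$ does not contribute to $(\nottoobadt)^c$.

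A union bound over $x\in V_L$, combined with $t(x) \geq L/(20(\log L)^{15})$ and $|V_L| \leq CL^d$, yields
\begin{equation*}
\pP\bigl((\nottoobadt)^c\bigr) \;\leq\; \sum_{x\in V_L} t(x)^{-8d} \;\leq\; C\,(\log L)^{120 d}\, L^{-7d} \;\leq\; \tfrac{1}{2}L^{-6d}
\end{equation*}
once $L$ is taken large enough to absorb the polylogarithmic factor. No serious obstacle is anticipated: Assumption {\bf A2} is calibrated precisely so that passing from radius $L$ to the coarse-graining scale $s_L \sim L/(\log L)^3$ saves a factor of $(\log L)^{-6}$ in the time threshold, which comfortably beats the $(\log L)^{-2}$ target and leaves enough margin for the polynomial union bound. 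The only points to verify carefully are that {\bf A2} may be invoked at every radius $t(x)$ (true by stationarity of $\pP$ and $t(x)\to\infty$ with $L$), and that the radii $t(x)$ are simultaneously small enough for the threshold comparison and large enough for the tail $t(x)^{-8d}$ to outpace $|V_L|$ — both of which are immediate from the definition of $h_L$.
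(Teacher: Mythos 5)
Your proof is correct and follows essentially the same route as the paper: bound $\Lambda_L(x)$ by the quenched mean exit time from $V_{2h_L(x)}(x)$, observe that $2h_L(x)\leq s_L/10<(\log L)^{-3}L$ makes the {\bf A2}-threshold $(\log t)^4t^2$ fall below $(\log L)^{-2}L^2$, then apply {\bf A2} (via shift invariance of $\pP$) at scale $2h_L(x)$ and take a union bound over the $O(L^d)$ points of $V_L$. Your write-up is in fact slightly more careful than the paper's, since it tracks the polylogarithmic loss from $t(x)\geq cr_L$ explicitly before absorbing it into the $(1/2)L^{-6d}$ bound.
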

\begin{prooof}
First notice that with
\begin{equation*}
  E_L = \left\{\omega \in\Omega: \mbox{for all }x\in
    V_L,\mbox{with }t=2h_L(x),\,\Erw_{x,\omega}\left[\tau_{V_t(x)}\right]
    \leq (\log L)^{-2}L^2\right\}
\end{equation*}
we have $E_L \subset \nottoobadt$.
As $2h_L(x)\leq s_L/10<(\log L)^{-3}L$, the complement of $E_L$ is bounded
under {\bf A2} by $\pP(E_L^c) \leq CL^dL^{-8d}$.

\end{prooof}
\begin{remark}
\label{remark-balanced}
(i) On a certain class of environments, we can easily bound the mean
time the RWRE spends the a ball. Fix a unit vector $e\in\{e_i\}_{i=1}^d$
from the canonical basis of $\mathbb{Z}^d$. We consider an environment
$\omega\in\mathcal{P}_{\e}$ such that for each 
$x\in\mathbb{Z}^d$, $\omega_x(e) = \omega_x(-e)$, i.e. the environment is
{\it balanced}
in direction $e$. 
In such a case,
\begin{equation*}
M_n = (X_n\cdot e)^2 - \sum_{k=0}^{n-1}\left(\omega_{X_k}(e) +
  \omega_{X_k}(-e)\right)
\end{equation*}
is a $\Prw_{0,\omega}$-martingale with respect to the
filtration generated by the walk $(X_n)_{n\geq 0}$. By the stopping
theorem, $\Erw_{0,\omega}\left[M_{n\wedge\tau_L}\right] = 0$.
Since $\omega_{X_k}(e) + \omega_{X_k}(-e) \geq 1/d -2\e > 0$, it follows that
\begin{equation*}
\Erw_{0,\omega}\left[n\wedge \tau_L\right] \leq
{\left(1/d-2\e\right)}^{-1}\Erw_{0,\omega}\left[(X_{n\wedge\tau_L}\cdot
  e)^2\right].
\end{equation*}
Therefore,
\begin{equation*}
\Erw_{0,\omega}\left[\tau_L\right]\leq \frac{d}{1-2\e d}(L+1)^2,
\end{equation*}
and {\bf A2} is trivially satisfied. 

However, for measures $\mu$ which are
invariant under rotations, the class of such environments has positive
measure under $\pP_{\mu}$ only if $\mu$ is supported on the subset of
symmetric transition probabilities
$\{q\in\mathcal{P}_{\varepsilon}: q(+e_i)=
q(-e_i) \mbox{ for all }i=1,\ldots,d\}$, implying $\omega_x(e)
=\omega_x(-e)$ for all unit vectors $e$ and
$x\in\mathbb{Z}^d$ almost surely. In this case, $|X_n|^2-n$ is a quenched
martingale, and $L^2\leq\Erw_{0,\omega}\left[\tau_L\right]\leq (L+1)^2$ for
almost all environments.
\\
(ii) Before proceeding, let us mention that Assumption ${\bf A2}$ can be
expressed in terms of hitting
probabilities. For example, if there exists $\rho>0$ such that for $L$ large,
\begin{equation*}
  \pP\left(\inf_{x\in V_L}\Prw_{x,\omega}\left(\tau_L \leq (\log
      L)^{3}L^2\right) \geq \rho\right) \geq 1-L^{-8d},
\end{equation*}
then ${\bf A2}$ holds. Indeed, on the event $\{\inf_{x\in V_L}\Prw_{x,\omega}\left(\tau_L \leq
  (\log L)^3L^2\right)\geq \rho\}$,
 \begin{equation*}
 \Erw_{0,\omega}\left[\tau_L\right] \leq (\log L)^3 L^2 +
 \sum_{k=1}^\infty \Prw_{0,\omega}\left(\tau_L > k(\log L)^3
  L^2\right)(\log L)^3 L^2.
\end{equation*}
By the Markov property it follows that on this event,
\begin{equation*}
  \Prw_{0,\omega}\left(\tau_L > k(\log L)^3L^2\right) \leq (1-\rho)^k,
\end{equation*}
whence for large $L$, 
\begin{equation*}
\Erw_{0,\omega}\left[\tau_L\right] \leq (1/\rho)(\log L)^3 L^2 \leq (\log
L)^4L^2.
\end{equation*}
\end{remark}
Let us continue by showing that we can forget about environments with space-bad
points or widely spread time-bad points.
\begin{lemma}
\label{def:times-lemmamanybad}
If $L_1$ is large, then $\ctwo(\delta,L_1)$, $\ctime(\eta,L_1)$ imply
that for $L$ with $L_1\leq L \leq L_1(\log L_1)^2$,
\begin{equation*}
\pP\left(\bads\cup\manybadt\right) \leq
    (1/2) L^{-6d}. 
\end{equation*}
\end{lemma}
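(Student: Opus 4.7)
The plan is to split $\pP(\bads \cup \manybadt) \le \pP(\bads) + \pP(\manybadt)$ and bound each summand by $\tfrac14 L^{-6d}$. The first summand is immediately handed to us by Lemma~\ref{def:times-lemmabads}: under $\ctwo(\delta,L_1)$ and for $L_1$ (hence $L$) large enough, $\pP(\bads) \le \exp(-(2/3)(\log L)^2)$, which lies below $\tfrac14 L^{-6d}$ for all $L$ sufficiently big.

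For $\pP(\manybadt)$ I will mimic the structure of Lemma~\ref{def:smoothbad-lemmamanybad}. The first step is to bound $\pP(x\in\badPt)$ for a fixed $x\in V_L$. Time-goodness at $x$ is the conjunction of finitely many conditions of the form $\Erw_{y,\omega}[\tau_{V_{t'}(y)}] \in [1\pm f_\eta(s_t)]\Erw_y[\tau_{V_{t'}(y)}]$. Since $t' \le s_t$ and $f_\eta$ is increasing, the failure of such a condition is contained in the event with the sharper tolerance $f_\eta(t')$, whose probability is at most $(t')^{-6d}$ by translation invariance of $\pP$ and $\ctime(\eta,L_1)$ applied at scale $t' \le s_L \le L_1$. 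A union bound over the (polynomially many, at most $CL^{d+1}$) distinct admissible triples $(t,y,t')$, combined with the worst-case lower bound $t' \ge c L/(\log L)^{18}$ coming from $y$ close to $\partial V_t(x)$, will yield
\[
\pP(x \in \badPt) \le C L^{-5d+1}(\log L)^{C_0}
\]
for constants $C, C_0$ depending only on $d$.

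The second step is to show that on $\manybadt$ one can extract two bad points whose environmental dependence neighborhoods are disjoint. The event $\{x\in\badPt\}$ is measurable with respect to $\omega$ restricted to $V_{R(x)}(x)$, with $R(x) = 2h_L(x)(1+o(1))$; the small excess over $2h_L(x)$ accounts for $V_{t'}(y)$ protruding just beyond $V_t(x)$ when $y$ lies within $r_t$ of $\partial V_t(x)$. On $\manybadt$, the smallest enclosing Euclidean ball of $\badPt$ has radius $r^*$ with $r^* + \sqrt{d}/2 > 4h_L(z)$ for the nearest lattice point $z$ to its center. Jung's inequality in $\mathbb{R}^d$ then gives $\diam(\badPt) \ge r^*\sqrt{2(d+1)/d}$, and since $\sqrt{2(d+1)/d} > 1$ strictly for $d \ge 3$, this produces an absolute buffer of order $h_L$ between $\diam(\badPt)$ and the $\mathcal{D}_L$-threshold $2(2h_L(z))$ — much larger than the $o(h_L)$ excess of $R(x)+R(y)$ over $2h_L(x)+2h_L(y)$. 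Hence the pair $x,y \in \badPt$ realizing the diameter satisfies $|x-y| > R(x)+R(y)$ for $L$ large, with the boundary case handled analogously using appropriately centered $\mathcal{D}_L$ balls of smaller radius.

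For such a separated pair, $\{x \in \badPt\}$ and $\{y \in \badPt\}$ are independent, whence a union bound gives
\[
\pP(\manybadt) \le C L^{2d}\, \pP(x\in\badPt)^2 \le C L^{-8d+2}(\log L)^{2C_0}.
\]
Since $-8d+2 \le -6d-4$ for $d \ge 3$, this is at most $\tfrac14 L^{-6d}$ for $L$ large, and the lemma follows. The main obstacle is the geometric extraction step: $\mathcal{D}_L$-membership provides the strict inequality $|x-y| > 4h_L(z)$ only, while $R(x)+R(y)$ strictly exceeds $4h_L(z)$ by a non-negligible $r_L/\text{polylog}$ amount. It is therefore essential to exploit the dimensional gain from Jung's inequality — available precisely because $d \ge 3$ — to open a large enough buffer for independence of the two bad-point events.
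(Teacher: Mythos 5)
Your proof follows the paper's route: split off $\pP(\bads)$ via Lemma~\ref{def:times-lemmabads}, bound $\pP(x\in\badPt)$ by a union bound over the $O(L^{d+1})$ admissible scales using $\ctime(\eta,L_1)$ at radii $\geq cL/(\log L)^{18}$ together with the monotonicity of $f_\eta$, and then on $\manybadt$ square this one-point bound over a well-separated (hence independent) pair of time-bad points, giving $CL^{2d}\cdot L^{-10d+2+o(1)}\leq \tfrac14 L^{-6d}$ for $d\geq 3$ — exactly the paper's computation. The only place you diverge is the extraction of the separated pair: the paper (as in Lemma~\ref{def:smoothbad-lemmamanybad}) simply takes a time-bad point maximizing $h_L$, which immediately yields $|x-y|>2h_L(x)+2h_L(y)$, whereas your Jung's-inequality detour (i) rests on the spurious remark that $d\geq 3$ is needed for Jung's constant $\sqrt{2(d+1)/d}$ to exceed $1$ (it exceeds $\sqrt{2}$ in every dimension), and (ii) as written compares $\diam(\badPt)$ with $4h_L(z)$ at the enclosing-ball center $z$, while what is needed is a comparison with $2h_L(x)+2h_L(y)$ at the diameter-realizing pair; the missing bridge is the $1/20$-Lipschitz continuity of $h_{L}$, which transfers $h_L(x),h_L(y)$ to $h_L(z)$ up to $\tfrac15(r^*+O(1))$ and thereby does close the gap, so your buffer argument for absorbing the small protrusion of the second-level balls (the point the paper glosses over) is sound once that step is made explicit.
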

\begin{prooof}
  We have $\pP\left(\bads\cup\manybadt\right) \leq \pP\left(\bads\right) +
  \pP\left(\manybadt\right)$. The first summand is bounded by
  Lemma~\ref{def:times-lemmabads}. For the second, it follows from the
  definition of time-badness, $f$ and \eqref{eq:times-srw} that if
  $x\in\badPt$ and $L$ is large, then either
  \begin{equation*}
  \Erw_{x,\omega}\left[\tau_{V_t(x)}\right]\notin\left[1-f_\eta(t),\, 1+f_\eta(t)\right]\cdot
  \Erw_x\left[\tau_{V_{t}(x)}\right]
\end{equation*}
for some $t\in[h_L(x),2h_L(x)]\cap\mathbb{N}$, or, if $\dL(x)>2s_L$, 
  \begin{equation*}
  \Erw_{y,\omega}\left[\tau_{V_{t'}(y)}\right]\notin\left[1-f_\eta(t'),\, 1+f_\eta(t')\right]\cdot
  \Erw_y\left[\tau_{V_{t'}(y)}\right]
\end{equation*}
for some $y\in V_{2h_L(x)}(x)$, $t'\in[h_{h_L(x)}^x(y),2h_{2h_L(x)}^x(y)]\cap\mathbb{N}$.

Now notice that for all $x\in V_L$, we have $h_L(x) \geq r_L/20$. Moreover,
if $\dL(x) > 2s_L$, then $h_L(x)= s_L/20$, whence for all $y\in V_t(x)$,
$t\in [h_L(x),\, 2h_L(x)]$, it follows that $h_t^x(y) \geq
r_{(s_L/20)}/20$. We conclude that under $\ctime(\eta,L_1)$,
  \begin{equation*}
\pP\left(x\in\badPt\right) \leq s_L\left(r_L/20\right)^{-6d} +
CL^ds_{s_L}\left(r_{s_L/20}/20\right)^{-6d},  
\end{equation*}
and therefore
 \begin{equation*}
  \pP\left(\manybadt\right) \leq
  CL^{4d+2}\left(r_{s_L/20}/20\right)^{-12d} \leq (1/3)L^{-6d}. 
  \end{equation*}
\end{prooof}

\subsection{Estimates on mean times}
It remains to deal with environments
$\omega\in\goods\cap\onebadt\cap\nottoobadt$.  In contrast to the estimates
on exit measures, we treat all these environments at once. The main
statement of this section, Lemma~\ref{def:times-mainlemma}, can therefore
be seen as the analog for sojourn times of both
Lemmata~\ref{def:lemma-goodpart} and~\ref{def:lemma-badpart} . In the
following, we will always assume that $\delta$ and $L$ are such that
Lemma~\ref{def:times-superlemma3} can be applied. We start with two
auxiliary statements. Here, the difference estimates on the coarse
grained Green's functions from Section~\ref{super-difference} play a
crucial role.
\begin{lemma}
\label{def:times-auxillemma1}
Let $0\leq \alpha < 3$ and $x, y\in
V_{L-2s_L}\backslash\badPt$ with $|x-y| \leq (\log
s_L)^{-\alpha}\,s_L$. On $\goods$, 
\begin{equation*}
\left|\Lambda_L(x) - \Lambda_L(y)\right| \leq C(\log\log s_L)(\log s_L)^{-\alpha}
  s_L^2.
\end{equation*}
\end{lemma}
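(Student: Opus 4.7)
Since $x, y \in V_{L-2s_L}$ sit in the bulk of $V_L$, $h_L(x) = h_L(y) = s_L/20$, and $\lambda_L(x) = \lambda_L(y)$ by translation invariance of simple random walk. Only the RWRE perturbation of $\Lambda_L$ requires control. My plan is to first establish the narrowest case $|x-y|\leq s_L/(\log s_L)^3$ via a Green's-function difference estimate at scale $s_L$, and then reach all $\alpha<3$ by chaining.

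\textbf{Base case} ($|x-y| \leq s_L/(\log s_L)^3$). Set $z=(x+y)/2$ and $R = s_L$. Then $V_R(z)\subset V_L$ and $V_t(x)\cup V_t(y)\subset V_R(z)$ for every $t\in[h_L(x),2h_L(x)] = [s_L/20,\,s_L/10]$. Put $F(u) = \Erw_{u,\omega}[\tau_{V_R(z)}]$. The strong Markov property at the first exit of $V_t(u)$, averaged against $\varphi(\cdot/h_L(u))/h_L(u)$ in $t$, gives
\begin{equation*}
\Lambda_L(u) = F(u) - \Ph_{L,r_L} F(u), \qquad u\in\{x,y\}.
\end{equation*}
Applying Lemma~\ref{times-keylemma} at scale $R$ yields $F = \Gh_R^z\Lambda_R^z$, so
\begin{equation*}
\Lambda_L(x)-\Lambda_L(y) = \sum_v [K(x,v)-K(y,v)]\,\Lambda_R^z(v),\qquad K := (I-\Ph_{L,r_L})\Gh_R^z.
\end{equation*}
Since $|x-y|\leq s_R$, Lemma~\ref{def:super-greendifference}(ii) applied to $V_R(z)$ bounds $\sum_v|\Gh_R^z(x,v)-\Gh_R^z(y,v)|$ by $C(\log\log s_L)(\log s_L)^3$, and a coupling of the single coarse grained steps from $x$ and $y$ (which remain within $|x-y|$ of each other throughout) transfers the same bound to $\Ph_{L,r_L}\Gh_R^z$. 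On $\goods$, Lemma~\ref{def:times-superlemma3}(ii) gives $\Gh_R^z\preceq C\Gamma_{R,r_R}$, whence $\Lambda_R^z(v)\leq Cs_R^2 \leq Cs_L^2(\log s_L)^{-6}$ uniformly. Combining, $|\Lambda_L(x)-\Lambda_L(y)|\leq C(\log\log s_L)(\log s_L)^{-3}s_L^2$.

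\textbf{General case $\alpha<3$.} Pick a chain $x=x_0,x_1,\ldots,x_N=y$ on the segment $[x,y]$ with $|x_i-x_{i+1}|\leq s_L/(\log s_L)^3$ and $N\leq C(\log s_L)^{3-\alpha}$. By convexity each $x_i\in V_{L-2s_L}$. Crucially, the base-case argument above used only membership of the endpoints in $V_{L-2s_L}$ and the event $\goods$; it never appealed to time-goodness of the endpoints. Thus it applies to every consecutive pair, and the triangle inequality yields
\begin{equation*}
|\Lambda_L(x)-\Lambda_L(y)| \leq N\cdot C(\log\log s_L)(\log s_L)^{-3}s_L^2 \leq C(\log\log s_L)(\log s_L)^{-\alpha}s_L^2.
\end{equation*}

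\textbf{Main obstacle.} The delicate piece is passing from $\sum_v|\Gh_R^z(x,v)-\Gh_R^z(y,v)|$ to the analogous bound for the composite kernel $\Ph_{L,r_L}\Gh_R^z$, because the measures $\Ph_{L,r_L}(x,\cdot)$ and $\Ph_{L,r_L}(y,\cdot)$ are RWRE exits from \emph{different} small balls in the same environment. I would handle this by sampling the smoothing radius $\xi$ independently of the start point, coupling the two coarse grained walks to stay within $|x-y|$ until one exits its ball, and then applying Lemma~\ref{def:super-greendifference} once more to the resulting close endpoints; the unlikely decoupling event contributes only a subleading error on $\goods$.
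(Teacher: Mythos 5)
Your base case contains a decisive gap: the uniform bound $\Lambda_R^z(v)\leq Cs_R^2$ does not follow from $\Gh_R^z\preceq C\Gamma_{R,r_R}$. A Green's function domination controls how often sites are visited, not how long the walk sits in a small ball per coarse grained step; closeness of exit \emph{laws} never bounds exit \emph{times} (this is exactly why the paper needs Assumption {\bf A2} and the whole time-good/time-bad machinery). On $\goods$ alone, and with only $x,y\notin\badPt$ assumed, the other points of $V_{s_L}(z)$ may well be time-bad -- the lemma is invoked on $\onebadt$, where an entire bad region $D$ may sit at distance of order $s_L$ from $x$ -- and for such $v$ there is no admissible bound on $\Lambda_R^z(v)$ (even $\nottoobadt$, which you are not allowed to use here, only gives $(\log L)^{-2}L^2$, far too weak). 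There is a second, related problem of scale: the second level of the space-good/time-good definitions, and hence Lemma~\ref{def:times-superlemma3}~(ii) and the identification of $\Gh_t^x$ with its goodified version, only cover balls $V_t(x)$ with $t\in[h_L(x),2h_L(x)]$, i.e.\ radius $\approx s_L/20$, with the scheme $h_t^x$. Your ball $V_R(z)$ has radius $R=s_L$ and carries the scheme $h_{s_L,r_{s_L}}$, which is not controlled by $\goods$; so neither $\Gh_R^z\preceq C\Gamma_{R,r_R}$ nor the application of Lemma~\ref{def:super-greendifference} to $\Gh_R^z$ is justified on the stated event.

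The chaining step inherits these problems: intermediate points $x_i$ need not lie outside $\badPt$, and your claim that the base case ``never appealed to time-goodness of the endpoints'' is precisely the unjustified sojourn-time bound above. The paper's proof is organized so as to avoid both issues: it works inside the two balls $V_t(x)$, $V_t(y)$ with $t\in[h_L(x),2h_L(x)]$ -- exactly the scale the goodness definitions quantify over -- writes both mean exit times via Lemma~\ref{times-keylemma}, observes that on the common bulk $B=V_{t'-2s_t}(x)$ the two schemes literally coincide (so $\Ph_t^x=\Ph_t^y$ and $\Lambda_t^x=\Lambda_t^y$ there), compares $\Gh_t^y(y,\cdot)$ with $\Gh_t^x(y,\cdot)$ through the strong Markov correction $b(y,z)$ killed by the $\Gamma$-estimates of Lemma~\ref{def:super-gammalemma}, bounds $\Lambda_t^x(z)\leq C(\log t)^{-6}t^2$ for \emph{all} $z\in V_t(x)$ using the second level of time-goodness of the endpoint $x$ itself, and performs the ``chaining'' at the level of the Green's function difference, applying Lemma~\ref{def:super-greendifference} $O((\log t)^{3-\alpha})$ times inside the same ball -- which requires no goodness of intermediate points. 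This construction also removes your self-identified ``main obstacle'': no coupling of quenched RWRE exit measures $\Ph_{L,r_L}(x,\cdot)$ and $\Ph_{L,r_L}(y,\cdot)$ from different balls is needed, and indeed such a coupling keeping the two walks within $|x-y|$ has no justification in a fixed environment, where translation invariance is unavailable. As it stands, the proposal does not prove the lemma.
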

\begin{prooof}
The claim follows if we show that for all $t\in
\left[(1/20)s_L,(1/10)s_L\right]$,
\begin{equation*}
  \left|\Erw_{x,\omega}\left[\tau_{V_t(x)}\right] -
    \Erw_{y,\omega}\left[\tau_{V_t(y)}\right]\right| \leq C(\log\log t)(\log t)^{-\alpha}
  t^2.
\end{equation*}
Set $t' = \left(1-20(\log t)^{-\alpha}\right)t$. Then $V_{t'}(x) \subset
  V_t(x)\cap V_t(y)$. Further, let $B = V_{t'-2s_t}(x)$. By Lemma~\ref{times-keylemma},
\begin{equation}
\label{eq:times-auxillemma1-1}
\Erw_{x,\omega}\left[\tau_{V_t(x)}\right] = \Gh_t^x1_B\Lambda_t^x(x) +
 \Gh_t^x1_{V_{t}(x)\backslash B}\Lambda_t^x(x).
\end{equation}
Since $x\in
V_{L-2s_L}\backslash\badPt$, it follows that $\Lambda_t^x(z)\leq C(\log
t)^{-6}t^2$, for all $z\in V_t(x)$. Moreover, since $\omega \in \goods$, we
have by Lemma~\ref{def:times-superlemma3} $\Gh_t^x\preceq C\Gamma_{t,r_t}(\cdot-x,\cdot-x)$. Thus,
Lemma~\ref{def:super-gammalemma} (iv) yields
\begin{equation*}
\Gh_t^x1_{V_{t}(x)\backslash B}\Lambda_t^x(x)\leq
C\Gamma_{t,r_t}\left(0,V_t\backslash V_{t'-2s_t}\right)\,(\log
t)^{-6}t^2 \leq  (\log t)^{-\alpha}t^2.   
\end{equation*} 
for $L$ (and therefore also $t$) sufficiently large. 
Concerning $\Erw_{y,\omega}\left[\tau_{V_t(y)}\right]$, we split again into
\begin{equation*}
  \Erw_{y,\omega}\left[\tau_{V_t(y)}\right]= \Gh_t^y1_B\Lambda_t^y(y) + \Gh_t^y1_{V_t(y)\backslash B}\Lambda_t^y(y).
\end{equation*}
As above, the second summand is bounded by $(\log t)^{-\alpha}t^2$. For $z \in
B$, we have $h_t^x(z) = h_t^y(z) = (1/20)s_t$. In particular,
$\Ph_t^x(z,\cdot) = \Ph_t^y(z,\cdot)$, and also $\Lambda_t^x(z) =
\Lambda_t^y(z)$. Since both $x$ and $y$ are contained in
$B\subset V_t(x)\cap V_t(y)$, the strong Markov property gives
\begin{equation*}
\Gh_t^y(y,z) =
\Gh_t^x(y,z) +b(y,z),
\end{equation*}
where
\begin{equation*}
  b(y,z) = \Erw_{y,\Ph_t^y (\omega)}\left[\Gh_t^y(\tau_B,z);\,
    \tau_B < \infty\right] -\Erw_{y,\Ph_t^x(\omega)}\left[\Gh_t^x(\tau_B,z);\,
    \tau_B < \infty\right].
\end{equation*}
Therefore,
\begin{eqnarray*}
  \lefteqn{\left|\Erw_{x,\omega}\left[\tau_{V_t(x)}\right]-\Erw_{y,\omega}\left[\tau_{V_t(y)}\right]\right|}\\
  &\leq& 2(\log t)^{-\alpha}t^2 +\sum_{z\in B}\left(\left|\Gh_t^x(x,z)-\Gh_t^x(y,z)\right| + |b(y,z)|\right)\Lambda_t^x(z).
\end{eqnarray*}
The quantity $\Lambda_t^x(z)$ is estimated as above. For the sum over $|b(y,z)|$, we
notice that if $w\in V_t(y)\backslash B$, then $t-|w-y|\leq C(\log
t)^{-\alpha}t$. We can
use twice Lemma~\ref{def:super-gammalemma} (v) to get
\begin{equation*}
\sum_{z\in B}|b(y,z)| \leq \sup_{v\in
  V_t(x)\backslash B}\Gh_t^x(v,B) + \sup_{w\in
  V_t(y)\backslash B}\Gh_t^y(w,B) \leq C(\log t)^{6-\alpha}.
\end{equation*}
Finally, for the sum over the Green's function difference, we recall that
$\Gh_{t}^x$ coincides with its goodified version, so we may apply
Lemma~\ref{def:super-greendifference}.  Doing so $O\left((\log
  t)^{3-\alpha}\right)$ times gives
\begin{equation*}
\sum_{z\in B}\left|\Gh_{t}^x(x,z)-\Gh_{t}^x(y,z)\right| \leq
C(\log\log t)(\log t)^{6-\alpha}.
\end{equation*}
This proves the statement.
\end{prooof}

\begin{lemma}
\label{def:times-auxillemma2}
Set $\Delta = 1_{V_L}(\Ph_{L,r_L}-\ph_{L,r_L})$. On $\goods\cap\onebadt\cap\nottoobadt$,
\begin{equation*}
\sup_{x\in V_L}\left|\Gh_{L,r_L}\Delta\gh_{L,r_L}\Lambda_L(x)\right| \leq C(\log L)^{-5/3}L^2.
\end{equation*} 
\end{lemma}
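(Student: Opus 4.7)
The argument rests on the identity $\gh_{L,r_L}\Lambda_L = \Lambda_L + \ph_{L,r_L}\gh_{L,r_L}\Lambda_L$, which holds on all of $\mathbb{Z}^d$: off $V_L$ both sides vanish by the convention $\ph_{L,r_L}(y,\cdot)=\delta_y$, while on $V_L$ it is the standard Green's function equation. Inserting it under $\Delta$ gives the decomposition
\[
\Gh_{L,r_L}\Delta\gh_{L,r_L}\Lambda_L = \Gh_{L,r_L}\Delta\Lambda_L + \Gh_{L,r_L}\Delta\ph_{L,r_L}\gh_{L,r_L}\Lambda_L,
\]
which I would bound term by term. Dropping the subscripts in the proof, the plan is to exploit the two-step space-good cancellation $\|\Delta\ph(y,\cdot)\|_1\leq(\log h_L(y))^{-9}$ on the right summand and the structural simplicity of $\lambda_L$ on the left.

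For the right summand, for interior $y$ with $\dL(y)>2r_L$ and $z$ in the support of $\Delta\ph(y,\cdot)$, the two-step bound is paired with Lemma~\ref{def:super-greendifference}(i): write $|\gh\Lambda_L(z)-\gh\Lambda_L(y)|\leq \sum_w|\gh(z,w)-\gh(y,w)||\Lambda_L(w)|$ and split $w$ into time-good contributions (where $\Lambda_L\leq Cs_L^2$) and $\badPt\subset D$ contributions (where $\Lambda_L\leq(\log L)^{-2}L^2$ on $\nottoobadt$ but $\sum_{w\in D}(\gh(z,w)+\gh(y,w))\leq C\Gamma_{L,r_L}(y,D)\leq C$ by Lemma~\ref{def:superlemma}(i)). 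This yields $|\Delta\ph\gh\Lambda_L(y)|\leq CL^2/(\log L)^{11}$ in the bulk, and with $\sum_y\Gh(x,y)\leq C(\log L)^6$ it contributes at most $CL^2/(\log L)^5$. The boundary layer $\sh_L(2r_L)$ is absorbed using $\Gh(x,\sh_L(2r_L))\leq C$ (Lemma~\ref{def:super-gammalemma}(iv) with $j=1$) and the crude pointwise bound $|\gh\Lambda_L(y)|\leq C(\log L)^{-2}L^2$ valid there.

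For the left summand, write $\Lambda_L = \lambda_L + R$ with $R=\Lambda_L-\lambda_L$. Since $\lambda_L(y)$ depends on $y$ only through $h_L(y)$ and $\Erw_y[\tau_{V_t(y)}]$ is translation invariant for any ball contained in $V_L$, $\lambda_L$ is constant on $V_{s_L/5}(y)$ whenever $\dL(y)>3s_L$, so $\Delta\lambda_L(y)=0$ in the deep bulk. What survives is supported on $\sh_L(3s_L)$, where $|\Delta\lambda_L(y)|\leq C\delta s_L^2$ and $\Gh(x,\sh_L(3s_L))\leq C\log\log L$ by Lemma~\ref{def:super-gammalemma}(iv) applied with $j\sim s_L/r_L = (\log L)^{12}$; hence $|\Gh\Delta\lambda_L(x)|\leq C\delta L^2(\log\log L)/(\log L)^6$, well within budget.

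The main obstacle is the remaining $\Gh\Delta R(x)$. Lemma~\ref{def:times-auxillemma1} only yields $|R(z)-R(y)|\leq C(\log\log L)s_L^2$ at the coarse-graining scale $|z-y|\sim s_L/20$ (the largest $\alpha$ allowed by the constraint $|z-y|\leq (\log s_L)^{-\alpha}s_L$ is essentially $0$), and the resulting pointwise bound $|\Delta R(y)|\leq C\delta(\log\log L)s_L^2$ multiplied by $\sum_y\Gh(x,y)\leq C(\log L)^6$ gives only $O(\delta(\log\log L)L^2)$, exceeding the target by a factor of order $(\log L)^{5/3}$. To close the gap one applies Lemma~\ref{def:times-auxillemma1} at multiple scales, slicing the Green's sum into shells on which Lemma~\ref{def:super-gammalemma}(iv) provides a sharper bound $\Gh(x,\mathcal{E}_j)\leq C\log(j+1)$, and trades the restriction $|z-y|\leq(\log s_L)^{-\alpha}s_L$ against the gain $(\log s_L)^{-\alpha}$ in the pointwise difference; the two-step cancellation $\|\Delta\ph\|_1\leq(\log L)^{-9}$ is recycled on the residual to absorb the last logarithmic factor. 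Throughout, the time-bad contribution stays harmless because $\Gh(x,\badPt)\leq C\Gamma_{L,r_L}(x,D)\leq C$ by Lemma~\ref{def:superlemma}(i) and $\|R\|_\infty\leq C(\log L)^{-2}L^2$ on $\nottoobadt$, so the bad-set term is uniformly $O(L^2/(\log L)^2)$.
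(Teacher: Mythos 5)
Your opening decomposition is exactly the paper's: writing $\gh_{L,r_L}\Lambda_L=\Lambda_L+\ph_{L,r_L}\gh_{L,r_L}\Lambda_L$ and splitting into $\Gh\Delta\ph\gh\Lambda_L+\Gh\Delta\Lambda_L$, and your bound on the first (smoothed) term is sound and essentially the paper's argument: zero mass of $\Delta\ph(y,\cdot)$, the two-level goodness factor $(\log L)^{-9}$, the Green's difference estimate of Lemma~\ref{def:super-greendifference}, the crude bound $\Lambda_L\leq(\log L)^{-2}L^2$ on $\nottoobadt$, and Lemma~\ref{def:super-gammalemma} (iv),(v) for the boundary layer. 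Your observation that $\Delta\lambda_L$ vanishes deep in the bulk by translation invariance is also correct.

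The genuine gap is the term $\Gh\Delta\Lambda_L$ (your $\Gh\Delta R$ after subtracting $\lambda_L$, which changes nothing since $R$ obeys no better regularity than $\Lambda_L$). You correctly compute that the naive bound leaves a deficit of order $(\log L)^{5/3}$, but the proposed fix cannot work as described. First, the pairs $(y,z)$ appearing in $\Delta\Lambda_L(y)$ have $|z-y|$ of order $h_L(y)\sim s_L/20$, dictated by the coarse-graining radius; Lemma~\ref{def:times-auxillemma1} with any $\alpha>0$ requires $|z-y|\leq(\log s_L)^{-\alpha}s_L$, so the advertised ``trade of restriction against gain'' is simply unavailable -- only $\alpha=0$ applies, which is the naive bound you already rejected. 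Second, the factor $(\log L)^{-9}$ attaches to the \emph{smoothed} kernel $\Delta\ph$ (or $\Delta'\ph_t^v$), not to $\Delta$ acting on $\Lambda_L$; there is no composition with $\ph$ in $\Gh\Delta\Lambda_L$ to ``recycle'', and slicing the Green's sum into the boundary shells $\mathcal{E}_j$ does not touch the bulk, where the full $(\log L)^6$ mass of $\Gh(x,\cdot)$ sits. The missing idea is the second-level perturbation expansion, which is precisely why space-goodness is defined over two scales: for $v$ away from $\badPt$ and from $\partial V_L$ one expands, for $t\in[h_L(v),2h_L(v)]$, the difference $(\Pi_{V_t(v)}-\pi_{V_t(v)})\Lambda_L(v)$ inside $V_t(v)$ using $\Delta'=1_{V_t(v)}(\Ph_t^v-\ph_t^v)$ and $\Gh_t^v$; the interior part gains $(\log h_t^v(w))^{-9}$ from $\Delta'\ph_t^v$ and uses time-goodness of $\partial V_t(v)$, while the layer near $\partial V_t(v)$ is handled by the zero mass of $\Delta'\pi_{V_t(v)}(w,\cdot)$, Lemma~\ref{def:times-auxillemma1} at scale $(\log L)^{-5/2}s_L$ (where it \emph{is} applicable), and the hitting estimates of Lemmata~\ref{def:hittingprob} (ii) and~\ref{def:hittingprob-technical} for the far boundary. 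This yields $|\Delta\Lambda_L(v)|\leq C(\log L)^{-8}L^2$ pointwise, which against $\Gh(x,V_L)\leq C(\log L)^6$ closes the estimate; the time-bad neighborhood $U(\badPt)$ and the shell near $\partial V_L$ are then absorbed as you indicate. Without this two-scale step your argument does not reach $(\log L)^{-5/3}L^2$.
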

\begin{prooof}
We have
\begin{equation*}
\Gh\Delta\gh\Lambda_L(x) = \Gh\Delta\ph\gh\Lambda_L(x) + 
\Gh\Delta\Lambda_L(x)= A_1 + A_2.
\end{equation*}
By Lemma~\ref{def:times-superlemma3}, $\Gh = \Ghg \preceq
C\Gamma$. Therefore, with $B_1= V_{L-2r_L}$, we bound $A_1$ by
\begin{eqnarray*}
  \left|A_1\right| &\leq& \left|\Gh1_{B_1}\Delta\ph\gh\Lambda_L(x)\right| + \left|\Gh1_{B_1^c}\Delta\ph\gh\Lambda_L(x)\right|\\
  &\leq& \left|\sum_{v\in B_1, w\in V_L}\Gh(x,v)\Delta\ph(v,w)\sum_{y\in
    V_L}\left(\gh(w,y)-\gh(v,y)\right)\Lambda_L(y)\right|+ C(\log L)^{-2}L^2 \\
  &\leq& C(\log L)^{-5/3}L^2,
 \end{eqnarray*}
 where in the next to last inequality we have used the bound on
 $\Lambda_L(y)$ coming from~\eqref{eq:times-badnessbound},
 Lemma~\ref{def:super-gammalemma} (iv), (v) and in the last additionally
 Lemma~\ref{def:super-greendifference}.
 For the term $A_2$, we let 
\begin{equation*}
U(\badPt)= \left\{v\in V_L : |\Delta(v,w)|
   > 0\mbox{ for some } w\in\badPt\right\}
\end{equation*}
and define $B= V_{L-5s_L}\backslash U(\badPt)$. We split into
\begin{equation*}
  A_2 = \Gh1_{B}\Delta\Lambda_L(x) + \Gh1_{B^c}\Delta\Lambda_L(x).
\end{equation*}
Lemma~\ref{def:super-gammalemma}
(iv) and an analogous application of Corollary~\ref{def:super-cor} with $U(\badPt)$ instead of $\badP_L$ yield
\begin{equation*}
\Gh(x,U(\badPt)\cup\sh_L(5s_L))\leq
C\log\log L.
\end{equation*}
Since $\Lambda_L(y)\leq (\log L)^{-2}L^2$, this estimates the
second summand of $A_2$. For the first one, 
\begin{equation*}
\Gh1_B\Delta\Lambda_L(x) \leq C\Gamma(x,B)\sup_{v\in
  B}\left|\Delta\Lambda_L(v)\right|.
\end{equation*}
Since $\Gamma(x,B) \leq C(\log L)^6$, the claim follows we show that for
$v\in B$,  
\begin{equation}
\label{eq:times-mainlemma-1}
\left|\Delta\Lambda_L(v)\right| \leq C(\log L)^{-8}L^2,
\end{equation}
which, by definition of $\Delta$, in turn follows if for all
$t\in[h_L(v), 2h_L(v)]$,
\begin{equation*}
\left|\left(\Pi_{V_t(v)}-\pi_{V_t(v)}\right)\Lambda_L(v)\right| \leq C(\log L)^{-8}L^2.
\end{equation*}
Notice that on $B$, $h_L(\cdot) = (1/20)s_L$. We now fix $v\in B$ and
$t\in[(1/20)s_L, (1/10)s_L]$. Set $\Delta'=
1_{V_t(v)}(\Ph_t^v-\ph_t^v)$ and $B'=
V_{t-2r_t}(v)$. By expansion~\eqref{eq:prel-pbe1},
\begin{equation}
\label{eq:times-mainlemma-2}
\left(\Pi_{V_t(v)}-\pi_{V_t(v)}\right)\Lambda_L(v) =
\Gh_t^v1_{B'}\Delta'\pi_{V_t(v)}\Lambda_L(v) +
\Gh_t^v1_{V_t(v)\backslash B'}\Delta'\pi_{V_t(v)}\Lambda_L(v).
\end{equation}
Since $\pi_{V_t(v)} = \ph_t^v\pi_{V_t(v)}$, we get
\begin{eqnarray*}
\left|\Gh_t^v1_{B'}\Delta'\pi_{V_t(v)}\Lambda_L(v)\right| &\leq& 
  \Gh_t^v(v,B')\sup_{w\in
    B'}||\Delta'\ph_t^v(w,\cdot)||_1\sup_{y\in \partial V_t(v)}\Lambda_L(y)\\
&\leq& C(\log s_L)^6\sup_{w\in B'}(\log h_t^v(w))^{-9}(\log L)^{-6}L^2 \\
&\leq& C(\log L)^{-9}L^2.
\end{eqnarray*}
Here, in the next to last inequality we have used the fact that $v$ is
space-good, all $y\in\partial V_t(v)$ are time-good, and
Lemma~\ref{def:super-gammalemma} (v). The last inequality follows from the
bound $h_t^v(w) \geq (1/20)r_{s_L/20}$.  For the second summand
of~\eqref{eq:times-mainlemma-2}, Lemma~\ref{def:super-gammalemma} (iv) gives
$\Gh_t^v(v,V_t(v)\backslash B') \leq C$, whence
\begin{equation*}
\left|\Gh_t^v1_{V_t(v)\backslash
  B'}\Delta'\pi_{V_t(v)}\Lambda_L(v)\right| \leq C \sup_{w\in V_t(v)\backslash
  B'}\left|\Delta'\pi_{V_t(v)}\Lambda_L(w)\right|. 
\end{equation*}
Fix $w\in V_t(v)\backslash B'$. Set $\eta =
  \dist(w,\partial V_t(v))\leq 2r_t + \sqrt{d}$ and choose $y_w\in\partial V_t(v)$ such that $|w-y_w| =
  \eta$. With
\begin{equation*}
I(y_w) = \left\{y\in\partial
    V_t(v) : |y-y_w|\leq (\log L)^{-5/2}s_L\right\},
\end{equation*}
we write
\begin{eqnarray}
\label{eq:times-mainlemma-3}
\lefteqn{\Delta'\pi_{V_t(v)}\Lambda_L(w)}\nonumber\\
& =& \sum_{y\in\partial
  V_t(v)}\Delta'\pi_{V_t(v)}(w,y)\left(\Lambda_L(y)-\Lambda_L(y_w)\right)\nonumber\\
&=& \sum_{y \in
  I(y_w)}\Delta'\pi_{V_t(v)}(w,y)\left(\Lambda_L(y)-\Lambda_L(y_w)\right)\nonumber\\
&&\, +
\sum_{y\in\partial
  V_t(v)\backslash
  I(y_w)}\Delta'\pi_{V_t(v)}(w,y)\left(\Lambda_L(y)-\Lambda_L(y_w)\right).
 \end{eqnarray}
For $y\in I(y_w)$, Lemma~\ref{def:times-auxillemma1} yields
$|\Lambda_L(y)-\Lambda_L(y_w)| \leq C(\log L)^{-7/3}s_L^2$. Therefore, 
\begin{equation*}
\sum_{y \in
  I(y_w)}\left|\Delta'\pi_{V_t(v)}(w,y)\right|\left|\Lambda_L(y)-\Lambda_L(y_w)\right|
\leq C(\log L)^{-8}L^2.
\end{equation*}
It remains to handle the second term of~\eqref{eq:times-mainlemma-3}. To
this end, let $U(w) = \{u\in V_t(v) : |\Delta'(w,u)|> 0\}$.
Using for $y\in \partial V_t(v)\backslash I(y_w)$ the simple bound
$\left|\Lambda_L(y)-\Lambda_L(y_w)\right|\leq \Lambda_L(y) + \Lambda_L(y_w)
\leq C(\log L)^{-6}L^2$,
\begin{eqnarray*}
\lefteqn{\left|\sum_{y\in\partial
    V_t(v)\backslash
    I(y_w)}\Delta'\pi_{V_t(v)}(w,y)\left(\Lambda_L(y)-\Lambda_L(y_w)\right)\right|}\\
&\leq& C(\log L)^{-6}L^2\sup_{u\in U(w)}\pi_{V_t(v)}\left(u,\partial
V_t(v)\backslash I(y_w)\right).
\end{eqnarray*}
If $u\in U(w)$ and $y\in\partial V_t(v)\backslash I(y_w)$, then 
\begin{equation*}
|u-y| \geq |y-y_w| -|y_w-u| \geq (\log L)^{-5/2}s_L - 3r_t \geq (1/2)(\log L)^{-5/2}s_L.
\end{equation*}
For such $u$, we get by Lemma~\ref{def:hittingprob} (ii)  and
  Lemma~\ref{def:hittingprob-technical}
\begin{eqnarray*}
\pi_{V_t(v)}\left(u,\partial V_t(v)\backslash I(y_w)\right) &\leq& C r_t
\sum_{y\in \partial V_t(v)\backslash I(y_w)}\frac{1}{|u-y|^d} \leq
Cr_t(\log L)^{5/2}(s_L)^{-1}\\
& \leq& C(\log L)^{-9}.
\end{eqnarray*}
This bounds the second term
of~\eqref{eq:times-mainlemma-3}. We have proved~\eqref{eq:times-mainlemma-1} and hence the lemma.
\end{prooof}
Now it is easy to prove
\begin{lemma}
\label{def:times-mainlemma}
There exists $L_0=L_0(\eta)$ such that for $L\geq L_0$ and environments $\omega\in \goods\cap\onebadt\cap\nottoobadt$,
\begin{equation*}
  \Erw_{0,\omega}\left[\tau_L\right] \in
    \left[1-f_\eta(L),\,1+f_\eta(L)\right]\cdot\Erw_0\left[\tau_L\right].
\end{equation*}
\end{lemma}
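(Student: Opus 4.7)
The plan is to use the coarse grained representation from Lemma~\ref{times-keylemma} to write $\Erw_{0,\omega}[\tau_L] = \Gh_{L,r_L}\Lambda_L(0)$ and $\Erw_0[\tau_L] = \gh_{L,r_L}\lambda_L(0)$, then compare the two by separating a spatial perturbation term from a time-comparison term. Using the resolvent identity $\Gh_{L,r_L} - \gh_{L,r_L} = \Gh_{L,r_L}\Delta\gh_{L,r_L}$ with $\Delta = 1_{V_L}(\Ph_{L,r_L}-\ph_{L,r_L})$, I would write
\begin{equation*}
\Erw_{0,\omega}[\tau_L] - \Erw_0[\tau_L] = \Gh_{L,r_L}\Delta\gh_{L,r_L}\Lambda_L(0) + \gh_{L,r_L}(\Lambda_L - \lambda_L)(0).
\end{equation*}
The first summand is exactly what Lemma~\ref{def:times-auxillemma2} controls on $\goods\cap\onebadt\cap\nottoobadt$, giving the bound $C(\log L)^{-5/3}L^2$.

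The main work is the second summand, which I would handle by splitting $V_L$ into three regions. On time-good points $y$ with $\dL(y) > r_L$, the definition of time-goodness yields $|\Lambda_L(y) - \lambda_L(y)| \leq f_\eta(s_L)\lambda_L(y)$, so this contribution is at most $f_\eta(s_L)\gh_{L,r_L}\lambda_L(0) = f_\eta(s_L)\Erw_0[\tau_L]$. On time-good points $y$ in the boundary layer $\sh_L(r_L)$, the bound $\Lambda_L(y)\vee\lambda_L(y)\leq Cr_L^2$ follows from~\eqref{eq:times-srw}, and $\gh_{L,r_L}(0,\sh_L(r_L))\leq C$ by Lemma~\ref{def:super-gammalemma}~(iv) with $j=1$, producing a contribution of order $r_L^2$. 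On the time-bad region $\badPt\subset D$ for some $D\in\mathcal{D}_L$, Assumption~\textbf{A2} enters through~\eqref{eq:times-badnessbound} in the form $\Lambda_L(y)\leq(\log L)^{-2}L^2$; combined with a uniform bound $\gh_{L,r_L}(0,D)\leq C$, obtained by estimating $\Gamma_{L,r_L}(0,D)$ directly using the diameter bound $\mathrm{diam}(D)\leq s_L/5$ (worst case being $D$ near the origin, where $a(y)\asymp s_L$ and $(a(y)+|y|)\asymp s_L$, hence $\Gamma_{L,r_L}(0,y)\leq C s_L^{-d}$ summed over at most $C s_L^d$ points), this gives a contribution of $C(\log L)^{-2}L^2$.

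Assembling these estimates and using $\Erw_0[\tau_L]\geq L^2$ from~\eqref{eq:times-srw}, one obtains
\begin{equation*}
|\Erw_{0,\omega}[\tau_L] - \Erw_0[\tau_L]| \leq \bigl(f_\eta(s_L) + C(\log L)^{-5/3}\bigr)\,\Erw_0[\tau_L],
\end{equation*}
and the conclusion reduces to the inequality $f_\eta(L) - f_\eta(s_L) \geq C(\log L)^{-5/3}$ for $L$ sufficiently large. This is the quantitative raison d'\^etre of the function $f_\eta$: since $\log s_L = \log L - 3\log\log L$,
\begin{equation*}
f_\eta(L) - f_\eta(s_L) = \frac{\eta}{3}\sum_{k=\lceil\log s_L\rceil+1}^{\lceil\log L\rceil} k^{-3/2} \asymp \eta\,\frac{\log\log L}{(\log L)^{3/2}},
\end{equation*}
which dominates $(\log L)^{-5/3}$ as $L\to\infty$.

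The main obstacle I anticipate is the uniform bound $\gh_{L,r_L}(0,D)\leq C$ and, more importantly, checking that the prefactor $(\log L)^{-2}$ coming from~\textbf{A2} really does absorb a Green's function mass of order one when $D$ happens to lie near the origin; this is precisely where the quantitative strength of~\textbf{A2} (rather than a merely qualitative tail bound) is essential. The other pieces are routine consequences of the machinery built in Section~\ref{super} and the definition of time-goodness.
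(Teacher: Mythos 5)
Your proposal is correct and follows essentially the same route as the paper: the representation $\Erw_{0,\omega}[\tau_L]=\Gh_{L,r_L}\Lambda_L(0)$ from Lemma~\ref{times-keylemma}, the resolvent splitting whose perturbation term is exactly the content of Lemma~\ref{def:times-auxillemma2}, the use of time-goodness, the boundary-layer and $\nottoobadt$ bounds against $\gh_{L,r_L}(0,\cdot)\leq C$, and the final comparison $f_\eta(L)>f_\eta(s_L)+C(\log L)^{-5/3}$. The only difference is bookkeeping: you subtract $\gh_{L,r_L}\lambda_L(0)$ and split the difference into three regions, whereas the paper brackets $A_1=\gh\Lambda_L(0)$ multiplicatively over $B=V_{L-r_L}\setminus\badPt$ and its complement — the underlying estimates are identical.
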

\begin{prooof}
  By Lemma~\ref{times-keylemma} and perturbation
  expansion~\eqref{eq:prel-pbe1}, with $\Delta = 1_{V_L}(\Ph-\ph)$,
\begin{equation*}
  \Erw_{0,\omega}\left[\tau_L\right] = \Gh\Lambda_L(0)
  = \gh\Lambda_L(0) +\Gh\Delta\gh\Lambda_L(0) = A_1 + A_2.
\end{equation*}
Set $B= V_{L-r_L}\backslash \badPt$. The term $A_1$ we split into
\begin{equation*}
A_1 = \gh1_B\Lambda_L(0) + \gh1_{V_L\backslash B}\Lambda_L(0).
\end{equation*}
Since $\gh(0,V_L\backslash B) \leq C$ and $\Lambda_L(x) \leq (\log
L)^{-2}L^2$, the second summand of $A_1$ can be bounded by $O\left((\log
  L)^{-2}\right)\Erw_0[\tau_L]$. The main contribution comes from the first
summand. First notice that
\begin{equation*}
\gh1_B\lambda_L(0) = \Erw_0\left[\tau_L\right]\left(1+O\left((\log L)^{-6}\right)\right).
\end{equation*}
Further, we have for $x\in B$,
\begin{equation*}
  \Lambda_L(x) \in \left[1-f_\eta\left((\log L)^{-3}L\right),\, 1+f_\eta\left((\log L)^{-3}L\right)\right]\cdot\lambda_L(x).
\end{equation*}
Collecting all terms, we conclude that
\begin{equation*}
\begin{split}
  A_1 \in &\left[1 - O\left((\log L)^{-2}\right)- f_\eta\left((\log
      L)^{-3}L\right),\, 1 + O\left((\log L)^{-2}\right) + f_\eta\left((\log
      L)^{-3}L\right)\right]\\
&\times\Erw_0\left[\tau_L\right]. 
\end{split}
\end{equation*}
Lemma~\ref{def:times-auxillemma2} bounds $A_2$ by $O((\log
L)^{-5/3})\Erw_0[\tau_L]$. Since for $L$ sufficiently large,
\begin{equation*}
f_\eta(L) > f_\eta\left((\log L)^{-3}L\right) +C(\log L)^{-5/3},
\end{equation*}
we arrive at
\begin{equation*}
  \Erw_{0,\omega}\left[\tau_L\right] = A_1 + A_2 \,\in\, 
  \left[1-f_\eta(L),\,1+f_\eta(L)\right] \cdot\Erw_0\left[\tau_L\right].
\end{equation*}
\end{prooof}

   \section{Proofs of the main results on sojourn times}
\label{proofmaintimes}
\begin{prooof2}{\bf of Proposition~\ref{def:times-main-prop}:}
  (i) From Lemmata~\ref{def:times-lemmanottoobadt},~\ref{def:times-lemmamanybad}
  and~\ref{def:times-mainlemma} we deduce that for large $L_0$, if $L_1\geq
  L_0$ and $L_1\leq L\leq L_1(\log L_1)^2$, we have under
  $\ctwo(\delta,L_1)$ and $\ctime(\eta,L_1)$
  \begin{eqnarray*}
    \lefteqn{\pP\left(\Erw_{0,\omega}\left[\tau_L\right] \notin \left[1-f(L),\,
          1+f(L)\right]\cdot\Erw_0\left[\tau_L\right]\right)}\\  
    &\leq& \pP\left(\bads\cup\manybadt\right) + \pP\left((\nottoobadt)^c\right)\\
    &&+\ 
    \pP\left(\left\{\Erw_{0,\omega}\left[\tau_L\right] \notin
        \left[1-f(L),\,1+f(L)\right]\cdot\Erw_0[\tau_L]\right\}\right.\\
    &&\quad\quad\,\, \left.\cap\goods\cap\onebadt\cap\nottoobadt\right)\\
    &\leq& L^{-6d}. 
  \end{eqnarray*}
  By Proposition~\ref{def:main-prop}, if $\delta>0$ is small,
  $\ctwo(\delta,L)$ holds under $\cone(\e)$ for all large $L$, provided
  $\e\leq\e_0(\delta)$. This proves part (i) of the proposition.\\
  (ii) We take $L_0$ from part (i). By
  choosing $\e$ small enough, we can guarantee that $\ctime(\eta,L_0)$
  holds. Then, by what we just proved, $\ctime(\eta,L)$
  holds for all $L\geq L_0$. Recalling~\eqref{eq:times-srw}, we therefore
  have for large $L\geq L_0$
\begin{eqnarray*}
  \lefteqn{\pP\left(\sup_{x: |x| \leq L^{3}}\sup_{y\in
        V_L(x)}\Erw_{y,\omega}\left[\tau_{V_L(x)}\right] \notin
      [1-\eta,1+\eta]\cdot L^2\right)}\\
  &\leq&
  CL^{3d}\,\pP\left(\sup_{y\in V_L}\Erw_{y,\omega}\left[\tau_L\right] \notin
    [1-\eta,1+\eta]\cdot L^2\right)\\
  &\leq&CL^{3d}\,\pP\left(\Erw_{0,\omega}\left[\tau_L\right]  <
    (1-\eta)\cdot L^2\right)+ CL^{3d}\,\pP\left(\sup_{y\in V_L}\Erw_{y,\omega}\left[\tau_L\right]  >
    (1+\eta)\cdot L^2\right)\\
  &\leq& CL^{-3d} +
  CL^{4d}\,\pP\left(\Erw_{0,\omega}\left[\tau_L\right]>(1+\eta)\cdot
    L^2\right) \leq L^{-d}.
\end{eqnarray*}  
\end{prooof2}
\begin{prooof2}{\bf of Corollary~\ref{def:times-cormoments}:}
First let $k=1$. Using Proposition~\ref{def:times-main-prop} (ii) and Borel-Cantelli, we
obtain for $\pP$-almost all $\omega$
\begin{equation}
\label{eq:times-cormoments-1}
\limsup_{L\rightarrow\infty}\sup_{x: |x| \leq L^{3}}\sup_{y\in
    V_L(x)}\Erw_{y,\omega}\left[\tau_{V_L(x)}\right]/L^2 \leq 2.
\end{equation}
For the rest of the proof, take an environment $\omega$ 
satisfying~\eqref{eq:times-cormoments-1}.  Assume $k \geq 2$. Then
\begin{eqnarray*}
\Erw_{y,\omega}\left[\tau^k_{V_L(x)}\right] &=&
\sum_{l_1,\ldots,l_k\geq 0}\Prw_{y,\omega}\left(\tau_{V_L(x)} >
  l_1,\ldots,\tau_{V_L(x)}>l_k\right)\\
&\leq& k!\sum_{0\leq l_1\leq\ldots\leq
  l_k}\Prw_{y,\omega}\left(\tau_{V_L(x)} > l_k\right).
\end{eqnarray*}
By the Markov property, using the case $k=1$ and induction in the last step,
\begin{eqnarray*}
\lefteqn{\sum_{0\leq l_1\leq\ldots\leq
  l_k}\Prw_{y,\omega}\left(\tau_{V_L(x)} > l_k\right)}\\ 
&=& \sum_{0\leq l_1\leq\ldots\leq
  l_{k-1}}\Erw_{y,\omega}\left[\sum_{l=0}^\infty\Prw_{X_{l_{k-1}},\omega}\left(\tau_{V_L(x)} >
    l\right);\, \tau_{V_L(x)}> l_{k-1}\right]\\
&\leq& \sup_{z\in V_L(x)}\Erw_{z,\omega}\left[\tau_{V_L(x)}\right]\sum_{0\leq l_1\leq\ldots\leq
  l_{k-1}}\Erw_{y,\omega}\left[\tau_{V_L(x)}> l_{k-1}\right]\\
&\leq& 2^kL^{2k},
\end{eqnarray*}
if $L = L(\omega)$ is sufficiently large.
\end{prooof2}

\begin{prooof2}{\bf of Theorem~\ref{def:times-thm2}:}
  Both statements are proved in the same way, so we restrict ourselves to
  the $\limsup$. 
  Set $\overline{\tau}_{V_L(x)} = \tau_{V_L(x)}/L^2$ and
  \begin{equation*}
    B_1
    = \left\{\limsup_{L\rightarrow\infty}\sup_{x: |x| \leq
        L^{3}}\sup_{y\in
        V_L(x)}\Erw_{y,\omega}\left[\overline{\tau}_{V_L(x)}\right]\in[1-\eta,1+\eta]\right\}.
  \end{equation*}
  By Proposition~\ref{def:times-main-prop} and Borel-Cantelli it follows that
  $\pP\left(B_1\right) = 1$ if $\e\leq \e_0$. Moreover, on $B_1$ the conclusion of
  Corollary~\ref{def:times-cormoments} holds
  true. Corollary~\ref{def:main-transience} tells us that for small enough
  $\e$, on a set $B_2$ of full measure the RWRE satisfies~\eqref{eq:main-transience-eq} and is
  therefore transient. Let $B = B_1\cap B_2$ and define  
\begin{equation*}
  \xi = \left\{\begin{array}{l@{\quad \mbox{for\ }}l}
      \limsup_{L\rightarrow\infty}\sup_{x: |x| \leq
        L^{3}}\sup_{y\in
        V_L(x)}\Erw_{y,\omega}\left[\overline{\tau}_{V_L(x)}\right]& \omega\in B\\
      0 & \omega\in\Omega\backslash B\end{array}\right..
\end{equation*}
Choose an bijective
enumeration function $g:\mathbb{Z}^d\rightarrow\mathbb{N}$ with $g(0) =
0$ and $g(x) < g(y)$ whenever $|x|< |y|$. Let $\mathcal{N}$ denote the
collection of all $\pP$-null sets in $\pF$ and set $\pF_n' =
\sigma\left(\mathcal{N}, Z_n, Z_{n+1},\ldots\right)$, where
$Z_k:\Omega\rightarrow\mathcal{P}$, $Z_k(\omega) =
\omega_{g^{-1}(k)}$, is the projection on the
$g^{-1}(k)$-th component. Let $\mathcal{T} =
\cap_n\pF_n'$ be the (completed) tail $\sigma$-field.  We show that $\xi$
is measurable with respect to $\mathcal{T}$, implying that $\xi$ is
$\pP$-almost surely constant. Take $\omega\in B$. 
We claim that for each fixed ball $V_l$ around the origin, $T_l$ its hitting time,
\begin{equation}
\label{eq:times-thm2-1} 
\xi(\omega) = \underbrace{\limsup_{L\rightarrow\infty}\sup_{x: |x| \leq
    L^{3}}\sup_{y\in V_L(x)\backslash
    V_l}\Erw_{y,\omega}\left[\overline{\tau}_{V_L(x)};\,
    T_l = \infty\right]}_{= \xi_l(\omega)}.
\end{equation}
But then also
\begin{equation*}
\xi(\omega) = \lim_{l\rightarrow\infty}\xi_l(\omega).
\end{equation*}
Since $\xi_l$ depends only on the random variables $\omega_x$ with $|x| >
l$, $\xi$ is in fact measurable with respect to $\mathcal{T}$, provided the
above representation holds true. Therefore, we only have to
prove~\eqref{eq:times-thm2-1}. Obviously, $\xi(\omega)\geq
\xi_l(\omega)$. For the other direction, by the Markov property in the
first inequality,
\begin{eqnarray*}
  \Erw_{y,\omega}\left[\overline{\tau}_{V_L(x)}\right] &=&
  \Erw_{y,\omega}\left[\overline{\tau}_{V_L(x)};\, \tau_{V_L(x)} \leq
    L\right] + \Erw_{y,\omega}\left[\overline{\tau}_{V_L(x)};\,
    \tau_{V_L(x)}>L\right]\\
  &\leq& 2L^{-1} + \Erw_{y,\omega}\left[\Erw_{X_L,\omega}\left[\overline{\tau}_{V_L(x)}\right];\,
    \tau_{V_L(x)}>L\right]\\
  &\leq& 2L^{-1} +
  \Erw_{y,\omega}\left[\Erw_{X_L,\omega}\left[\overline{\tau}_{V_L(x)};\, T_l<\infty\right];\,
    \tau_{V_L(x)}>L\right]\\
  && +\ 
  \Erw_{y,\omega}\left[\Erw_{X_L,\omega}\left[\overline{\tau}_{V_L(x)};\,
      T_l=\infty\right];\, 
    \tau_{V_L(x)}>L\right]. 
\end{eqnarray*} 
Clearly,
\begin{equation*}
\Erw_{y,\omega}\left[\Erw_{X_L,\omega}\left[\overline{\tau}_{V_L(x)};\, T_l=\infty\right];\,
  \tau_{V_L(x)}>L\right] \leq \sup_{y\in V_L(x)\backslash
  V_l}\Erw_{y,\omega}\left[\overline{\tau}_{V_L(x)};\, T_l=\infty\right], 
\end{equation*}
so $\xi(\omega)\leq \xi_l(\omega)$ will follow if we show that
\begin{equation}
\label{eq:times-thm2-2}
\limsup_{L\rightarrow\infty}\sup_{x: |x| \leq
    L^{3}}\sup_{y\in V_L(x)}\Erw_{y,\omega}\left[\Erw_{X_L,\omega}\left[\overline{\tau}_{V_L(x)};\, T_l<\infty\right];\,
    \tau_{V_L(x)}>L\right] = 0. 
\end{equation}
By Cauchy-Schwarz in the first and Corollary~\ref{def:times-cormoments}
in the last inequality, for large $L$,
\begin{eqnarray*} 
  \lefteqn{\Erw_{y,\omega}\left[\Erw_{X_L,\omega}\left[\overline{\tau}_{V_L(x)};\, T_l<\infty\right];\,
      \tau_{V_L(x)}>L\right]}\\
  &\leq&\Erw_{y,\omega}\left[\Erw_{X_L,\omega}\left[\overline{\tau}_{V_L(x)}^2\right]^{1/2}\Prw_{X_L,\omega}\left(T_l<\infty\right)^{1/2};\,\tau_{V_L(x)}>L\right]\\  
  &\leq&\sup_{z\in
    V_L(x)}\Erw_{z,\omega}\left[\overline{\tau}_{V_L(x)}^2\right]^{1/2}\Erw_{y,\omega}\left[\Prw_{X_L,\omega}\left(T_l<\infty\right)^{1/2}\right]\\
  &\leq& 3\Erw_{y,\omega}\left[\Prw_{X_L,\omega}\left(T_l<\infty\right)^{1/2}\right].
\end{eqnarray*} 
For the probability inside the expectation, note that as a consequence
of~\eqref{eq:main-transience-eq}, for each $\vartheta > 0$ we can choose
$K=K(\omega,l)$ such that
\begin{equation*}
  \sup_{z: |z| \geq K}\Prw_{z,\omega}\left(T_l < \infty\right) \leq \vartheta^2/81.
\end{equation*}
Therefore, replacing the probability by $1$ on $\{|X_L|< K\}$,
\begin{equation*}
\Erw_{y,\omega}\left[\Prw_{X_L,\omega}\left(T_l<\infty\right)^{1/2}\right]
\leq \vartheta/9 + \Prw_{y,\omega}\left(|X_L|< K\right).
\end{equation*}
Using again~\eqref{eq:main-transience-eq}, there exists $K' =
K'(\omega,K)$ such that
\begin{equation*}
\sup_{y: |y| \geq K'}\Prw_{y,\omega}\left(|X_L|< K\right)\leq\sup_{y: |y| \geq
  K'}\Prw_{y,\omega}\left(T_K<\infty\right) \leq \vartheta/9.
\end{equation*}
 On the other hand, for each fixed $K'>0$ and $K>0$, transience also implies
\begin{equation*}
\sup_{y\in V_{K'}}\Prw_{y,\omega}\left(|X_L| < K\right)\leq \vartheta/9,
\end{equation*}
if $L$ is large enough. Altogether, we have shown that for $L$ sufficiently
large,
\begin{equation*}
\sup_{x: |x|\leq L^3}\sup_{y\in V_L(x)}\Erw_{y,\omega}\left[\Erw_{X_L,\omega}\left[\overline{\tau}_{V_L(x)};\, T_l<\infty\right];\,
    \tau_{V_L(x)}>L\right] \leq \vartheta.
\end{equation*}
Since $\vartheta$ can be chosen arbitrarily small, this
shows~\eqref{eq:times-thm2-2} from which we deduce~\eqref{eq:times-thm2-1}.
\end{prooof2}

   \section{Appendix}
\subsection{Some difference estimates}
In this section we collect some difference estimates of (non)-smoothed
exit distributions needed to prove Lemma~\ref{def:est-phi}
(i) and (iii). The first technical lemma compares the exit measure on
 $\partial V_L$ of simple random walk to that on $\partial C_L$ of standard
 Brownian motion. 
\begin{lemma}
\label{def:app-kernelest-technical}
Let $\beta,\eta >0$ with $3\eta<\beta<1$. For large
$L$, there exists a constant $C = C(\beta,\eta) > 0$ such that for
$A\subset\mathbb{R}^d$, $A^\beta = \{y\in\mathbb{R}^d : \dist(y,A)\leq
L^\beta\}$ and $x\in V_L$ with $\dL(x) > L^\beta$, the
following holds.
%
\begin{enumerate}
\item $\pi_L(x,A) \leq
\pibm_L\left(x,A^\beta\right)\left(1+CL^{-(\beta-3\eta)}\right) + L^{-(d+1)}.$
\item $\pibm_L(x,A) \leq \pi_L(x,A^{\beta})\left(1+CL^{-(\beta-3\eta)}\right) + L^{-(d+1)}.$
\end{enumerate}
\end{lemma}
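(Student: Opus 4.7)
The plan is to couple a simple random walk $X$ starting at $x$ with a $d$-dimensional Brownian motion $B$ of covariance $(1/d)I$ starting at $x$ on one probability space, via a KMT/Einmahl-type strong approximation, and then compare $X_{\tau_L}\in\partial V_L$ with $B_{\tau_{C_L}}\in\partial C_L$. Write $\pP$ for the joint law. The approximation furnishes an event $\mathcal{G}$ with $\pP(\mathcal{G}^c)\leq L^{-q}$ for any fixed $q$, on which $\sup_{n\leq L^3}|X_n-B_n|\leq L^\eta$ for all large $L$. Combining with standard exit-time tail bounds $\pP(\tau_L\vee\tau_{C_L}>L^3)\leq L^{-q}$, I would work on the reduced event $\mathcal{G}_0\subset\mathcal{G}$ on which both walks have exited by time $L^3$ while staying $L^\eta$-close up to exit; for large $L$, $\pP(\mathcal{G}_0^c)\leq\tfrac{1}{2}L^{-(d+1)}$.

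The next step is to show that on a further good event $\mathcal{G}_1\subset\mathcal{G}_0$ with $\pP(\mathcal{G}_1^c)\leq L^{-(d+1)}$, the two exit positions satisfy $|X_{\tau_L}-B_{\tau_{C_L}}|\leq L^{3\eta}$. At time $\sigma:=\tau_L\wedge\tau_{C_L}$ both walks are confined to the tube $\{|\,|y|-L\,|\leq L^\eta\}$; the walk that has not yet exited then takes only $O(L^{2\eta})$ additional time to do so (by Brownian scaling, or, on the SRW side, by an appeal to Lemma~\ref{def:hittingprob}), and during this short interval $B$ displaces by at most $L^{3\eta}$ with high probability.

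Putting the two pieces together, $\{X_{\tau_L}\in A\}\cap\mathcal{G}_1\subset\{B_{\tau_{C_L}}\in A^{L^{3\eta}}\}$, so
\[
\pi_L(x,A)\leq\pibm_L\bigl(x,A^{L^{3\eta}}\bigr)+\pP(\mathcal{G}_1^c)\leq\pibm_L\bigl(x,A^{L^{3\eta}}\bigr)+L^{-(d+1)}.
\]
Since $3\eta<\beta$ we have $A^{L^{3\eta}}\subset A^\beta$, and the promotion to $\pibm_L(x,A^\beta)\bigl(1+CL^{-(\beta-3\eta)}\bigr)$ comes from the explicit Poisson kernel~\eqref{eq:est-poissonkernel}: under the standing assumption $\dL(x)>L^\beta$, every $z\in\partial C_L$ satisfies $|x-z|\geq L^\beta$, so shifting $z$ by at most $L^{3\eta}$ alters $|x-z|^{-d}$ by a multiplicative factor $1+O(L^{-(\beta-3\eta)})$. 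Integrating against surface measure on the strip separating $A^{L^{3\eta}}$ from $A^\beta$ yields the stated factor. Part (ii) follows symmetrically: on the same event $\mathcal{G}_1$, $\{B_{\tau_{C_L}}\in A\}\subset\{X_{\tau_L}\in A^{L^{3\eta}}\}$, and the analogous multiplicative factor for the discrete Poisson kernel is supplied by Lemma~\ref{def:hittingprob}(iii).

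The principal obstacle is the second step, controlling $|\tau_L-\tau_{C_L}|$ on $\mathcal{G}$. Spatial closeness of $X$ and $B$ does not directly yield closeness of exit times: a walk tangent to $\partial C_L$ can in principle hover in the boundary tube for a long time. The argument has to show that such hovering is rare, uniformly in the (random) position on $\partial C_L$ reached at time $\sigma$, which requires delicate exit-time tails for walks confined to a thin shell. This must be carried out both for Brownian motion (via Gaussian tail estimates) and for simple random walk (via the hitting estimates from Section~\ref{estimates} together with Lemma~\ref{def:lemmalawler}(iii)), and the bookkeeping with $\eta$ and $\beta$ must be arranged so that the final spatial error $L^{3\eta}$ fits below $L^\beta$ and leaves the $L^{-(\beta-3\eta)}$-gain intact.
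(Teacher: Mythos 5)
There is a genuine gap, and it sits exactly at the step you flag as the ``principal obstacle'': the event $\mathcal{G}_1$ on which $|X_{\tau_L}-B_{\tau_{C_L}}|\leq L^{3\eta}$ cannot have failure probability $L^{-(d+1)}$. At time $\sigma=\tau_L\wedge\tau_{C_L}$ the not-yet-exited process starts at distance at most $O(L^\eta)$ from its boundary, and from such a point both the remaining exit time and the lateral displacement of the exit point have heavy, Cauchy-type tails: the probability that the exit point lands at distance more than $\rho$ from the starting point is of order $L^{\eta}/\rho$, so the failure probability at scale $\rho=L^{3\eta}$ is only of order $L^{-2\eta}$ (and the probability of ``hovering'' longer than $L^{4\eta}$ is of order $L^{-\eta}$). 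Since $\eta$ is small, this is enormously larger than $L^{-(d+1)}$, and as an additive error it would swamp $\pi_L(x,A)$ itself, which can be as small as $L^{-(d-1)}$. A telltale sign that the structure is off is that, were your claim about $\mathcal{G}_1$ true, you would obtain $\pi_L(x,A)\leq \pibm_L(x,A^\beta)+L^{-(d+1)}$ with no multiplicative factor at all (since $A^{L^{3\eta}}\subset A^\beta$ trivially); the factor $1+CL^{-(\beta-3\eta)}$ in the statement is not a Poisson-kernel smoothness correction but is precisely the price of the heavy-tailed boundary-layer displacement, and it therefore has to enter multiplicatively, not through an additive bad event.

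The way the paper makes it multiplicative is worth internalizing: it compares the SRW exit from $V_L$ with the Brownian exit from a slightly \emph{larger} ball $C_{L'}$, $L'=L+L^\eta$ (rescaling $A^\beta$ to $A_{'}^\beta\subset\partial C_{L'}$ costs only a factor $1-CL^{-(\beta-\eta)}$ by the explicit Poisson kernel), so that on the coupling event the Brownian particle is still inside $C_{L'}$ when the walk exits $V_L$. The only dangerous event is then $\{X_{\tau_L}\in A,\ W_{\tau'}\notin A_{'}^\beta\}$, and on the coupling event this forces the \emph{walk}, started from its exit point $y\in A$, to hit the $C_1\log L$-neighborhood $U$ of $\partial C_{L'}\setminus A_{'}^\beta$ before leaving $V_{L''}$, $L''=L+2L^\eta$. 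By the strong Markov property at $\tau_L$ this event has probability at most $\Prw_x(X_{\tau_L}\in A)\sup_{y\in A}\Prw_y(T_U<\tau_{L''})$, and since every $z\in U$ satisfies $|y-z|\geq cL^\beta$ and $\dist_{L''}(z)\leq 2L^\eta$, Lemma~\ref{def:hittingprob} (ii) together with Lemma~\ref{def:hittingprob-technical} gives $\Prw_y(T_U<\tau_{L''})\leq CL^{2\eta}(\log L)L^{-\beta}\leq CL^{-(\beta-3\eta)}$ — which is exactly the multiplicative error, while the additive $L^{-(d+1)}$ comes only from the coupling failure and the $L^{5/2}$ time-horizon cut-off. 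Your coupling set-up and the time-horizon reduction are fine and match the paper; to repair the proposal you should abandon the attempt to localize $B_{\tau_{C_L}}$ near $X_{\tau_L}$ outright and instead condition on $\{X_{\tau_L}\in A\}$ and control the subsequent lateral excursion by a hitting estimate in the shell, as above (and symmetrically with the Brownian hitting bound of Lemma~\ref{def:hittingprob-bm} for part (ii)).
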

\begin{prooof}
  (i) Set $L' = L + L^\eta$, $L''= L +2L^\eta$ and denote by $A_{'}^\beta$
  the image of $A^\beta$ on $\partial C_{L'}$ under the map $y\mapsto
  (L'/L)y$. With $\hat{x} = (L'/L)x$, using the Poisson kernel
  representation~\eqref{eq:est-poissonkernel} in the second equality,
\begin{equation*}
  \pibm_L\left(x,A^\beta\right) =
  \pibm_{L'}\left(\hat{x},A_{'}^\beta\right) =
  \int\limits_{A_{'}^\beta}\frac{\left((L')^2 -
      |\hat{x}|^2\right)|x-y|^d}{\left((L')^2-|x|^2\right)|\hat{x}-y|^d}\pibm_{L'}(x,dy).
\end{equation*}
Since $|x| \leq L+1 -L^\beta$ and $0<\eta < \beta < 1$, an evaluation of
the integrand shows
\begin{equation}
\label{eq:app-kernelest-technical-eq1}
  \pibm_L\left(x,A^\beta\right) \geq\pibm_{L'}(x,A_{'}^\beta)\left(1-C(\beta,\eta)L^{-(\beta-\eta)}\right)
\end{equation}
for some positive constant $C(\beta,\eta)$. By~\cite{ZAI}, Corollary 1, for
each $k\in\mathbb{N}$ there exists a constant $C_1=C_1(k) > 0$ such that
for each integer $n\geq 1$, one can construct on the same probability space
a Brownian motion $W_t$ with covariance matrix $d^{-1}I_d$ as well as
simple random walk $X_n$, both starting in $x$ and satisfying (with
$\mathbb{Q}$ denoting the probability measure on that space)
\begin{equation}
\label{eq:app-kernelest-technical-eq2}
  \mathbb{Q}\left(\max_{0\leq m\leq n}\left|X_m-W_m\right| > C_1\log
    n\right) \leq C_1 n^{-k}.
\end{equation}
Choose $k >(2/5)(d+1)$ and let $C_1(k)$ be the corresponding constant. The
following arguments hold for sufficiently large $L$. By standard results on the
oscillation of Brownian paths, 
\begin{equation}
\label{eq:app-kernelest-technical-eq3}
\mathbb{Q}\left(\sup_{0\leq t\leq L^{5/2}}\left|W_{\lfloor
      t\rfloor}-W_t\right| > (5/2)C_1\log L\right) \leq (1/3)L^{-(d+1)}.
\end{equation}
With
\begin{equation*}
B_1= \left\{\sup_{0\leq t\leq L^{5/2}}\left|X_{\lfloor
      t\rfloor}-W_t\right| \leq 5C_1\log L\right\},
\end{equation*}
we deduce from~\eqref{eq:app-kernelest-technical-eq2}
and~\eqref{eq:app-kernelest-technical-eq3} that
\begin{equation*}
  \mathbb{Q}\left({B}^c_1\right) \leq (2/3)L^{-(d+1)}.
\end{equation*}
Let $\tau' = \inf\left\{t\geq 0 : W_t \notin C_{L'}\right\}$ and
$B_2= \{\tau'\vee\tau_{L''} \leq L^{5/2}\}$. We claim that 
\begin{equation}
\label{eq:app-kernelest-technical-eq4}
\mathbb{Q}\left({B}^c_2\right) \leq
(1/3)L^{-(d+1)}.
\end{equation}
By the central limit theorem, one finds a constant $c>0$ with
$\mathbb{Q}\left(\tau_{L''} \leq {(L'')}^2\right)\geq c$ for $L$ large.  By
the Markov property, we obtain $\mathbb{Q}\left(\tau_{L''} > L^{5/2}\right)
\leq (1-c)^{L^{1/3}}$. A similar bound holds for the probability
$\mathbb{Q}\left(\tau' > L^{5/2}\right)$, and~\eqref{eq:app-kernelest-technical-eq4} follows. Since $\pibm_L$ is
unchanged if the Brownian motion is replaced by a Brownian motion with
covariance $d^{-1}I_d$, we have
\begin{eqnarray}
\label{eq:app-kernelest-technical-eq5}
  \pibm_{L'}(x,A_{'}^\beta) &\geq& \mathbb{Q}\left(X_{\tau_L}\in A,\, W_{\tau'}\in A_{'}^\beta\right)\\ 
  &\geq& \Prw_x\left(X_{\tau_L}\in A\right) - \mathbb{Q}\left(X_{\tau_L}\in
    A,\, W_{\tau'}\notin A_{'}^\beta,\, B_1\cap B_2\right) - L^{-(d+1)}.\nonumber
\end{eqnarray}
Let $U=\left\{z\in\mathbb{Z}^d : \dist(z,(\partial C_{L'}\backslash A_{'}^\beta))
\leq 5C_1\log L\right\}$. 
Then
\begin{equation*}
\mathbb{Q}\left(X_{\tau_L}\in A,\, W_{\tau'}\notin A_{'}^\beta,\, B_1\cap B_2\right) \leq
\Prw_x\left(X_{\tau_L}\in A,\, T_U < \tau_{L''}\right).
\end{equation*}
By the strong Markov property,
\begin{equation*}
  \Prw_x\left(X_{\tau_L}\in A,\, T_U < \tau_{L''}\right)\leq
  \Prw_x\left(X_{\tau_L}\in A\right)\sup_{y\in A}\Prw_y\left( T_U < \tau_{L''}\right).
\end{equation*}
Further, there exists a constant $c>0$ such that for $y\in A$ and $z\in U$,
we have $|y-z| \geq cL^{\beta}$ and $\dist_{L''}(z) \leq\dist_{L''}(y) \leq
2L^\eta$. Therefore, an application of first Lemma~\ref{def:hittingprob}
(ii) and then Lemma~\ref{def:hittingprob-technical} yields
\begin{equation*}
  \Prw_y\left(T_U < \tau_{L''}\right) \leq C L^{2\eta}\sum_{z\in
    U}\frac{1}{{|y-z|}^d} \leq  CL^{2\eta}(\log L) L^{-\beta}\leq CL^{-(\beta-3\eta)},
\end{equation*}
uniformly in $y\in A$. Going back
to~\eqref{eq:app-kernelest-technical-eq5}, we arrive at
\begin{equation*}
  \pibm_{L'}\left(x,A_{'}^\beta\right) \geq \pi_L(x,A)\left(1-CL^{-(\beta-3\eta)}\right)-L^{-(d+1)}.
\end{equation*}
Together with~\eqref{eq:app-kernelest-technical-eq1}, this shows (i).\\
(ii) The ideas are the same as in (i), so we only sketch the proof. Set
$L'=L-L^\eta$, $L'' = L+L^\eta$. Denote by $A_{'}$ the image of $A$ on
$\partial{C}_{L'}$ under $y\mapsto (L'/L)y$. Similar to~\eqref{eq:app-kernelest-technical-eq1}, one finds
\begin{equation*}
  \pibm_L\left(x,A\right) \leq\pibm_{L'}(x,A_{'})\left(1+C(\beta,\eta)L^{-(\beta-\eta)}\right).
\end{equation*}
With $B_1$, $B_2$, $\tau'$ and $W_t$, $\mathbb{Q}$ defined as above,
$\Pbm_x$ the law of $W_t$ conditioned on $W_0=x$,
\begin{equation*}
\pi_L(x,A^{\beta}) \geq \Pbm_x\left(W_{\tau'}\in A_{'}\right) - \mathbb{Q}\left(W_{\tau'}\in A_{'},\,X_{\tau_L}\notin
A^{\beta},\,B_1\cap B_2\right) - L^{-(d+1)}.
\end{equation*}
Then, with $U=\left\{z\in\mathbb{R}^d : \dist(z,(\partial C_{L}\backslash A^{\beta}))
\leq 5C_1\log L\right\}$, $\tau'' = \inf\left\{t\geq 0 : W_t \notin C_{L''}\right\}$, 
\begin{equation*}
\mathbb{Q}\left(W_{\tau'}\in A_{'},\,X_{\tau_L}\notin
  A^{\beta},\, B_1\cap B_2\right)\leq \Pbm_x\left(W_{\tau'}\in
  A_{'}\right)\sup_{y\in A_{'}}\Pbm_y\left(T_U<\tau''\right).
\end{equation*}
Using the hitting estimates for Brownian motion from
Lemma~\ref{def:hittingprob-bm}, one obtains for $y\in A_{'}$ 
\begin{equation*}
\Pbm_y\left(T_U<\tau''\right)\leq CL^{-(\beta-3\eta)}.
\end{equation*}
Altogether, (ii) follows.
\end{prooof}

We write $\phbm_{\psi}(x,z)$ for the density of
$\phbm_{\psi}(x,dz)$ with respect to $d$-dimensional Lebesgue measure,
i.e. for $\psi = (m_x)_{x\in\mathbb{R}^d}$, 
\begin{equation}
\label{eq:app-cgpsibmdens}
\phbm_{\psi}(x,z)=\frac{1}{m_x}\varphi\left(\frac{|z-x|}{m_x}\right)\pibm_{C_{|z-x|}}(0,z-x).
 \end{equation}
\begin{lemma}
\label{def:app-kernelest}
There exists a constant $C> 0$ such that 
for large $L$, $\psi=(m_y)\in\mathcal{M}_L$, $x,x'\in\{(2/3)L\leq |y|\leq
(3/2)L\}\cap\mathbb{Z}^d$ and any $z,z'\in\mathbb{Z}^d$, 
\begin{enumerate}
\item $\ph_{\psi}(x,z) \leq CL^{-d}$. 
\item $\phbm_{\psi}(x,z)\leq CL^{-d}$.
\item $|\ph_{\psi}(x,z) - \ph_{\psi}(x',z)| \leq
  C|x-x'|L^{-(d+1)}\log L$.
\item $|\ph_{\psi}(x,z) - \ph_{\psi}(x,z')| \leq C|z-z'|L^{-(d+1)}\log L$.
\item $|\phbm_{\psi}(x,z) - \phbm_{\psi}(x',z)| \leq C|x-x'|L^{-(d+1)}$.
\item $|\phbm_{\psi}(x,z) - \phbm_{\psi}(x,z')|
  \leq C|z-z'|L^{-(d+1)}$.
\item $|\ph_{\psi}(x,z)-\phbm_{\psi}(x,z)| \leq L^{-(d+1/4)}$.
\end{enumerate}
\end{lemma}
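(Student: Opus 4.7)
The plan is to handle the seven claims in three groups: the pointwise upper bounds (i)--(ii), the Lipschitz-type estimates (iii)--(vi), and the lattice/Brownian comparison (vii). Throughout, $\psi\in\mathcal{M}_L$ supplies the key structural information that $m_x\in (L/10,5L)$ is Lipschitz in $x$ with constant $10$, and $\varphi\in C^\infty$ is supported in $(1,2)$.

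For (ii), I read off the explicit formula~\eqref{eq:app-cgpsibmdens}: the product of $m_x^{-1}\varphi(|z-x|/m_x)=O(L^{-1})$ and $(d\alpha(d)|z-x|^{d-1})^{-1}=O(L^{-(d-1)})$ yields the claim. For (i), I fix $z$ and observe that the set of $t$ for which $z\in\partial V_t(x)$ is an interval of length at most $1$: indeed, $V_t(x)$ is piecewise constant in $t$, and $z$ leaves $\partial V_t(x)$ as soon as $t$ crosses the integer distance $|z-x|$. Applying Lemma~\ref{def:lemmalawler} (i) to bound $\pi_{V_t(x)}(x,z)\leq Ct^{-(d-1)}\leq CL^{-(d-1)}$ on this interval, and multiplying by the $O(L^{-1})$ prefactor, gives (i).

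For (v) and (vi), I differentiate the smooth explicit Brownian density. Each partial derivative in $x$ or $z$ costs one factor of $L^{-1}$ because all relevant length scales are of order $L$; the Lipschitz bound on $x\mapsto m_x$ is what lets the $x$-derivative inherit this scaling. A first-order Taylor expansion then gives (v) and (vi). For (iii), I decompose
\begin{align*}
\ph_\psi(x,z)-\ph_\psi(x',z)&=\frac{1}{m_x}\int\varphi(t/m_x)\bigl[\pi_{V_t(x)}(x,z)-\pi_{V_t(x')}(x',z)\bigr]\dt t\\
&\quad+\int\Bigl[\tfrac{1}{m_x}\varphi(t/m_x)-\tfrac{1}{m_{x'}}\varphi(t/m_{x'})\Bigr]\pi_{V_t(x')}(x',z)\dt t.
\end{align*}
The second term is bounded directly by $C|x-x'|L^{-(d+1)}$ using the Lipschitz property of $m$ together with (i). For the first, I translate $\pi_{V_t(x)}(x,z)=\pi_{V_t}(0,z-x)$, reducing the problem to comparing the discrete harmonic measure at the two nearby boundary points $z-x$ and $z-x'$, which I tackle via the Green's function representation $\pi_{V_t}(0,y)=\sum_{y'\in V_t,\,y'\sim y}p^{\mathrm{RW}}(y',y)g_{V_t}(0,y')$ together with Lemma~\ref{def:lemmalawler} (ii); the $\log L$ factor arises from summing the pointwise gradient bound over the mismatching portions of $\partial V_t(x)$ and $\partial V_t(x')$ across the $t$-integration. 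Part (iv) is handled analogously with the roles of $x$ and $z$ swapped.

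For (vii), I leverage the set-level coupling of Lemma~\ref{def:app-kernelest-technical} and upgrade it to a pointwise density bound via the built-in smoothing. Choose $\beta=1/2$, $\eta=1/12$ so that $\beta-3\eta=1/4$, and set $A=V_{L^\beta}(z)\cap\mathbb{Z}^d$, with $|A|\asymp|A^\beta|\asymp L^{d\beta}$. By (i) and (iv), $\ph_\psi(x,A)=|A|\ph_\psi(x,z)+O(|A|L^{\beta-(d+1)}\log L)$, and by (ii) and (vi), $\phbm_\psi(x,A^\beta)=|A^\beta|\phbm_\psi(x,z)+O(|A^\beta|L^{\beta-(d+1)})$. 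Integrating Lemma~\ref{def:app-kernelest-technical} in $t$ against $m_x^{-1}\varphi(t/m_x)\dt t$ yields $\ph_\psi(x,A)\leq\phbm_\psi(x,A^\beta)(1+CL^{-1/4})+L^{-(d+1)}$ and the reverse inequality with $A$ and $A^\beta$ swapped; dividing by $|A|$ and collecting the various $L^{-d}$-scale terms produces the claimed $L^{-(d+1/4)}$ bound with room to spare. The main obstacle is the bookkeeping in (iii): the fact that the supports $\partial V_t(x)$ and $\partial V_t(x')$ are shifted copies of each other forces the $t$-range over which $z$ contributes to $\ph_\psi(\cdot,z)$ to shift with the base point, and showing this ``boundary-of-boundary'' mismatch contributes only a logarithmic factor is the most delicate piece of the argument.
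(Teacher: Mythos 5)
Parts (i), (ii), (v), (vi), and the splitting in (iii) into a weight-difference term plus a main term are fine and essentially the paper's argument. The genuine gap is in your treatment of the main term of (iii)/(iv). After translating to $\pi_{V_t}(0,z-x)$ versus $\pi_{V_t}(0,z-x')$, you propose to compare the two via the last-step Green's function representation, Lemma~\ref{def:lemmalawler} (ii), and a ``pointwise gradient bound'' at the two boundary points. But Lemma~\ref{def:lemmalawler} (ii) controls the variation of $\pi_L(\cdot,y)$ in the \emph{starting} point, deep inside the ball; it says nothing about the variation of $\pi_{V_t}(0,\cdot)$ (or of $g_{V_t}(0,\cdot)$) across neighbouring \emph{boundary} points, and no pointwise bound of order $L^{-d}$ holds there: because of lattice roughness, neighbouring boundary points of $V_t$ can carry harmonic measures differing by their own order $L^{-(d-1)}$ (different numbers of interior neighbours, say). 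Worse, for fixed $t$ at most one of $z-x$, $z-x'$ typically lies on $\partial V_t$, so the pointwise difference is of size $L^{-(d-1)}$ on a $t$-set of length of order $1$; the cancellation you need only appears after matching radii and integrating in $t$. The paper's proof does exactly this: it reparametrizes $t\mapsto t'(t)$ so that both points are boundary points of the respective balls $V=V_t(x)$, $V'=V_{t'}(x')$, and then bounds $\pi_{V}(x,z)-\pi_{V'}(x',z)$ by a first-exit decomposition over the thin symmetric difference $V\setminus V'$: each $y$ there contributes $\Prw_x\left(T_y<\tau_V\right)\pi_V(y,z)=O\left(L^{-(d-1)}|y-z|^{-d}\right)$, the $t$-measure of $\{t: y\in V\setminus V'\}$ is $O(|y-z|/L)$, and summing via Lemma~\ref{def:hittingprob-technical} produces the $\log L$. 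Your sketch contains no substitute for this step, and the tools you cite cannot supply it.

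In (vii) your strategy (average over an intermediate ball, apply Lemma~\ref{def:app-kernelest-technical} inside the $t$-integral, de-average using (iv)/(vi)) is the paper's, but your parameter choice breaks it: you average over $A=V_{L^{1/2}}(z)$ while the comparison lemma enlarges $A$ by $L^{\beta}=L^{1/2}$, so $|A^{\beta}|/|A|$ tends to $2^{d}$, not to $1$. Since $\phbm_{\psi}(x,z)$ is itself of order $L^{-d}$ on its support, dividing by $|A|$ then only gives $\ph_{\psi}(x,z)\leq 2^{d}\,\phbm_{\psi}(x,z)+\dots$, which is useless for an additive error $L^{-(d+1/4)}$. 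You need the averaging radius $L^{\alpha}$ strictly larger than the enlargement scale $L^{\beta}$, as in the paper's choice $\alpha=2/3$, $\beta=1/3$, $0<\eta<1/40$: then the relative volume and boundary-layer errors are $O(L^{\beta-\alpha})\leq L^{-1/4}$, the coupling error is $O(L^{-(\beta-3\eta)})\leq L^{-1/4}$, and the de-averaging error $CL^{\alpha-(d+1)}\log L$ stays below $L^{-(d+1/4)}$.
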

\begin{corollary}
\label{def:app-kernelest-cor}
In the situation of the preceding lemma,
 \begin{enumerate}
 \item
 \begin{eqnarray*}
    \lefteqn{|\ph_{\psi}(x,z) - \ph_{\psi}(x',z)|}\\ 
    &\leq& C\min\left\{|x-x'|L^{-(d+1)}\log L,\,
      |x-x'|L^{-(d+1)}+
      L^{-(d+1/4)}\right\}.
  \end{eqnarray*}
\item
  \begin{eqnarray*}
    \lefteqn{|\ph_{\psi}(x,z) - \ph_{\psi}(x,z')|}\\ 
    &\leq&
    C\min\left\{|z-z'|L^{-(d+1)}\log L,\,|z-z'|L^{-(d+1)}+
      L^{-(d+1/4)}\right\}.
  \end{eqnarray*}
\end{enumerate}
\end{corollary}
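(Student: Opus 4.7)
The corollary is a straightforward combination of the estimates already collected in Lemma~\ref{def:app-kernelest}, so my proof plan is quite short: I would derive each of the two bounds inside the $\min$ separately and then take the pointwise minimum.

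For part (i), the first bound $C|x-x'|L^{-(d+1)}\log L$ is exactly the content of Lemma~\ref{def:app-kernelest} (iii), so nothing has to be done there. For the second bound $C(|x-x'|L^{-(d+1)}+L^{-(d+1/4)})$ I would insert $\phbm_\psi$ via the triangle inequality:
\begin{equation*}
|\ph_\psi(x,z)-\ph_\psi(x',z)|\leq |\ph_\psi(x,z)-\phbm_\psi(x,z)|+|\phbm_\psi(x,z)-\phbm_\psi(x',z)|+|\phbm_\psi(x',z)-\ph_\psi(x',z)|.
\end{equation*}
The two outer terms are each bounded by $L^{-(d+1/4)}$ thanks to Lemma~\ref{def:app-kernelest} (vii), while the middle term is controlled by $C|x-x'|L^{-(d+1)}$ via Lemma~\ref{def:app-kernelest} (v). Combining these two estimates gives the desired inequality after taking the minimum.

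Part (ii) is handled in an entirely parallel way. The bound $C|z-z'|L^{-(d+1)}\log L$ comes directly from Lemma~\ref{def:app-kernelest} (iv). For the other bound I would again insert $\phbm_\psi$ and use Lemma~\ref{def:app-kernelest} (vii) to control the discrete-continuous discrepancy at both $z$ and $z'$, together with Lemma~\ref{def:app-kernelest} (vi) to handle $|\phbm_\psi(x,z)-\phbm_\psi(x,z')|\leq C|z-z'|L^{-(d+1)}$.

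There is no real obstacle here: the corollary merely records the fact that one can interpolate between the ``purely random walk'' bound (with a $\log L$ loss) and the ``replace by Brownian motion'' bound (which loses a small polynomial factor $L^{-1/4}$ but gains smoothness). The only thing worth noting is that the hypothesis $x,x'\in\{(2/3)L\leq|y|\leq(3/2)L\}\cap\mathbb{Z}^d$ of Lemma~\ref{def:app-kernelest} is inherited verbatim by the corollary, so no additional restriction of the parameter range is needed.
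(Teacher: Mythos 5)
Your argument is correct and matches the paper's proof, which is simply ``Combine (iii)--(vii)'' of Lemma~\ref{def:app-kernelest}: the first bound in each minimum is parts (iii) resp.\ (iv), and the second follows from the triangle inequality through $\phbm_{\psi}$ using (vii) for the two discrete-continuous comparison terms and (v) resp.\ (vi) for the Brownian difference. Nothing further is needed.
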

\begin{proof}
Combine (iii)-(vii).
\end{proof}
\begin{remark}
The condition on $x$ and $x'$ in the lemma is only to ensure that both
points lie in the domain of $\psi$.
\end{remark}
\begin{prooof2}{\bf of Lemma~\ref{def:app-kernelest}:}
  (i), (ii) This follows from the definition of $\ph_{\psi}$, $\phbm_{\psi}$ together with
  Lemma~\ref{def:lemmalawler} (i) and the explicit form of the Poisson
  kernel~\eqref{eq:est-poissonkernel}, respectively. \\
  (iii), (iv) We can restrict ourselves to the case
  $|x-x'| = 1$ as otherwise we take a shortest path connecting $x$ with $x'$
  inside $\{(2/3)L\leq |y|\leq (3/2)L\}$ and apply the result for distance $1$
  $O\left(|x-x'|\right)$ times. We have
  \begin{eqnarray*}
    \lefteqn{\ph_{\psi}(x,z)-\ph_{\psi}(x',z)}\\
    &=& \left(1-\frac{m_x}{m_{x'}}\right)\ph_{\psi}(x,z) +
    \frac{1}{m_{x'}}\int_{\mathbb{R}_+}\left(\varphi\left(\frac{t}{m_x}\right)
      -\varphi\left(\frac{t}{m_{x'}}\right)\right)\pi_{V_t(x)}(x,z)\dt t\\
    && +\ \frac{1}{m_{x'}}\int_{\mathbb{R}_+}\varphi\left(\frac{t}{m_{x'}}\right)
    \left(\pi_{V_t(x)}(x,z)-\pi_{V_t(x')}(x',z)\right)\dt t\\
    &=& I_1 + I_2 + I_3.
  \end{eqnarray*}
  Using the fact that $\psi\in\mathcal{M}_L$ and part (i) for
  $\ph_{\psi}(x,z)$, it follows that $|I_1| \leq CL^{-(d+1)}$. Using
  additionally the smoothness of $\varphi$ and, by
  Lemma~\ref{def:lemmalawler} (i), $|\pi_{V_t(x)}(x,z)| \leq CL^{-(d-1)}$,
  we also have $|I_2| \leq CL^{-(d+1)}$. It remains to handle $I_3$. By
  translation invariance of simple random walk, $\pi_{V_t(x)}(x,z) =
  \pi_{V_t}(0,z-x)$. In particular, both (iii) and (iv) will follow if we
  prove that
  \begin{equation}
    \label{eq:a4-1}
    \left|\int_{\mathbb{R}_+}\varphi\left(\frac{t}{m_{x}}\right)\left(\pi_{V_t}(0,z-x)-\pi_{V_t}(0,z-x')\right)\dt
      t \right|
    \leq C L^{-d}\log L
  \end{equation}
  for $x,x'$ with $|x-x'| = 1$. By definition of $\mathcal{M}_L$, $m_x\in
  (L/10,5L)$. We may therefore assume that $L/10 < |y-z| < 10L$ for
  $y=x,x'$. Due to the smoothness of $\varphi$ and the fact that the
  integral is over an interval of length at most $2$,~\eqref{eq:a4-1} will
  follow if we show
  \begin{equation*}
    \left|\int_{L/10}^{10L}\left(\pi_{V_t}(0,z-x)-\pi_{V_t}(0,z-x')\right)\dt
      t\right|
    \leq C L^{-d}\log L.
  \end{equation*} 
  We set $J = \{t > 0 : z-x\in \partial V_t\}$ and $J' = \{t>0 :
  z-x'\in\partial V_{t'}\}$, where
  \begin{equation*}
    t' = t'(t) = \left|t\frac{(z-x)}{|z-x|}-(x'-x)\right|.
  \end{equation*}
  $J$ is an interval of length at most $1$, and $J'$ has the same length up
  to order $O(L^{-1})$. Furthermore, $|J\Delta J'|$ is of order
  $O(L^{-1})$, and $\left|\frac{\textup{d}}{\textup{d}t}t'\right| = 1 +
  O(L^{-1})$. Using that both $\pi_{V_t}(0,z-x)$ and $\pi_{V_t}(0,z-x')$
  are of order $O(L^{-(d-1)})$, it therefore suffices to prove
  \begin{equation}
    \label{eq:a4-2}
    \left|\int_{J\cap
        J'}\left(\pi_{V_t(x)}(x,z)-\pi_{V_{t'}(x')}(x',z)\right)\dt t\right| \leq CL^{-d}\log L.
  \end{equation}
  Write $V$ for $V_t(x)$ and $V'$ for $V_{t'}(x')$. By a first exit
  decomposition,
  \begin{equation*}
    \pi_V(x,z) \leq \pi_{V'}(x,z) + \sum_{y\in V\backslash V'}\Prw_x\left(T_y
      < \tau_V\right)\pi_V(y,z).   
  \end{equation*}
  By Lemma~\ref{def:lemmalawler} (ii), we can replace $\pi_{V'}(x,z)$ by
  $\pi_{V'}(x',z) + O(L^{-d})$. For $y\in V\backslash V'$ we have by
  Lemma~\ref{def:hittingprob} (ii) $\pi_V(y,z) = O(|y-z|^{-d})$ and
  $\Prw_x\left(T_y < \tau_V\right) = O(L^{-(d-1)})$, uniformly in $t
  \in J\cap J'$. Further, using $|x-x'| = 1$, we have with $r= |z-x|$
  \begin{equation*}
    \bigcup\limits_{t\in J\cap J'}(V\backslash V') \subset V_r(x)\backslash
    V_{r-2}(x') \subset x + \sh_r(3),
  \end{equation*}
  and for any $y\in \sh_r(3)$, it follows by a geometric consideration that
  \begin{equation*}
    \int_{J\cap J'}1_{\{y\in V\backslash V'\}}\dt t \leq C\frac{|y-z|}{L}.
  \end{equation*}
  Altogether, applying Lemma~\ref{def:hittingprob-technical} in the last
  step,
\begin{eqnarray*}
  \lefteqn{\int_{J\cap J'}\pi_{V}(x,z)\dt t}\\ 
  &\leq& 
  \int_{J\cap J'}\pi_{V'}(x',z)\dt t + O(L^{-d}) + CL^{-(d-1)}\sum_{y\in x +
    \sh_r(3)}\frac{1}{|y-z|^{d}}\frac{|y-z|}{L}\\ 
  &\leq& \int_{J\cap J'}\pi_{V'}(x',z)\dt t +  CL^{-d}\log L.
\end{eqnarray*}
The reverse inequality, proved in the same way, then implies~\eqref{eq:a4-2}.\\
(v) We can assume $|x-x'| \leq 1$. Then the claim follows from 
  \begin{eqnarray*}
    \lefteqn{\left|\phbm_{\psi}(x,z) - \phbm_{\psi}(x',z)\right|}\\
    &=&
    \frac{1}{d\,alpha}\left|\frac{1}{m_x}\varphi\left(\frac{|z-x|}{m_x}\right)\frac{1}{|z-x|^{d-1}}-
      \frac{1}{m_{x'}}\varphi\left(\frac{|z-x'|}{m_{x'}}\right)\frac{1}{|z-x'|^{d-1}}\right|. 
  \end{eqnarray*}
  (vi) This is proved in the same way as (v).\\
  (vii) Fix $\alpha = 2/3$, $\beta = 1/3$, and let
  $0<\eta<1/40$. Set $A = C_{L^\alpha}(z)$ and $A^{\mathbb{Z}}= A\cap
  \mathbb{Z}^d$. By part (iv), we have
  \begin{equation}
    \label{eq:a7-1}
    \ph_{\psi}(x,z) \leq \frac{1}{|A^{\mathbb{Z}}|}\ph_{\psi}\left(x,A^{\mathbb{Z}}\right)+
    CL^{-(d+1-\alpha)}\log L.
  \end{equation}
  Further,
  \begin{equation}
    \label{eq:a7-2}
    \ph_{\psi}\left(x,A^{\mathbb{Z}}\right) =
    \frac{1}{m_x}\int_{L/10}^{10L}
    \varphi\left(\frac{t}{m_x}\right)\pi_{V_t(x)}\left(x,A^{\mathbb{Z}}\right)\dt
    t. 
  \end{equation}
  By Lemma~\ref{def:app-kernelest-technical} (i), it follows that for $t
  \in (L/10,10L)$
  \begin{equation*}
    \pi_{V_t(x)}\left(x,A^{\mathbb{Z}}\right) \leq
    \pibm_{V_t(x)}\left(x,A^\beta\right)\left(1+CL^{-(\beta-3\eta)}\right)
    + CL^{-(d+1)}, 
  \end{equation*}
  where $A^\beta = C_{L^\alpha + L^\beta}(z)$ and the constant $C$ is
  uniform in $t$. If we plug the last line into~\eqref{eq:a7-2} and use
  part (ii) and (vi), we arrive at
  \begin{eqnarray*}
    \ph_\psi\left(x,A^{\mathbb{Z}}\right) &\leq&
    \phbm_\psi(x,A^\beta)\left(1+CL^{-(\beta-3\eta)}\right) + CL^{-(d+1)}\\
    &\leq& \phbm_\psi(x,A)\left(1+CL^{-(\beta-3\eta)}\right) +
    CL^{-d}L^{(d-1)\alpha+\beta}\\
    &\leq & |A|\cdot\phbm_\psi(x,z) + CL^{d\alpha}L^{-(d+\beta-3\eta)}.
  \end{eqnarray*}
  Notice that in our notation, $|A|$ is the volume of $A$, while $|A^{\mathbb{Z}}|$ is
  the cardinality of $A^{\mathbb{Z}}$. From {\it Gauss} we have learned that $|A| =
  |A^{\mathbb{Z}}| + O\left(L^{(d-1)\alpha}\right)$.  Going back to~\eqref{eq:a7-1},
  this implies
  \begin{equation*}
    \ph_{\psi}(x,z) \leq \phbm_{\psi}(x,z) + L^{-(d+1/4)},
  \end{equation*}
  as claimed. To prove the reverse inequality, we can follow the same steps,
  replacing the random walk estimates by those of Brownian motion and vice
  versa.
\end{prooof2}

\subsection{Proof of Lemma~\ref{def:est-phi}}
\begin{prooof2}{\bf of Lemma~\ref{def:est-phi}:}
  (i) Set $\alpha = 2/3$, $\beta= 1/3$ and $\eta =
  \dist(x,\partial V_L)$. Choose $y_1\in\partial V_L$ such that $|x-y_1| =
  \eta$. First assume $\eta \leq L^{\beta}$. The following estimates are
  valid for $L$ large. Write
  \begin{equation*}
    \phi_{L,\psi}(x,z) = \sum_{y\in \partial V_L:\atop |y-y_1| \leq
      L^\alpha}\pi_L(x,y)\ph_{\psi}(y,z) + \sum_{y\in \partial V_L:\atop |y-y_1|>
      L^\alpha}\pi_L(x,y)\ph_{\psi}(y,z)  = I_1 + I_2.
  \end{equation*}
  For $I_2$, notice that $|y-y_1| > L^\alpha$ implies $|y-x| >
  L^{\alpha}/2$. Using Lemmata~\ref{def:app-kernelest} (i),
  \ref{def:hittingprob} (iii) in the first and
  Lemma~\ref{def:hittingprob-technical} in the second inequality, we have
  \begin{equation}
    \label{eq:est-phi-eq-1-1}
    I_2 \leq C\eta L^{-d}\sum_{y\in\partial V_L:\atop|y-y_1| >
      L^\alpha}\frac{1}{|x-y|^d} \leq C\eta L^{-(d+\alpha)} \leq L^{-(d+1/4)}.
  \end{equation}
  For $I_1$, we first use Lemma~\ref{def:app-kernelest} part (iii) to
  deduce
  \begin{equation*}
    \ph_{\psi}(y,z) \leq \ph_{\psi}(y_1,z) + CL^{-(d+1-\alpha)}\log L.
  \end{equation*}
  Therefore by part (vii),
  \begin{equation*}
    I_1 \leq \ph_{\psi}(y_1,z) + L^{-(d+1/4)} \leq \phbm_{\psi}(y_1,z) + 2L^{-(d+1/4)}.
  \end{equation*}
  From the Poisson
  formula~\eqref{eq:est-poissonkernel} we deduce much as
  in~\eqref{eq:est-phi-eq-1-1} that
  \begin{equation*}
    \int\limits_{y\in\partial C_L: |y-y_1| > L^\alpha}\pibm_L(x,dy) \leq L^{-1/4}.
  \end{equation*}
  Using Lemma~\ref{def:app-kernelest} (ii) in the first and (v) in the
  second inequality, we conclude that
  \begin{eqnarray*}
    \phbm_{\psi}(y_1,z) &\leq& \phbm_{\psi}(y_1,z) \int\limits_{y\in\partial C_L:
      |y-y_1| \leq L^\alpha}\pibm_L(x,dy) + CL^{-(d+1/4)}\\ 
    &\leq& \int\limits_{y\in\partial C_L:
      |y-y_1| \leq L^\alpha}\pibm_L(x,dy)\phbm_{\psi}(y,z) +
    CL^{-(d+1/4)}\\
    &\leq& \phibm_{L,\psi}(x,z) + CL^{-(d+1/4)}.
  \end{eqnarray*}
  Now we look at the case $\eta > L^\beta$. We take a cube $U_1$ of radius
  $L^\alpha$, centered at $y_1$, and set $W_1 = \partial V_L\cap U_1$.
  Then we can find a partition of $\partial V_L\backslash W_1$ into
  disjoint sets $W_i = \partial V_L\cap U_i$, $i=2,\ldots,k_L$, where $U_i$
  is a cube such that for some $c_1,c_2> 0$ depending only on $d$, 
  \begin{equation*}
    c_1L^{\alpha(d-1)}\leq |W_i| \leq c_2L^{\alpha(d-1)}. 
  \end{equation*}
  For $i \geq 2$, we fix an arbitrary $y_i\in W_i$.  Let $W_i^\beta =
  \{y\in\mathbb{R}^d : \dist(y,W_i) \leq L^\beta\}$.  Applying first
  Lemma~\ref{def:app-kernelest} (iii) and then
  Lemma~\ref{def:app-kernelest-technical} (i) gives
  \begin{eqnarray}
    \label{eq:est-phi-eq-1-2}
    \phi_{L,\psi}(x,z) &\leq& \sum_{i=1}^{k_L}\pi_L(x,W_i)\ph_{\psi}(y_i,z)+ L^{-(d+1/4)}\nonumber\\
    &\leq&
    \sum_{i=1}^{k_L}\pibm_L(x,W_i^\beta)\ph_{\psi}(y_i,z)\left(1+L^{-1/4}\right)
    +L^{-(d+1/4)}.
  \end{eqnarray}
  As the $W_i^\beta$ overlap, we refine them as follows: Set $\tilde{W}_1
  = W_1^\beta\cap \partial C_L$, and split $\partial C_L\backslash
  \tilde{W}_1$ into a collection of disjoint measurable sets $\tilde{W}_i
  \subset \partial C_L\cap W_i^\beta$, $i=2,\ldots,k_L$, such that
  $\cup_{i=1}^{k_L} \tilde{W}_i = \partial C_L$ and
  $|(W_i^\beta\cap\partial C_L)\backslash \tilde{W_i}| \leq
  C_1L^{\alpha(d-2)+\beta}$ for some $C_1=C_1(d)$. By construction we can
  find constants $c_3, c_4 > 0$ such that $|\tilde{W}_i| \geq
  c_3L^{\alpha(d-1)}$ and, for $i=2,\ldots,k_L$,
  \begin{equation*}
    \inf\limits_{y\in W_i^\beta}|x-y| \geq c_4\sup\limits_{y\in \tilde{W}_i}|x-y|,
  \end{equation*}
  which implies by~\eqref{eq:est-poissonkernel} that
  \begin{equation*}
    \sup\limits_{y\in W_i^\beta}\pibm_L(x,y) \leq c_4^{-1}\inf\limits_{y\in
      \tilde{W}_i}\pibm_L(x,y).
  \end{equation*}
  For $i=1,\ldots, k_L$ we then have
  \begin{equation*}
    \pibm_L(x,W_i^\beta) \leq
    \pibm_L(x,\tilde{W}_i)\left(1+C_1c_3^{-1}L^{\beta-\alpha}\right)\leq \pibm_L(x,\tilde{W}_i)\left(1+L^{-1/4}\right). 
  \end{equation*}
  Plugging the last line into~\eqref{eq:est-phi-eq-1-2},
  \begin{equation*}
    \phi_{L,\psi}(x,z) \leq \sum_{i=1}^{k_L}\pibm_L(x,\tilde{W}_i)\ph_{\psi}(y_i,z)\left(1+L^{-1/4}\right)+ L^{-(d+1/4)}.
  \end{equation*}
  A reapplication of Lemma~\ref{def:app-kernelest} (iii), (vii) and then
  (ii) yields
  \begin{eqnarray*}
    \phi_{L,\psi}(x,z) &\leq& \sum_{i=1}^{k_L}\int_{\tilde{W}_i}\pibm_L(x,dy)\ph_{\psi}(y,z)+ L^{-(d+1/4)}\nonumber\\
    &\leq& \sum_{i=1}^{k_L}\int_{\tilde{W}_i}\pibm_L(x,dy)\phbm_{\psi}(y,z)\left(1+L^{-1/4}\right)
    +L^{-(d+1/4)}\\
    &=& \phibm_{L,\psi}(x,z) + CL^{-(d+1/4)}.
  \end{eqnarray*}
  The reverse inequality in both the cases $\eta \leq L^\beta$ and $\eta >
  L^{\beta}$ is obtained similarly.\\
  (ii) Let $\psi = (m_y)_y\in\mathcal{M}_L$ and $z\in\mathbb{Z}^d$. For
  $y\in\mathbb{R}^d$ with $L/2 < |y| < 2L$ we set
  \begin{equation}
    \label{eq:app-phidens} 
    g(y,z) = 
    \frac{1}{m_y}\varphi\left(\frac{|z-y|}{m_y}\right)\pibm_{C_{|z-y|}}(0,z-y).
  \end{equation}
  Then
  \begin{equation*}
    \phibm_{L,\psi}(x,z) = \int_{\partial C_L}\pibm_L(x,dy)g(y,z).
  \end{equation*}
  Choose a cutoff function
  $\chi\in C^{\infty}\left(\mathbb{R}^d\right)$ with compact support in
  $\{x\in\mathbb{R}^d : 1/2 < |x| < 2\}$ such that $\chi \equiv 1$ on
  $\{2/3\leq|x|\leq 3/2\}$. Setting $m_v = 1$ for $v\notin \{L/2 < |x| <
  2L\}$, we define 
  \begin{equation*}
    \tilde{g}(y,z) = g(Ly,z)\chi(y),\ \ y\in\mathbb{R}^d.
  \end{equation*}
  By~\eqref{eq:est-poissonkernel} we have the representation
  \begin{equation*}
    \tilde{g}(y,z) =
    \frac{1}{d\,\alpha\, m_{Ly}}{|z-Ly|}^{-d+1}\varphi\left(\frac{|z-Ly|}{m_{Ly}}\right)\chi(y). 
  \end{equation*}
  Notice that $\tilde{g}(\cdot,z)\in C^4\left(\mathbb{R}^d\right)$, with
  $\tilde{g}(y,z) = 0$ if $|z-Ly| \notin (L/5,10L)$ or
  $|y|\notin\left(1/2,2\right)$. 
 The Poisson integral $u(\overline{x},z)
    = \phibm_{L,\psi}(x,z),\ x = L\overline{x}$, solves the Dirichlet problem
  \begin{equation}
  \label{eq:app-DP}
    \left\{\begin{array}{r@{\:=\:}l@{,\quad}l}
        \Delta_{\overline{x}}u(\overline{x},z) & 0 &\overline{x}\in
        C_1\\
        u(\overline{x},z) &\tilde{g}(\overline{x}, z)
        &\overline{x}\in\partial C_1
      \end{array}\right..
  \end{equation}
 where $\Delta_{\overline{x}}$ is the Laplace operator with respect
 to $\overline{x}$.
  Moreover, by Corollary 6.5.4 of Krylov~\cite{KRY}, $u(\cdot,z)$ is smooth on
  $\overline{C}_1$. Write
  \begin{equation*} {|u(\cdot,z)|}_k =
    \sum_{i=0}^{k}{\left|\left|D^iu(\cdot,z)\right|\right|}_{C_1}.
  \end{equation*}
  Theorem 6.3.2 in the same book shows that for
  some $C>0$ independent of $z$
  \begin{equation*} {|u(\cdot,z)|}_3 \leq C{|\tilde{g}(\cdot,z)|}_4.
  \end{equation*}
  A direct calculation shows that $\sup_{z\in\mathbb{R}^d}{|\tilde{g}(\cdot,z)|}_4 \leq
  CL^{-d}$. Now the claim follows from
  \begin{equation*}
    {\left|\left|D^i\phibm_{L,\psi}(\cdot,z)\right|\right|}_{C_L} =
    L^{-i}{\left|\left|D^iu(\cdot,z)\right|\right|}_{C_1}.  
  \end{equation*}
  (iii) Let $x, x'\in V_L\cup\partial V_L$. Choose $\tilde{x}\in V_L$ next to $x$ and $\tilde{x}'\in V_L$
  next to $x'$. Then $|\tilde{x}-x| = 1$ if $x\in\partial{V}_L$ and
  $\tilde{x}=x$ otherwise. By the triangle inequality,
  \begin{eqnarray}
    \label{eq:app-est-phi-eq1}
    \lefteqn{\left |\phi_{L,\psi}(x,z) - \phi_{L,\psi}(x',z)\right|}\nonumber\\ 
    &\leq& \left|\phi_{L,\psi}(x,z) - \phi_{L,\psi}(\tilde{x},z)\right|
    +\left|\phi_{L,\psi}(\tilde{x},z) - \phi_{L,\psi}(\tilde{x}',z)\right|
    +\left|\phi_{L,\psi}(\tilde{x}',z) - \phi_{L,\psi}(x',z)\right|.\nonumber\\
  \end{eqnarray}
  By parts (i) and (ii) combined with the mean value theorem, we get for the
  middle term
  \begin{eqnarray*}
    \lefteqn{\left|\phi_{L,\psi}(\tilde{x},z) - \phi_{L,\psi}(\tilde{x}',z)\right|}\\
    &\leq & \left|\phi_{L,\psi}(\tilde{x},z) - \phibm_{L,\psi}(\tilde{x},z)\right| +
    \left|\phibm_{L,\psi}(\tilde{x},z) - \phibm_{L,\psi}(\tilde{x}',z)\right| +
    \left|\phibm_{L,\psi}(\tilde{x}',z) - \phi_{L,\psi}(\tilde{x}',z)\right|\\
    &\leq & C\left (L^{-(d+1/4)} + |x-x'|L^{-(d+1)}\right).
  \end{eqnarray*}
  If $x\in\partial{V_L}$, then $\phi_{L,\psi}(x,z) = \ph_{\psi}(x,z)$, so
  that we can write the first term of~\eqref{eq:app-est-phi-eq1} as
  \begin{equation*}
    \left|\phi_{L,\psi}(x,z) - \phi_{L,\psi}(\tilde{x},z)\right| =
    \left|\sum_{y\in\partial V_L}\pi_L(\tilde{x},y)\left(\ph_{\psi}(y,z) - \ph_{\psi}(x,z)\right)\right|.
  \end{equation*}
  Set $A = \{y\in\partial V_L : |x-y| > L^{1/4}\}$. Then by
  Lemmata~\ref{def:hittingprob} (iii) and~\ref{def:hittingprob-technical},
  \begin{equation*}
    \pi_L(\tilde{x},A) \leq C\sum_{y\in A}\frac{1}{|x-y|^d} \leq CL^{-1/4}.
  \end{equation*}
  For all $y\in \partial V_L$, we have by Lemma~\ref{def:app-kernelest} (i)
  that $|\ph_{\psi}(y,z)-\ph_{\psi}(x,z)| \leq CL^{-d}$. If $y\in\partial
  V_L\backslash A$, then part (iii) gives
  $|\ph_{\psi}(y,z)-\ph_{\psi}(x,z)| \leq CL^{-(d+3/4)}\log L$. Altogether,
  \begin{equation*}
    \left|\phi_{L,\psi}(x,z) - \phi_{L,\psi}(\tilde{x},z)\right| \leq CL^{-(d+1/4)}.
  \end{equation*}
  The third term of~\eqref{eq:app-est-phi-eq1} is treated in exactly the
  same way.
\end{prooof2}

\subsection{Proof of Lemma~\ref{def:hittingprob}}
We start with an auxiliary lemma, which already includes the upper bound of
part (iii). 
\begin{lemma}
  \label{def:lemma1}
 Let $x \in V_L$, $y \in \partial V_L$, and set $t = |x-y|$. 
 \begin{enumerate}
  \item
    \begin{equation*}
      \Prw_x\left(X_{\tau_L} = y\right) \leq C\:{\dL(x)}^{-d+1}.
    \end{equation*}
  \item 
    \begin{equation*}
      \Prw_x(X_{\tau_L} = y) \leq C\frac{\max\{1,\dL(x)\}}{|x-y|}\max_{x'\in\partial
        V_{t/3}(y)\cap V_L}\Prw_{x'}(X_{\tau_L} = y).
    \end{equation*}
  \item 
    \begin{equation*}
      \Prw_x(X_{\tau_L} = y) \leq C\frac{\max\{1,\dL(x)\}}{|x-y|^d}.
    \end{equation*}
  \end{enumerate}
\end{lemma}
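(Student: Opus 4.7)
The three estimates form a natural hierarchy: (i) is a uniform bound depending only on $\dL(x)$, (ii) is a strong Markov reduction, and (iii) is the sharp combined estimate. The natural proof order is (i), (ii), (iii), but I want to flag at the outset that (iii) does \emph{not} follow from merely combining (i) and (ii) --- it requires a separate Green's function input, which will be the main technical challenge.

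\textbf{Proof of (i).} For $\dL(x) \le c_0 = c_0(d)$ the bound is trivial since $\pi_L \le 1$. For $r := \dL(x) > c_0$ I would couple the SRW with standard Brownian motion via the Zaitsev-type strong coupling already used in the proof of Lemma~\ref{def:app-kernelest-technical}: on a time window of length $L^{5/2}$ (which contains $\tau_L$ with overwhelming probability), the two processes stay within $O(\log L)$ of each other off an event of probability at most $L^{-(d+1)}$. Consequently $\pi_L(x,y) \le \pibm_L(x,\,V_{C\log L}(y)\cap\partial C_L) + L^{-(d+1)}$. The explicit Poisson density \eqref{eq:est-poissonkernel} satisfies $\pibm_L(x,z) \le 2\dL(x)/(d\alpha(d)|x-z|^d) \le C/r^{d-1}$ for $z\in\partial C_L$ (using $|x-z|\ge r$ and $L^2-|x|^2 \le 2Lr$), and integrating over a cap of surface area $O((\log L)^{d-1})$ produces a polylogarithmic factor. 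This log loss can be eliminated either by rerunning the coupling on the smaller ball $V_r(x)$ (where the coupling error is only $O(\log r)$) and iterating a bounded number of times, or by separately invoking the Harnack estimate from Lemma~\ref{def:lemmalawler}~(i) in the regime where $\dL(x)$ is comparable to $L$.

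\textbf{Proof of (ii).} For $t := |x-y| \le 6$ the claim reduces to (i). Otherwise $x\notin V_{t/3}(y)$, and the strong Markov property at $T_{V_{t/3}(y)}\wedge\tau_L$ gives immediately
\begin{equation*}
\pi_L(x,y) \le \Prw_x\bigl(T_{V_{t/3}(y)}<\tau_L\bigr)\cdot\max_{x'\in\partial V_{t/3}(y)\cap V_L}\pi_L(x',y).
\end{equation*}
It remains to prove $\Prw_x(T_{V_{t/3}(y)}<\tau_L) \le C\max\{1,\dL(x)\}/t$. The Brownian analogue does not apply directly because Lemma~\ref{def:hittingprob-bm}~(ii) requires $C_{2a}(y)\subset C_L$, which fails for $y\in\partial C_L$. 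I bypass this by shifting $y$ inward along the inward normal to a point $\tilde y$ with $\dist(\tilde y,\partial C_L) = t/2$, so $C_{t/4}(\tilde y)\subset C_L$; Lemma~\ref{def:hittingprob-bm}~(ii) then gives $\Pbm_x(T_{C_{t/4}(\tilde y)}<\tau_{C_L}) \le K(t/4)^{d-2}\cdot t\cdot\dL(x)/|x-\tilde y|^d \sim C\dL(x)/t$. Since $V_{t/4}(\tilde y)\subset V_{t/3}(y)$, the SRW hitting probability of $V_{t/3}(y)$ inherits the same bound after transfer through the Zaitsev coupling.

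\textbf{Proof of (iii), and the main obstacle.} Naively combining (i) and (ii) yields only $\pi_L(x,y) \le C\max\{1,\dL(x)\}/t$: applying (i) to each $x'\in\partial V_{t/3}(y)\cap V_L$ gives $\pi_L(x',y)\le C/\dL(x')^{d-1}$, which degenerates to $O(1)$ when $x'$ is near $\partial V_L$, thereby losing the desired $t^{-(d-1)}$ factor. The remedy is a separate Green's function estimate
\begin{equation*}
g_L(x,w) \le C\frac{\dL(x)\,\dL(w)}{|x-w|^d},\qquad x,w\in V_L,
\end{equation*}
obtained by comparing with the explicit Brownian Green's function $\gbm(x,w) = A_d\bigl(|x-w|^{-(d-2)} - (L/(|x|\,|x^*-w|))^{d-2}\bigr)$ (with $x^* = L^2x/|x|^2$) on $C_L$; the latter satisfies the stated bound by a Taylor expansion as $|x|,|w|\to L$ in which the two terms cancel to leading order. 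Part (iii) then follows at once from the representation $\pi_L(x,y) = (2d)^{-1}\sum_{w\sim y,\,w\in V_L}g_L(x,w)$, since $\dL(w)=O(1)$ for $w$ adjacent to $y\in\partial V_L$. The hard part is transferring the \emph{multiplicative} Brownian Green's function bound to SRW, since the Zaitsev coupling only provides \emph{additive} control; the most natural route is via a local CLT comparison (in the spirit of Proposition~\ref{def:super-localclt}) together with the killed Green's function identity $g_L = g_{\mathbb{Z}^d} - \Erw_{\cdot}\bigl[g_{\mathbb{Z}^d}(X_{\tau_L},\cdot)\bigr]$, bounding the boundary term using (i).
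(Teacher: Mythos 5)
Your proposal has genuine gaps in all three parts, and in each case the paper's own argument is more elementary than the route you chose. For (i), the Zaitsev coupling cannot by itself produce a bound for exit at a \emph{single} lattice point: after coupling you can only compare with the Brownian exit measure of a cap of radius $O(\log L)$ around $y$, and the de-smearing step (removing the $(\log L)^{d-1}$ factor) is exactly what is missing; your two fixes do not close it, since rerunning the coupling in $V_{\dL(x)}(x)$ estimates the exit from the small ball rather than from $V_L$, and the Harnack inequality of Lemma~\ref{def:lemmalawler}~(i) only covers $\dL(x)\asymp L$. The paper instead proves (i) in a few lines by reversibility, writing $\Prw_x(X_{\tau_L}=y)=\frac{1}{2d}\sum_{y'\sim y,\,y'\in V_L}\Prw_{y'}(T_x<\tau_L)$, and bounding the latter by the gambler's-ruin estimate of Lemma~\ref{def:lemmalawler}~(iii) (cost $C/\dL(x)$ to come within distance $\dL(x)/3$ of $x$ before exiting) times the transient hitting estimate of Lemma~\ref{def:hittingprob}~(i) (cost $C\dL(x)^{-d+2}$). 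For (ii), your reduction via the strong Markov property is fine, but the key bound $\Prw_x(T_{V_{t/3}(y)}<\tau_L)\leq C\max\{1,\dL(x)\}/t$ is not established: replacing $V_{t/3}(y)$ by an inscribed ball $C_{t/4}(\tilde y)$ bounds the hitting probability from \emph{below}, not above, so it cannot give an upper bound (and the inclusion $C_{t/4}(\tilde y)\subset C_{t/3}(y)$ is false anyway, since points of $C_{t/4}(\tilde y)$ lie up to distance $3t/4$ from $y$). The paper's trick is different: it places a point $x'$ \emph{outside} $V_L$ with $V_{t/10}(x')\cap V_L=\emptyset$ and $x\in V_{t/4}(x')$, observes that on $\{X_{\tau_L}=y\}$ the walk must leave $V_{t/4}(x')$ before touching $V_{t/10}(x')$, and reads off the factor $C\max\{1,\dL(x)\}/t$ from Lemma~\ref{def:lemmalawler}~(iii).

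For (iii), your premise that it ``does not follow from merely combining (i) and (ii)'' is only true for a single application; the paper obtains (iii) precisely from (i) and (ii) by a bootstrap over scales: one proves $\max_{z\in\partial V_{l/3}(y)\cap V_L}\Prw_z(X_{\tau_L}=y)\leq Kl^{-d+1}$ by induction on $l$ (dyadically), using (i) when $\dL(z)\geq\eta l$ and using (ii) together with the induction hypothesis at scale $l/3$ when $\dL(z)<\eta l$, with $\eta$ chosen small in terms of the constant in (ii) so the iteration does not lose constants. Your alternative via the killed Green's function bound $g_L(x,w)\leq C\dL(x)\dL(w)/|x-w|^d$ and the last-step identity is a legitimate strategy in principle, but the crucial step --- transferring the multiplicative (cancellation-sensitive) Brownian bound to the discrete $g_L$ --- is exactly the hard part and is left as a sketch; the proposed route through $g_L=g_{\mathbb{Z}^d}-\Erw_\cdot[g_{\mathbb{Z}^d}(X_{\tau_L},\cdot)]$ requires controlling the boundary term to relative precision $\dL(x)\dL(w)/|x-w|^2$, which needs harmonic-measure information of the very kind Lemma~\ref{def:lemma1} is meant to supply, so as written it is both unproven and circular in flavor. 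I recommend adopting the reversibility argument for (i), the exterior-ball argument for (ii), and the scale induction for (iii).
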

\begin{prooof}
  (i) We can assume that $s = \dL(x) \geq 6$. If $s' = \lfloor s/3\rfloor$, then $\partial V_{s'}(x) \subset V_{L-s'}$. Using
  Lemma~\ref{def:lemmalawler} (iii), we
  compute for any $y' \in V_L$ with $|y-y'| = 1$,
  \begin{equation*}
    \Prw_{y'}\left(T_{\partial V_{s'}(x)} < \tau_L\right) \leq
    \Prw_{y'}\left(T_{V_{L-s'}} < \tau_L\right) \leq Cs^{-1}. 
  \end{equation*}
  By Lemma~\ref{def:hittingprob} (i) it follows that uniformly in
  $z\in\partial{V}_{s'}(x)$,
  \begin{equation*}
    \Prw_z\left(T_x < \tau_L\right) \leq \Prw_z\left(T_x < \infty\right) \leq C
    (s')^{-d+2}\leq C s^{-d+2}.
  \end{equation*}
  Thus, by the strong Markov property at $T_{\partial V_{s'}(x)}$,
  \begin{equation*}
    \Prw_{y'}\left(T_x < \tau_L\right) \leq Cs^{-d+1}.
  \end{equation*}
  Since by time reversibility of simple random walk
  \begin{equation*}
    \Prw_x\left(X_{\tau_L} = y\right) = \sum_{y'\in V_L,\atop |y'-y| =
      1}\Prw_x\left(X_{\tau_L} = y,\, X_{\tau_L-1} = y'\right) =
    \frac{1}{2d}\sum_{y'\in V_L,\atop |y'-y| = 1} \Prw_{y'}\left(T_x <
      \tau_L\right), 
  \end{equation*}  
  the claim is proved.\\
  (ii) We may assume that $t=|x-y| >100d$ and $\dL(x) <
   t/100$. Choose a point $x'$ outside $V_L$ such that $V_{t/10}(x')\cap V_L
  =\emptyset$ and $|x-x'| \leq \dL(x)+t/10 + \sqrt{d}$. Then $|x-x'| \leq
  t/5$. Furthermore, since $|x'-y|\geq 4t/5$, 
\begin{equation*}
     \left(V_{t/4}(x')\cup\partial V_{t/4}(x')\right) \cap V_{t/3}(y) = \emptyset.
   \end{equation*}
We apply twice the strong Markov property and obtain
   \begin{equation*}
     \Prw_x(X_{\tau_L} = y) \leq \Prw_x\left(\tau_{V_{t/4}(x')} <
      T_{V_{t/10}(x')}\right) \max_{z\in\partial V_{t/3}(y)\cap V_L}\Prw_z(X_{\tau_L}=y).
   \end{equation*}
Evaluating the expression in Lemma~\ref{def:lemmalawler} (iii) shows
\begin{equation*}
\Prw_x\left(\tau_{V_{t/4}(x')} < T_{V_{t/10}(x')}\right)\leq C\frac{\max\{1,\dL(x)\}}{t},
\end{equation*}
which concludes the proof of part (ii).\\
  (iii) By (ii) it suffices to prove that for some constant $K$ and for
  all $l\geq 1$
  \begin{equation}
    \label{eq:eq1}
    \max_{z\in\partial V_{l/3}(y)\cap V_L}\Prw_z(X_{\tau_L}=y) \leq Kl^{-d+1}.
  \end{equation}
  Let $c_1$ and $c_2$ be the constants from (i) and (ii), respectively.
  Define $\eta = 3^{-d}c_2^{-1}$ and $K = \max\{3^{d(d-1)}c_2^{d-1},
  c_1\eta^{-d+1}\}$.  For $l \leq 3^{d}c_2$ there is nothing to prove since
  $Kl^{-d+1} \geq 1$.  Thus let $l > 3^{d}c_2$, and choose $l_0$ with $l_0
  < l \leq 2l_0$. Assume that~\eqref{eq:eq1} is proved for all $l' \leq l_0$. We show
  that~\eqref{eq:eq1} also holds for $l$. For $z$ with $\dL(z) \geq \eta
  l$, it follows from (i) that
  \begin{equation*}
    \Prw_z(X_{\tau_L}=y) \leq c_1\eta^{-d+1}l^{-d+1} \leq Kl^{-d+1}.
  \end{equation*}
  If $1 \leq \dL(z) < \eta l$, then by (ii) and the fact that $l/3 \leq
  l_0$
  \begin{eqnarray*}
    \Prw_z(X_{\tau_L}=y) &\leq&
    c_2\frac{\max\{1,\dL(z)\}}{|z-y|}\max_{z'\in\partial V_{t/9}(y)\cap
      V_L}\Prw_z(X_{\tau_L}=y)\nonumber\\ 
    &\leq& c_23\eta K \left(l/3\right)^{-d+1} \leq K l^{-d+1}.
  \end{eqnarray*}
  If $\dL(z) < 1$, then again by (i)
  \begin{equation*}
    \Prw_z(X_{\tau_L}=y) \leq c_23l^{-1} K \left(l/3\right)^{-d+1} \leq Kl^{-d+1}.
  \end{equation*}
  This proves the claim.
\end{prooof}
\begin{prooof2}{\bf of Lemma~\ref{def:hittingprob}:} 
(i) follows from Proposition 6.4.2 of~\cite{LawLim}.\\
(ii) We consider different cases. If $|x-y| \leq \dL(y)/2$, then $\dL(x) \geq
  \dL(y)/2$ and thus by Lemma~\ref{def:hittingprob} (i)
  \begin{equation*}
    \Prw_x\left(T_{V_a(y)} < \tau_L\right) \leq \Prw_x\left(T_{V_a(y)}<\infty\right) \leq
    C{\left(\frac{a}{|x-y|}\right)}^{d-2}\leq C\frac{a^{d-2}\dL(y)\dL(x)}{|x-y|^d}.
  \end{equation*}
  For the rest of the proof we assume that $|x-y| > \dL(y)/2$. Set $a' =
  \dL(y)/5$. First we argue that in the case $1\leq a\leq a'$, we only have
  to prove the bound for $a'$. Indeed, if $\dL(y)/6 \leq a < a'$, we get an
  upper bound by replacing $a$ by $a'$. For $1 \leq a < \dL(y)/6$, the
  strong Markov property together with Lemma~\ref{def:hittingprob} (i)
  yields
  \begin{eqnarray*}
    \Prw_x\left(T_{V_a(y)} <
      \tau_L\right)&\leq&\max_{z\in\partial(\mathbb{Z}^d\backslash V_{a'}(y))}\Prw_z\left(T_{V_a(y)} <
      \tau_L\right)\Prw_x\left(T_{V_{a'}(y)} < \tau_L\right)\nonumber\\
    &\leq& C{\left(\frac{a}{a'-1}\right)}^{d-2}\:\frac{{(a')}^{d-2}\dL(y)\max\{1,\dL(x)\}}{|x-y|^d}\nonumber\\ 
    &\leq & C\frac{a^{d-2}\dL(y)\max\{1,\dL(x)\}}{|x-y|^d}.
  \end{eqnarray*}
  Now we prove the claim for $a = \dL(y)/5$. We take a point $y'\in\partial
  V_L$ closest to $y$. If $|x-z| \geq |x-y|/2$ for all $z\in V_a(y')$, then
  by Lemma~\ref{def:lemma1} (iii)
  \begin{equation*}
    \max_{z\in V_a(y')}\Prw_x\left(X_{\tau_L}=z\right)
    \leq C\,2^d\frac{\max\{1,\dL(x)\}}{|x-y|^d}.
  \end{equation*}
  As a subset of $\mathbb{Z}^d$, $V_{a}(y')\cap\partial V_L$ contains on
  the order of $\dL(y)^{d-1}$ points. Therefore, by
  Lemma~\ref{def:lemmalawler} (i), we deduce that there exists some $\delta >
  0$ such that
  \begin{equation*}
    \min_{x'\in V_a(y)}\Prw_{x'}\left(X_{\tau_L} \in V_{a}(y')\right) \geq \delta.
  \end{equation*}
  We conclude that
  \begin{eqnarray}
    \label{eq:eq2}
    \frac{a^{d-1}\max\{1,\dL(x)\}}{|x-y|^d} &\geq& c\Prw_{x}\left(X_{\tau_L} \in
      V_{a}(y')\right) \geq c\Prw_{x}\left(X_{\tau_L} \in V_{a}(y'),\,
      T_{V_a(y)} < \tau_L\right)\nonumber\\
    &=&c\sum_{x'\in V_a(y)}\Prw_{x}\left(X_{T_{V_a(y)}}  = x',\,
      T_{V_a(y)} < \tau_L\right)\Prw_{x'}\left(X_{\tau_L} \in
      V_a(y')\right)\nonumber\\
    &\geq& c\,\delta\cdot \Prw_{x}\left(T_{V_a(y)} < \tau_L\right).
  \end{eqnarray}
  On the other hand, if $|x-z| < |x-y|/2$ for some $z\in V_a(y')$, then
  \begin{equation*}
    |x-y| \leq |x-z| + |z-y'| + |y'-y| \leq 2\dL(y) + |x-y|/2
  \end{equation*}
  and thus
  \begin{equation}
    \label{eq:eq3}
    \dL(y)/2 < |x-y| \leq 4\dL(y).
  \end{equation}
  If $\dL(x) \geq 4\dL(y)/5$, we use Lemma~\ref{def:hittingprob} (i)
  again. For $\dL(x) < 4\dL(y)/5$, we get by Lemma~\ref{def:lemmalawler}
  (iii)
  \begin{equation*}
    \Prw_x\left(T_{V_a(y)} < \tau_L\right) \leq \Prw_x\left(T_{V_{L-4\dL(y)/5}}<\tau_L\right) \leq
    C\frac{\max\{1,\dL(x)\}}{\dL(y)}.  
  \end{equation*}
  Together with~\eqref{eq:eq3}, this proves the claim in this
  case. Altogether, we have proved the bound for $1\leq a\leq \dL(y)/5$. It
  remains to handle the case $\max\{1,\dL(y)/5\} \leq a$.  If $z\in
  V_{6a}(y)$, we have that
  \begin{equation*}
    |x-y| \leq |x-z| + 6a
  \end{equation*}
  and thus, using $|x-y| > 7a$,
  \begin{equation*}
    |x-y| \leq 7|x-z|. 
  \end{equation*}
  Therefore Lemma~\ref{def:lemma1} (iii) yields
  \begin{equation*}
    \max_{z\in V_{6a}(y)}\Prw_x\left(X_{\tau_L}=z\right)
    \leq C\frac{\max\{1,\dL(x)\}}{|x-z|^d} \leq 7^dC\frac{\max\{1,\dL(x)\}}{|x-y|^d}.
  \end{equation*}
  Again by Lemma~\ref{def:lemmalawler} (i), we find some $\delta > 0$ such that
  \begin{equation*}
    \min_{x'\in V_a(y)}\Prw_{x'}\left(X_{\tau_L} \in V_{6a}(y)\right) \geq \delta.
  \end{equation*}
  A similar argument to~\eqref{eq:eq2}, with $V_a(y')$ replaced by
  $V_{6a}(y)$, finishes the proof of (ii).\\
(iii) It only remains to prove the lower bound. Let $t=|x-y|$. First assume
$t\geq L/2$. Then Lemma~\ref{def:lemmalawler} (iii) gives
\begin{equation*}
\Prw_x\left(T_{V_{2L/3}}<\tau_L\right) \geq c\frac{\dL(x)}{t},
\end{equation*}
and the claim follows from the strong Markov property and Lemma~\ref{def:lemmalawler} (i).
Now assume $t<L/2$. Let $x'\in V_L$ such that $V_t(x')\subset V_L$ and
$y\in\partial V_t(x')$. If $\dL(x)> t/2$, there is by
Lemma~\ref{def:lemmalawler} (i) a strictly positive
probability to exit the ball $V_{t/2}(x)$ within $V_{2t/3}(x')$. Since by
the same lemma,
\begin{equation}
\label{eq:eq4}
\inf_{z\in V_{2t/3}(x')}\Prw_{z}\left(\tau_L=y\right) \geq ct^{-(d-1)},
\end{equation}
we obtain the claim in this
case again by applying the strong Markov property. Finally, assume $\dL(x) \leq
t/2$. Then a careful evaluation of the expression in Lemma~\ref{def:lemmalawler} (iii) shows
\begin{equation*}
\Prw_x\left(T_{V_{L-2t/3}}<\tau_L\right) \geq c\frac{\dL(x)}{t},
\end{equation*}
and
\begin{eqnarray*}
\Prw_x\left(\tau_L=y\right)&\geq&
\Prw_x\left(\tau_L=y,\,T_{L-2t/3}<\tau_L,\,T_{V_{2t/3}(x')}<\tau_L\right)\\
&\geq& c\frac{\dL(x)}{t}\Prw_x\left(\tau_L=y\,|\,T_{L-2t/3}<\tau_L,\,
  T_{V_{2t/3}(x')}<\tau_L\right)\\
&&\times\Prw_x\left(T_{V_{2t/3}(x')}<\tau_L\,|\,T_{L-2t/3}<\tau_L\right).
\end{eqnarray*}
By a simple geometric consideration and again Lemma~\ref{def:lemmalawler}
(i), the second probability on the right side is bounded from below by some
$\delta>0$, and the first probability
has already been estimated in~\eqref{eq:eq4}.
\end{prooof2}

\subsection{Proofs of Propositions~\ref{def:super-localclt} and~\ref{def:super-behaviorgreen}}
Since $\ph_m(x,y) = \ph_m(0,y-x)$, it suffices to look at
$\ph_m(x)=\ph_m(0,x)$ and $\gh_{m,\mathbb{Z}^d}(x) =
\gh_{m,\mathbb{Z}^d}(0,x)$. Recall the definition of $\gamma_m$ from Section~\ref{clt}.

\begin{prooof2}{{\bf of Proposition~\ref{def:super-localclt}:}}
For bounded $m$, that is $m\leq m_0$ for some $m_0$, the result is
a special case of~\cite{LawLim}, Theorem 2.1.1. Also, for $n\leq
n_0$ and all $m$, the statement follows from Lemma~\ref{def:app-kernelest}
(i). We therefore have to prove the proposition only for large $n$ and $m$.
To this end, let $B_m = [-\sqrt{\gamma_m}\,\pi,\,\sqrt{\gamma_m}\,\pi]^d$, and for $\theta\in B_m$ set
\begin{equation*}
\phi_m(\theta) = \sum_{y\in\mathbb{Z}^d}e^{i\theta\cdot y/\sqrt{\gamma_m}}\ph_m(y).
\end{equation*} 
The Fourier inversion formula gives
\begin{equation*}
\ph_m^n(x) = \frac{1}{(2\pi)^d\gamma_m^{d/2}}
\int\limits_{B_m}e^{-ix\cdot\theta/\sqrt{\gamma_m}}[\phi_m(\theta)]^n\dt\theta. 
\end{equation*}
We decompose the integral into
\begin{equation*}
(2\pi)^d\gamma_m^{d/2}n^{d/2}\ph_m^n(x) = I_0(n,m,x)+\ldots+ I_3(n,m,x), 
\end{equation*}
where, with $\beta = \sqrt{n}\;\theta$,
\begin{eqnarray*}
I_0(n,m,x) &=& \int\limits_{\mathbb{R}^d}e^{-ix\cdot\beta/\sqrt{n\gamma_m}}e^{-|\beta|^2/2}\dt\beta,\\
I_1(n,m,x) &=& \int\limits_{|\beta|\leq
  n^{1/4}}e^{-ix\cdot\beta/\sqrt{n\gamma_m}}\left([\phi_m(\beta/\sqrt{n})]^n-e^{-|\beta|^2/2}\right)\dt\beta,
\\ 
I_2(n,m,x) &=& -\int\limits_{|\beta|>n^{1/4}}e^{-ix\cdot\beta/\sqrt{n\gamma_m}}e^{-|\beta|^2/2}\dt\beta,\\ 
I_3(n,m,x) &=& n^{d/2}\int\limits_{n^{-1/4}<|\theta|,\, \theta \in B_m}e^{-ix\cdot\theta/\sqrt{\gamma_m}}[\phi_m(\theta)]^n\dt\theta.\\
\end{eqnarray*}
By completing the square in the exponential, we get
\begin{equation*}
I_0(n,m,x)= (2\pi)^{d/2}\exp\left(-\frac{|x|^2}{2n\gamma_m}\right).
\end{equation*}
For $I_1$ and $|\beta|\leq n^{1/4}$, we expand $\phi_m$ in a series around
the origin,
\begin{eqnarray}
\label{eq:app-lclt1}
\phi_m(\beta/\sqrt{n}) &=& 1 - |\beta|^2/2n + |\beta|^4O\left(n^{-2}\right),\nonumber\\
\log \phi_m(\beta/\sqrt{n}) & = & - |\beta|^2/2n + |\beta|^4O\left(n^{-2}\right).
\end{eqnarray}
Therefore,
\begin{equation*}
[\phi_m(\beta/\sqrt{n})]^n = e^{-|\beta|^2/2}\left(1+|\beta|^4O\left(n^{-1}\right)\right),
\end{equation*}
so that
\begin{equation*}
\left|I_1(n,m,x)\right| \leq O\left(n^{-1}\right)\int\limits_{|\beta|\leq
  n^{1/4}}e^{-|\beta|^2/2}|\beta|^4\dt \beta = O\left(n^{-1}\right).
\end{equation*}
Similarly, $I_2$ is bounded by
\begin{equation*}
|I_2(n,m,x)| \leq C \int_{n^{1/4}}^\infty r^{d-1}e^{-r^2/2}\dt r = O\left(n^{-1}\right).
\end{equation*}
Concerning $I_3$, we follow closely~\cite{BZ}, proof of
Proposition B1, and split the integral further into
\begin{eqnarray*}
n^{-d/2}I_3(n,m,x) &=& \int\limits_{n^{-1/4}<|\theta| \leq a} +
\int\limits_{a<|\theta| \leq A} +
\int\limits_{A<|\theta| \leq m^{\alpha}} +
\int\limits_{m^{\alpha}<|\theta|,\, \theta \in B_m}\\
& =& \left(I_{3,0} + I_{3,1} +
I_{3,2} + I_{3,3}\right)(n,m,x),
\end{eqnarray*}
where $0<a<A$ and $\alpha\in (0,1)$ are constants that will be chosen
in a moment, independently of $n$ and $m$.
By~\eqref{eq:app-lclt1}, we can find $a>0$ such that for
$|\beta|\leq a\sqrt{n}$, 
$\log\phi_m(\theta) \leq  -|\theta|^2/3$ (recall that $\beta = \sqrt{n}\,\theta$). Then
\begin{equation*}
|I_{3,0}(n,m,x)|\leq C\int_{n^{-1/4}}^\infty r^{d-1}e^{-nr^2/3}\dt r = O\left(n^{-(d+2)/2}\right).
\end{equation*}
As a consequence of Lemma~\ref{def:lemmalawler} (i) and of our coarse
graining, it follows that for any $0<a < A$, one has for some $0 < \rho =
\rho(a,A) < 1$, uniformly in $m$,
\begin{equation*}
\sup_{a\leq |\theta| \leq A}|\phi_m(\theta)| \leq \rho.            
\end{equation*}
Using this fact,
\begin{equation*}
|I_{3,1}(n,m,x)| \leq CA^{d}\rho^n = O\left(n^{-(d+2)/2}\right).
\end{equation*}
To deal with the last two integrals is more delicate
since we have to take into account the $m$-dependency. First,
\begin{equation*}
|I_{3,2}(n,m,x)| \leq \int\limits_{A<|\theta| \leq
  m^\alpha}\left|\phi_m(\theta)\right|^n\dt \theta.
\end{equation*}
We bound the integrand pointwise. Since $\ph_m(\cdot)$ is invariant
under rotations preserving $\mathbb{Z}^d$, it suffices to look at $\theta$
with all components positive. Assume $\theta_1 =
\max\left\{\theta_1,\ldots,\theta_d\right\}$. Set $M=\lfloor 2\pi\sqrt{\gamma_m}/\theta_1\rfloor$ and $K =
\lfloor 5m/M\rfloor$. Notice that 
$\ph_m(x) > 0$ implies $|x| < 2m$. By taking $A$ large enough, we can
assume that on the domain of integration, $M \leq m$. First,
\begin{equation*}
\phi_m(\theta) =
\sum_{\left(x_2,\ldots,x_d\right)}\exp\left(\frac{i}{\sqrt{\gamma_m}}\sum_{s=2}^dx_s\theta_s\right)
\sum_{j=1}^K\sum_{x_1=-2m
  + (j-1)M}^{-2m +
  jM-1}\exp\left(\frac{ix_1\theta_1}{\sqrt{\gamma_m}}\right)\ph_m(x).
\end{equation*}
Inside the $x_1$-summation, we write for each $j$ separately
\begin{equation*}
\ph_m(x) = \ph_m(x)-\ph_m(x^{(j)})+\ph_m(x^{(j)}),
\end{equation*}
where $x^{(j)} = (-2m + (j-1)M,x_2,\ldots,x_d)$. By Corollary~\ref{def:app-kernelest-cor}, 
\begin{equation*}
  \left|\ph_m(x)-\ph_m(x^{(j)})\right| \leq C\left|\frac{x_1+2m
      -(j-1)M}{m}\right|^{1/2}m^{-d}.
\end{equation*}
Thus,
\begin{equation*}
\left|\sum_{x_1=-2m+ (j-1)M}^{-2m
    +jM-1}\exp\left(\frac{ix_1\theta_1}{\sqrt{\gamma_m}}\right)\left(\ph_m(x)-\ph_m(x^{(j)})\right)\right| \leq
C\theta_1^{-3/2}m^{-d+1},
\end{equation*}
and
\begin{equation*}
  \left|\sum_{j=1}^K\sum_{x_1=-2m
      + (j-1)M}^{-2m +
      jM-1}\exp\left(\frac{ix_1\theta_1}{\sqrt{\gamma_m}}\right)\left(\ph_m(x)-\ph_m(x^{(j)})\right)\right|
  \leq C\theta_1^{-1/2}m^{-d+1}.
\end{equation*}
On our domain of integration, $0<\left(\theta_1/\sqrt{\gamma_m}\right) \leq
Cm^{\alpha-1} < 2\pi$ for large $m$. Therefore,
\begin{eqnarray*}
  \left|\sum_{j=1}^K\ph_m(x^{(j)})\sum_{x_1 = -2m +(j-1)M}^{-2m +
      jM-1}\exp\left(\frac{ix_1\theta_1}{\sqrt{\gamma_m}}\right)\right| &\leq&
  CKm^{-d}\left|\frac{1-\exp\left(i\theta_1M/\sqrt{\gamma_m}\right)}{1-\exp\left(i\theta_1/\sqrt{\gamma_m}\right)}\right|\\
&\leq& C|\theta|m^{-d},
\end{eqnarray*}
and altogether for sufficiently large $A$, $m$ and $n$, 
\begin{equation*}
\int\limits_{A<|\theta| \leq
  m^\alpha}\left|\phi_m(\theta)\right|^n\dt \theta \leq C_1^n\int\limits_{A<|\theta| \leq m^\alpha}\left(\frac{1}{\sqrt{|\theta|}} +\frac{|\theta|}{m}\right)^n\dt \theta = O\left(n^{-(d+2)/2}\right).
\end{equation*}
For $I_{3,3}$ we again assume all components of $\theta$ positive and
$\theta_1 = \max\{\theta_1,\ldots,\theta_d\}$. Since
\begin{equation*}
\ph_m(x) = \sum_{y=-2m}^{x_1}\left(\ph_m(y,x_2,\ldots,x_d)-\ph_m(y-1,x_2,\ldots,x_d)\right),
\end{equation*}
we have 
\begin{eqnarray*}
  \lefteqn{\left|\phi_m(\theta)\right|}\\ 
&\leq& Cm^{d-1}
  \left|\sum_{x_1=-2m}^{2m}\exp\left(\frac{ix_1\theta_1}{\sqrt{\gamma_m}}\right)\sum_{y=-2m}^{x_1}\left(\ph_m(y,x_2,\ldots,x_d)-\ph_m(y-1,x_2,\ldots,x_d)\right)\right|
  \\ 
  &\leq&
  Cm^{d-1}\sum_{y=-2m}^{2m}\left|\ph_m(y,x_2,\ldots,x_d)-\ph_m(y-1,x_2,\ldots,x_d)\right|\left|\sum_{x_1=y}^{2m}\exp\left(\frac{ix_1\theta_1}{\sqrt{\gamma_m}}\right)\right|.
\end{eqnarray*}
The sum over the exponentials is estimated by $C m/|\theta|$, so that again
with Corollary~\ref{def:app-kernelest-cor},
\begin{equation*}
\left|\phi_m(\theta)\right| \leq C_2 m^{1/2}|\theta|^{-1}.
\end{equation*}
Hence, for $\alpha$ close to $1$ and large $n$, $m$,
\begin{equation*}
\int\limits_{m^{\alpha}<|\theta|,\, \theta \in
  B_m}\left|\phi_m^n(\theta)\right|\dt\theta \leq C_2^nm^{n/2+\alpha(d-n)}= O\left(n^{-(d+2)/2}\right).
\end{equation*}
\end{prooof2}
For Proposition~\ref{def:super-behaviorgreen}, we still need a large deviation estimate.
\begin{lemma}[Large deviation estimate]
\label{def:app-ldestim}
There exist constants $c_1,c_2 > 0$ such that for $|x| \geq 3m$
\begin{equation*}
\ph_m^n(x) \leq c_1m^{-d}\exp\left(-\frac{|x|^2}{c_2nm^2}\right).
\end{equation*}
\end{lemma}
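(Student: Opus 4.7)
The statement is a Chernoff-type large deviation bound, sharpened by the local prefactor $m^{-d}$. Pure exponential Chebyshev applied to $S_n$ will produce the exponential factor but miss the $m^{-d}$; conversely, the local CLT (Proposition~\ref{def:super-localclt}) gives the $m^{-d}$ uniformly but no decay in $|x|$ beyond the diffusive regime. The plan is to combine the two, by splitting off one ``local'' step and running an exponential tilt on the remaining $n-1$ steps.

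First I would record two ingredients. (a) By the reflection symmetry of simple random walk, $\ph_m(0,\cdot)$ is symmetric, hence centered, and it is supported where $|y|\leq 2m+O(1)$. A standard Taylor estimate then yields, for $|\theta|\leq 1/(4m)$,
\begin{equation*}
  \Erw_{0,\ph_m}[\exp(\theta\cdot X_1)] \leq 1 + C m^2|\theta|^2 \leq \exp(Cm^2|\theta|^2),
\end{equation*}
and hence, by independence, $\Erw_{0,\ph_m}[\exp(\theta\cdot S_n)] \leq \exp(Cnm^2|\theta|^2)$. (b) From Lemma~\ref{def:app-kernelest}(i), applied with the constant field $\psi\equiv m$, together with translation invariance of $\ph_m$, one obtains the uniform pointwise bound $\ph_m(z)\leq Cm^{-d}$ for every $z\in\mathbb{Z}^d$.

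The main step is to combine (a) and (b). Decomposing $\ph_m^n(x)=\sum_y\ph_m^{n-1}(y)\ph_m(x-y)$, multiplying by $\exp(\theta\cdot x)$, and using $\theta\cdot x=\theta\cdot y+\theta\cdot(x-y)$, the last factor is controlled on the support of $\ph_m(\cdot-y)$ by $Cm^{-d}\exp(2m|\theta|)$ thanks to (b), while the remaining sum is exactly $\Erw_{0,\ph_m}[\exp(\theta\cdot S_{n-1})]$, estimated by (a). Together this gives
\begin{equation*}
  \ph_m^n(x)\leq Cm^{-d}\exp\bigl(-\theta\cdot x+2m|\theta|+Cnm^2|\theta|^2\bigr).
\end{equation*}

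Finally I would optimize $\theta=\lambda\,x/|x|$ over $\lambda\in(0,1/(4m)]$. Here the hypothesis $|x|\geq 3m$ plays its role: it allows the unwanted cost $2m\lambda$ to be absorbed into at most one third of $\lambda|x|$, so the exponent becomes $-(2/3)\lambda|x|+Cnm^2\lambda^2$; the optimum $\lambda\asymp |x|/(nm^2)$ stays in the admissible range since $|x|\leq 2mn$ on the support of $\ph_m^n$, and plugging it back yields the claimed bound with appropriate constants $c_1, c_2$. The only point requiring any care is this compatibility between the range of admissible tilts (dictated by the step MGF) and the exponential sharpness one wants in $|x|$; I do not foresee a more substantial obstacle.
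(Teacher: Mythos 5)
Your proof is correct and takes essentially the same route as the paper's: a one-step decomposition $\ph_m^n(x)=\sum_y\ph_m^{n-1}(y)\ph_m(x-y)$ supplying the uniform factor $\ph_m(\cdot)\leq Cm^{-d}$, combined with an exponential-moment (Hoeffding/Chernoff) bound for the remaining $n-1$ steps using that each coarse-grained step is symmetric and bounded by $2m$. The paper implements the Chernoff bound coordinate-wise on the event $\{|X_{n-1}|\geq |x|-2m\}$ rather than via a directional tilt, and under $|x|\geq 3m$ your absorption actually leaves $-(1/3)\lambda|x|$ rather than $-(2/3)\lambda|x|$, but neither difference is material since the constants $c_1,c_2$ are unspecified.
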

\begin{prooof}
  Write $\Prw$ for $\Prw_{0,\ph_m}$ and $\Erw$ for the expectation with
  respect to $\Prw$, and denote by $X_n^j$ the $j$th component of the random
  walk $X_n$ under $\Prw$. For $r
  > 0$,
\begin{eqnarray*}
\sum_{y: |y| \geq r}\ph_m^n(y) 
&\leq& \sum_{j=1}^d\Prw(|X_n^j| \geq d^{-1/2}r)\nonumber\\
&=& 2d\Prw(X_n^1 \geq d^{-1/2}r).
\end{eqnarray*}
We claim that 
\begin{equation*}
\Prw(X_n^1 \geq d^{-1/2}r) \leq \exp\left(-\frac{r^2}{8dnm^2}\right).
\end{equation*}
By the martingale maximal inequality for all $t, \lambda > 0$,
\begin{equation*}
\Prw(X_n^1 \geq \lambda) \leq e^{-t\lambda}\Erw\left[\exp(tX_n^1)\right] =
e^{-t\lambda}\left(\Erw\left[\exp(tX_1^1)\right]\right)^n. 
\end{equation*}
Since $X_1^1 \in (-2m,2m)$ and $x\rightarrow e^{tx}$ is convex, it follows that
\begin{equation*}
\exp(tX_1^1)\leq \frac{1}{2}\frac{(2m-X_1^1)}{2m}e^{-2tm} + \frac{1}{2}\frac{(2m+X_1^1)}{2m}e^{2tm}. 
\end{equation*}
Therefore, using the symmetry of $X_1^1$,
\begin{equation*}
\Erw\left[\exp\left(tX_n^1\right)\right] \leq
\left(\frac{1}{2}e^{-2tm}+\frac{1}{2}e^{2tm}\right)^n = \cosh^n(2tm)\leq e^{2nt^2m^2},
\end{equation*}
and
\begin{equation*}
\Prw(X_n^1 \geq d^{-1/2}r)\leq e^{-td^{-1/2}r}e^{2nt^2m^2}.
\end{equation*}
Putting $t = r/(4\sqrt{d}\,nm^2)$ we get
\begin{equation*}
\Prw(X_n^1 \geq d^{-1/2}r)\leq \exp\left({-\frac{r^2}{8dnm^2}}\right).
\end{equation*}
From this it follows that
\begin{eqnarray*}
\ph_m^n(x) = \sum_{y:|y| \geq |x|-2m}\ph_m^{n-1}(y)\ph_m(x-y)&\leq&
\frac{c_1}{m^d}\exp\left({-\frac{(|x|-2m)^2}{8d(n-1)m^2}}\right)\\
&\leq&
\frac{c_1}{m^d}\exp\left({-\frac{|x|^2}{c_2nm^2}}\right).
\end{eqnarray*}
\end{prooof}
\begin{prooof2}{\bf of Proposition~\ref{def:super-behaviorgreen}:}
(i) follows from Proposition~\ref{def:super-localclt}. For (ii), we set 
\begin{equation*}
N = N(x,m) = \frac{|x|^2}{\gamma_m}\left(\log\frac{|x|^2}{\gamma_m}\right)^{-2}.
\end{equation*}
We split $\gh_{m,\mathbb{Z}^d}(x)$ into
\begin{equation*}
\gh_{m,\mathbb{Z}^d}(x) = \sum_{n=1}^\infty\ph_m^n(x) =
\sum_{n=1}^{\lfloor N\rfloor}\ph_m^n(x)+
\sum_{n=\lfloor N\rfloor +1}^{\infty}\ph_m^n(x). 
\end{equation*}
For the first sum on the right, we use the large deviation estimate from
Lemma~\ref{def:app-ldestim},
\begin{equation*}
\sum_{n=1}^{\lfloor N\rfloor}\ph_m^n(x) \leq c_1m^{-d}\sum_{n=1}^{\lfloor
  N\rfloor}\exp\left(-\frac{|x|^2}{c_2\,nm^2}\right) = O\left(|x|^{-d}\right).
\end{equation*}
In the second sum, we replace the transition probabilities by the
expressions obtained in Proposition~\ref{def:super-localclt}. 
The error terms are estimated by
\begin{equation*}
  \sum_{n=\lfloor N\rfloor +1}^\infty O\left(m^{-d}n^{-(d+2)/2}\right) =
  O\left(|x|^{-d}\left(\log\frac{|x|^2}{\gamma_m}\right)^d\right). 
\end{equation*}
Putting $t_n = 2\gamma_mn/|x|^2$, we obtain for the main part
\begin{eqnarray*}
  \lefteqn{\sum_{n=\lfloor
    N\rfloor + 1}^\infty\frac{1}{(2\pi\gamma_mn)^{d/2}}\exp\left(-\frac{|x|^2}{2\gamma_mn}\right)}\\
  &=&
  \frac{|x|^{-d+2}}{2\pi^{d/2}\gamma_m}\sum_{n=\lfloor N\rfloor +1}^\infty
  t_n^{-d/2}\exp(-1/t_n)(t_n-t_{n-1})\nonumber\\  
  &=& \frac{|x|^{-d+2}}{2\pi^{d/2}\gamma_m}\int_0^\infty t^{-d/2}\exp(-1/t)\mbox{d}t
  + O\left(|x|^{-d}\right).
\end{eqnarray*}
This proves the statement for $|x| \geq 3m$ with
\begin{equation*}
c(d) = \frac{1}{2\pi^{d/2}}\int_0^\infty t^{-d/2}\exp(-1/t)\mbox{d}t.
\end{equation*}
\end{prooof2}
\subsection*{Acknowledgments}
I would like to thank Erwin Bolthausen and Ofer Zeitouni for many helpful
discussions and constant support.



\end{document}